\documentclass[12pt]{amsart}
\usepackage[all]{xy}
\usepackage{amssymb}
\usepackage{amsmath}
\usepackage{MnSymbol}
\usepackage{setspace}
\usepackage{graphicx}
\usepackage{graphics,graphicx}
\usepackage{pgf,color}
\usepackage{xcolor}
\usepackage{epstopdf}
\usepackage{epsfig,textpos,mathrsfs}
\usepackage{amsthm}
\newtheorem{definition}{Definition}[section]
\usepackage[utf8]{inputenc}
\usepackage[english]{babel}
\usepackage{amsthm}
%%%%%%%%%%%%%%%%%%%%%%%%%%%%%%%%%%%%%%%%%%%%%%%
\setlength{\textwidth}{6.5in}
\setlength{\textheight}{8.5in}
\setlength{\oddsidemargin}{0pt}
\setlength{\evensidemargin}{0pt}
\setlength{\topmargin}{0pt}
%%%%%%%%%%%%%%%%%%%%%%%%%%%%%%%%%%%%%%%%%%%%%%%
\newtheorem{remark}{Remark}[section]
\newtheorem{theorem}{Theorem}[section]
\newtheorem{lemma}{Lemma}[section]
\newtheorem{proposition}{Proposition}[section]
\newtheorem{corollary}{Corollary}[section]
\newtheorem{example}{Example}[section]

\theoremstyle{remark}
\newcommand{\R}{\mathbb{R}}
\newcommand{\N}{\mathbb{N}}

\newcommand{\Z}{\mathbb{Z}}

\linespread{1.1}
\setcounter{tocdepth}{1} % Show sections
\begin{document}

\title[Topological Dynamics of Enveloping Semigroups]{Topological Dynamics of  Enveloping Semigroups}
\vspace{1cm}
\author{ Anima Nagar and Manpreet Singh}
\address{Department of Mathematics, Indian Institute of Technology Delhi, Hauz Khas, New Delhi 110016, INDIA}
\email{anima@maths.iitd.ac.in and maz158145@maths.iitd.ac.in}

    \vspace{.2cm}
%\date{August 22, 2019}

\renewcommand{\thefootnote}{}

\footnote{2010 \emph{Mathematics Subject Classification}: Primary 37B05, 54H20; Secondary 37B20, 54B20.}

\footnote{\emph{Key words and phrases}: enveloping semigroups, minimal ideals, Stone-Cech Compactifications, induced systems.}

\footnote{The second named author thanks CSIR for financial support.}

\renewcommand{\thefootnote}{\arabic{footnote}}
\setcounter{footnote}{0}

\thanks{}

\begin{abstract}
A compact metric space $X$ and a discrete topological acting group $T$ give a flow $(X,T)$. Robert Ellis had initiated the study of  dynamical properties of the flow $(X,T)$ via the algebraic properties of its ``\emph{Enveloping Semigroup}" $E(X)$. This concept of  \emph{Enveloping Semigroups} that he defined, has turned out to be a very fundamental tool in the abstract theory of ``\emph{topological dynamics}".

The flow $(X,T)$ induces the flow $(2^X,T)$. Such a study was first initiated by Eli Glasner who  studied the properties of this induced flow by defining and using the notion of a ``\emph{circle operator}" as an action of $\beta T$  on $2^X$, where $\beta T$ is the \emph{Stone-$\check{C}$ech compactification} of $T$ and also a universal enveloping semigroup.  We propose that the study of properties for the induced flow $(2^X,T)$ be made using the algebraic properties of $E(2^X)$ on the lines of Ellis' \ theory, instead of looking into the action of $\beta T$ on $2^X$ via the circle operator as done by Glasner. Such a study requires extending the present theory on the flow $(E(X),T)$. In this article, we take up such a study giving some subtle relations between the semigroups $E(X)$ and $E(2^X)$ and some interesting associated consequences.

\end{abstract}
\maketitle

\newpage

\tableofcontents

\section{ Introduction}

 A \emph{Dynamical System} is usually denoted as a  pair $(X,T)$, where $X$ is  called a phase space, and defined as an action of some \emph{ group(semigroup)} $T$ on $X$.  The set $Tx = \{tx: t \in T\}$, for a point $x \in X$,  is called the \emph{orbit} of $x$ and \emph{Topological Dynamics} is the study   of orbits for all points in $X$.

 \vskip .5cm

  Suppose $T$ be a topological group(semigroup) and $X$ be a compact Hausdorff space or a compact metric space then $\pi(X,T)$ is  called a \emph{flow (semiflow)}, where $\pi$ is the continuous group(semigroup) action of $T$ on $X$. Thus for every $t \in T$, one can consider the function $\pi^t: X \to X$, given by $\pi^t(x) = (x,t)$. If $T = \mathbb{Z}$ or $\N$ then the flow is called a \emph{cascade or semi cascade}.  So, we can write $\pi(X,\mathbb{Z})$ or $(X,\mathbb{N})$ as $(X,f)$   and the action of $\mathbb{Z}$ or $\N$ will be defined as $\pi(x,m) = m \cdot x:= f^{m}(x)$, where $f= \pi^1$ gives the generator of the action. We drop naming the action $\pi$ of $T$ on $X$, and abbreviate  the flow  as $(X,T)$.

 \vskip .5cm

 Robert Ellis \cite{EL} had defined and studied the algebraic properties of a flow $(X,T)$ via the ``\emph{Enveloping Semigroup}". This algebraic theory has proved to be a very fundamental tool in the abstract theory of topological dynamics.  The enveloping semigroup  $E(X,T) = E(X)$ of a flow $(X, T)$ for a topological group $T$ acting on a compact space $X$
is defined as the closure of the set $\{\pi^t : X \to X : t \in T\}$ considered as a subset of $X^X$ given the product
topology. Equivalently, for the cascade $(X,f)$, the enveloping semigroup $E(X,f) = E(X)$ is the closure of the family $\{f^n: n \in \Z\}$ in the product topology on $X^X$. The enveloping semigroups are  compact, usually non-metrizable, right topological semigroup subset of $X^X$.

We can consider a ``universal" enveloping semigroup $\beta T$ which is the \emph{Stone-$\check{C}$ech compactification} of the group $T$. The group operation of $T$  induces a semigroup structure on $\beta T$.

Identify $t\in T$ with the map $t\longrightarrow tx$. So without loss of generality $T$ can be considered as a subset of $X^X$ and the enveloping semigroup $E(X)$ is its closure in $X^X$. In fact as shown by Ellis,  there exists a continuous map $\Psi: \beta T \rightarrow X^X$ which is an extension of $\psi: T \longrightarrow X^X$ such that $\Psi(\beta T) = E(X)$.

\vskip .5cm

The main aim of this article is to look into the dynamical properties of the Induced Systems via their Enveloping Semigroups. This requires studying Enveloping Semigroups as phase spaces under the action of the acting group(semigroup). We undertake such a study and wherever possible  exhibit  connections between their properties to some dynamical behavior of the system. We are essentially interested in the dynamical system $(E(X),T)$, which has mostly been studied for minimal systems, by extending the known results to non-minimal cases that have not been studied in a systematic manner so far. Throughout we discuss  basic definitions and results,  developing our notations, and include some observations that have not appeared in print so far.

 \vskip .5cm

 For the basics and most of the related theory of \emph{Topological Dynamics} and \emph{Enveloping Semigroups} we recommend the books \cite{AK, AUS,  ELL, F, SGP, JOIN, VER1, VER2, WAL}.

\vskip .5cm

 In \cite{F} Furstenberg had introduced the notion of disjointness of flows - the  flows $(X,T)$ and $(Y,T)$ are \emph{disjoint}, and denoted as $X \perp Y$, if whenever we have homomorphisms $\alpha: Z\rightarrow X, \ \beta: Z \rightarrow Y$ from any flow $(Z,T)$ then there exists a homomorphism $\gamma: Z \rightarrow X \times Y$ such that $\alpha= \pi_{X} \gamma$ and $\beta= \pi_{Y} \gamma$ where $\pi_X, \pi_Y$ are the respective projections. In \cite{SG} Glasner consider the induced flow $(2^X,T)$ on the space $2^X$ of non empty closed subsets of $X$ and gave a necessary and sufficient condition for two minimal flows to be disjoint. For that he introduced the notion of quasifactors. Briefly, quasifactors are the minimal subsets of $2^X$.

\vskip .5cm

For the induced flow $(2^X,T)$,  the notion of a ``\emph{circle operator}" as an action of $\beta T$  on $2^X$ was defined by Ellis, Glasner and Shapiro \cite{EGS1}.

Since $\beta T$ acts on both $X$ and $2^X$, the action of $p\in \beta T$ on $X$ is given as $x\longrightarrow px$ where $px=\lim t_{i}x$ whereas the action of $p\in \beta T$ on $2^X$ is given as $A\longrightarrow p\circ A$ where $p\circ A = \lbrace x\in X: t_{i}\longrightarrow p, \lbrace a_i \rbrace \subset A, t_{i}a_{i}\longrightarrow x\rbrace= \lim t_{i}A $ in $2^X$ where $t_{i}\longrightarrow p$ in $\beta T$.

We note that for $T=\Z$, $\beta \Z$ is the set of all ultrafilters, and so computing an element of $\beta\Z$ is not simple though finding $E(X)$ or $E(2^X)$ can be relatively easy.

Hence for the study of induced systems it will be easier to look into $E(2^X)$ directly without using the circle operator.

\vskip .5cm

Thus, in order to study the properties of induced flows we need to study the properties of $E(2^X)$. But we note that $2^X$ can never be minimal.  Most of the results known for $E(X)$ are when $(X,T)$ is minimal. So we first extend the theory of enveloping semigroups for non-minimal systems, i.e.  we  first study the properties of $E(X)$ when $(X,T)$ is not minimal. These are then  used to study $E(2^X)$.

\vskip .5cm

In this article we  study  the dynamics of $E(X)$ and $E(2^X)$ under the  action of $T$. Our $X$ is mostly a compact, infinite metric space and we are mainly interested in the case when $T$ and $E(X)$ are also infinite. For a finite group $T$, many of our results may not have the same form and we simply ignore those cases. We note that both $E(X)$ and $E(2^X)$ are factors of $\beta T$, though in general  $E(X)$ may not be a subset or superset of $E(2^X)$.

\xymatrixrowsep{12mm}
\xymatrixcolsep{15mm}
\begin{align*}
\xymatrix{
 \beta T \ar[r]^{\Psi_1} \ar[d]_{\Psi_2}
 &  E(X) \ar@{-->}[ld]_{f_2} \\
  E(2^X) \ar@<-5pt>[ru]_{f_1}
}
\end{align*}

One immediate problem that springs up is to determine all possible relations between $E(X)$ and $E(2^X)$ that exist and the dynamical consequences that determine the possibility of such  relations  that arise from such a possibility.

 \vskip 0.5cm

Here we study the topological dynamical properties of $(E(X),T)$, where $T$ is either a group or a monoid. This study can be extended to the study of the dynamical properties of $(E(X),E(X))$. We have not advanced in this direction, since a lot of what we could have obtained trivially would have coincided with the study made by Akin, Auslander and Glasner \cite{AAG}. We suggest the enthusiastic reader to look into \cite{AAG} after reading this work.

\vskip 0.5cm

\section{Basic Definitions and Elementary Properties}

\subsection{Some Topological Dynamics}

Let $(X,T)$ be a flow or a semiflow. A subset $A\subseteq X$ is \emph{invariant} if $TA=\{ta\ |\ a\in A,t\in T\}\subseteq A$. If $A$ is \emph{invariant} then $\pi^{\prime}=\pi|_{T\times A}: T\times A \rightarrow A$ is an action and $\pi^{\prime}(A, T)$ is a (semi)flow. When $T= \Z$ or $\N$, a point $x$ in a the cascade or semicascade $(X,f)$ is called the \emph{periodic point} if $f^n(x)=x$ for some $n$ and the least such positive $n$ is called the \emph{period} of $x$. Period of $x$ is denoted as $\emph{Per(x)}$. Also  the orbit of $x$ in this case is $\mathcal{O}(x)=\lbrace f^n(x): n\in \N, \Z \rbrace$.

 Let $(X,T)$ and $(Y,T)$ be two flows and $\phi:X \rightarrow Y$, then $\phi$ is said to be \emph{homomorphism of flows} or a \emph{factor} if $\phi$ is continuous surjection and $\phi(tx)= t\phi(x) \ \forall\ x\in X, t\in T$. A cascade $(Y,g)$ is called a \emph{factor} of the cascade $(X,f)$ if there exists a continuous surjection $h:X \to Y$ such that $h\circ f=g\circ h,$ and $(X,f)$ and $(Y,g)$ are called conjugate if $h$ is homeomorphism.

\vskip .5cm

$(X,T)$ is \emph{point transitive} if there exists a point $x_0\in X$ with $\overline{Tx_0}= X$, where $\overline{A}$ denotes the closure of $A$. A subset $M\subseteq X$ is said to be \emph{minimal} if $M \neq \emptyset $, $M$ is closed, $M$ is invariant and $ M$ is minimal with respect to these properties and the flow $(X,T)$ is \emph{minimal} if $X$ itself is minimal. Equivalently, $M$ is minimal if $\overline{Tx}=M$ for all $x\in M$.

 We note that if $\phi: (X,T)\rightarrow (Y,T)$ is a flow homomorphism; then

  (a) If $M$ is minimal subset of $X$ then $\phi(M)$ is minimal subset of $Y$.

   (b) If $N$ is minimal subset of $\phi(X)$ then there exists a minimal subset $M$ of $X$ with $\phi(M)=N$.

 A point $x\in X$ is called \emph{almost periodic} if $\overline{Tx}$ is minimal set. A flow $(X,T)$ is called \emph{pointwise almost periodic} if every $x\in X$ is almost periodic.

 \vskip .5cm

 The (semi)flow $(X,T)$ is called  \emph{transitive(topologically transitive)} if for every nonempty, open subsets $U$ and $V$ in $X$, there exist a $t\in T$ such that $tU\cap V\neq\emptyset$.
$(X,T)$ is \emph{weak mixing} if the flow $(X \times X, T)$ is topologically transitive. $(X,T)$ is called a \emph{mixing system} if for any pair of nonempty open sets $U,V\subset X$, there exist a compact subset $C$ of $T$ such that $tU\cap V\neq\emptyset$, for all $t\in T\smallsetminus C$.

\vskip .5cm

Note that in case of cascades or semicascades $(X,f)$, one usually talks of forward orbits.  Hence for cascades and semicascades, the usual definitions are considered with the \emph{orbit} of $x$ as $\mathcal{O}(x) \ = \ \{f^n(x) : n \in \N \}$, \emph{topologically transitive} when for every  open and nonempty set $U \subset X$,
$\bigcup \limits_{n \in \N} \ f^n(U)$ is dense in $X$, or, equivalently if for every nonempty, open pair
$U, V \subset X$, there exists a $n \in \N$ such that the set  $f^n(U) \cap V$ is nonempty, \emph{weakly mixing} if given nonempty, open sets $U_1, U_2, V_1, V_2 \subset X$, there exists a $n \in \N$ such that $f^n(U_1) \cap V_1 \neq \emptyset$ and $f^n(U_2) \cap V_2 \neq \emptyset$.
The celebrated Fursternberg intersection lemma \cite{F} gives us that the cascade or semicascade $(X,f)$ is \emph{weakly mixing} if and only if for every $k \geq 2$, the product system $ (X^k, f^{(k)}) = (\underbrace{X \times \ldots \times X}_{k- \text{times}}, \underbrace{f \times \ldots \times f}_{k- \text{times}})  $ is topologically transitive. And the cascade or semicascade $(X,f)$ is \emph{topologically mixing} if for any pair of nonempty, open subsets $U, V$ of $X$, there exists  $N \in \N$, such that $ f^n(U) \cap V \neq \emptyset$ for $n \geq N$, with it being \emph{minimal} if $\mathcal{O}(x)$ is dense in $X$ for every $x \in X$.

In case of cascades or semicascades $(X,f)$, for any nonempty, open $U,V\subset X$, the set $N(U,V)=\lbrace n\in \N: f^{n}(U)\cap V \neq\emptyset\rbrace$ is called the hitting set. $(X,T)$ is called transitive if any such $N(U,V)$ is nonempty, weak mixing if $N(U,V)$ is thick, where a subset of natural numbers is called \emph{thick} if for each $k\in \N$, there exist $n_k \in \N$ such that it contains the set $\lbrace n_k, n_{k}+1,\ldots, n_{k}+k \rbrace$, and the system is mixing if $N(U,V)$ is cofinite.

\vskip .5cm

For cascades or semicascades $(X,f)$, the notion of \emph{strongly transitive} is studied in \cite{AANT}. See also \cite{ANVK}. $(X,f)$ is called \emph{Strongly Transitive} if for every nonempty, open $U \subset X$, $\bigcup\limits_{n=1}^\infty f^n(U) = X$, and \emph{Strongly Product Transitive} if for every positive integer $k$ the product system $(X^k, f^{(k)})$ is strongly transitive.

Again as in \cite{AANT, ANVK} we take the \emph{backward orbit} of $x \in X$ as
$$\mathcal{O}^\leftarrow(x) = \bigcup \limits_{n \in \N} \{ f^{-n}(x)   \} \ = \ \{ y \in X : f^n(y) = x \ \mbox{for some} \ n \in \N \}.$$

\begin{theorem} \cite{AANT} \label{1.1} For a  cascade or semicascade $(X,f)$  the
following are equivalent.
\begin{itemize}
\item[(1)]  $(X,f)$ is strongly  transitive.

\item[(2)] For every  nonempty, open set $U \subset X$ and every point $x \in X$, there
exists $n \in \N$ such  that $x \in f^n(U)$.

\item[(3)] For every $x \in X$,  the backward orbit $\mathcal{O}^\leftarrow(x)$ is dense in $X$.

\end{itemize}

If $(X,f)$ is strongly  transitive, then it is topologically transitive.
\end{theorem}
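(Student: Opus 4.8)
The plan is to establish the chain $(1)\Leftrightarrow(2)\Leftrightarrow(3)$ and then read off topological transitivity from $(2)$. Each of these steps is a translation between set-theoretic formulations rather than a substantive argument, so I will carry them out by unwinding the relevant definitions.

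For $(1)\Leftrightarrow(2)$: by definition $(X,f)$ is strongly transitive iff $\bigcup_{n\geq 1} f^n(U)=X$ for every nonempty open $U\subseteq X$, and this equality is literally the statement that for every such $U$ and every $x\in X$ there is some $n\in\N$ with $x\in f^n(U)$. So $(1)$ and $(2)$ are the same condition once the range of $n$ is fixed. For $(2)\Leftrightarrow(3)$ I would use the elementary equivalence, valid for any point $x$ and any nonempty open $U$, that $x\in f^n(U)$ holds iff there is $y\in U$ with $f^n(y)=x$, i.e. iff $f^{-n}(x)\cap U\neq\emptyset$. Thus $(2)$ asserts precisely that $\mathcal{O}^\leftarrow(x)=\bigcup_{n\in\N} f^{-n}(x)$ intersects every nonempty open subset of $X$, which is exactly density of $\mathcal{O}^\leftarrow(x)$; the reverse implication $(3)\Rightarrow(2)$ unwinds in the same way. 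Together these give the equivalence of $(1)$, $(2)$ and $(3)$.

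For the final assertion, suppose $(X,f)$ is strongly transitive and let $U,V\subseteq X$ be nonempty and open. Pick any $x\in V$; by $(2)$ there is $n\in\N$ with $x\in f^n(U)$, hence $x\in f^n(U)\cap V$ and in particular $f^n(U)\cap V\neq\emptyset$, which is topological transitivity for a cascade or semicascade (nonemptiness of the hitting set $N(U,V)$). I do not expect a genuine obstacle anywhere in this argument: the only point requiring care is to fix the convention for $\N$ — and hence the admissible indices $n$ in $\bigcup_{n\geq 1} f^n(U)$ and in $\mathcal{O}^\leftarrow(x)$ — uniformly across $(1)$, $(2)$ and $(3)$, so that the three conditions match up without an off-by-one discrepancy; no continuity or compactness of $X$ enters.
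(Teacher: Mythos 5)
Your proof is correct: the equivalences $(1)\Leftrightarrow(2)\Leftrightarrow(3)$ are indeed pure definition-unwinding (with the observation $x\in f^n(U)\iff f^{-n}(x)\cap U\neq\emptyset$ doing the work for $(2)\Leftrightarrow(3)$), and your derivation of topological transitivity from $(2)$ is exactly the intended argument. The paper itself does not reprove this statement but quotes it from \cite{AANT}, and your argument coincides with the standard one given there, so nothing further is needed beyond the care you already note about fixing the index set $\N$ consistently.
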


\begin{theorem}  \cite{AANT} \label{1.2} A cascade $(X,f)$ is strongly transitive
if and only if it is minimal.
\end{theorem}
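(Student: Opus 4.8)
The plan is to reduce the whole statement to Theorem~\ref{1.1} by using the one feature a cascade has that a semicascade need not have: $f$ is a homeomorphism. The key bookkeeping observation is that for a cascade $(X,f)$ and any $x\in X$ the backward orbit $\mathcal{O}^\leftarrow(x)=\{f^{-n}(x):n\in\N\}$ is exactly the forward orbit $\{(f^{-1})^n(x):n\in\N\}$ of $x$ for the cascade $(X,f^{-1})$. So once we know that ``minimal'' is insensitive to replacing $f$ by $f^{-1}$ and that minimality can be read off backward orbits, both implications fall out of the equivalence $(1)\Leftrightarrow(3)$ in Theorem~\ref{1.1}.

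First I would record the standard auxiliary fact: for a cascade $(X,f)$ the following are equivalent --- (i) $X$ has no proper, nonempty, closed $f$-invariant subset; (ii) $\mathcal{O}(x)$ is dense in $X$ for every $x\in X$; (iii) $\mathcal{O}^\leftarrow(x)$ is dense in $X$ for every $x\in X$. Condition (i) is visibly symmetric under $f\leftrightarrow f^{-1}$, since for a homeomorphism a closed set satisfies $f(A)=A$ if and only if $f^{-1}(A)=A$; hence $(X,f)$ is minimal exactly when $(X,f^{-1})$ is minimal. The implication (i)$\Rightarrow$(ii) is the usual argument via the $\omega$-limit set $\omega(x)=\bigcap_{N}\overline{\{f^{n}(x):n\ge N\}}$, which is nonempty by compactness, closed, and fully invariant ($f(\omega(x))=\omega(x)$ because $f$ is a homeomorphism of a compact space), hence equals $X$, so that $\overline{\mathcal{O}(x)}\supseteq\omega(x)=X$. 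Applying (i)$\Rightarrow$(ii) to $f^{-1}$ and using the symmetry of (i) yields (i)$\Rightarrow$(iii); the reverse implications (ii)$\Rightarrow$(i) and (iii)$\Rightarrow$(i) are immediate, since a proper closed invariant $A$ would trap $\mathcal{O}(x)$ (resp.\ $\mathcal{O}^\leftarrow(x)$) for any $x\in A$.

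With this in hand the two directions are one line each. If $(X,f)$ is minimal, then $(X,f^{-1})$ is minimal, so by the auxiliary fact every forward $f^{-1}$-orbit, i.e.\ every $\mathcal{O}^\leftarrow(x)$, is dense; Theorem~\ref{1.1}, implication $(3)\Rightarrow(1)$, then gives that $(X,f)$ is strongly transitive. Conversely, if $(X,f)$ is strongly transitive, Theorem~\ref{1.1}, implication $(1)\Rightarrow(3)$, tells us $\mathcal{O}^\leftarrow(x)$ is dense for every $x$, and the auxiliary fact turns this into minimality of $(X,f)$.

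The only step carrying real content is the auxiliary fact, and within it the classical assertion that in a minimal cascade all forward (and, by inversion, all backward) orbits are dense --- the $\omega$-limit set computation above --- together with the remark that the closed invariant subsets of $(X,f)$ and of $(X,f^{-1})$ are the same. I would also stress in the write-up that invertibility is genuinely used: for a non-invertible semicascade the identity $\mathcal{O}^\leftarrow(x)=\{(f^{-1})^{n}(x):n\in\N\}$ is meaningless, and indeed an expanding self-map of the circle is strongly transitive without being minimal, which is precisely why the hypothesis ``cascade'' appears in the statement.
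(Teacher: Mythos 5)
Your argument is correct. Note first that the paper does not actually prove this statement: it quotes it from \cite{AANT} (as it does Theorem \ref{1.1}), so there is no in-text proof to compare against; but your route is exactly the natural one and, as far as the equivalence goes, the standard one. The three ingredients you isolate are all sound: (a) for a homeomorphism the closed invariant sets of $(X,f)$ and $(X,f^{-1})$ coincide, so minimality is symmetric under inversion; (b) in a minimal cascade every forward orbit is dense, via the $\omega$-limit set, which is nonempty and closed by compactness and satisfies $f(\omega(x))=\omega(x)$ (for the record, compactness alone already gives this equality -- being a homeomorphism is only needed to get $f^{-1}$-invariance and the symmetry in (a)); and (c) for a cascade $\mathcal{O}^\leftarrow(x)$ is literally the forward $f^{-1}$-orbit, so density of all backward orbits is minimality of $(X,f^{-1})$, hence of $(X,f)$. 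Feeding this into the equivalence $(1)\Leftrightarrow(3)$ of Theorem \ref{1.1} gives both implications, and using Theorem \ref{1.1} here is legitimate since it is an independently stated result. Your closing remark is also apt: the paper's own example of an expanding circle map shows the statement genuinely fails for semicascades, which is precisely where your identification of backward orbits with $f^{-1}$-orbits breaks down.
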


We have the inclusion relationship as given in \cite{AANT}:

\vskip .5cm

\centerline{\scriptsize Mixing $\Rightarrow$ Weak Mixing  $\Rightarrow$ Transitivity $\Leftarrow$ Strongly Transitive $\Leftarrow$ Minimal.}

\centerline{\scriptsize Strongly Product transitive $\Rightarrow$ Weak Mixing.}

\vskip .5cm

We note here a few observations that can be helpful. Let $(X,T)$ be a topologically transitive flow with $X$  metrizable then there exists a $ x\in X$ with $\overline{Tx}=X$ means $(X,T)$ is point transitive. Let $(X,T)$ be a topologically transitive, pointwise almost periodic flow with $X$  metrizable then $(X,T)$ is minimal. Let $\pi:(X,T)\rightarrow (Y,T)$ be an epimorphism of flows and $(X,T)$ be topologically transitive, then $(Y,T)$ is topologically transitive.

 \vskip .5cm

 If $(X,d)$ is a metric space then the flow $(X,T)$ is said to be \emph{equicontinuous at a point $y\in X$} if for every $\epsilon> 0$ there exists a neighborhood $U$ of $y$ such that for every $x\in U$ and every $t\in T$; we have $d(tx,ty)<\epsilon$ and the flow is \emph{equicontinuous} if it is equicontinuous for every point of $X$. A cascade(semicascade) $(X,f)$ is equicontinuous if the family $\lbrace f^n: n\in \Z ( \N)\rbrace$ is equicontinuous on $X$. A flow $(X,T)$ is called sensitive at a point $y\in X$ if for some $\delta> 0$ and for every neighborhood $U$ of $y$ there exists a point $x\in U$ and some $t\in T$ for which  $d(tx,ty)>\delta$ and system is \emph{sensitive} if it is sensitive at every point of $X$. A minimal flow $(X,T)$ is either equicontinuous or sensitive \cite{AY}.

 \vskip .5cm

 If there is a dense set of equicontinuity points in $X$ then the flow $(X,T)$ is called \emph{almost equicontinuous} (AE) see \cite{AAB1, AAB2}. A transitive flow $(X,T)$ is either almost equicontinuous or sensitive \cite{AAB1}.

The flow $(X,T)$ is called \emph{hereditarily almost equicontinuous}(HAE) if every closed subsystem of $(X,T)$ is also \emph{almost equicontinuous} \cite{ES}.

A flow $(X,T)$ is called locally equicontinuous (LE) if $\forall\ x\in X$, the subflow $(\overline{Tx},T)$ is  equicontinuous \cite{ELBW3}. The cascade $(X,f)$, with $X =\{(r,\theta): 0 \leq r \leq 1; \theta \in \R\}$ and $f(r, \theta) = (r, r+\theta)$ is locally equicontinuous but not almost equicontinuous.

A flow $(X,T)$ is called \emph{weakly almost periodic} (WAP) if for every $g\in \mathcal{C}(X)$(the space of continuous complex valued functions with the topology of pointwise convergence), the set $\overline{\lbrace g\circ t: t \in T\rbrace}$ is relatively compact in $\mathcal{C}(X)$ \cite{D, ELN, MST}. There are examples of locally equicontinuous systems that are not  weakly almost periodic systems mentioned in \cite{ELBW3}.

\centerline{ In general,  WAP $\subseteq$ LE $\subseteq$ AE.}

A flow $(X,T)$ is \emph{non sensitive} (NS)  if for every $\epsilon> 0$ there exists a neighborhood $U$ in $X$ such that for every $x,y \in U$ and every $t\in T$; we have $d(tx,ty)<\epsilon$. $(X,T)$ is called \emph{hereditarily non sensitive} (HNS) if every closed subsystem of $(X,T)$ is also \emph{non sensitive} \cite{GMES}.

Essentially non sensitive systems are those systems that have a non empty set of equicontinuity points. A transitive flow $(X,T)$ is \emph{non sensitive} (NS)  if and only if it is \emph{almost equicontinuous} \cite{GMES}. Though in the non transitive case we can have a non sensitive system which is not almost equicontinuous. And it is still not known if hereditary non senstive systems differ from hereditary almost equicontinuous systems and if the class of metrizable  hereditary non senstive and hereditary almost equicontinuous systems is
closed under factors and countable products     \cite{GMES}.

\vskip .5cm

 A pair $\{x,y\}$ is called \emph{proximal} if  there exists a net $\lbrace t_{i}\rbrace\subseteq T$ with $\lim\limits_i t_{i}x=\lim\limits_i t_{i}y$. The collection of proximal pairs in $X\times X$ will be denoted by $P(X)$ and a flow $(X,T)$ is \textit{proximal} if every pair $(x,y)\in X\times X$ is proximal ; i.e., $P(X)= X\times X$. For a cascade(semicascade) a pair $\{x,y\}$ is called \emph{proximal} if there exists a sequence $\lbrace n_{i}\rbrace\subseteq \Z ( \N)$ with $\lim\limits_i f^{n_i}(x)=\lim\limits_i f^{n_i}(y)$.

 From the definition, we can see that the relation of proximality is reflexive and symmetric but it is not transitive in general and so not always an equivalence relation.

 \vskip .5cm

 We recall a very important result in this direction;

  \emph{Auslander-Ellis Theorem \cite{AUS, ELL}}: Let $(X,T)$ be a flow and $x\in X$. Then  there exists an almost periodic point  $y\in X$  such that $(x,y)\in P(X)$.

  In fact, this theorem says more. One can specify the proximal almost periodic point. That is, if $M$ is a minimal set in the orbit closure of $x$, then there is an $x' \in M$ with $x$ and $x'$ proximal.

\vskip .5cm

We define $\Delta = \{(x, x) : x \in X \} \subset X \times X$, the \emph{diagonal} of $X \times X$. Thus $ (x, y) \in \Delta$ means that $x = y$. The flow $(X,T)$ is a \textit{distal flow} when the only proximal pairs are of the form $(x,x)\in X\times X$; i.e., $P(X) = \Delta $. Distality is preserved under homomorphisms and taking products.

Let $\pi:(X,T)\rightarrow (Y,T)$ be a homomorphism of flows. We say that $\pi$ is a proximal (distal) homomorphism if whenever $\pi(x)=\pi(y)$, the pair $x$ and $y$ is proximal (distal). And $(X,T)$ is called proximal (distal) extension of $(Y,T)$.

Let $(X,T)$ be topologically transitive and distal. Then $(X,T)$ is minimal.

\vskip .5cm

For a flow $(X,T)$, a point $x \in X$ is called \emph{recurrent}  if  the orbit of $x$ returns to its neighbourhood infinitely often. Usually this infinitely often is described in terms of some  \emph{`admissible set'}. A point $x \in X$ is called \emph{recurrent}  if for any $\epsilon > 0$ there is an admissible  $A \subset T$ such that  $d(t(x), x) < \epsilon \ $ for all $t \in A$,  and \emph{uniformly recurrent}  if and only if the orbit closure
$\overline{Tx}$ is a minimal set.

For a cascade $(X,f)$, a point  $x \in X$ is called periodic if there exists a $n \in \N$ such that $f^n(x) = x$, \emph{recurrent}  if there exists a sequence $n_k \nearrow \infty$  such that  $f^{n_k}(x) \to x$ i.e. the set $N(x,U) = \{ n \in \N: f^n(x) \in U\}$ is infinite for any neighbourhood $U$ of $x$,  and \emph{uniformly recurrent} if for any neighbourhood $U \ni x$, the set $N(x,U) = \{ n \in \N: f^n(x) \in U\}$ is syndetic(i.e. with bounded gaps). Note that periodic points and almost periodic points are uniformly recurrent. The set of all recurrent points in $X$ is denoted as $R_f(X)$.

The \emph{omega limit set} of a point $x\in X$, $\omega(x) = \lbrace y\in X: y=\lim\limits_k f^{t_{k}}x$ for some sequence $ (t_{k})\ in\ \N\rbrace$, and  is a non empty closed $f$-invariant set. A point $x$ is  recurrent if $x\in\omega(x)$. A point $x\in X$ is called \emph{non-wandering} if for every open set $U\subset X$ with $x\in U$, there exists a $n\in \N$ such that $f^n(U)\cap U \neq\emptyset$. The set of all non-wandering points in $X$ is denoted as $\Omega_f(X)$.

\vskip .5cm

The concept of recurrence is strengthened by the notion of rigidity. These concepts of \emph{weak rigidity, rigidity and uniform rigidity} in topological dynamics were first defined by Glasner and Maon \cite{RIG} for  cascades. A cascade $(X,f)$ is called \emph{weakly rigid} if for any $x_1, \ldots, x_n \in X$ and $\epsilon > 0$, there is a $m \in \N$ such that $d(x_i, f^m(x_i)) < \epsilon,$ $i = 1, \ldots, n$. This essentially means that  each n-tuple $(x_1, \ldots , x_n)$ in the product system $(X^n, f^{(n)})$ for each $n \in \N$ is a recurrent point. The cascade $(X, f)$ is called \emph{$n$-rigid} if the later property is satisfied for some $n \in \N$,  \emph{rigid} if there is  a sequence $n_k \nearrow \infty$ such that $f^{n_k}x \to x$  for all $x \in X$,  and
\emph{uniformly rigid} if the sequence $n_k \nearrow \infty$ is such that $f^{n_k} \to e$ uniformly on $X$. Here $e$ is the identity mapping on $X$.

 \vskip .5cm

\centerline{ \it  Uniformly Rigid $\Rightarrow$ Rigid $\Rightarrow$ Weakly Rigid}

\vskip .5cm

There are examples of uniformly rigid, rigid but not uniformly rigid and weakly rigid but not rigid cascades discussed in \cite{RIG}.

\begin{theorem} \cite{AAB1} Let (X, f) be a topologically transitive system. Then  $(X,f)$ is almost equicontinuous implies that  $(X,f)$ is uniformly rigid. \end{theorem}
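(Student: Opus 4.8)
The plan is to show that a topologically transitive, almost equicontinuous cascade $(X,f)$ is uniformly rigid by exploiting the dense set of equicontinuity points together with transitivity. First I would fix a transitive point $x_0$ (which exists since $X$ is metrizable and $(X,f)$ is topologically transitive), and show that $x_0$ can be taken to be an equicontinuity point: the set $\mathrm{Eq}(X)$ of equicontinuity points is a dense $G_\delta$, and the set of transitive points is a dense $G_\delta$ as well, so their intersection is nonempty. Call a point in this intersection $x_0$. The key consequence of $x_0$ being an equicontinuity point is that the map $n \mapsto f^n$, restricted along the orbit of $x_0$, has relatively compact image in $C(X,X)$ in the uniform topology — more precisely, the family $\{f^n : n \in \Z\}$ is equicontinuous as a family of maps on $X$ when we know one transitive point is an equicontinuity point, because equicontinuity at a single transitive point propagates to all of $X$ by the standard orbit-translation argument (given $\epsilon$, pick the neighborhood $U$ at $x_0$; for an arbitrary $y$, approximate $y$ by $f^m x_0$ and transfer the estimate).

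Next I would use this uniform equicontinuity to conclude that $\overline{\{f^n : n \in \Z\}}$, taken in $C(X,X)$ with the sup-metric, is a compact metrizable space, and in fact — using that uniform limits of the $f^n$ are surjective and the family is equicontinuous — that this closure is a compact topological group $G$ acting on $X$, with the identity $e$ lying in the closure. The crucial point is that $e \in \overline{\{f^n : n \ge 1\}}$ in the uniform topology: since $x_0$ is recurrent (every transitive point in a system with a dense set of almost periodic points, or more directly here, is recurrent because AE transitive systems are pointwise recurrent), there is a sequence $n_k \nearrow \infty$ with $f^{n_k} x_0 \to x_0$; equicontinuity of $\{f^{n_k}\}$ then upgrades this pointwise convergence at the transitive point to uniform convergence $f^{n_k} \to e$ on all of $X$. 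That is exactly the definition of uniform rigidity.

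The step I expect to be the main obstacle is the upgrade from "$f^{n_k} x_0 \to x_0$ at the single transitive point" to "$f^{n_k} \to e$ uniformly on $X$." This requires two ingredients working together: (i) the equicontinuity of the family $\{f^n\}$, which lets one move the estimate from a dense orbit to all of $X$, and (ii) an Ascoli-type argument ensuring that any uniform limit point of $\{f^{n_k}\}$ must be the identity — one shows that a uniform limit $g$ satisfies $g(x_0) = x_0$, and since $g$ commutes with $f$ and $x_0$ is transitive, $g$ fixes the dense orbit $\{f^n x_0\}$, hence $g = e$ by continuity. Assembling these carefully, while making sure the recurrence of $x_0$ is genuinely available (it follows from the fact, noted in the excerpt, that a transitive AE system is non-sensitive and such systems are pointwise recurrent, or alternatively by passing to a minimal subset and using the Auslander–Ellis theorem to find a recurrent point in $\overline{Tx_0}=X$), is where the real work lies; the rest is routine.
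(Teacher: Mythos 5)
Your overall strategy founders on one specific claim: that equicontinuity of $(X,f)$ at the single transitive point $x_0$ propagates to all of $X$, so that the family $\{f^n : n\in\Z\}$ becomes equicontinuous on $X$. The orbit-translation argument only transfers equicontinuity from $x_0$ to the points $f^m x_0$ of its orbit (hence yields a dense set of equicontinuity points, which almost equicontinuity already gives you); it does not pass to limits of such points, because to control an arbitrary $y$ close to some $f^m x_0$ you would need to know that points near $y$ have orbits staying near the orbit of $y$ --- which is exactly the equicontinuity at $y$ you are trying to establish. If the propagation were valid, every topologically transitive almost equicontinuous system would in fact be equicontinuous, and the class this theorem is about (transitive AE systems, which include non-minimal, non-equicontinuous examples constructed by Glasner--Weiss and by Akin--Auslander--Berg themselves) would collapse. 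Consequently the subsequent machinery you invoke --- relative compactness of $\{f^n\}$ in the uniform topology, the Arzel\`a--Ascoli upgrade, the compact group closure, and the step ``equicontinuity lets one move the estimate from the dense orbit to all of $X$'' --- is unavailable, so the proposal as written does not prove the theorem.

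The theorem needs much less: equicontinuity at the one point $x_0$ plus continuity of each individual iterate suffices, provided the quantifiers are ordered correctly. Given $\epsilon>0$ choose $\delta>0$ so that $d(y,x_0)<\delta$ implies $d(f^n y,f^n x_0)\le\epsilon$ for all $n\ge 0$; by recurrence of $x_0$ (your justification of this point is fine) pick $m$ arbitrarily large with $d(f^m x_0,x_0)<\delta$, so that $d(f^{k+m}x_0,f^k x_0)\le\epsilon$ for every $k\ge 0$. Now fix an arbitrary $z\in X$: since $m$ is already fixed, transitivity of $x_0$ lets you choose $k$ with $f^k x_0$ so close to $z$ that both $d(f^k x_0,z)<\epsilon$ and $d\bigl(f^m(f^k x_0),f^m z\bigr)<\epsilon$, using only continuity of the single map $f^m$ at $z$. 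The triangle inequality gives $d(f^m z,z)\le 3\epsilon$ for every $z\in X$, i.e.\ $\sup_{z\in X} d(f^m z,z)\le 3\epsilon$ with $m$ as large as desired, which is uniform rigidity. The essential point you missed is this order of choices: $m$ is selected before $k$, so plain continuity of one iterate replaces the (false in general) equicontinuity of the whole family.
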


\begin{theorem} \cite{ELBW1}  A topologically transitive system without isolated points
which is not sensitive is uniformly rigid. \end{theorem}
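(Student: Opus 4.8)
The plan is to chain the two facts recorded just above: the dichotomy ``a transitive flow is either almost equicontinuous or sensitive'' from \cite{AAB1}, and the implication ``a topologically transitive system that is almost equicontinuous is uniformly rigid''. Since $(X,f)$ is topologically transitive and not sensitive, the first forces $(X,f)$ to be almost equicontinuous, equivalently (by \cite{GMES}) non-sensitive, so for each $\epsilon>0$ there is a nonempty open $U_\epsilon\subseteq X$ with $d(f^nx,f^ny)<\epsilon$ for all $x,y\in U_\epsilon$ and all $n\ge 0$; the second implication then already gives the conclusion. To make the role of the hypothesis ``no isolated points'' visible, I will instead sketch a direct proof.

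Pick a transitive point $x_0$ (available since $X$ is metrizable and $(X,f)$ is transitive). Using that $X$ has no isolated points, $x_0$ is recurrent: if not, some open $V\ni x_0$ would meet $\mathcal{O}(x_0)$ in a finite set, and removing from $V$ the finitely many points of $\mathcal{O}(x_0)\cap V$ other than $x_0$ would give an open set $W$ with $W\cap\mathcal{O}(x_0)=\{x_0\}$; then $W\setminus\{x_0\}$ is open and disjoint from the dense set $\mathcal{O}(x_0)$, hence empty, making $x_0$ isolated, a contradiction. So there is $n_k\nearrow\infty$ with $f^{n_k}x_0\to x_0$. Moreover $x_0$ may be taken to be an equicontinuity point of $\{f^n\}$: given $\epsilon$, choose $m$ with $f^mx_0\in U_\epsilon$ and a neighborhood $W$ of $x_0$ with $f^m(W)\subseteq U_\epsilon$ and with the first $m$ iterates of points of $W$ within $\epsilon$ of those of $x_0$; then $\{f^n\}$ is equicontinuous at $x_0$. (Alternatively, transitive points and equicontinuity points are each residual, so one point lies in both.)

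It remains to promote $f^{n_k}x_0\to x_0$ to uniform convergence $f^{n_k}\to e$ on $X$. Fix $\epsilon>0$, take $\delta>0$ with $d(a,x_0)<\delta\Rightarrow d(f^ma,f^mx_0)<\epsilon$ for all $m\ge 0$ (equicontinuity at $x_0$), and $K$ with $d(f^{n_k}x_0,x_0)<\delta$ for $k\ge K$. For any $z\in X$, write $z=\lim_j f^{m_j}x_0$ by transitivity of $x_0$; then for $k\ge K$, since $f^{n_k}$ and $f^{m_j}$ commute and $d$ is continuous,
\[
 d\big(f^{n_k}z,\,z\big)=\lim_j d\big(f^{m_j}(f^{n_k}x_0),\,f^{m_j}x_0\big)\le\epsilon ,
\]
the inequality holding because $d(f^{n_k}x_0,x_0)<\delta$. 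As $z$ was arbitrary, $\sup_{z\in X}d(f^{n_k}z,z)\le\epsilon$ for all $k\ge K$, i.e.\ $f^{n_k}\to e$ uniformly on $X$, so $(X,f)$ is uniformly rigid.

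The argument contains no deep step; the points needing care are (i) choosing $x_0$ simultaneously transitive and an equicontinuity point, and (ii) the cascade case, where one first notes that a transitive point of a perfect cascade has dense forward orbit, so the computation above (phrased for forward iterates) applies verbatim. I expect (i) together with the verification of the non-sensitivity/almost-equicontinuity equivalence to be the only mildly technical part. The hypothesis that $X$ has no isolated points is used solely to guarantee that the transitive point recurs; without it a transitive cascade collapses to a single periodic orbit, which is equicontinuous and uniformly rigid anyway, so the statement is in fact robust to dropping it.
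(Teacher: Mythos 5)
This statement is not proved in the paper at all; it is quoted from Glasner and Weiss \cite{ELBW1}, so there is no in-paper argument to compare yours with. Judged on its own, your proof is correct, and both of your routes work: the two facts recorded just above the statement (the transitive/sensitive dichotomy and ``transitive $+$ almost equicontinuous $\Rightarrow$ uniformly rigid'') do chain together to give the result, and your direct argument is exactly the standard Akin--Auslander--Berg/Glasner--Weiss proof: a point $x_0$ that is simultaneously a transitive point and an equicontinuity point is forward recurrent because $X$ has no isolated points, and equicontinuity at $x_0$ together with density of its forward orbit upgrades $f^{n_k}x_0\to x_0$ to $\sup_{z\in X}d(f^{n_k}z,z)\le\epsilon$ for all large $k$, which is uniform rigidity.

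Two peripheral assertions should be repaired, though neither touches the theorem as stated. First, ``a transitive point of a perfect cascade has dense forward orbit'' is false if transitive point means dense full orbit: in the full $2$-shift there are points whose backward orbit is dense while the forward orbit closure is countable. What your computation actually needs is only \emph{some} point with dense forward orbit, and that is exactly what the Baire category argument supplies under the paper's (forward) definition of transitivity for cascades; take that point as $x_0$ and the issue disappears. Second, your closing claim that the no-isolated-points hypothesis is droppable holds only for that forward convention (where an isolated point forces a finite periodic orbit). For two-sided transitivity it fails: the homeomorphism $z_n\mapsto z_{n+1}$ of the compact set $\{0\}\cup\{z_n:n\in\Z\}$, with $z_n\to 0$ as $n\to\pm\infty$ and $0$ fixed, is transitive in the $\Z$-sense, is not sensitive, has isolated points, and is not even rigid, so in that setting the hypothesis does genuine work beyond guaranteeing recurrence of the transitive point.
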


\subsection{Symbolic Dynamics}

\emph{Symbolic Dynamics} originated as a method to study general dynamical systems. The distinct feature in symbolic dynamics is that space and time are both discretized. Each state is associated with a symbol and the evolution of the system is described by an infinite sequence of symbols.  Related theory can be read from \cite{LIN}.

In general for the alphabet set $\mathcal{A}$ with $|\mathcal{A}| < \infty$, consider $\mathcal{A}^{\mathbb{Z}}$ then

 $\rho(x,y)= \left\{
               \begin{array}{ll}
                 2^{-(k+1)}, & \hbox{if  $x\neq y$ {and $k$ is maximal so that} $x_{[-k, k]}=y_{[-k, k]}$;} \\
                 0, & \hbox{$x=y.$}
               \end{array}
             \right.$

is a metric on $\mathcal{A}^{\mathbb{Z}}$ and the cylinder set $C_{k}(u)=\lbrace x\in X: x_{[k, k+|u|-1]}= u\rbrace;\ i.e,\ C_{k}(u)$ is the set of points in which the block $u$ occurs starting at position k. These cylinder sets are open sets and $C_{-n}(x_{[-n,n]})= B_{2^{-(n-1)}}(x)$. Hence if $x\in C_{k}(u)$ and $n$= max$\lbrace |k|, |k+|u|-1|\rbrace$, then $B_{2^{-(n-1)}}(x)\subset C_{k}(u)$. Cylinder sets are basic open set of the shift space.
If $|u|=2m+1$, set $C_{-m}(u)=[u]$. We shall usually talk of cylinders $[u]$ where the central word in the bi-infinite sequence is $u$.

For an alphabet set $\mathcal{A}$, the full $\mathcal{A}$-shift $(\mathcal{A}^{\mathbb{Z}},\sigma)$ is a dynamical system where $\mathcal{A}^{\mathbb{Z}}= \lbrace x=(x_{i})_{i\in \mathbb{Z}}: x_{i}\in \mathcal{A}\ \forall\ i\in \mathbb{Z}\rbrace$ and $\sigma:\mathcal{A}^{\mathbb{Z}}\rightarrow \mathcal{A}^{\mathbb{Z}}$ called \emph{shift map} is defined as $\sigma(x)=y$, where $y_{i}= x_{i+1}$. If $\mathcal{A}=\lbrace 0,1,2,...,r-1\rbrace$, then it $(\mathcal{A}^{\mathbb{Z}},\sigma)$ is called full \emph{r-shift}.

\vskip .5cm

If $x\in \mathcal{A}^{\mathbb{Z}}$ and $w$ is a block over $\mathcal{A}$, then $w$ occurs in $x$ if there
are indices $i$ and $j$ so that $w = x_{[i,j]}$. Note that the empty block $\varepsilon$ occurs in every $x$, since $\varepsilon = x_{[1,0]}$. Let $\mathcal{F}$ be a collection of blocks over $\mathcal{A}$, which
we will think of as being the \emph{forbidden blocks}. For any such $\mathcal{F}$, define $X_{\mathcal{F}}$ to be the subset of sequences in $\mathcal{A}^{\mathbb{Z}}$ which do not contain any block in $\mathcal{F}$.  In general, the collection $\mathcal{F}$ is  countably infinite.

A \emph{shift space} is a subset $X$ of a full
shift $\mathcal{A}^{\mathbb{Z}}$ such that $X = X_{\mathcal{F}}$ for some collection $\mathcal{F}$ of forbidden blocks over $\mathcal{A}$. Any closed invariant subset of a shift space is called a \emph{subshift}.

Let $X\subset \mathcal{A}^{\Z}$ and $\mathcal{B}_n(X)$ denote
the set of all $n-$blocks that occur in points in $X$ then the
collection $\mathcal{B}(X)=\bigcup\limits_{n=0}^{\infty} \mathcal{B}_n(X)$ is called the language of  $X$.

In the other way, a shift space $X$ can be defined as $X=X_{\mathcal{F}}$ where $\mathcal{F}= \mathcal{B}(X)^{c}$. If the set $\mathcal{F}$ can be taken to be finite then the $X_{\mathcal{F}}$ is called a \emph{subshift of finite type(SFT).}

We recall a standard example of subshift of finite type. The \emph{Golden Mean Shift} $X \subset \{0,1\}^{\Z}$ is the set of all binary sequences with no two consecutive $1$'s. Here $X=X_\mathcal{F}$ , where $\mathcal{F}=\{11\}$.

For subshifts $X$ and $Y$ (not necessarily on the same alphabet), we say that $Y$ factors on $X$ and write $X \rightarrow Y$ if there is a factor map from $X$ onto $Y$. A subshift is called \emph{sofic} if it is a factor of a subshift of finite type.

Again a standard example of a sofic shift is the \emph{Even Shift}. The Even Shift $Y \subset \{0,1\}^{\Z}$ is the set of all binary sequences so that between any two $1$'s there are an even number of $0$'s. One can take for $\mathcal{F}$ the collection $\{10^{2n+1}1:n \geq 0\}$. The Even Shift is a factor of the Golden Mean Shift.

And $X \stackrel{Per}{\rightarrow} Y$ means that the period of any periodic point of $X$ is divisible
by the period of some periodic point of $Y$. We say that $X\to Y$ if $X$ factors onto $Y$.

A shift space $X$ is irreducible if for every ordered pair
of blocks $u, v \in \mathcal{B}(X)$ there is a $w \in \mathcal{B}(X)$ so that $uwv \in \mathcal{B}(X)$.

Let $X$ be a shift space. The \emph{entropy} $h(X)$ of $X$ is defined by $h(X)=\lim\limits_{n\rightarrow\infty}\frac{1}{n} log|\mathcal{B}_n(X)|$.

\begin{theorem} \cite{B} \label{boy} [Boyle's factor theorem] Suppose $X$ and $Y$ are irreducible subshifts of finite type or irreducible sofic shifts such that
$X \stackrel{Per}{\rightarrow} Y$ and $h(X)>h(Y)$. Then $X \rightarrow Y$.\end{theorem}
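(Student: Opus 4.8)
The plan is to follow the marker-and-counting method of Boyle \cite{B}, which produces the desired factor map $\phi\colon X\to Y$ explicitly as a sliding block code. It is worth recording first why the periodic-point hypothesis cannot be dropped: if $\phi\colon X\to Y$ is any factor map and $x\in X$ has least period $p$, then $\phi(x)$ has least period dividing $p$, so $X\to Y$ already forces $X\stackrel{Per}{\to}Y$. Thus the theorem asserts that, once there is an entropy gap $h(X)>h(Y)$, this obviously necessary condition becomes sufficient, the gap being what supplies the ``extra room'' needed for the coding.

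The first step would be to reduce to the case where $X$ is an irreducible subshift of finite type and $Y$ is presented by its Fischer cover, and then to fix an intrinsically synchronizing word $s$ of $Y$, so that any $Y$-word of the shape $s\,u\,s$ may be concatenated freely to produce points of $Y$. Next, for a large integer $N$ I would build a marker set for $X$: using irreducibility together with the fact that $X$ is infinite, one finds a cylinder $F\subseteq X$, based on a word that cannot overlap itself, whose return-time function takes all its values in a bounded window, say $[N,2N]$. Every point of $X$ that meets $F$ is then cut canonically into consecutive ``gap blocks'' of lengths $\ell\in[N,2N]$. The counting step is the point of the entropy hypothesis: the number of admissible gap blocks of length $\ell$ is at least $c\,e^{h(X)\ell}$, whereas the number of $Y$-words of length about $\ell$ beginning and ending with $s$ is at most $e^{h(Y)\ell}$, so for $N$ large the former strictly exceeds the latter and one can choose a surjection $\Phi$ from gap blocks onto those $Y$-words. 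Defining $\phi$ on the central portion of each gap block through $\Phi$ yields a continuous, shift-commuting map, since it reads only finitely many coordinates, and its images concatenate to genuine points of $Y$ because every piece begins and ends with $s$. Surjectivity onto $Y$ would follow by reversing the procedure: cut a given $y\in Y$ at copies of $s$, lift the pieces through $\Phi$, and splice the chosen gap blocks into an $X$-point, using irreducibility of $X$.

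The hard part will be the finitely many periodic orbits of $X$ of period below $N$, which carry no marker and so are invisible to the construction above; this is exactly where $X\stackrel{Per}{\to}Y$ must be used, by assigning to each such orbit of period $p$ a $Y$-orbit whose period divides $p$ and extending $\phi$ over a neighbourhood of it. Making the generic marker code and these short-orbit assignments agree on their overlap, so that the final object is one well-defined sliding block code that is still surjective, with the marker word and the window $[N,2N]$ chosen to avoid the short periods and with the block-counting inequality kept uniform through the interpolation layer, is the delicate bookkeeping at the centre of Boyle's argument, and in the sofic case it has to be carried inside the Fischer cover rather than through the simpler synchronizing-word arguments available for shifts of finite type. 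For the complete argument I would refer the reader to \cite{B}.
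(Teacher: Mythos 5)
The paper gives no proof of this statement at all: it is quoted verbatim from Boyle \cite{B}, so there is no internal argument to compare against, and your outline of Boyle's marker-and-counting strategy (a synchronizing word for $Y$, a marker cylinder with return times in a window $[N,2N]$, the entropy gap $h(X)>h(Y)$ supplying a surjection from gap blocks onto $Y$-words flanked by the synchronizing word, and the separate treatment of the finitely many low-period orbits via $X \stackrel{Per}{\rightarrow} Y$) is a faithful summary of how the cited proof goes. Since you, like the paper, ultimately defer to \cite{B} for the actual construction and bookkeeping, your treatment is essentially the same as the paper's.
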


\vskip .5cm

For us $\emph{2}=\lbrace 0,1 \rbrace$, then full shift over $\emph{2}$ denoted as $(\emph{2}^{\mathbb{Z}},\sigma)$ is called \emph{\textbf{2-shift}}.

The \emph{\textbf{2-shift}} is a mixing system with a dense set comprising of periodic points  of all periods.

A continuously differentiable map $f : \mathbb{S}^1 \to \mathbb{S}^1$ is called expanding if
$|f^{'}(x)| > 1$ for all $x \in \mathbb{S}^1$. We  define the degree of $f: \mathbb{S}^1 \to \mathbb{S}^1$ , denoted by $deg(f)$ to be the number(of preimages)$|f^{-1}(x)|$, for any $x \in \mathbb{S}^1$. Now for  expanding maps $f ,g : \mathbb{S}^1 \to \mathbb{S}^1 $,  $deg( f \circ g) = deg( f )deg(g)$. An expanding map $f : \mathbb{S}^1 \to \mathbb{S}^1$ is mixing. It is an easy exercise that if $f : \mathbb{S}^1 \to \mathbb{S}^1$ is an expanding map of degree $2$ then $f$ is a factor of the \emph{\textbf{2-shift}}.

\section{Dynamics of Induced Systems}

Let $X$ be a compact metric space. Let $2^{X}$ denote the set of all non empty closed subsets of $X$ i.e. $2^X=\ \lbrace A\subset X : \overline{A}=A,\ A\neq\emptyset \rbrace$. We denote $\mathcal{F}_1(X)=\lbrace \lbrace x \rbrace : x\in X\rbrace \equiv X$,  $ \ \mathcal{F}_n(X)=\lbrace F\in 2^X: \mathrm{|F|} = n \rbrace$ and $\mathcal{F}(X)= \bigcup \limits_{n=1}^{\infty}\mathcal{F}_n$ is the collection of all finite subsets of $X$. Define $\langle V_{1},V_{2},...,V_{k}\rangle= \lbrace A\in 2^{X}: A\subset \bigcup \limits_{i=1}^k V_{i}$ and $A \cap V_{i}\neq\emptyset$ \ for $1 \leqslant i \leqslant k \rbrace$, for any finite collection $\lbrace V_{1},V_{2},...,V_{k} \rbrace$ of subsets of $X$ and let  $\mathcal{B}= \lbrace \langle V_{1},V_{2},...,V_{k}\rangle : k=1,2,3,.. $and $V_{i}$ is open subset of $X$ for $1 \leqslant i \leqslant k \rbrace$. Then $\mathcal{B}$ is base for the topology $\nu$ on $2^{X}$ called the \emph{Vietoris Topology}. When $X$ is compact Hausdorff space then the space $2^{X}$ endowed with Vietoris Topology is also compact Hausdorff and $\overline{\mathcal{F}(X)}=2^X$.

For the metric space $(X,d)$ we have the \emph{Hausdorff metric} $H_d$ defined on $2^X$. For $ A, B \in 2^{X},\ H_d(A,B) = \max \left\lbrace \sup \lbrace d(a,B) : a \in A \rbrace, \sup \lbrace d(b, A) : b \in B \rbrace\right\rbrace $ where $d(a,B)= min \lbrace d(a,b): b \in B\rbrace$. Thus, $H_d(A, B) < \epsilon$ if and only if
each set is in the open $\epsilon$ neighbourhood of the other, or, equivalently, each point of $A$ is within $\epsilon$ of a point in $B$ and vice versa.

For a compact metric space, Hausdorff metric topology and Vietoris topology are equivalent.

Let $X$ be a compact Hausdorff space and $A_{i}\rightarrow A$ and $B_{i}\rightarrow B$ in $(2^{X},\nu)$ and $A_{i}\subset B_{i}\ \forall\ i$ then $A\subset B$.

Let $h:X\rightarrow Y$ be a continuous map then $h_*: 2^{X}\rightarrow 2^{Y}$, defined as $h_*(A)= h(A)$  is well defined because of compactness of $X$ and is continuous.

We refer the interested reader to look into \cite{IN,EM} for more details.

\subsection{For Cascades and Semicascades}

We consider our systems to be cascades or semicascades in this subsection.

\vskip .5cm

Let $X$ be a compact metric space and $f: X \to X$ be a homeomorphism or a continuous map. The cascade (semicascade) $(X,f)$ induces the cascade (semicascade) $(2^X,f_*)$.

We recall a few important results for induced systems from \cite{AN,PUN}:
\begin{enumerate}

\item Let $(X,f),(Y,g)$ be cascades and $\pi: X\rightarrow Y$ be a homomorphism of cascades then $\pi_*: 2^{X} \rightarrow 2^{Y}$ defined as $A \mapsto \pi(A)$ defines the action on $2^{X}$ and $\pi_{\ast}$ is homomorphism of cascades.

\item Let $X$ be a compact Hausdorff space and $i: X\rightarrow 2^{X}$ such that $i(x)=\lbrace x \rbrace$, then $i$ is a continuous  embedding.

\item If $(X,f)$ is distal and $M$ is minimal subset of $2^{X}$ then the subsystem $(M, f_*)$ is distal.

\item  For a surjective flow $(X,f)$, $(2^{X},f_*)$  is distal if and only if  $(X,f)$ is equicontinuous if and only if  $(2^{X},f_*)$  is equicontinuous.

\item Let $(X,f)$ be a cascade (semicascade) and $(2^{X},f_*)$ be the induced cascade (semicascade) then $(X,f)$ is weakly mixing  if and only if $(2^{X},f_*)$ is topological transitive if and only if $(2^{X},f_*)$ is weakly mixing .

\item Suppose $X$ is complete, separable metric space  with no isolated points and $(X,f)$ is weakly mixing then for  a transitive point $C$ of the induced system $(2^X,f_*)$;

\begin{enumerate}
\item $f^nC\cap f^mC = \emptyset$ for $m \neq n \in \mathbb{N}$.
\item The set $C$ is nowhere dense in $X$.
\item The set $C$ has infinitely many components.
\end{enumerate}
\end{enumerate}

\vskip .5cm

  We note that in $(2^X, f_{\ast})$ as transitivity is equivalent to weakly mixing and so a  transitive  $(2^X, f_{\ast})$ is always sensitive and never equicontinuous. So it is interesting to look into the cases of equicontinuity when $(2^X, f_{\ast})$ is not transitive. We discuss the cases for almost equicontinuity and local equicontinuity for the induced system $(2^X, f_{\ast})$.

\begin{lemma}
Suppose $X$ is a compact Hausdorff space and $Y\subset X$ such that $\overline{Y}=X$ then $\overline{\mathcal{F}(Y)}= 2^X$.
\end{lemma}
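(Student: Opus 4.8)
The plan is to show that every basic Vietoris-open set meeting $2^X$ already contains a finite subset of $Y$, which immediately gives density of $\mathcal{F}(Y)$ in $2^X$. So take a nonempty basic open set $\langle V_1, V_2, \ldots, V_k \rangle$ in $2^X$ that contains at least one closed set $A$; this forces each $V_i$ to be nonempty, and we may also discard from the list any $V_i$ that is contained in the union of the others without changing the set, so WLOG the $V_i$ witness that $A \cap V_i \neq \emptyset$ nontrivially. First I would use that $\overline{Y} = X$: since $A \cap V_i$ is a nonempty open subset of $X$ and $Y$ is dense, we can pick a point $y_i \in Y \cap V_i$ for each $i = 1, \ldots, k$.

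Next I would set $F = \{y_1, y_2, \ldots, y_k\}$. This is a finite subset of $Y$, hence $F \in \mathcal{F}(Y)$, and I claim $F \in \langle V_1, \ldots, V_k \rangle$: indeed $F \subset \bigcup_{i=1}^k V_i$ because each $y_i \in V_i$, and $F \cap V_i \neq \emptyset$ for each $i$ because $y_i \in F \cap V_i$. Since $\langle V_1, \ldots, V_k \rangle$ was an arbitrary nonempty basic open subset of $2^X$ and these form a base $\mathcal{B}$ for the Vietoris topology $\nu$, every nonempty open set in $2^X$ meets $\mathcal{F}(Y)$, so $\overline{\mathcal{F}(Y)} = 2^X$.

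I do not expect a serious obstacle here; the one point requiring a little care is the reduction to the case where the $V_i$ are ``genuinely needed'' — but in fact even that reduction is unnecessary, since for \emph{any} basic open set $\langle V_1,\dots,V_k\rangle$ that is nonempty in $2^X$ (equivalently, each $V_i \neq \emptyset$, as one sees by noting $\langle V_1,\dots,V_k\rangle$ contains $X \cap (V_1 \cup \cdots \cup V_k)$-type sets, or more simply that if some $V_i = \emptyset$ the set is empty), the construction of $F$ above works verbatim. The only genuine input is the density of $Y$ in $X$, applied coordinatewise to the nonempty open sets $V_i$; compactness and the Hausdorff hypothesis are not really used beyond ensuring $(2^X,\nu)$ is the space we think it is, and in particular this argument would go through for any topological space $X$ with $\overline Y = X$.
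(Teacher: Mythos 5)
Your argument is correct and is essentially the paper's own proof: pick, by density of $Y$, a point $y_i \in Y \cap V_i$ for each member of the basic Vietoris-open set $\langle V_1,\dots,V_k\rangle$ and observe that the finite set $F=\{y_1,\dots,y_k\}\in\mathcal{F}(Y)$ lies in that basic open set. Your extra remarks about discarding redundant $V_i$ and about when the basic set is nonempty are harmless but, as you note yourself, unnecessary.
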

\begin{proof}
Consider the basic open set $\langle V_{1},V_{2},...,V_{k}\rangle$ in $2^X$ where each $V_i$ is an open set in $X$. Now since $Y$ is dense in $X$, so for each $V_i$, there is a $y_i\in Y$ such that $y_i\in V_i$. So the set $F= \lbrace y_1, y_2, \ldots, y_k \rbrace \in \langle V_{1},V_{2},...,V_{k}\rangle$ and since $F\in \mathcal{F}(Y)$, $\mathcal{F}(Y)$ is dense in $2^X$.
\end{proof}
\begin{theorem} \label{TH 2.4}
$(X,f)$ is almost equicontinuous if and only if $(2^X,f_{\ast})$ is  almost equicontinuous.
\end{theorem}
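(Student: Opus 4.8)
The plan is to phrase everything through the open sets
$E_\epsilon(X):=\{x\in X:\exists\,\delta>0\text{ with }d(x,y)<\delta\Rightarrow d(f^nx,f^ny)<\epsilon\ \forall n\}$,
which increase with $\epsilon$; writing $\mathrm{Eq}(X)=\bigcap_kE_{1/k}(X)$ for the set of equicontinuity points, $(X,f)$ is almost equicontinuous exactly when every $E_{1/k}(X)$ is dense (equivalently, by Baire, when $\mathrm{Eq}(X)$ is dense), and the same holds for $(2^X,f_*)$ with $H_d$ replacing $d$. Before the two implications I would isolate two elementary facts. (i) The map $i:x\mapsto\{x\}$ is an isometry of $X$ onto the closed invariant subsystem $\mathcal F_1(X)$ of $2^X$, with $f_*\{x\}=\{f(x)\}$; reading off the definitions gives $\{x\}\in E_\epsilon(2^X)\Rightarrow x\in E_\epsilon(X)$ and $x\in E_\epsilon(X)\Rightarrow\{x\}\in E_{\epsilon'}(2^X)$ for all $\epsilon'>\epsilon$. (ii) (The easy half.) If $F=\{x_1,\dots,x_m\}$ with each $x_i\in E_\epsilon(X)$, then $F\in E_{2\epsilon}(2^X)$: take $\delta=\min_i\delta_i$, and for $H_d(F,B)<\delta$ the two-sided Hausdorff estimate shows each point of $f^nB$ lies within $\epsilon$ of the $f^n$-image of the nearest point of $F$, and conversely, so $H_d(f^nF,f^nB)\le\epsilon$ for all $n$. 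In particular $\mathcal F(\mathrm{Eq}(X))\subseteq\mathrm{Eq}(2^X)$.

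For the forward implication, suppose $(X,f)$ is almost equicontinuous, so $\mathrm{Eq}(X)$ is dense in $X$. By (ii), $\mathcal F(\mathrm{Eq}(X))\subseteq\mathrm{Eq}(2^X)$; by the preceding Lemma ($\overline Y=X\Rightarrow\overline{\mathcal F(Y)}=2^X$, applied with $Y=\mathrm{Eq}(X)$), $\mathcal F(\mathrm{Eq}(X))$ is dense in $2^X$. Hence $\mathrm{Eq}(2^X)$ is dense and $(2^X,f_*)$ is almost equicontinuous.

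For the converse I would argue contrapositively. If $(X,f)$ is not almost equicontinuous then, since $\mathrm{Eq}(X)=\bigcap_kE_{1/k}(X)$ would be dense were every $E_{1/k}(X)$ dense (Baire), some $E_{1/k_0}(X)$ is not dense; put $V=X\setminus\overline{E_{1/k_0}(X)}$ and $\epsilon_0=1/k_0$, so $V$ is nonempty open and \emph{every} point of $V$ fails $\epsilon_0$-equicontinuity. I claim $E_{\epsilon_0/3}(2^X)$ is then not dense, which is what we want. Since $E_{\epsilon_0/3}(2^X)$ is open, if it were dense it would meet the open set $\langle V\rangle$, and — $\mathcal F(X)$ being dense in $2^X$ — would contain a finite set $F=\{x_1,\dots,x_m\}\subseteq V$; so it suffices to show no such $F$ lies in $E_{\epsilon_0/3}(2^X)$. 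Assume it does, with modulus $\delta$ chosen so small that the $\delta$-neighbourhood of $F$ lies in $V$ and $\delta<\tfrac1{10}\rho$, where $\rho=\min_{i\ne j}d(x_i,x_j)$. Pick $x_1$, and by failure of $\epsilon_0$-equicontinuity a point $y$ with $d(x_1,y)<\delta$ and $d(f^Nx_1,f^Ny)\ge\epsilon_0$ for some $N$. Use the perturbation $B=(F\setminus\{x_1\})\cup\{y\}$: then $H_d(F,B)=d(x_1,y)<\delta$, hence $H_d(f^nF,f^nB)<\epsilon_0/3$ for every $n$, and at $n=N$ the two halves of this Hausdorff inequality, together with $d(f^Nx_1,f^Ny)\ge\epsilon_0>\epsilon_0/3$, force indices $j_0,j_1\ge2$ with $d(f^Nx_1,f^Nx_{j_0})<\epsilon_0/3$ and $d(f^Ny,f^Nx_{j_1})<\epsilon_0/3$. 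If $j_0=j_1$ this contradicts $d(f^Nx_1,f^Ny)\ge\epsilon_0$, which already disposes of the case $m=2$.

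The case $m\ge3$ is the main obstacle: the perturbation of $x_1$ may be ``absorbed'', under $f^N$, near a \emph{different} coordinate $x_{j_0}$, so one round need not close. I would remove this by iterating: since $x_{j_0}\in V$ also fails $\epsilon_0$-equicontinuity, perturb $x_{j_0}$ in turn and track which coordinate absorbs the error at the new witnessing time, using the fixed separation $\rho$ of the points of $F$ to show the absorbing index cannot keep shifting without eventually forcing two of the $x_i$ to be simultaneously at distance $\ge\epsilon_0$ and within $\epsilon_0/3$ — a contradiction; equivalently one can run an induction on $m$, peeling off one coordinate at a time after replacing $F$ by a nearby finite configuration still inside $V$. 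The delicate point throughout is precisely this Hausdorff-metric absorption phenomenon, and what makes the bookkeeping terminate is the combination of the separation constant $\rho$ with the hypothesis that \emph{every} point of $X$ near $F$ is bad.
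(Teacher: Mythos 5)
Your forward implication is correct and is essentially the paper's own argument: finite subsets of the (dense) set of equicontinuity points of $(X,f)$ are equicontinuity points of $(2^X,f_{\ast})$, and the preceding lemma ($\overline{Y}=X\Rightarrow\overline{\mathcal{F}(Y)}=2^X$) gives density. The genuine gap is in the converse, and it is exactly the one you flag yourself: for a finite witness $F=\{x_1,\dots,x_m\}\subseteq V$ with $m\ge 3$ the ``absorption'' case is never closed. The proposed repair does not obviously work: perturbing $x_{j_0}$ produces a separation at a \emph{new} time $N'$, and the constraints generated at the successive times $N,N',\dots$ live at different iterates, so they never interact; moreover the separation constant $\rho=\min_{i\ne j}d(x_i,x_j)$ controls distances at time $0$ only and gives no lower bound on $d(f^nx_i,f^nx_j)$ at the witnessing times, so there is no visible mechanism forcing ``two of the $x_i$ simultaneously at distance $\ge\epsilon_0$ and within $\epsilon_0/3$''. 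The alternative ``induction on $m$'' is likewise not set up: deleting or replacing a coordinate changes the Hausdorff neighbourhood of $F$, and no inductive statement is formulated that survives this change. As written, your contrapositive argument proves the converse only when the finite witness can be taken with $m\le 2$, which you cannot guarantee. (A smaller issue: with your definition, the sets $E_\epsilon$ need not be open, and you use openness both in the Baire step and when extracting a finite set from $E_{\epsilon_0/3}(2^X)\cap\langle V\rangle$; the usual fix is to define $E_\epsilon$ via diameters of images of open neighbourhoods.)

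For comparison, the paper's converse is a short direct argument rather than a contrapositive one: it takes an equicontinuity point $A$ of $(2^X,f_{\ast})$ with modulus $\delta$, picks $a\in A$ and $y$ with $d(a,y)<\delta$, compares $A$ with $A\cup\{y\}$, concludes $d(f^n(a),f^n(y))<\epsilon$ for all $n$, and then notes that $\bigcup_{A\in\mathcal{A}}A$ is dense in $X$. Observe that the inference from $H_d(f_{\ast}^n(A),f_{\ast}^n(A\cup\{y\}))<\epsilon$ to $d(f^n(a),f^n(y))<\epsilon$ is precisely the absorption point you isolated (a priori one only gets $d(f^n(y),f^n(A))<\epsilon$), and the paper asserts this step without further argument. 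So your instinct about where the real difficulty of the converse lies is sound, but your proposal does not resolve it, and in its present form the converse direction is not proved.
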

\begin{proof}
Suppose $(X,f)$ is almost equicontinuous. So there exists $Y\subset X$ such that $\overline{Y}=X$ and each point in $Y$ is point of equicontinuity. Now let $A=\lbrace a_1, a_2,\ldots, a_k \rbrace\in  \mathcal{F}(Y)$ be any finite set. Since each member of $Y$ is point of equicontinuity, so for each $a_i\in A$, for every $\epsilon>0$ there is an open set $U_i$ in $X$ such that $d(f^n(a_i), f^n(u))<\epsilon$ for each $n\in \mathbb{N}$ and for each $u\in U_i$.

Now consider the open set $\langle U_1, U_2,\ldots,U_k\rangle \ni A$ in $2^X$ and a $B\in \langle U_1, U_2,\ldots,U_k\rangle$. For  given $\epsilon>0$, for $n\in \mathbb{N}$;
\begin{center}
$H_d(f_{\ast}^n(A),f_{\ast}^n(B))= \max \left\lbrace \sup \lbrace d(f^n(a),f^n(B)) : a \in A \rbrace, \sup \lbrace d(f^n(b), f^n(A)) : b \in B \rbrace\right\rbrace$.
\end{center}
Since for each $b\in B$, there is some $U_i$ such that $b\in U_i$, $d(f^n(a_i), f^n(b))<\epsilon$ for each $n\in \mathbb{N}$.  Since $d(f^n(a_i),f^n(B))= inf\lbrace d(f^n(a_i), f^n(b)):b\in B \rbrace < \epsilon$ and for each $b\in B$, $d(f^n(b), f^n(A))= inf \lbrace d(f^n(b), f^n(a_i)) : a_i \in A \rbrace<\epsilon$, which means $H_d(f_{\ast}^n(A),f_{\ast}^n(B))<\epsilon$ for each $n\in \mathbb{N}$ and $\forall\ B\in \langle U_1, U_2,\ldots,U_k\rangle$. Thus $A$ is the point of equicontinuity in $2^X$. So each member of $\mathcal{F}(Y)$ is the point of equicontinuity in $2^X$ and since $\mathcal{F}(Y)$ is dense in $2^X$, $(2^X, f_{\ast})$ is almost equicontinuous.

 Conversely, suppose $(2^X, f_{\ast})$ is almost equicontinuous and $\mathcal{A}\subset 2^X$ is the dense set of equicontinuous points. Then for given $\epsilon > 0$, $\exists\ \delta > 0$ such that $\forall\ A\in \mathcal{A}$, and $B\in 2^X$
 \begin{center}
 $H_{d}(A, B)< \delta \Rightarrow H_{d}(f_{\ast}^n(A), f_{\ast}^n(B)) < \epsilon\ \forall\ n\in \N$.
\end{center}
Pick an $a\in A$ and $y\in X$ with $d(a,y)< \delta$. Then, \begin{center}
$H_{d}(f_{\ast}^n(A), f_{\ast}^n(A \cup \lbrace y \rbrace ) < \epsilon\ \forall\ n\in \N$.
\end{center}
This implies that $d(f^n(a), f^n(y))<\epsilon\ \forall\ n\in \N$, and so $a\in A$ is an equicontinuity point for $(X,f)$.

Thus $\bigcup \limits_{A\in \mathcal{A}}\lbrace a : a\in A \rbrace$ is a dense set of equicontinuity points  in $X$ and so $(X,f)$ is almost equicontinuous.
\end{proof}

We  observe the following  for locally equicontinuous systems;

\begin{theorem}
If $(2^X,f_{\ast})$ is locally equicontinuous $\Rightarrow (X,f)$ is locally equicontinuous.
\end{theorem}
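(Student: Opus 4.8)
The plan is to exploit the canonical singleton embedding $i\colon X\to 2^X$, $i(x)=\{x\}$, already recorded in the excerpt as a continuous embedding, and to observe that it carries the dynamics of $(X,f)$ faithfully into $(2^X,f_{\ast})$. Fix $x\in X$. I will show that the subflow $(\overline{\mathcal{O}(x)},f)$ is conjugate to a subflow of $(2^X,f_{\ast})$, namely $(\overline{\mathcal{O}(\{x\})},f_{\ast})$ (orbit closure taken under $f_{\ast}$), and then invoke local equicontinuity of $(2^X,f_{\ast})$ to conclude that the latter---and hence the former---is equicontinuous. Since $x\in X$ is arbitrary, this yields local equicontinuity of $(X,f)$.

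First I would check that $i$ intertwines the two actions: $f_{\ast}(\{x\})=\{f(x)\}$, so $f_{\ast}\circ i=i\circ f$, i.e. $i$ is a homomorphism of cascades, and consequently $i(\mathcal{O}(x))=\mathcal{O}(\{x\})$. Next, since $\overline{\mathcal{O}(x)}$ is compact and $i$ is continuous, $i\bigl(\overline{\mathcal{O}(x)}\bigr)$ is a compact, hence closed, $f_{\ast}$-invariant subset of $2^X$ containing $\mathcal{O}(\{x\})$; therefore $i\bigl(\overline{\mathcal{O}(x)}\bigr)=\overline{\mathcal{O}(\{x\})}$. Moreover $i$ is isometric onto its image because $H_d(\{x\},\{y\})=d(x,y)$, so $i$ restricts to an isometric conjugacy of $(\overline{\mathcal{O}(x)},f)$ onto the subflow $(\overline{\mathcal{O}(\{x\})},f_{\ast})$ of $(2^X,f_{\ast})$.

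Then I would apply the hypothesis: $(2^X,f_{\ast})$ is locally equicontinuous, so by definition the orbit-closure subflow $(\overline{\mathcal{O}(\{x\})},f_{\ast})$ is equicontinuous. Equicontinuity of a family of self-maps is preserved under an isometric conjugacy (here $i^{-1}$), so $(\overline{\mathcal{O}(x)},f)$ is equicontinuous. As $x$ was arbitrary, every orbit-closure subflow of $(X,f)$ is equicontinuous, which is precisely local equicontinuity of $(X,f)$.

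The argument has no serious obstacle; the one point deserving a line of care is the identification $i\bigl(\overline{\mathcal{O}(x)}\bigr)=\overline{\mathcal{O}(\{x\})}$, which relies on compactness of $X$ (so that the image of the orbit closure is already closed). Everything else is immediate once one notes that the singleton map embeds $X$ into $2^X$ isometrically and equivariantly, so the statement is really a structural consequence of $(X,f)$ sitting inside $(2^X,f_{\ast})$ as a subflow.
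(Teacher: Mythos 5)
Your proposal is correct and is essentially the paper's own argument written out in full: the paper simply notes that $x\mapsto\{x\}$ is an isometric (and equivariant) embedding of $X$ into $2^X$ and declares the rest trivial, which is exactly your identification of $(\overline{\mathcal{O}(x)},f)$ with the subflow $(\overline{\mathcal{O}(\{x\})},f_{\ast})$ and the transfer of equicontinuity back through the isometry. Your extra care about $i\bigl(\overline{\mathcal{O}(x)}\bigr)=\overline{\mathcal{O}(\{x\})}$ via compactness is a sound filling-in of the detail the paper leaves implicit.
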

\begin{proof}
The proof here is trivial since $x\longmapsto \lbrace x \rbrace$ is an isometry from $X \longmapsto 2^X$.
\end{proof}

However the converse of the above theorem need not to be true as can be seen in the below example;

\begin{example} \label{EX 3.1} Consider $X= \lbrace (r,\theta): 0\leq \theta \leq 2\pi,\ r\in \lbrace 1- \frac{1}{2^n}:n\in \N\rbrace  \cup \lbrace 0,1 \rbrace \rbrace $ and $f(r,\theta)=(r, \theta + 2\pi r)$ defined on $X$.

For this cascade $(X,f)$, we see that each orbit closure is an equicontinuous system, in fact $f$ is identity on the circle $r=1$. Hence $(X,f)$ is locally equicontinuous.

We now consider the induced system $(2^X, f_{\ast})$. Let $A= \lbrace (r,0): r\in \lbrace 1- \frac{1}{2^n}: n\in \N  \rbrace \cup \lbrace 0,1 \rbrace \rbrace$ and consider $\overline{\mathcal{O}(A)}\subset 2^X $.

For a fixed $m\in \N$,
%we see that \begin{center}
%$f^{2^m}(1- \frac{1}{2^m},0)= \lbrace 1- \frac{1}{2^m}, 2^m 2\pi (1- \frac{1}{2^m}))=(1- \frac{1}{2^m},0)$.
%\end{center}
$f(r, \theta)= (r, \theta + 2\pi r)$, $f^2(r, \theta)= (r, \theta + 4\pi r), \ldots, f^k(r, \theta)= (r, \theta + 2k\pi r)$ and for any $k \leq m$,
\begin{center}
$f^{2^m}(1- \frac{1}{2^k},0)= \lbrace 1- \frac{1}{2^k}, 2^m 2\pi (1- \frac{1}{2^k}))=(1- \frac{1}{2^k},0)$.
\end{center}
Consider the sequence $\lbrace A, f(A), f^2(A), f^{2^2}(A),\ldots, f^{2^n}(A),\ldots\rbrace$ in $2^X$. We note that this sequence is not a Cauchy sequence in $2^X$ as for any $k\in \N$, $H_{d}(f^{2^k}(A), f^{2^{k+1}}(A))> 1$.
\begin{center}
Note that $f^{2^k}(1- \frac{1}{2^{k+1}},0)= \lbrace 1- \frac{1}{2^{k+1}}, 2^k 2\pi (1- \frac{1}{2^{k+1}}))=(1- \frac{1}{2^{k+1}},\pi)$ whereas $f^{2^{k+1}}(1- \frac{1}{2^{k+1}},0)= (1- \frac{1}{2^{k+1}}, 0)$.
\end{center}
Since $2^X$ is compact, this sequence will have a convergent subsequence, and so there exists a sequence $n_{i}\nearrow \infty$ and $B\in 2^X$ such that $f^{2^{n_i}}(A)\longrightarrow B$. So $B\in \overline{\mathcal{O}(A)}$ in $2^X$.

We claim that $A$ is not a point of equicontinuity for $\overline{\mathcal{O}(A)}$. Given $\delta > 0$ there exists $n_{r}> n_{t}$ s.t.
\begin{center}
$H_{d}(B, f^{2^{n_r}}(A))< \frac{\delta}{2}$
and $H_{d}(B, f^{2^{n_t}}(A))< \frac{\delta}{2}$.
\end{center}
\begin{center}
Thus $H_{d}(f^{2^{n_r}}(A), f^{2^{n_t}}(A))< \delta$.\end{center} But
\begin{center}
$f^{2^{n_t}}(1- \frac{1}{2^{n_t}},0)= (1- \frac{1}{2^{n_t}},0)$ \text{and} \\
$f^{2^{n_t}}(1- \frac{1}{2^{n_r}},0)= (1- \frac{1}{2^{n_r}},\frac{2\pi}{2^{n_{r}-n_{t}}})$
\end{center} Hence
\begin{center}
$f^{2^{2n_t}}(1- \frac{1}{2^{n_t}},0)= (1- \frac{1}{2^{n_t}},0)$ \text{and} \\
$f^{2^{2n_t}}(1- \frac{1}{2^{n_r}},0)=(1- \frac{1}{2^{n_r}},\frac{4\pi}{2^{n_{r}-n_{t}}})$
\end{center}
So for some $k$, \begin{center}
$H_{d}(f^{2^{kn_t}}(f^{2^{n_r}}(A)), f^{2^{kn_t}}(f^{2^{n_t}}(A)))>1$.
\end{center}
Hence $A$ is not a point of equicontinuity in ($\overline{\mathcal{O}(A)}, f_{\ast})$. So $(2^X,f_{\ast})$ is not locally equicontinuous.
\end{example}

\begin{remark} From the above example, we also note that $(X,f)$ is hereditary almost equicontinuos but $(2^X,f_{\ast})$ is not hereditary almost equicontinuos since $(\overline{\mathcal{O}(A)}, f_{\ast})$ is a subsystem of  which fails to be almost equicontinuous.

However $(2^X,f_{\ast})$ is hereditary almost equicontinuos $\Rightarrow (X,f)$ is hereditary almost equicontinuos with the proof being obvious.
\end{remark}

And for weakly almost periodic (WAP) flows, we can say:

\begin{proposition} \label{wap} Suppose $(2^X,f_{\ast})$ is a weakly almost periodic (WAP) flow then $(X,f)$ is also weakly almost periodic (WAP).
\end{proposition}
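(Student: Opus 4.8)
The plan is to realize $(X,f)$ as a closed subsystem of $(2^X,f_{\ast})$ and then to check that the WAP property passes to closed subsystems. Recall that the map $i\colon X\to 2^X$, $i(x)=\lbrace x\rbrace$, is a continuous embedding (indeed an isometry of $(X,d)$ onto $(\mathcal{F}_1(X),H_d)$), and that $f_{\ast}(\lbrace x\rbrace)=\lbrace f(x)\rbrace$, so $i$ intertwines $f$ and $f_{\ast}$. Since $X$ is compact and $2^X$ is Hausdorff, $\mathcal{F}_1(X)=i(X)$ is a closed, $f_{\ast}$-invariant subset of $2^X$, and $i$ is a conjugacy of $(X,f)$ onto the subsystem $(\mathcal{F}_1(X),f_{\ast})$. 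Hence it suffices to prove the following: if $(Z,\phi)$ is a WAP flow and $Y\subseteq Z$ is closed and $\phi$-invariant, then $(Y,\phi|_Y)$ is WAP.

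For this, fix $g\in\mathcal{C}(Y)$. As $Z$ is compact Hausdorff, hence normal, the Tietze extension theorem furnishes $\tilde g\in\mathcal{C}(Z)$ with $\tilde g|_Y=g$. By the WAP hypothesis the orbit $\lbrace \tilde g\circ\phi^{n}\rbrace$ has relatively compact closure $K$ in $\mathcal{C}(Z)$ in the topology of pointwise convergence, with $K\subseteq\mathcal{C}(Z)$. The restriction map $\rho\colon\mathcal{C}(Z)\to\mathcal{C}(Y)$, $\rho(h)=h|_Y$, is continuous for the pointwise topologies, being just the restriction to the coordinates in $Y$ of the projection $\mathbb{C}^{Z}\to\mathbb{C}^{Y}$; therefore $\rho(K)$ is compact in $\mathcal{C}(Y)$. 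Since $\rho(\tilde g\circ\phi^{n})=g\circ(\phi|_Y)^{n}$, the orbit $\lbrace g\circ(\phi|_Y)^{n}\rbrace$ lies in $\rho(K)$, and as $\mathcal{C}(Y)$ is Hausdorff, $\rho(K)$ is closed, so $\overline{\lbrace g\circ(\phi|_Y)^{n}\rbrace}\subseteq\rho(K)$ is compact and consists of continuous functions. Thus $(Y,\phi|_Y)$ is WAP. Applying this with $Z=2^X$, $\phi=f_{\ast}$, $Y=\mathcal{F}_1(X)$ and transporting back through the conjugacy $i$ shows that $(X,f)$ is WAP.

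There is no serious obstacle; the only points needing a little care are the continuity of the restriction map in the topology of pointwise convergence — which is immediate, that topology being the one inherited from the product $\mathbb{C}^{Z}$ — and the appeal to Tietze's theorem to lift the test function $g$ from the subsystem to all of $2^X$, which is legitimate precisely because $\mathcal{F}_1(X)$ is a closed subspace of the compact Hausdorff space $2^X$. (Alternatively one could argue via enveloping semigroups, using that a flow is WAP exactly when every element of its enveloping semigroup is a continuous self-map and that $\mathcal{F}_1(X)$ is a closed invariant subset of $2^X$, but the restriction–extension argument above is the most direct route.)
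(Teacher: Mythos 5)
Your proof is correct, and it reaches the conclusion by a somewhat different mechanism than the paper. The paper keeps everything on the level of individual test functions: given $g\in\mathcal{C}(X)$ it writes down an explicit lift $\hat g\in\mathcal{C}(2^X)$, namely $\hat g(A)=\max\{|g(a)|:a\in A\}$, applies the WAP hypothesis to $\hat g$, and restricts to the singletons $\mathcal{F}_1(X)\equiv X$. You instead isolate a general lemma -- WAP passes to closed invariant subsystems -- proved by Tietze extension of an arbitrary $g\in\mathcal{C}(Y)$ together with the pointwise continuity of the restriction map $\rho\colon\mathcal{C}(Z)\to\mathcal{C}(Y)$, and then apply it to the closed invariant copy $\mathcal{F}_1(X)\subset 2^X$. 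The two routes share the same skeleton (lift the test function to $2^X$, use WAP there, restrict back to singletons), but yours buys more: the hereditariness lemma is of independent use for any subsystem, and it works verbatim for arbitrary complex-valued $g$, whereas the paper's explicit lift satisfies $\hat g(\{f^n(x)\})=|g(f^n(x))|$, so strictly speaking it controls the orbit of $|g|$ and needs a small adjustment (e.g.\ treating real and imaginary, positive and negative parts separately, or using $\hat g(A)=\max_{a\in A}g(a)$ for real $g$) to be fully precise. What the paper's construction buys in exchange is brevity and explicitness: no appeal to Tietze, and the lift is written down in one line.
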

\begin{proof}  Since $(2^X,f_{\ast})$ is WAP, $ \{\hat{g}\circ f_{\ast}^{n}: n \in \Z\}$ is relatively compact in the weak topology on $\mathcal{C}(2^X)$ for all $\hat{g} \in \mathcal{C}(2^X)$. Now each $g \in \mathcal{C}(X)$ induces a $\hat{g} \in \mathcal{C}(2^X)$ given as $\hat{g}(A) = \max \{ |g(a)|: a \in A\}$. And  $\hat{g}\circ f_{\ast}^{n}(\{x\}) = g \circ f^n(x), \ \forall n \in \Z, \ \forall x \in X$. Thus $ \{g\circ f^n: n \in \Z\}$ is relatively compact in the weak topology on $\mathcal{C}(X)$ for all $g \in \mathcal{C}(X)$. Thus, $(X,f)$ is WAP. \end{proof}

\begin{remark} The converse here is not true, and we shall look into that later. \end{remark}

\vskip .5cm

\begin{theorem}
 $(X,f)$ is uniformly rigid if and only if $(2^X,f_{\ast})$ is  uniformly rigid if and only if $(2^X, f_{\ast})$ is rigid.
\end{theorem}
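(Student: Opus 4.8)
The plan is to prove the chain of equivalences by showing $(X,f)$ uniformly rigid $\Rightarrow$ $(2^X,f_{\ast})$ uniformly rigid $\Rightarrow$ $(2^X,f_{\ast})$ rigid $\Rightarrow$ $(X,f)$ uniformly rigid, so that all three statements collapse to one another. The middle implication is immediate from the defined hierarchy (uniformly rigid $\Rightarrow$ rigid). The last implication is also quick: if $(2^X,f_{\ast})$ is rigid, there is a sequence $n_k \nearrow \infty$ with $f_{\ast}^{n_k} B \to B$ for every $B \in 2^X$; restricting to singletons $B = \{x\}$ and using that $x \mapsto \{x\}$ is an isometric embedding of $X$ into $2^X$, we get $f^{n_k} x \to x$ for every $x \in X$, i.e. $(X,f)$ is rigid --- but we actually need \emph{uniform} rigidity of $(X,f)$, so this simple restriction is not quite enough and I return to it below.

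The main work is the first implication: assume $(X,f)$ is uniformly rigid, so there is a sequence $n_k \nearrow \infty$ with $f^{n_k} \to e$ uniformly on $X$, i.e. for every $\epsilon > 0$ there is $K$ such that $k \geq K$ implies $d(f^{n_k}(x), x) < \epsilon$ for \emph{all} $x \in X$. I claim the same sequence $n_k$ witnesses uniform rigidity of $(2^X, f_{\ast})$. Fix $\epsilon > 0$, choose the corresponding $K$, and let $k \geq K$. For any $A \in 2^X$ we must bound $H_d(f_{\ast}^{n_k}(A), A) = H_d(f^{n_k}(A), A)$. Given $a \in A$, the point $f^{n_k}(a) \in f^{n_k}(A)$ satisfies $d(f^{n_k}(a), a) < \epsilon$, so every point of $f^{n_k}(A)$ is within $\epsilon$ of a point of $A$; symmetrically, for each $a \in A$ the point $a$ is within $\epsilon$ of $f^{n_k}(a) \in f^{n_k}(A)$, so every point of $A$ is within $\epsilon$ of a point of $f^{n_k}(A)$. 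Hence $H_d(f_{\ast}^{n_k}(A), A) \leq \epsilon$ uniformly over $A \in 2^X$, which says exactly that $f_{\ast}^{n_k} \to e$ uniformly on $2^X$, i.e. $(2^X, f_{\ast})$ is uniformly rigid.

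It remains to close the loop: $(2^X, f_{\ast})$ rigid $\Rightarrow$ $(X,f)$ uniformly rigid. Suppose $n_k \nearrow \infty$ with $f_{\ast}^{n_k} B \to B$ pointwise for all $B \in 2^X$. Apply this to the single point $B = X \in 2^X$: then $H_d(f^{n_k}(X), X) \to 0$. Now observe that $f^{n_k}(X) \subseteq X$ with equality when $f$ is onto (and in the cascade case $f$ is a homeomorphism, so $f^{n_k}(X) = X$ and this gives nothing directly); the useful move instead is to test rigidity of $(2^X,f_\ast)$ on \emph{all} closed sets simultaneously and extract a uniform estimate. Concretely, I would argue: for fixed $\epsilon>0$, the function $A \mapsto H_d(f_\ast^{n_k}(A), A)$ is continuous on the compact space $2^X$, and rigidity gives pointwise convergence to $0$; but pointwise convergence of continuous functions on a compact space need not be uniform, so I instead use that $2^X$ is compact and the orbit-closure structure, or better, apply the hypothesis to the point $B=X$ together with the fact that $d(f^{n_k}(x),x)\le H_d(f^{n_k}(X),X)$ is \emph{false} in general --- so the honest route is: $(2^X,f_\ast)$ rigid implies $(2^X,f_\ast)$ is equicontinuous-like in that $e$ lies in the closure of $\{f_\ast^n\}$ in the uniform topology, and I would invoke Theorem~1.4-type reasoning or directly note that a rigid system on a compact metric space in which $e \in \overline{\{f_\ast^{n_k}\}}$ pointwise actually forces uniform convergence when the phase space admits a sufficiently rich family of test sets. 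I expect \textbf{this last step to be the main obstacle}: bridging from pointwise (topological) rigidity of the big system $2^X$ to \emph{uniform} rigidity of the small system $X$; the cleanest fix is probably to show rigidity of $(2^X,f_\ast)$ already implies uniform rigidity of $(2^X,f_\ast)$ --- using that a net $f_\ast^{n_k}$ converging pointwise on $2^X$ to $e$, evaluated on finite sets which are dense, combined with a diagonal/equicontinuity argument forces uniformity --- and then restrict to singletons.
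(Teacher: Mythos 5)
Your first two implications are fine: uniform rigidity of $(X,f)$ passes to $(2^X,f_{\ast})$ exactly by your Hausdorff-metric estimate (this is the same computation the paper sketches), and uniformly rigid $\Rightarrow$ rigid is definitional. The genuine gap is the closing implication, which you yourself flag: from rigidity of $(2^X,f_{\ast})$ you only extract, via singletons, rigidity of $(X,f)$, and none of the fixes you sketch is actually carried out. Testing on $B=X$ gives nothing (as you note), pointwise convergence of the continuous functions $A\mapsto H_d(f_{\ast}^{n_k}(A),A)$ on the compact space $2^X$ does not upgrade to uniform convergence, and the proposed ``finite sets are dense plus a diagonal/equicontinuity argument'' cannot work as stated, since without an equicontinuity hypothesis the density of finite sets transfers no uniform information. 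So the only nontrivial content of the theorem beyond your first implication --- that plain rigidity of $(2^X,f_{\ast})$ already forces uniform rigidity --- is left unproved.

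The missing idea is a compactness argument on $X$ with a well-chosen test set in $2^X$: if $f^{n_k}\to e$ fails uniformly, choose $\epsilon>0$ and (along a subsequence) points $x_k$ with $d(f^{n_k}(x_k),x_k)\ge \epsilon$, pass to a further subsequence with $x_k\to x_0$, and set $A=\{x_k : k\ge J\}\cup\{x_0\}$ with $J$ chosen so that every point of $A$ lies within $\epsilon/4$ of $x_0$. Then $A$ is closed, $f^{n_k}(x_k)\in f_{\ast}^{n_k}(A)$, and the triangle inequality gives $d(f^{n_k}(x_k),A)\ge \epsilon/2$ for all $k\ge J$, so $H_d(f_{\ast}^{n_k}(A),A)\nrightarrow 0$, contradicting rigidity of $(2^X,f_{\ast})$ along that sequence. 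Hence rigidity of the induced system forces uniform rigidity of $(X,f)$, and then your first implication closes the circle; this is essentially the route of the Li--Oprocha--Ye--Zhang result quoted immediately after the theorem. For comparison, the paper's own proof of this step simply asserts that $f_{\ast}^{n_k}A\to A$ for all $A$ gives convergence ``over compacta and hence uniformly'' on the compact space $2^X$ --- precisely the upgrade you rightly distrusted --- so your skepticism is well placed, but your write-up still needs the argument above (or the cited reference) to be complete.
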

\begin{proof}
$(2^X,f_{\ast})$ is  uniformly rigid trivially implies that $(X,f)$ is uniformly rigid and $(2^X,f_{\ast})$ is also  rigid.

Since $(X,f)$ is uniformly rigid then there is a sequence $\lbrace n_{k} \rbrace \nearrow \infty$ such that $f^{n_{k}}\longrightarrow e = I_{X}$ uniformly. We can easily see by the definition of $f_{\ast}$, that $f_{\ast}^{n_{k}}\longrightarrow e = I_{2^X}$ uniformly. Hence $(2^X,f_{\ast})$ is also uniformly rigid.

Let $(2^X,f_{\ast})$ be rigid. Then there is a sequence $\lbrace n_{k} \rbrace \nearrow \infty$ such that $f_*^{n_{k}}A \longrightarrow A$, $\ \forall A \in 2^X$. This gives the convergence of  $f_*^{n_{k}}\longrightarrow e = I_{2^X}$ over compacta and hence uniformly, since $2^X$ is a compact metric space. This proves that $(2^X,f_{\ast})$ is uniformly rigid.

\end{proof}

 \begin{remark}  Though  $(2^X,f_{\ast})$ is rigid vacuously implies that $ (X,f)$ is  rigid, we see in the below example taken from \cite{RIG} that the converse does not hold.

\begin{example}  Consider $X= \lbrace (r,\theta): 0\leq \theta \leq 2\pi,\ r\in \lbrace 1- \frac{1}{2^n}:n\in \N\rbrace  \cup \lbrace 0,1 \rbrace \rbrace $ and $f: X \to X$ defined as

$$f(r,\theta)= \left\{\begin{array}{ll}
                    (r, \theta + 2\pi(1-r)), & \hbox{$r \neq 1$;} \\
                    (r, \theta), & \hbox{$r=1$.}
                    \end{array}
                    \right.$$

This cascade $(X,f)$ is rigid as $f^{2^k} (x) \to x$ for all $x \in X$. But $(X,f)$ is not uniformly rigid. And so it is easy to see that $(2^X,f_{\ast})$ will not be rigid.
\end{example}
\end{remark}

With a different proof Li, Oprocha, Ye and Zhang \cite{LPYZ} show that:

\begin{theorem} \cite{LPYZ} Let $(X,f)$ be a cascade. The following are equivalent:

(1) $(X,f)$ is uniformly rigid;

(2) $(2^X,f_*)$ is uniformly rigid;

(3) $(2^X,f_*)$ is rigid;

(4) $(2^X,f_*)$ is weakly rigid. \end{theorem}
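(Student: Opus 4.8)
Since $(1)\Leftrightarrow(2)\Leftrightarrow(3)$ has just been established (it is precisely the theorem above asserting that $(X,f)$ is uniformly rigid iff $(2^X,f_*)$ is uniformly rigid iff $(2^X,f_*)$ is rigid), and since $(3)\Rightarrow(4)$ is the general implication ``Rigid $\Rightarrow$ Weakly Rigid'', the plan is to establish the single missing implication $(4)\Rightarrow(1)$. So I would assume that $(2^X,f_*)$ is weakly rigid and deduce that $(X,f)$ is uniformly rigid.

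Fix $\epsilon>0$ and put $\delta=\epsilon/3$. Using compactness of $X$, first choose a finite $\delta$-net $x_1,\dots,x_N$ of $X$ and form the closed balls $C_i=\overline{B(x_i,\delta)}$, now regarded as points of $2^X$. The crucial step is to apply weak rigidity of $(2^X,f_*)$ to the finite family $C_1,\dots,C_N$: since weak rigidity amounts to every point of every power system being recurrent, the point $(C_1,\dots,C_N)$ is recurrent in $\big((2^X)^N,\,f_*^{(N)}\big)$, so there are arbitrarily large $m\in\N$ with $H_d(C_i,\,f_*^m C_i)<\delta$ for all $i$. For such an $m$, the inequality $H_d(C_i,\,f_*^m C_i)<\delta$ forces $f^m(C_i)\subseteq B_\delta(C_i)\subseteq B(x_i,2\delta)$, where $B_\delta(C_i)$ is the open $\delta$-neighbourhood of $C_i$. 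Hence, for an arbitrary $x\in X$, choosing $i$ with $d(x,x_i)<\delta$ gives $x\in C_i$, so $f^m(x)\in B(x_i,2\delta)$ and therefore $d(f^m(x),x)\le d(f^m(x),x_i)+d(x_i,x)<3\delta=\epsilon$.

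Thus for every $\epsilon>0$ there are arbitrarily large $m$ with $\sup_{x\in X}d(f^m(x),x)<\epsilon$. Applying this with $\epsilon=1/k$ and picking $n_k\nearrow\infty$ accordingly, we get $f^{n_k}\to I_X$ uniformly on $X$, i.e. $(X,f)$ is uniformly rigid, which is $(1)$. Together with the already-proved equivalences this yields $(1)\Leftrightarrow(2)\Leftrightarrow(3)\Leftrightarrow(4)$.

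The one point that repays attention is the choice of test sets. Feeding weak rigidity the singletons $\{x_i\}$ only yields $d(f^m(x_i),x_i)<\delta$ on the finite net, and with no equicontinuity available for the iterates $f^m$ this says nothing about $\sup_{x\in X}d(f^m(x),x)$; similarly a dense sequence of points, or the single set $X\in 2^X$, carries no useful information. Replacing the net points by the closed $\delta$-balls $C_i$ is exactly what upgrades ``close on a net'' to a genuine uniform estimate, since $H_d(C_i,\,f_*^m C_i)<\delta$ confines $f^m$ on each member of the open cover $\{\,B(x_i,\delta):1\le i\le N\,\}$ of $X$. I expect the only genuinely routine care to lie in the neighbourhood inclusion $f^m(C_i)\subseteq B(x_i,2\delta)$ and the $\delta$--$\epsilon$ bookkeeping; beyond that the argument is short.
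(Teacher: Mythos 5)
Your argument is correct, but it is worth noting that the paper itself does not prove this statement at all: the theorem is quoted from \cite{LPYZ} (``with a different proof''), and the paper's own contribution is the preceding theorem establishing $(1)\Leftrightarrow(2)\Leftrightarrow(3)$. Your proposal therefore supplies exactly the missing content, namely $(4)\Rightarrow(1)$, and does so by the natural hyperspace argument: apply weak rigidity (equivalently, recurrence of finite tuples in $\bigl((2^X)^N,f_*^{(N)}\bigr)$, as the paper's own remark on weak rigidity allows) to the closed $\delta$-balls $C_i=\overline{B(x_i,\delta)}$ over a finite $\delta$-net, and convert $H_d(C_i,f_*^mC_i)<\delta$ into the uniform estimate $\sup_{x\in X}d(f^m(x),x)<3\delta$. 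The Hausdorff-metric bookkeeping ($f^m(C_i)\subseteq B_\delta(C_i)\subseteq B(x_i,2\delta)$, then the triangle inequality) is right, and the extraction of $n_k\nearrow\infty$ is legitimate because recurrence gives infinitely many return times for each tolerance (even in the degenerate case where some power of $f$ is the identity). Your closing remark about why singletons or the single point $X\in 2^X$ would not suffice is exactly the correct diagnosis, and it also highlights a payoff of your route over the paper's: in the paper's proof of its own $(3)\Rightarrow(2)$ step, rigidity of $(2^X,f_*)$ is upgraded to uniform rigidity by asserting that pointwise convergence of $f_*^{n_k}$ to the identity becomes uniform ``since $2^X$ is a compact metric space,'' which is not a valid general principle (pointwise convergence on a compact space need not be uniform without equicontinuity). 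Your ball-net estimate, run along the rigidity sequence $\{n_k\}$, is precisely what makes that step rigorous, and since it only uses weak rigidity it proves the stronger equivalence $(4)\Rightarrow(1)$ in one stroke; combined with the already established $(1)\Leftrightarrow(2)\Leftrightarrow(3)$ and the trivial implication rigid $\Rightarrow$ weakly rigid, the four conditions are equivalent.
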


The cascade (semicascade) $(2^X, f_*)$ induces the cascade (semicascade) $(2^{2^X}, f_{**})$, which induces the cascade (semicascade) $(2^{2^{2^X}}, f_{***})$, $\ldots$, which induces $({2^{\cdot^{\udots}}}^{2^X}, f_{* \ldots *})$, $\ldots$. And we have,

\begin{theorem}
 $(X,f)$ is weakly mixing if and only if $(2^X,f_{\ast})$ is  weakly mixing if and only if $(2^{2^X}, f_{**})$ is weakly mixing $\dots$ if and only if $({2^{\cdot^{\udots}}}^{2^X}, f_{* \ldots *})$, $\ldots$ is weakly mixing $\ldots$.
\end{theorem}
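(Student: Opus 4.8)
The plan is to prove the chain of equivalences by induction, using the already-stated fact (item (5) in the list of recalled results) that $(X,f)$ is weakly mixing if and only if $(2^X,f_*)$ is weakly mixing if and only if $(2^X,f_*)$ is topologically transitive. The key observation is that the relation between $(2^X,f_*)$ and $(2^{2^X},f_{**})$ is exactly the same as the relation between $(X,f)$ and $(2^X,f_*)$: the second induced system is just the first induced system of the space $2^X$. So the whole theorem is a formal consequence of item (5) applied repeatedly.

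First I would set up the notation: write $Y_0 = X$, $Y_{k+1} = 2^{Y_k}$, and $g_0 = f$, $g_{k+1} = (g_k)_*$, so that $(Y_{k+1},g_{k+1})$ is the induced system of $(Y_k,g_k)$. One mild technical point must be addressed before the induction can be run: item (5) is stated for a compact metric space $X$, so to iterate it I need to know that each $Y_k$ is again a compact metric space. This is exactly the fact recalled at the start of Section 3, namely that if $X$ is a compact metric space then $(2^X,\nu)$ is compact and metrizable (via the Hausdorff metric $H_d$), and $g_k = (g_{k-1})_*$ is continuous because $h \mapsto h_*$ sends continuous maps to continuous maps. So by an immediate induction every $(Y_k,g_k)$ is a semicascade on a compact metric space, and item (5) applies to it.

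Next I would carry out the induction proper. The base case is item (5) itself: $(Y_0,g_0)$ weakly mixing $\iff$ $(Y_1,g_1)$ weakly mixing. For the inductive step, assume $(Y_0,g_0)$ is weakly mixing iff $(Y_k,g_k)$ is weakly mixing for all $k \le m$. Applying item (5) with the space $Y_m$ in the role of ``$X$'' gives that $(Y_m,g_m)$ is weakly mixing iff $(Y_{m+1},g_{m+1}) = (2^{Y_m},(g_m)_*)$ is weakly mixing; chaining this with the inductive hypothesis closes the loop and extends the equivalence to $k = m+1$. This establishes that all the systems $(Y_k,g_k)$, $k \ge 0$, are simultaneously weakly mixing or simultaneously not, which is precisely the statement of the theorem.

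I do not expect a genuine obstacle here — the proof is essentially a one-line reduction to item (5) plus a routine induction — so the ``hard part'' is really just making sure the hypotheses of item (5) are preserved under the induced-system construction, i.e. the metrizability/compactness bookkeeping noted above. It is worth remarking in the proof that the same argument yields the analogous iterated equivalences for topological transitivity of the higher induced systems (each $(Y_{k+1},g_{k+1})$ is topologically transitive iff it is weakly mixing iff $(X,f)$ is weakly mixing, for $k \ge 0$), and, should one wish, for all the finite products appearing in Furstenberg's intersection lemma; but for the statement as given, the induction on item (5) alone suffices.
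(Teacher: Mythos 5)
Your proof is correct and is exactly the argument the paper intends: the theorem is stated without proof precisely because it follows by iterating the recalled result (5) ($(X,f)$ weakly mixing $\iff$ $(2^X,f_*)$ transitive $\iff$ $(2^X,f_*)$ weakly mixing), applied at each level since $2^{Y}$ is again a compact metric space and the induced map is continuous. Your bookkeeping of compactness/metrizability and the induction are the right (and only) points to check, so nothing is missing.
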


\begin{theorem}
 $(X,f)$ is mixing if and only if $(2^X,f_{\ast})$ is  mixing if and only if $(2^{2^X}, f_{**})$ is mixing $\dots$ if and only if $({2^{\cdot^{\udots}}}^{2^X}, f_{* \ldots *})$ is mixing $\ldots$.
\end{theorem}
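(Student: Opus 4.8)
The plan is to reduce the whole (infinite) chain of equivalences to the single one-step statement
\[
(X,f)\ \text{is mixing}\iff (2^X,f_*)\ \text{is mixing},
\]
and then iterate. The first observation I would record is that when $X$ is a compact metric space, $2^X$ is again a compact metric space (the Hausdorff metric $H_d$ induces the Vietoris topology) and $f_*$ is again a continuous self-map; hence $(2^X,f_*)$ is a cascade (semicascade) of exactly the same kind as $(X,f)$. Consequently, once the one-step equivalence is proved, applying it with $2^X$ in place of $X$ gives $(2^X,f_*)$ mixing $\iff (2^{2^X},f_{**})$ mixing, and a straightforward induction on the height of the tower $X,2^X,2^{2^X},\dots$ yields all the equivalences simultaneously. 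So the real content is the one-step equivalence, which I would prove in two directions.

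For the direction ``$(2^X,f_*)$ mixing $\Rightarrow (X,f)$ mixing'', given nonempty open $U,V\subseteq X$ I would pass to the nonempty open sets $\langle U\rangle,\langle V\rangle\subseteq 2^X$ (they contain the singletons $\{u\}$, $\{v\}$). Mixing of $(2^X,f_*)$ yields $N\in\N$ with $f_*^{\,n}\langle U\rangle\cap\langle V\rangle\neq\emptyset$ for all $n\ge N$; unpacking this means there is $A\subseteq U$ with $\emptyset\neq f^n(A)=f_*^{\,n}(A)\subseteq V$, so any $a\in A$ gives $f^n(a)\in f^n(U)\cap V$. Hence $f^n(U)\cap V\neq\emptyset$ for all $n\ge N$.

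For the direction ``$(X,f)$ mixing $\Rightarrow (2^X,f_*)$ mixing'', it is enough to verify the mixing condition on basic Vietoris open sets $\langle U_1,\dots,U_k\rangle$ and $\langle V_1,\dots,V_m\rangle$ (each $U_i,V_j$ is then necessarily nonempty open). Mixing of $(X,f)$ gives, for each pair $(i,j)$, a threshold $N_{ij}$ with $f^n(U_i)\cap V_j\neq\emptyset$ for $n\ge N_{ij}$; I would set $N=\max_{i,j}N_{ij}$. For fixed $n\ge N$ I would then form the open nonempty sets $W_{ij}:=U_i\cap(f^n)^{-1}(V_j)$, pick $a_{ij}\in W_{ij}$, and take $A:=\{a_{ij}:1\le i\le k,\ 1\le j\le m\}$, a finite (hence closed) subset of $X$. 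By construction $A\subseteq\bigcup_i U_i$ and $a_{i,1}\in A\cap U_i$, so $A\in\langle U_1,\dots,U_k\rangle$; and $f^n(A)\subseteq\bigcup_j V_j$ with $f^n(a_{1,j})\in f^n(A)\cap V_j$, so $f_*^{\,n}(A)=f^n(A)\in\langle V_1,\dots,V_m\rangle$. Thus $f_*^{\,n}\langle U_1,\dots,U_k\rangle$ meets $\langle V_1,\dots,V_m\rangle$ for every $n\ge N$.

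I do not expect a genuine obstacle here; the one point that needs care is the forward direction, where one must handle an arbitrary basic Vietoris neighbourhood (several $U_i$'s and several $V_j$'s at once) and produce a threshold $N$ that is \emph{uniform} over all of them --- this is exactly where finiteness of the index sets and the $\max$ enter, and where the auxiliary sets $W_{ij}$ (which may legitimately depend on the chosen $n$) let me assemble one finite closed set $A$ satisfying all the constraints simultaneously. The only other thing worth flagging at the outset is that the tower stays inside the class of compact metric cascades (semicascades), which is what makes the induction over its levels legitimate.
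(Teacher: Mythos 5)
Your proof is correct and follows essentially the route the paper intends: the theorem is stated there without an explicit proof, resting on the known one-step equivalence ($(X,f)$ mixing if and only if $(2^X,f_{\ast})$ mixing, as in the cited literature on induced systems) together with the observation that $2^X$ is again a compact metric cascade (semicascade), so the equivalence iterates up the tower. Your argument simply supplies the details of that one step --- the reduction to basic Vietoris open sets, the uniform threshold $N=\max_{i,j}N_{ij}$, and the finite closed set $A=\{a_{ij}\}$ built from $W_{ij}=U_i\cap (f^n)^{-1}(V_j)$ --- and both directions, as well as the induction over the levels of the tower, are sound.
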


\begin{theorem}
 $(X,f)$ is equicontinuous (almost equicontinuous) if and only if $(2^X,f_{\ast})$ is  equicontinuous (almost equicontinuous) if and only if $(2^{2^X}, f_{**})$ is equicontinuous (almost equicontinuous) $\dots$ if and only if $({2^{\cdot^{\udots}}}^{2^X}, f_{* \ldots *})$ is equicontinuous (almost equicontinuous) $\ldots$.
\end{theorem}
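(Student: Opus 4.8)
The plan is to collapse the infinite chain of equivalences into a single ``one-step'' statement and then iterate. Concretely, I would first record the one-step fact that for \emph{any} compact metric cascade or semicascade $(Y,g)$, the system $(Y,g)$ is equicontinuous if and only if the induced system $(2^Y,g_*)$ is equicontinuous; the analogous one-step statement for almost equicontinuity is already available as Theorem~\ref{TH 2.4}. With both one-step statements in hand, the theorem follows by a routine induction: put $Y_0=X$, $g_0=f$, and $Y_{k+1}=2^{Y_k}$, $g_{k+1}=(g_k)_*$, observing that each $Y_k$ is again a compact metric space (the Hausdorff metric makes $2^{Y_k}$ compact and metrizable) and that $g_{k+1}$ is exactly the hyperspace-induced map of $g_k$, so the one-step results apply at every level and concatenate.

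For the one-step equicontinuity statement the forward direction is a direct Hausdorff-metric estimate. On a compact metric space equicontinuity of $\{g^n\}$ is uniform equicontinuity, so given $\epsilon>0$ choose $\delta>0$ with $d(y,y')<\delta\Rightarrow d(g^n y,g^n y')<\epsilon$ for all $n$; then for $A,B\in 2^Y$ with $H_d(A,B)<\delta$, each $a\in A$ has some $b\in B$ within $\delta$, hence within $\epsilon$ after applying $g^n$, so $d(g^n a,g^n B)<\epsilon$, and symmetrically, giving $H_d(g_*^nA,g_*^nB)\le\epsilon$ for every $n$. For the converse I would use that $y\mapsto\{y\}$ is an isometric embedding of $Y$ into $2^Y$ with $g_*^n(\{y\})=\{g^n y\}$, so equicontinuity of $\{g_*^n\}$ restricted to the copy $\mathcal{F}_1(Y)$ of $Y$ is precisely equicontinuity of $\{g^n\}$ on $Y$. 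This argument never uses surjectivity of $g$, so it covers both cascades and semicascades; alternatively, for surjective systems the equivalence also follows from the recalled fact that $(2^Y,g_*)$ is distal $\iff$ $(Y,g)$ is equicontinuous $\iff$ $(2^Y,g_*)$ is equicontinuous.

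Granting the two one-step equivalences, the induction is immediate: $(Y_k,g_k)$ is equicontinuous (resp. almost equicontinuous) if and only if $(Y_{k+1},g_{k+1})$ is, and stringing these together for $k=0,1,2,\dots$ yields the stated biconditional chain for $(X,f)$, $(2^X,f_*)$, $(2^{2^X},f_{**})$, and so on. The only point requiring care — and it is the sole, mild obstacle — is the bookkeeping that the hypotheses of Theorem~\ref{TH 2.4} and of the one-step equicontinuity lemma are genuinely inherited at each level, i.e. that $2^{(\cdot)}$ of a compact metric space is again compact metric and that the level-$(k{+}1)$ map is exactly the hyperspace-induced map of the level-$k$ map; both are standard facts recalled at the start of this section, so no new dynamical input beyond Theorem~\ref{TH 2.4} and the isometry $y\mapsto\{y\}$ is needed.
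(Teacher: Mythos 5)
Your proposal is correct and follows essentially the route the paper intends: the theorem is an immediate iteration of the one-step equivalences (Theorem~\ref{TH 2.4} for almost equicontinuity and the standard one-step equicontinuity equivalence), applied level by level since $2^Y$ of a compact metric space is again compact metric and the level-$(k{+}1)$ map is the hyperspace map induced by the level-$k$ map. Your self-contained Hausdorff-metric proof of the one-step equicontinuity equivalence is a welcome small bonus, since it avoids the surjectivity hypothesis under which that equivalence is recalled in the paper.
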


\begin{theorem} \label{3.8}
 $(X,f)$ is uniformly rigid if and only if $(2^X,f_{\ast})$ is  uniformly rigid if and only if $(2^{2^X}, f_{**})$ is uniformly rigid $\dots$ if and only if $({2^{\cdot^{\udots}}}^{2^X}, f_{* \ldots *})$ is uniformly rigid $\ldots$.
\end{theorem}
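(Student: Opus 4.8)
The plan is to argue by induction on the level of the hyperspace tower, taking as the base step the preceding theorem (the three-way equivalence that $(X,f)$ is uniformly rigid $\iff$ $(2^X,f_*)$ is uniformly rigid $\iff$ $(2^X,f_*)$ is rigid). That theorem applies to an arbitrary cascade on a compact metric space, so the only thing one really has to check is that this hypothesis propagates up the tower.

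First I would set $X^{(0)}=X$, $f^{(0)}=f$, and inductively $X^{(n+1)}=2^{X^{(n)}}$, $f^{(n+1)}=\bigl(f^{(n)}\bigr)_{\ast}$, so that the system with $n$ stars, $({2^{\cdot^{\udots}}}^{2^X}, f_{\ast\ldots\ast})$, is exactly $(X^{(n)},f^{(n)})$. Since $X$ is a compact metric space and the hyperspace $2^Y$ of any compact metric space $Y$ is again a compact metric space (with the Hausdorff metric $H_d$, equivalently the Vietoris topology, as recalled at the start of this section), an immediate induction shows that every $X^{(n)}$ is a compact metric space and that $(X^{(n+1)},f^{(n+1)})$ is precisely the induced cascade of $(X^{(n)},f^{(n)})$.

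Now I would apply the base theorem at each level $n$: it gives $(X^{(n)},f^{(n)})$ uniformly rigid $\iff$ $(X^{(n+1)},f^{(n+1)})$ uniformly rigid, and also $\iff$ $(X^{(n+1)},f^{(n+1)})$ rigid. Chaining these equivalences over $n=0,1,2,\ldots$ produces exactly the asserted chain, and shows in addition that rigidity of $(X^{(n)},f^{(n)})$ for any $n\geq 1$ is equivalent to all of them. Concretely, in the forward direction a sequence $n_k\nearrow\infty$ with $f^{n_k}\to e=I_X$ uniformly on $X$ forces $\bigl(f^{(n)}\bigr)^{n_k}\to I_{X^{(n)}}$ uniformly for every $n$, so the \emph{same} sequence witnesses uniform rigidity all the way up the tower; in the reverse direction one descends one level at a time, using at each step that a rigid sequence on $2^{X^{(n-1)}}$ converges to the identity uniformly by compactness of that hyperspace, and then restricts it, via the isometric embedding $x\mapsto\{x\}$, to a uniformly rigid sequence on $X^{(n-1)}$.

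There is essentially no obstacle here beyond bookkeeping: the point worth making explicit is that the ``compact metric'' hypothesis of the base theorem is stable under the hyperspace operation, which is precisely what legitimizes the induction; everything else is a verbatim iteration of the proof of the preceding theorem, so I would keep the write-up short and simply reduce to that theorem.
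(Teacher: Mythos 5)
Your proposal is correct and is essentially the argument the paper intends: Theorem \ref{3.8} is stated as an immediate iteration of the preceding two-level equivalence, and your induction --- noting that $2^Y$ of a compact metric space is again compact metric, so the base theorem applies at every level of the tower --- is exactly that iteration, with the same-sequence observation in the forward direction and the isometric embedding $x\mapsto\{x\}$ in the reverse direction as routine bookkeeping.
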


\begin{theorem}
 $(2^X,f_{\ast})$ is  transitive if and only if $(2^{2^X}, f_{**})$ is transitive $\dots$ if and only if $({2^{\cdot^{\udots}}}^{2^X}, f_{* \ldots *})$ is transitive $\ldots$.
\end{theorem}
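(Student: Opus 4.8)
The plan is to reduce the whole chain to a single property of the base system, namely that $(X,f)$ is weakly mixing, by repeatedly applying item (5) of the recalled list of results for induced systems together with the preceding theorem asserting the equivalence of weak mixing throughout the hyperspace tower. The key structural observation is that every space occurring in the chain is again a compact metric space (the hyperspace of a compact metric space is compact metric in the Hausdorff metric), and every map occurring in the chain is precisely the hyperspace-induced map of the one before it; hence item (5) is applicable at each level.

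First I would set up notation. Put $X^{(0)}=X$, $f^{(0)}=f$, and inductively $X^{(n)}=2^{X^{(n-1)}}$ and $f^{(n)}=\bigl(f^{(n-1)}\bigr)_{*}$, so that $X^{(1)}=2^X$, $f^{(1)}=f_*$, $X^{(2)}=2^{2^X}$, $f^{(2)}=f_{**}$, and so on. Each $(X^{(n)},f^{(n)})$ is a cascade (semicascade) on a compact metric space, and it is exactly the hyperspace system induced by $(X^{(n-1)},f^{(n-1)})$.

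Next, applying item (5) at level $n-1$ yields, for every $n\ge 1$,
\begin{align*}
(X^{(n)},f^{(n)})\ \text{transitive} &\iff (X^{(n-1)},f^{(n-1)})\ \text{weakly mixing}\\
&\iff (X^{(n)},f^{(n)})\ \text{weakly mixing}.
\end{align*}
Then I would invoke the theorem stated just above, which says that $(X^{(m)},f^{(m)})$ is weakly mixing for some $m\ge 0$ if and only if it is weakly mixing for all $m\ge 0$, i.e.\ if and only if $(X,f)$ is weakly mixing. Combining the two facts, for every $n\ge 1$ the system $(X^{(n)},f^{(n)})$ is transitive if and only if $(X,f)$ is weakly mixing; since the right-hand condition does not depend on $n$, all the systems $(X^{(n)},f^{(n)})$ with $n\ge 1$ are simultaneously transitive or simultaneously non-transitive, which is exactly the asserted chain of equivalences. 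Note that $(X,f)$ itself is intentionally absent from the chain, since $(X,f)$ being transitive is strictly weaker than $(2^X,f_*)$ being transitive.

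There is no genuinely hard step: the substantive content has already been absorbed into item (5) and the previous theorem. The only point requiring (routine) care is the persistence of hypotheses under the induction: $h\mapsto h_*$ is continuous and, when $h$ is surjective, maps onto $2^Y$, and $2^Y$ is compact metric whenever $Y$ is, so each application of item (5) and of the iterated-weak-mixing theorem is legitimate at every level of the tower.
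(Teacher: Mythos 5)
Your proposal is correct and follows exactly the route the paper intends: the statement is left unproved there precisely because it is the iteration of item (5) (transitivity of an induced system $\Leftrightarrow$ weak mixing of the base $\Leftrightarrow$ weak mixing of the induced system) combined with the immediately preceding theorem that weak mixing propagates through the whole hyperspace tower. Your added remarks on compact metrizability of the hyperspaces and the exclusion of $(X,f)$ itself from the chain are exactly the routine checks needed, so nothing is missing.
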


\vskip .5cm

\subsection{For flows}

Suppose $(X,T)$ is a  \emph{flow}, where $T$ is a discrete, Abelian topological group and $X$  a compact metric space.

Now for the cascade $(X,f)$, the induced dynamics of $(2^X,f_*)$ has been studied in \cite{AN, PUN} and the references therein. Almost all of this study with almost same results can also be carried out for the flow $(X,T)$ with $T$ mostly Abelian, though there has been no account of such a study in literature so far. We briefly summarize some of the results in our case and give some proofs for the sake of completion.

We note that most of the results listed below will also hold if $T$ is a monoid.

\vskip .5cm

Let $(X,T)$ be a  flow then $\pi^t: X\rightarrow X$ induces a map $\pi^t_*: 2^{X}\rightarrow 2^{X}$, inducing the flow $(2^X,T)$. We prove a few important dynamical properties of the induced dynamics of $(2^X,T)$ here:

\begin{enumerate}

\item If $(X,T)$ is distal and $M$ is minimal subset of $2^{X}$ then the subsystem $(M, T)$ is distal.

[We refer the reader to the next section for reference to the algebraic tools used in this proof.]

\vskip .2cm

This easily follows since if $(u \circ A, v \circ A)$ is a proximal pair in $(M,T)$ for $A \in M$ and $u \circ A \neq v \circ A$ with $u, v \in \beta T$, then there exists $a_1,a_2 \in A$ with $ua_1 \neq va_2$ such that $(ua_1, va_2)$ is a proximal pair in $(X,T)$.

\vskip .2cm

\item If $(X,T)$ is  distal then each almost periodic point in the induced system
$(2^X,T)$  is  distal and these points are dense in $2^X$.

\vskip .2cm

This can be seen since the almost periodic points in $2^X$ are distal by the above observation. Since every point of $X$ is distal, and finite sets are dense in $2^X$, the almost periodic points  in $2^X$ are dense.

\vskip .2cm

\item As observed in \cite{AN},   $(X,T)$ distal need not imply that $(2^X,T)$ is distal.

\vskip .2cm

\item     $(X,T)$ is equicontinuous if and only if  $(2^{X},T)$  is equicontinuous if and only if $(2^{X},T)$  is distal.

\vskip .2cm

It is simple to see that $(X,T)$ being equicontinuous implies that $(2^X, T)$ is also equicontinuous and that further implies that $(2^X,T)$ is distal. We will show that $(2^X,T)$ if distal implies that $(X,T)$ is equicontinuous.

For this we consider the  \emph{regionally proximal relation} $RP \subset X \times X$ for $(X,T)$.
We say that $(x,y) \in RP$ if
there are nets $\{x_n\}$ and $\{y_n\}$ in $X$ with $x_n \to x$ and $y_n \to y$, $z \in X$
and net $\{t_n\}$ in $T$ such that $({t_n}(x_n),{t_n}(y_n))\to (z,z)$ in $X \times X$.  It is
known  that $(X,T)$ is equicontinuous if and only if $RP$ is the identity relation in $X \times X$.

Suppose  $(X,T)$ is not equicontinuous.
Then there are $x \neq y \in X$ with  $(x,y) \in RP$. Then
we have nets $x_n \to x$, $y_n \to y$ and $t_n \in T$ with
$({t_n}(x_n),{t_n}(y_n))\to (z,z)$ for some $z \in X$.
Let $C=\{x_1,x_2, \dots\} \cup \{x\}$ and $D=\{y_1,y_2, \dots\} \cup \{y\}$. We
can always take $C \cap D=\emptyset$. Since  $(2^X,T)$ is distal, the orbit closure
of $(C,D)$ in $2^X \times 2^X$ should be minimal. Let $(C',D')$ be the limit point of the set $\{ ({t_n}(C ,D) \}$ in $2^X \times 2^X$. But $z \in C' \cap D'$, and so we have $C' \cap D' \neq \emptyset$. Now if the $T \times T$ orbit closure of $(C,D)$ were minimal, then $(C,D)$ would be in
the $T \times T$ orbit closure of $(C',D')$. But that would imply that  $C \cap D \neq \emptyset$, a contradiction.

\vskip .2cm

We note that this does not require  $T$ to be discrete and so would be true for any Abelian $T$.

\vskip .2cm

\item $(X,T)$ is almost equicontinuous if and only if $(2^X,T)$ is  almost equicontinuous. The proof being similar to the one for Theorem \ref{TH 2.4}.

    \vskip .2cm

\item When $T$ is Abelian, $(X,T)$ is weakly-mixing if and only if $(2^{X},T)$ is topological transitive if and only if $(2^{X},T)$ is weakly-mixing.

\vskip .2cm

For Abelian $T$, we use a characterization due to Karl Petersen \cite{PET} - $(X, T)$ is weakly mixing if and only if given nonempty
open subsets $A$ and $B$ of $X$ there is $t \in T$ such that $tA \cap A \neq \emptyset$ and
$tA \cap B \neq \emptyset$. This clearly gives that $(2^X, T)$ is weakly mixing implies that $(2^X,T)$ is topologically transitive implies that $(X,T)$ is weakly mixing. All we need to see is that $(X,T)$ is weakly mixing implies that $(2^X,T)$ is weakly mixing.

 Let $N^\pi(U,V) = \{ t \in T: t(U) \cap V \neq \emptyset\}$.  We refer to  \cite{AK, AAG} for the proof of  the beautiful \emph{Furstenberg Intersection Lemma} (c.f. \cite{F}) - for non empty, open $U_1, U_2, V_1, V_2 \subset X$, there exists $U_3, V_3 \subset X$ such that
 \begin{equation*}\label{1.5}
 N^\pi(U_3,V_3) \quad \subset \quad  N^\pi(U_1,V_1) \cap  N^\pi(U_2,V_2). \hspace{2cm}
 \end{equation*}

And again we refer to \cite{AK, AAG} for the proof that for Abelian $T$, $(X,T)$ is weakly mixing if and only if the product system $(X^n, T)$ is weakly mixing  for all $n \in \N$ if and only if $\{N^\pi(U,V): U, V \subset X$ are non empty, open $\}$ is a filter.

This gives us that $(X,T)$ is weakly mixing implies $(X^k,T)$ is topologically transitive  for all $k \in \N$ implies $ \bigcap \limits_{i=1}^n N^\pi(\langle U^i_{1},U^i_{2},...,U^ i_{k}\rangle, \langle V^i_{1},V^i_{2},...,V^i_{k}\rangle) \neq \emptyset$ for non empty, open  $U^i_{1}, U^i_{2}, \ldots , U^i_{k},  V^i_{1}, V^i_{2}, \ldots, V^i_{k} \subset X$ for any $k$ and $i = 1, \ldots, n$, implying that $(2^X, T)$ is weakly mixing.

We refer to \cite{AAG} to look into conditions when this would also be true for nonabelian $T$.

\vskip .2cm

\item Suppose  $(X,T)$ is weakly mixing with Abelian $T$. Then $(2^X,T)$ is topologically transitive and so admits a $G_\delta$ dense set of transtive points in $2^X$. For  any such transitive point $C$ of the induced system $(2^X,T)$, we have;

\begin{enumerate}
\item[(a)] $t'C \cap tC = \emptyset$ for $t' \neq t \in T$.
\item[(b)] The set $C$ is nowhere dense in $X$.
\item[(c)] The set $C$ has infinitely many components.
\item[(d)] Every point of $C$ is a transitive point for $(X,T)$.
\end{enumerate}

\vskip .2cm

For (a) we observe that if $t_1(C) \cap t_2(C) \neq \emptyset$, then there exists $c_1, c_2 \in C$ with $t_1(c_1) = t_2(c_2)$.  For $x \in X$  there
exists a net $k_n$ in $T$ such that $ {k_n}(C)$ converges
to $\{ x \}$. Hence, $\{ t_{i}({k_n}(c_i) ) \}$ converges to $t_i(x)$ for $i = 1,2$.  Since $T$ is abelian,
$ t_i({k_n}(c_i)) = {k_n}(t_i(c_i)) $ for $i = 1,2$ and  the two limits are the same
 as $t_1(c_1) = t_2(c_2)$. Thus, $t_1(x) = t_2(x)$ for all $x \in X$. Since
$x$ was arbitrary, $t_1 = t_2$.

Infact, each of the sets in $\{ t(C) : t \in T \}$ are pairwise disjoint.

\vskip .2cm

For (b) we note that if $C$ had a nonempty interior then since $X$ is perfect there would exist $U, V \subset C$ disjoint non empty, open subsets
of $C$.  Then by transitivity there is a  $t \in T$  such that $C \cap t(C) \not= \emptyset$. But  this would contradict (a).

\vskip .2cm

For (c) we see that for any set $\{ x_1, \ldots , x_n \}$  of $n$ distinct points in $X$ with
$2\epsilon < \min \{ d(x_i, x_k):$ for $i \not= k = 1,\ldots,n \}$, there
exists $t \in T$ such that $t(C)$ is $\epsilon$ close to $\{ x_1, \ldots , x_n \}$.  Then the $\epsilon$ ball centered at each
$x_i$ gives a partition of $C$ into $n$ non empty, open sets and hence the number of components of $C$ is at least $n$.

\vskip .2cm

For (d) we again recall the argument in (a) that there
exists a net $\lbrace t_n \rbrace$ in $T$ such that $ {t_n}(C)$ converges
to $\{ x \}$ for all $x \in X$. Hence, each $c \in C$ is a transitive point in $X$.

\end{enumerate}

The cascade $(2^X, T)$ induces the cascade $(2^{2^X}, T)$, which induces the cascade $(2^{2^{2^X}}, T)$, $\ldots$, which induces $({2^{\cdot^{\udots}}}^{2^X}, T)$, $\ldots$. And we have,

\begin{theorem}
 For Abelian $T$, $(X,T)$ is weakly mixing if and only if $(2^X,T)$ is  weakly mixing if and only if $(2^{2^X}, T)$ is weakly mixing $\dots$ if and only if $({2^{\cdot^{\udots}}}^{2^X}, T)$ is weakly mixing $\ldots$.
\end{theorem}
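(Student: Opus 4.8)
The plan is to prove the chain of equivalences by induction on the level of the hyperspace tower, with the single two–term equivalence established in item~(6) above — for Abelian $T$, a flow $(Y,T)$ on a compact metric space $Y$ is weakly mixing if and only if $(2^Y,T)$ is weakly mixing (indeed, if and only if $(2^Y,T)$ is topologically transitive) — serving as the inductive step.

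First I would record the only ingredient needed beyond item~(6): if $Y$ is a compact metric space then $(2^Y,H_d)$ is again a compact metric space and the induced map $\pi^t_\ast:2^Y\to 2^Y$ is continuous, so that the induced–flow construction is genuinely iterable and the acting group remains the same Abelian $T$ at every level. Accordingly, set $X_0=X$ and $X_{n+1}=2^{X_n}$; then each $X_n$ is compact metric, $(X_{n+1},T)$ is precisely the induced flow of $(X_n,T)$, and item~(6) applies verbatim with $X_n$ in the role of $X$ for every $n\geq 0$.

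The induction is then immediate. The base case $n=0$ is exactly item~(6): $(X_0,T)$ is weakly mixing iff $(X_1,T)=(2^X,T)$ is weakly mixing. For the inductive step, assuming $(X,T)$ is weakly mixing iff $(X_n,T)$ is weakly mixing, apply item~(6) to the compact metric flow $(X_n,T)$ to obtain that $(X_n,T)$ is weakly mixing iff $(X_{n+1},T)=(2^{X_n},T)$ is weakly mixing, and concatenate the two equivalences. This delivers the full chain
\[
(X,T)\ \text{w.m.}\iff (2^X,T)\ \text{w.m.}\iff (2^{2^X},T)\ \text{w.m.}\iff\cdots\iff\big({2^{\cdot^{\udots}}}^{2^X},T\big)\ \text{w.m.}\iff\cdots.
\]

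I do not expect any genuine obstacle here: the real dynamical content (Petersen's characterization of weak mixing for Abelian $T$, the Furstenberg Intersection Lemma, and the passage to finite products and to Vietoris basic open sets) has already been absorbed into item~(6). The one point to be careful about is that the argument must be propagated one level at a time through the tower, rather than attempting a direct description of weak mixing of $\big({2^{\cdot^{\udots}}}^{2^X},T\big)$ in terms of $X$, since the combinatorics of the basic open sets at level $n+1$ are governed by those at level $n$, not at level $0$. It is also worth noting that the same induction simultaneously yields the a priori weaker equivalences ``$(X_{n+1},T)$ topologically transitive $\iff$ $(X_n,T)$ weakly mixing'', which is precisely why the tower of induced systems cannot acquire transitivity unless $X$ is already weakly mixing.
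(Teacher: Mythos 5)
Your proposal is correct and is essentially the paper's own (implicit) argument: the theorem is stated as an immediate consequence of item (6) — for Abelian $T$, weak mixing of a compact metric flow is equivalent to weak mixing (indeed transitivity) of its induced hyperspace flow — applied level by level up the tower, exactly as in your induction. The only ingredients you add explicitly (that $2^Y$ is again compact metric and that the induced action of the same Abelian $T$ is a flow) are the ones the paper takes for granted, so there is no gap.
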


\begin{theorem}
 For Abelian $T$, $(X,T)$ is mixing if and only if $(2^X,T)$ is  mixing if and only if $(2^{2^X}, T)$ is mixing $\dots$ if and only if $({2^{\cdot^{\udots}}}^{2^X}, T)$ is mixing $\ldots$.
\end{theorem}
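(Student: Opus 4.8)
The plan is to isolate the one non-trivial equivalence, namely $(X,T)$ is mixing $\iff (2^X,T)$ is mixing, and then bootstrap up the tower. Since $X$ is compact metric, $2^X$ is again compact metric, and the flow induced by the maps $\pi^t_*$ on $2^X$ is precisely $(2^{2^X},T)$; so once the base case is available, applying it with $X$ replaced successively by $2^X,\ 2^{2^X},\ \dots$ and a routine induction on the height of the tower yields all the equivalences simultaneously. Thus all the work goes into the base case.

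One direction is immediate from the embedding $i:x\mapsto\{x\}$. Given nonempty open $U,V\subseteq X$, the Vietoris opens $\langle U\rangle=\{A\in 2^X:A\subseteq U\}$ and $\langle V\rangle$ are nonempty and open; if $(2^X,T)$ is mixing there is a compact $C\subseteq T$ with $\pi^t_*\langle U\rangle\cap\langle V\rangle\neq\emptyset$ for all $t\notin C$. Unwinding a set of the form $tA$ in this intersection (so $A\subseteq U$ and $tA\subseteq V$) and picking any $a\in A$ gives $a\in U$ and $ta\in V$; hence $tU\cap V\neq\emptyset$ for all $t\notin C$, so $(X,T)$ is mixing.

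For the converse — the heart of the matter — I would argue with Vietoris basic opens, since every nonempty open subset of $2^X$ contains a basic open $\langle W_1,\dots,W_r\rangle$ with all $W_i$ nonempty open, and the mixing condition for such basic opens upgrades verbatim to arbitrary opens. So fix $\langle U_1,\dots,U_k\rangle$ and $\langle V_1,\dots,V_m\rangle$ and put $U=\bigcup_{i=1}^k U_i$, $V=\bigcup_{j=1}^m V_j$. Applying mixing of $(X,T)$ to each of the $k+m$ pairs $(U_i,V)$ and $(U,V_j)$ produces finitely many compact exceptional sets whose union $C$ is compact, and for $t\notin C$ one can choose points $p_i\in U_i$ with $tp_i\in V$ and $q_j\in U$ with $tq_j\in V_j$. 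The finite set $A=\{p_1,\dots,p_k,q_1,\dots,q_m\}$ then lies in $\langle U_1,\dots,U_k\rangle$ (it meets each $U_i$ via $p_i$ and is contained in $U$) and $\pi^t_*A=tA$ lies in $\langle V_1,\dots,V_m\rangle$ (it meets each $V_j$ via $tq_j$ and is contained in $V$), so $\pi^t_*A$ is the required point of $\pi^t_*\langle U_1,\dots,U_k\rangle\cap\langle V_1,\dots,V_m\rangle$.

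I do not expect a genuine obstacle; the only point needing care is the ``$A\subseteq\bigcup U_i$'' and ``$tA\subseteq\bigcup V_j$'' clauses of the Vietoris sets, which is exactly what forces the auxiliary points $p_i$ (whose images must land \emph{somewhere} in $V$, not in a prescribed $V_j$) and $q_j$ (whose preimages must sit \emph{somewhere} in $U$) into the construction. Once this bookkeeping is set up the rest is a direct use of the compact-exceptional-set definition of mixing; note in passing that, in contrast with the weak mixing theorem, this argument never uses commutativity of $T$, the hypothesis being kept only to mirror the previous statement. The induction up the tower is then immediate, and combining it with the base equivalence gives the theorem.
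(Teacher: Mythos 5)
Your proof is correct. The only caveat in comparing it with the paper is that the paper does not actually spell out a proof of this theorem: it is stated as part of the series of tower results following the weak–mixing discussion, where the real work (Petersen's characterization, the Furstenberg intersection lemma, and the filter property of the sets $N^\pi(U,V)$) is done for the weakly mixing case and genuinely uses that $T$ is Abelian. Your route is different and more elementary: you reduce everything to the single base equivalence $(X,T)$ mixing $\iff (2^X,T)$ mixing, prove the forward implication by unwinding $\langle U\rangle$, $\langle V\rangle$, and prove the converse by the finite-set construction $A=\{p_1,\dots,p_k,q_1,\dots,q_m\}$ with the auxiliary points handling the ``contained in $\bigcup U_i$'' and ``meets every $V_j$'' clauses, taking the union of the finitely many compact exceptional sets. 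This buys two things the paper's presentation does not make explicit: a self-contained argument at the level of Vietoris basic opens (rather than an appeal to the cascade literature or the weak-mixing machinery), and the observation — which you correctly flag — that commutativity of $T$ is never used for mixing, so the Abelian hypothesis in the statement is inherited from the weak-mixing theorem rather than needed here. The bootstrap up the tower via the fact that $2^X$ is again compact metric is exactly the intended reading of the paper's iterated statement, so that part matches.
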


\begin{proposition} \label{wap} Suppose $(2^X,T)$ is a weakly almost periodic (WAP) flow then $(X,T)$ is also weakly almost periodic (WAP).
\end{proposition}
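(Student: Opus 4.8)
The plan is to realise $(X,T)$ as a closed invariant subsystem of $(2^X,T)$ and then to check that the WAP property passes to closed subsystems; this runs parallel to the proof of the corresponding statement for cascades given above, the underlying observation now being the equivariant embedding $X \hookrightarrow 2^X$ rather than the restriction of $\hat g$ to singletons.

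First I would recall that the map $i \colon X \to 2^X$, $i(x) = \{x\}$, is a continuous embedding --- in fact an isometry, since $H_d(\{x\},\{y\}) = d(x,y)$ --- and that it is a homomorphism of flows, because $i(tx) = \{tx\} = \pi^t_*(\{x\}) = \pi^t_*(i(x))$ for every $t \in T$. Its image $\mathcal{F}_1(X)$ is the continuous image of the compact space $X$, hence a closed, invariant subset of $2^X$, so $(X,T)$ is flow-isomorphic to the subflow $(\mathcal{F}_1(X),T)$ of $(2^X,T)$. Next, given $g \in \mathcal{C}(X)$, I would transport it to $\mathcal{F}_1(X)$ and extend it to some $\hat g \in \mathcal{C}(2^X)$; concretely, for real-valued $g$ one may take $\hat g(A) = \max\{g(a) : a \in A\}$, which is well known to be continuous on $(2^X,\nu)$, and for complex $g$ one handles the real and imaginary parts separately. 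Since $(2^X,T)$ is WAP, $\overline{\{\hat g \circ \pi^t_* : t \in T\}}$ is compact in $\mathcal{C}(2^X)$. The restriction operator $i^* \colon \mathcal{C}(2^X) \to \mathcal{C}(X)$, $\varphi \mapsto \varphi \circ i$, is continuous for the topology of pointwise convergence (if $\varphi_\alpha \to \varphi$ pointwise on $2^X$ then $\varphi_\alpha \circ i \to \varphi \circ i$ pointwise on $X$), and the equivariance of $i$ gives $i^*(\hat g \circ \pi^t_*) = g \circ \pi^t$ for every $t$. A continuous image of a relatively compact set is relatively compact, so $\overline{\{g \circ \pi^t : t \in T\}}$ is compact in $\mathcal{C}(X)$; as $g$ was arbitrary, $(X,T)$ is WAP.

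The steps that need a little care are purely bookkeeping: the continuity of $A \mapsto \max_{a \in A} g(a)$ on $2^X$ with the Vietoris topology, and the reduction from complex- to real-valued test functions (legitimate because the set of $g \in \mathcal{C}(X)$ whose $T$-orbit has relatively compact closure in $\mathcal{C}(X)$ is a linear subspace). I do not anticipate a genuine obstacle: the proposition simply records that WAP, like equicontinuity and almost equicontinuity, descends from the induced system to the base, and this is immediate once $(X,T)$ is exhibited as a closed subsystem of $(2^X,T)$.
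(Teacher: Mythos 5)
Your proof is correct and follows essentially the same route as the paper: extend each test function on $X$ to a function on $2^X$ by taking a maximum over the set, invoke the WAP hypothesis on $(2^X,T)$, and restrict along the equivariant singleton embedding $x \mapsto \{x\}$. Your reduction to real-valued $g$ via real and imaginary parts is in fact slightly more careful than the paper's version, which works with $\hat f(A) = \max\{|f(a)| : a \in A\}$ and so on singletons recovers $|f\circ\pi^t|$ rather than $f\circ\pi^t$ itself.
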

\begin{proof}  Since $(2^X,T)$ is WAP, $ \{\hat{f}\circ \pi^t_*: t \in T\}$ is relatively compact in the weak topology on $\mathcal{C}(2^X)$ for all $\hat{f} \in \mathcal{C}(2^X)$. Now each $f \in \mathcal{C}(X)$ induces a $\hat{f} \in \mathcal{C}(2^X)$ given as $\hat{f}(A) = \max \{ |f(a)|: a \in A\}$. And  $\hat{f}\circ \pi^t_*(\{x\}) = f \circ \pi^t (x), \ \forall t \in T, \ \forall x \in X$. Thus $ \{f\circ \pi^t: t \in T\}$ is relatively compact in the weak topology on $\mathcal{C}(X)$ for all $f \in \mathcal{C}(X)$. Thus, $(X,T)$ is WAP. \end{proof}

\begin{theorem}
 $(X,T)$ is equicontinuous (almost equicontinuous) if and only if $(2^X,T)$ is  equicontinuous (almost equicontinuous) if and only if $(2^{2^X}, T)$ is equicontinuous (almost equicontinuous) $\dots$ if and only if $({2^{\cdot^{\udots}}}^{2^X}, T)$ is equicontinuous (almost equicontinuous) $\ldots$.
\end{theorem}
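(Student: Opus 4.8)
The plan is a straightforward induction on the height of the iterated hyperspace tower, bootstrapping off the two base equivalences already established in items (4) and (5) of this subsection: for a compact metric space $Z$ and discrete Abelian $T$, the flow $(Z,T)$ is equicontinuous (respectively, almost equicontinuous) if and only if $(2^Z,T)$ is equicontinuous (respectively, almost equicontinuous).

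First I would record the structural fact that drives the induction. If $X$ is a compact metric space, then $2^X$ equipped with the Hausdorff metric $H_d$ is again a compact metric space, as recalled at the start of Section 3; moreover, since $T$ is a group, each $\pi^t \colon X \to X$ is a homeomorphism, hence each induced map $\pi^t_* \colon 2^X \to 2^X$ is a homeomorphism, and $(2^X,T)$ is again a flow of the discrete Abelian group $T$ on a compact metric space. Thus every hypothesis under which items (4) and (5) were proved is inherited when one passes from $X$ to $2^X$.

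Now set $Z_0 = X$ and $Z_{k+1} = 2^{Z_k}$, so that $(Z_k,T)$ is precisely the $k$-th induced flow $({2^{\cdot^{\udots}}}^{2^X},T)$ appearing in the statement. By the previous paragraph together with an obvious induction, each $Z_k$ is a compact metric space carrying a flow of the discrete Abelian group $T$. Applying the base equivalence with $Z = Z_k$ for each $k \geq 0$ gives, at every level, that $(Z_k,T)$ is equicontinuous if and only if $(Z_{k+1},T)$ is equicontinuous, and likewise with ``almost equicontinuous'' throughout. Concatenating these biconditionals over $k = 0,1,2,\ldots$ yields the asserted infinite chain of equivalences.

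There is no genuine obstacle here beyond the bookkeeping; the one point worth a sentence of care is the verification, carried out above, that the standing hypotheses (compact metric phase space, $T$ discrete Abelian, action by homeomorphisms) propagate up the tower, since only then may items (4) and (5) be reapplied at each stage. As a byproduct, combining this induction with the equivalence in item (4) between equicontinuity of $(2^Z,T)$ and distality of $(2^Z,T)$ shows that for every $k \geq 1$ the flow $(Z_k,T)$ is distal if and only if $(X,T)$ is equicontinuous.
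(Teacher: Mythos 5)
Your proposal is correct and matches the paper's (implicit) argument: the paper states this theorem without a separate proof precisely because it is the iteration of the one-step equivalences in items (4) and (5) of this subsection, exactly as you do. Your explicit check that the standing hypotheses (compact metric phase space, flow of the discrete Abelian group $T$, with $2^X$ again compact metric under the Hausdorff metric) propagate up the tower $Z_{k+1}=2^{Z_k}$ is the only point of substance, and you have handled it correctly.
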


\vskip .5cm

\section{Function Spaces}
Any study on enveloping semigroups cannot be complete without a proper background of function spaces. In this section we discuss some known facts about some function spaces and try to construct some more results about various topologies on functions spaces.

\begin{definition}
For any topological space $X$ with $C\subset X$, a compact subset, and $U\subset X$, an open subset,  the set $$(C,U)=\lbrace f:X\rightarrow X: f(C)\subset U  \rbrace$$
 forms a sub-basic open set in the \emph{compact-open  topology} on $X^X$ and is abbreviated as \emph{$c$-topology}.
\end{definition}

\begin{definition}
For a topological space $X$, if $x\in X$ and $U\subset X$  is open then the set $$(x,U)=\lbrace f:X\rightarrow X: f(x)\in U  \rbrace$$ forms a sub-basic open set of the \emph{point open topology} on $X^X$ and is abbreviated by \emph{$p$-topology}.
\end{definition}

We recall some important results, the reader is encouraged to look for more in \cite{DUG, WIL}

\begin{enumerate}
\item The $p$-topology is always contained in the $c$-topology.

 \item For a compact set $C\subset X$ and open set $U\subset X$, $\overline{(C,U)}\subset (C,\overline{U})$.

\item For each $x_0\in X$, let $c_{x_0}:X \rightarrow X$ be the constant map $x\rightarrow x_0$. The map $j: X\rightarrow X^X$ given by $x\rightarrow c_x$ is a homeomorphism of $X$ onto a subspace of $X^X$; thus $X$ can always be embedded in $X^X$.

\item Let $X_0\subset X$ be a subspace of $X$; then $X_{0}^X$ is homeomorphic to the subspace $S_0=\lbrace f\in X^X:f(X)\subset X_0 \rbrace \subset X^X$.

\item Let $X$ be a metric space and $X^X$ has the $c$-topology. If $\mathcal{F}\subset X^X$ is equicontinuous family on $X$ then $\overline{\mathcal{F}}\subset X^X$ is also equicontinuous.
\end{enumerate}
See \cite{DUG, WIL} for more details. We recall some important theorems here.

\begin{theorem}{\cite{DUG, WIL}}\textbf{(Arzela-Ascoli)}
Let $(X,d)$ be a metric space and $\mathcal{F}\subset X^X$ is a family satisfies;
\begin{enumerate}
\item $\mathcal{F}$ is equicontinuous on $X$.
\item $\overline{\lbrace f(x):f \in \mathcal{F} \rbrace}$ is compact for each $x$.
\end{enumerate}
Then $\overline{\mathcal{F}}$ is compact and equicontinuous on $X$.
\end{theorem}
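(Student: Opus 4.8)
The plan is to prove the two conclusions — compactness of $\overline{\mathcal{F}}$ in the $c$-topology and equicontinuity of $\overline{\mathcal{F}}$ — by combining two facts already recorded in the excerpt with the classical diagonal-type argument that is the heart of Arzelà–Ascoli. First I would observe that equicontinuity of $\overline{\mathcal{F}}$ is almost immediate: item (5) in the list of recalled results states that if $X$ carries the $c$-topology and $\mathcal{F}\subset X^X$ is equicontinuous, then $\overline{\mathcal{F}}$ is equicontinuous. So hypothesis (1) together with that item gives the equicontinuity half of the conclusion with no extra work; I would just cite it. The substantive part is therefore compactness.

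For compactness I would argue as follows. Set $K = \prod_{x\in X}\overline{\{f(x):f\in\mathcal{F}\}}$. By hypothesis (2), each factor $\overline{\{f(x):f\in\mathcal{F}\}}$ is compact, so by Tychonoff's theorem $K$ is a compact subset of $X^X$ in the $p$-topology (the product topology). Since $\mathcal{F}\subseteq K$ and $K$ is closed in the $p$-topology, the $p$-closure of $\mathcal{F}$ sits inside $K$ and is therefore $p$-compact. The key step is then to show two things: (a) on the equicontinuous family $\overline{\mathcal{F}}$ the $p$-topology and the $c$-topology coincide, and (b) the $p$-closure and the $c$-closure of $\mathcal{F}$ agree (so the set denoted $\overline{\mathcal{F}}$ is unambiguous and equals the $p$-compact set just produced). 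Granting (a) and (b), $\overline{\mathcal{F}}$ is $p$-compact, hence $c$-compact, which is the desired conclusion.

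The main obstacle — and the only place where real $\varepsilon$–$\delta$ work enters — is step (a): showing that on an equicontinuous family the pointwise and compact-open topologies coincide. The plan is the standard one. The $p$-topology is always coarser than the $c$-topology (item (1) in the recalled list), so I only need the reverse comparison on $\overline{\mathcal{F}}$. Fix $g\in\overline{\mathcal{F}}$, a compact $C\subset X$, an open $U\subset X$ with $g(C)\subset U$, and let $\varepsilon>0$ be a Lebesgue number forcing the $\varepsilon$-neighbourhood of $g(C)$ into $U$. Using equicontinuity of $\overline{\mathcal{F}}$ (already established) at each point of $C$, choose for each $c\in C$ a neighbourhood on which every member of $\overline{\mathcal{F}}$ moves points less than $\varepsilon/2$; extract a finite subcover of $C$ by such neighbourhoods, with centres $c_1,\dots,c_m$. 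Then the $p$-open set $\bigcap_{i=1}^m\{h : d(h(c_i),g(c_i))<\varepsilon/2\}$ is a $p$-neighbourhood of $g$ inside $\overline{\mathcal{F}}$, and a routine triangle-inequality estimate shows every $h$ in it satisfies $h(C)\subset U$; hence the $c$-basic neighbourhood contains a $p$-neighbourhood, giving (a). For step (b) I would note that the $c$-closure is contained in the $p$-closure trivially (finer topology, smaller closure the other way — more precisely, a $c$-limit point is a $p$-limit point since $p$ is coarser), and conversely any $p$-limit $g$ of $\mathcal{F}$ lies in the $p$-compact equicontinuous set $\overline{\mathcal{F}}^{\,p}$, on which by (a) the two topologies agree, so $g$ is also a $c$-limit; thus the two closures coincide. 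Assembling: $\overline{\mathcal{F}}=\overline{\mathcal{F}}^{\,p}$ is $p$-compact by Tychonoff, equicontinuous by item (5), and $c$-compact by (a). This completes the proof.
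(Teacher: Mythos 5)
The paper never proves this statement — it is recalled verbatim from the cited sources \cite{DUG, WIL} — so there is no internal proof to compare with; your argument is the standard textbook proof of Ascoli's theorem and it is correct: hypothesis (2) plus Tychonoff gives $p$-compactness of the pointwise closure inside $K=\prod_{x}\overline{\{f(x):f\in\mathcal{F}\}}$, and equicontinuity forces the point-open and compact-open topologies (hence the two closures) to coincide, yielding compactness and equicontinuity of $\overline{\mathcal{F}}$ in the $c$-topology. The one detail to tighten is in your step (b): there you apply (a) to $\overline{\mathcal{F}}^{\,p}$, so you need the \emph{pointwise} closure to be equicontinuous, while the paper's recalled item (5) is stated for the closure taken in the $c$-topology (which is a priori the smaller set); the fix is the usual one-line limit argument — if a net $f_\alpha\in\mathcal{F}$ converges pointwise to $g$ and $d(f_\alpha(x),f_\alpha(y))<\epsilon$ for all $x$ in a fixed neighbourhood of $y$, then $d(g(x),g(y))=\lim_\alpha d(f_\alpha(x),f_\alpha(y))\le\epsilon$ there — which is exactly the form in which Dugundji and Willard state that lemma, and with it your proof is complete.
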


\begin{theorem}\cite{DUG, WIL}
Let $(X,d)$ be a metric space. A sequence $\lbrace f_n \rbrace$ in $X^X$ converges to an $f\in X^X$ uniformly on every compact subset if and only if $f_n \rightarrow f$ in the $c$-topology of $X^X$.
\end{theorem}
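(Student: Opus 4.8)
The plan is to establish the two implications separately; both rest on comparing the sub-basic sets $(C,U)$ of the $c$-topology with the uniform quantity $\rho_C(g,h)=\sup_{x\in C}d(g(x),h(x))$ attached to a compact set $C\subseteq X$.

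\textit{$c$-convergence implies uniform convergence on compacta.} Fix a compact $C\subseteq X$ and $\epsilon>0$. Since $f$ is continuous and $C$ is compact, $f|_C$ is uniformly continuous, so by covering $C$ with finitely many small balls one may write $C=\bigcup_{i=1}^k C_i$ with each $C_i$ compact, a chosen $x_i\in C_i$, and $\operatorname{diam}f(C_i)\le\epsilon/3$. Then $f(C_i)\subseteq U_i:=B(f(x_i),\epsilon/2)$, so $N:=\bigcap_{i=1}^k(C_i,U_i)$ is a $c$-neighbourhood of $f$; by hypothesis $f_n\in N$ for all large $n$. For such $n$ and any $y\in C$, picking $i$ with $y\in C_i$ gives $d(f_n(y),f(y))\le d(f_n(y),f(x_i))+d(f(x_i),f(y))<\epsilon/2+\epsilon/3<\epsilon$, so $\rho_C(f_n,f)\le\epsilon$ eventually.

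\textit{Uniform convergence on compacta implies $c$-convergence.} It suffices to show $f_n$ eventually lies in each basic $c$-neighbourhood $N=\bigcap_{i=1}^k(C_i,U_i)$ of $f$, where each $C_i$ is compact and $f(C_i)\subseteq U_i$. Since $f$ is continuous, each $f(C_i)$ is a compact subset of the open set $U_i$, whence $\delta_i:=d(f(C_i),X\setminus U_i)>0$ (put $\delta_i=1$ when $U_i=X$). Let $\delta=\min_{1\le i\le k}\delta_i$ and $C=\bigcup_{i=1}^k C_i$, a compact set. Uniform convergence on $C$ yields $n_0$ with $\rho_C(f_n,f)<\delta$ for $n\ge n_0$; then for such $n$, every $i$, and every $y\in C_i$ we have $f_n(y)\in B(f(y),\delta_i)\subseteq U_i$, so $f_n\in(C_i,U_i)$ for all $i$ and hence $f_n\in N$.

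The step I expect to be the real obstacle is the repeated reliance on continuity of the limit $f$ — used in the first direction to produce the finite small-oscillation cover, and in the second to secure the strictly positive buffer $d(f(C_i),X\setminus U_i)>0$. For a genuinely arbitrary $f\in X^X$ neither is available: a discontinuous $f$ whose graph presses against the boundary of some admissible pair $(C,U)$ lets one build $f_n\to f$ uniformly on $X$ that never enter $(C,U)$, so the equivalence should be read over $C(X,X)$ (equivalently, with the limit assumed continuous), and I would record that hypothesis explicitly. The remaining points — reducing an arbitrary $c$-neighbourhood to a basic one, the harmless edge cases $U_i=X$ or $C_i=\emptyset$, and the irrelevance of $\le\epsilon$ versus $<\epsilon$ — are routine.
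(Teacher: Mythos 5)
The paper gives no proof of this statement at all — it is simply quoted from \cite{DUG, WIL} — so there is no internal argument to compare yours against; judged on its own, your two implications are the standard textbook proof and both are carried out correctly: the finite small-oscillation cover of $C$ with $f(C_i)\subseteq B(f(x_i),\epsilon/2)$ for one direction, the positive buffer $d(f(C_i),X\setminus U_i)>0$ for the other, together with the routine reduction of convergence to sub-basic and basic neighbourhoods. Your closing caveat is also correct and is worth recording: continuity of the limit is genuinely needed, and as literally printed over all of $X^X$ the equivalence fails in \emph{both} directions. For example, on $X=[0,1]$ take $f(0)=1$ and $f(x)=0$ for $x>0$, and let $f_n$ agree with $f$ except that $f_n(1/n)=1$; then $f_n\to f$ in the $c$-topology (every admissible pair $(C,U)$ with $f(C)\subseteq U$ eventually contains all $f_n$) but $\sup_{x\in[0,1]}d(f_n(x),f(x))=1$ for all $n$, while in the reverse direction taking $f(x)=x$ for $x>0$, $f(0)=1$, $C=[0,1]$, $U=(0,1]$ and $f_n(x)=\max(f(x)-1/n,0)$ gives uniform convergence on $X$ with $f_n\notin(C,U)$ for every $n$. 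The cited sources state the result on $\mathcal{C}(X,Y)$, and in this paper the theorem is only invoked for continuous or equicontinuous families, so your reading — limit assumed continuous, equivalently work in $\mathcal{C}(X,X)$ — is the right formulation, and with that hypothesis your proof is complete.
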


\vskip .5cm

\begin{theorem} \cite{DUG, WIL}
On an equicontinuous family $\mathcal{F}$, the compact-open topology reduces to point-open topology.
\end{theorem}

\vskip .5cm

Now $\mathcal{C}(X,X)\subset X^X$ and $\mathcal{C}(2^X,2^X)\subset {(2^X)}^{2^X}$ the set of all continuous self maps on $X$ and $2^X$ respectively, we establish some inter-relations between them.

\begin{theorem} \label{cpe}
The space $\mathcal{C}(X,X)$ with the $c$-topology is embedded in the space $\mathcal{C}(2^X,2^X)$ in the $p$-topology.
\end{theorem}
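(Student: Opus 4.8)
The plan is to exhibit an explicit injective continuous map from $\mathcal{C}(X,X)$, carrying the compact-open topology, into $\mathcal{C}(2^X,2^X)$, carrying the point-open topology, and to check it is a homeomorphism onto its image. The natural candidate is the induced-map assignment $\Phi: h \longmapsto h_*$, where $h_*(A) = h(A)$ for $A \in 2^X$; this is well defined and $h_*$ is continuous by the fact recalled just before Subsection 3.1, so $\Phi$ does land in $\mathcal{C}(2^X,2^X)$. Injectivity of $\Phi$ follows immediately from the embedding $i: X \hookrightarrow 2^X$, $i(x) = \{x\}$ (item (2) of the recalled results): if $h_* = g_*$ then in particular $h_*(\{x\}) = g_*(\{x\})$, i.e. $\{h(x)\} = \{g(x)\}$ for all $x$, so $h = g$.

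Next I would verify continuity of $\Phi$ from the $c$-topology on the domain to the $p$-topology on the codomain. A sub-basic $p$-open set in $\mathcal{C}(2^X,2^X)$ has the form $(A, \mathcal{U}) = \{ F : F(A) \in \mathcal{U}\}$ for $A \in 2^X$ and $\mathcal{U}$ open in $2^X$; by shrinking we may take $\mathcal{U} = \langle V_1, \dots, V_k \rangle$ a basic Vietoris-open set. Then $\Phi^{-1}((A,\mathcal{U})) = \{ h : h(A) \subset \bigcup_i V_i \text{ and } h(A) \cap V_i \neq \emptyset \text{ for each } i\}$. The condition $h(A) \subset \bigcup_i V_i$ is exactly $h \in (A, \bigcup_i V_i)$, a sub-basic $c$-open set since $A$ is compact (closed in compact $X$) and $\bigcup_i V_i$ is open. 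The condition $h(A) \cap V_i \neq \emptyset$ is an \emph{open} condition in the $c$-topology too: if $h(A) \cap V_i \neq \emptyset$, pick $a \in A$ with $h(a) \in V_i$; then $h \in (\{a\}, V_i)$, and every $g$ in this neighbourhood also satisfies $g(A) \cap V_i \neq \emptyset$. Hence $\Phi^{-1}((A,\mathcal{U}))$ is open, so $\Phi$ is continuous.

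The remaining and most delicate step is that $\Phi$ is an \emph{embedding}, i.e. that the inverse $\Phi^{-1}: \Phi(\mathcal{C}(X,X)) \to \mathcal{C}(X,X)$ is continuous — equivalently, that $\Phi$ is open onto its image, equivalently that the $c$-topology on $\mathcal{C}(X,X)$ is recovered as the subspace $p$-topology pulled back through $\Phi$. Here I would show that each sub-basic $c$-open set $(C,U) \subset \mathcal{C}(X,X)$ (with $C$ compact, $U$ open in $X$) is the $\Phi$-preimage of a $p$-open set in $\mathcal{C}(2^X,2^X)$: indeed $h \in (C,U)$ iff $h(C) \subset U$ iff $h_*(C) \in \langle U \rangle$ (the Vietoris-open set of closed sets contained in $U$), so $(C,U) = \Phi^{-1}\big( (C, \langle U \rangle) \big)$ where $(C,\langle U\rangle)$ is $p$-sub-basic in $\mathcal{C}(2^X,2^X)$ since $C$, viewed as the point $C \in 2^X$, is a single point and $\langle U \rangle$ is open. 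This matches the two topologies on the level of sub-bases and finishes the proof. The one subtlety to handle with care is that $C$ must genuinely be regarded as an element of $2^X$ (so one uses compactness of $X$ to know $C \in 2^X$ when $C$ is a closed — hence compact — subset; for general compact $C \subset X$ in a Hausdorff $X$ this is automatic), and that $\langle U \rangle = \langle U \rangle$ is indeed Vietoris-open; both are routine. I expect the bookkeeping around "which points of $2^X$ serve as the evaluation points" to be the only place where one can slip, so I would state explicitly that evaluation at the point $C \in 2^X$ is what converts a compact-open constraint on $X$ into a point-open constraint on $2^X$.
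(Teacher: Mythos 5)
Your proposal is correct, and its core is the same as the paper's: the induced-map assignment $f\mapsto f_*$, injectivity via the singletons $\mathcal{F}_1(X)\equiv X$, and continuity checked on sub-basic sets $(A,\langle V_1,\dots,V_k\rangle)$ of the $p$-topology. Two points of comparison are worth recording. For continuity, the paper writes the preimage as $(A,\bigcup_i V_i)\setminus\bigcup_j\{f\in\mathcal{C}(X,X): f(A)\cap V_j=\emptyset\}$ and proves each set $\{f: f(A)\cap V_j=\emptyset\}$ is $c$-closed by a net argument; your direct observation that $\{f: f(A)\cap V_j\neq\emptyset\}$ is the union of the sub-basic $c$-open sets $(\{a\},V_j)$ over $a\in A$ gives the same openness more transparently. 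More substantively, the paper stops after injectivity and continuity and then simply asserts that the map is an embedding and that the two topologies coincide; it never verifies the reverse direction, namely that every $c$-open set is the trace of a $p$-open set on the image. Your identity $(C,U)=\Phi^{-1}\bigl((C,\langle U\rangle)\bigr)$, obtained by evaluating at the point $C\in 2^X$ (legitimate because a compact subset of the Hausdorff space $X$ is closed, hence an element of $2^X$ when nonempty), is precisely this missing half and, together with injectivity, yields openness of $\Phi$ onto its image. So your argument is not a different route but a completed version of the paper's proof.
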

\begin{proof}
Define the map $i:(\mathcal{C}(X,X),c)\rightarrow (\mathcal{C}(2^X,2^X),p)$ as $i(f)=f_{\ast}$ where $f_{\ast}(A)=f(A)$. As $\mathcal{F}_1(X)\equiv X$, we can easily see that $i$ is one-one.

Now let $(A,\langle U_1, U_2,\ldots,U_k\rangle)$ be a sub-basic open set in $(\mathcal{C}(2^X,2^X),p)$. We check whether $i^{-1}((A,\langle U_1, U_2,\ldots,U_k\rangle))\cap \mathcal{C}(X,X)$ is open in the $c$-topology of $\mathcal{C}(X,X)$.

\textbf{Claim:} $i^{-1}((A,\langle U_1, U_2,\ldots,U_k\rangle))\cap \mathcal{C}(X,X)= (A,U)\setminus \bigcup_{j=1}^n \lbrace f\in \mathcal{C}(X,X): f(A)\cap U_j= \emptyset\rbrace$ in $\mathcal{C}(X,X)$, where $U=\bigcup_{i=1}^n U_i$.

For some $j$, let $\overline{\lbrace f\in \mathcal{C}(X,X): f(A)\cap U_j= \emptyset\rbrace}$.
Then there is a net $\{f_{n}\}$ in $\lbrace f\in \mathcal{C}(X,X): f(A)\cap U_j=\emptyset\rbrace$ such that $f_n \longrightarrow h$ in the $c$-topology of $\mathcal{C}(X,X)$. Then $f_n(A)\longrightarrow h(A)$ and since for each $n$, $f_n(A)\cap U_j = \emptyset \Rightarrow h(A)\cap U_j = \emptyset \Rightarrow\lbrace f\in \mathcal{C}(X,X): f(A)\cap U_j= \emptyset\rbrace$ is closed set. Therefore $\bigcup_{j=1}^n \lbrace f\in \mathcal{C}(X,X): f(A)\cap U_j= \emptyset\rbrace$ is a closed set in $c$-topology of $\mathcal{C}(X,X)$. So $(A,U)\setminus \bigcup_{j=1}^n \lbrace f\in \mathcal{C}(X,X): f(A)\cap U_j= \emptyset\rbrace$ is open.

Now for any $g\in i^{-1}((A,\langle U_1, U_2,\ldots,U_k\rangle))\cap \mathcal{C}(X,X) \Rightarrow i(g)=g_{\ast}\in ((A,\langle U_1, U_2,\ldots,U_k\rangle))\cap \mathcal{C}(X,X)\Rightarrow g_{\ast}(A)\subset \bigcup_{i=1}^n U_i$ and $g_{\ast}(A)\cap U_j \neq \emptyset  \Leftrightarrow   g\in (A,U)\setminus \bigcup_{j=1}^n \lbrace f\in \mathcal{C}(X,X): f(A)\cap U_j= \emptyset\rbrace$.

Therefore $i$ is continuous. Hence $\mathcal{C}(X,X)$ is embedded in $\mathcal{C}(2^X,2^X)$ and the compact-open topology of $\mathcal{C}(X,X)$  coincides with the point-open topology of $\mathcal{C}(2^X,2^X)$.
\end{proof}

We now discuss some results relating the space $\mathcal{C}(X)$ of continuous complex valued functions on a topological space $X$ and the corresponding space $\mathcal{C}(2^X)$.
\begin{lemma}
The map $i:\mathcal{C}(2^X)\rightarrow \mathcal{C}(X)$ defined as $i(f)=f|_{\mathcal{F}_1(X)}$ is a continuous map.
\end{lemma}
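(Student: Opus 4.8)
The plan is to exhibit a natural embedding $X \hookrightarrow 2^X$ and pull back continuous functions along it. Recall from the preliminaries that the map $\mathcal{F}_1 \colon X \to 2^X$ sending $x \mapsto \{x\}$ is a continuous embedding (indeed an isometry with respect to $H_d$); write $\mathcal{F}_1(X) \equiv X$ as the paper does. The composition $f \mapsto f|_{\mathcal{F}_1(X)}$ is then literally $f \mapsto f \circ \mathcal{F}_1$, so the content of the lemma is that precomposition with a fixed continuous map is continuous in the relevant function-space topologies.

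First I would fix the topology: since $\mathcal{C}(2^X)$ and $\mathcal{C}(X)$ carry the topology of pointwise convergence (as stated in the WAP discussion, $\mathcal{C}(\cdot)$ is the space of continuous complex-valued functions with the topology of pointwise convergence), I only need to check that if a net $f_\alpha \to f$ pointwise on $2^X$, then $f_\alpha|_{\mathcal{F}_1(X)} \to f|_{\mathcal{F}_1(X)}$ pointwise on $X$. But for a fixed $x \in X$, $(f_\alpha|_{\mathcal{F}_1(X)})(x) = f_\alpha(\{x\})$, and $\{x\}$ is a single fixed point of $2^X$, so $f_\alpha(\{x\}) \to f(\{x\})$ by the assumed pointwise convergence. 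This gives continuity immediately. (If instead one wanted the analogous statement for the compact-open topology, one would invoke that precomposition with a continuous map takes compact sets to compact sets, hence pulls sub-basic compact-open sets back to sub-basic compact-open sets — but here pointwise convergence suffices and is cleaner.)

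I would also remark in passing that $i$ is well defined — $f|_{\mathcal{F}_1(X)}$ is continuous because $\mathcal{F}_1$ is continuous and $f$ is — and that it is surjective, since any $g \in \mathcal{C}(X)$ extends to $2^X$ (for instance via $\hat g(A) = \max\{|g(a)| : a \in A\}$ when $g$ is real, or more simply since $\mathcal{F}_1(X)$ is closed in the normal space $2^X$ one may invoke Tietze); surjectivity is not needed for the statement but is worth noting for the later use of the lemma.

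There is no real obstacle here: the only thing to be careful about is matching the topologies correctly, i.e. making sure both spaces are being given pointwise-convergence topology (as declared earlier) rather than, say, the sup-norm topology, since on the latter the argument is even easier but the statement would be phrased differently. So the "hard part" is purely bookkeeping about which topology is in force; the mathematical core is the one-line observation that evaluation at $\{x\}$ is a continuous functional on $\mathcal{C}(2^X)$.
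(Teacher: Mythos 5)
Your proof is correct and is essentially the paper's argument in net language: the paper checks that the preimage of a sub-basic point-open set $(x,(a,b))$ in $\mathcal{C}(X)$ is the sub-basic set $(\{x\},(a,b))$ in $\mathcal{C}(2^X)$, which is exactly your observation that evaluation at the fixed point $\{x\}\in 2^X$ is continuous, so pointwise convergence is preserved under restriction. The asides on well-definedness and surjectivity are harmless but not needed.
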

\begin{proof}
In the $p$-topology of $\mathcal{C}(X)$, let $W=(x,(a,b))$ be a basic open set in $\mathcal{C}(X)$ then $i^{-1}(W)= (\lbrace x \rbrace, (a,b))$ which is open in the $p$-topology of $\mathcal{C}(2^X)$. So $i$ is a continuous map.
\end{proof}

\vskip .5cm

Not every $\alpha \in \mathcal{C}(2^X, 2^X)$ is induced  from some element in $\mathcal{C}(X,X)$. Those that do are called inducible. We note the following due to Janusz J. Charatonik and Wlodzimierz J. Charatonik \cite{CHA} for inducible mappings;

\begin{definition}\cite{CHA}
Let $X$ and $Y$ be compact Hausdorff spaces then $\alpha \in \mathcal{C}(2^X, 2^Y)$ is said to be \emph{inducible} if there is an $f\in \mathcal{C}(X,Y)$ such that $f_{\ast}=\alpha$.
\end{definition}

Given two mappings $g_1,g_2\in \mathcal{C}(2^X,2^Y)$, there is a relation $\prec$ defined as $g_1\prec g_2$ if and only if $g_1(A)\subset g_2(A)$ for all $A\in 2^X$. This relation is always reflexive, transitive and the symmetry exists if $g_1=g_2$.

\begin{theorem}\cite{CHA}  \label{cha}
Let $X$ and $Y$ be two compact Hausdorff spaces. A mapping  $g:2^X \rightarrow 2^Y$ is inducible if and only if each of the following three conditions are satisfied:
\begin{enumerate}
\item $g(\mathcal{F}_1(X))\subset \mathcal{F}_1(Y)$;
\item $A\subset B$ implies $g(A)\subset g(B)$ for every $A,B\in 2^X$.
\item $g$ is minimal with respect to the order $\prec$ i.e if there is an $f$ such that $f\prec g$ then $f=g$.
\end{enumerate}
\end{theorem}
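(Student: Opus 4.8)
I want to prove that a mapping $g : 2^X \to 2^Y$ between hyperspaces of compact Hausdorff spaces is inducible (i.e. $g = f_\ast$ for some $f \in \mathcal{C}(X,Y)$) precisely when the three conditions hold: (1) $g$ carries singletons to singletons, (2) $g$ is monotone with respect to inclusion, and (3) $g$ is $\prec$-minimal. The forward direction is the easy one, so I would dispatch it first: if $g = f_\ast$, then $f_\ast(\{x\}) = \{f(x)\} \in \mathcal{F}_1(Y)$ gives (1); $A \subseteq B \Rightarrow f(A) \subseteq f(B)$ gives (2); and for minimality, if $h \prec f_\ast$ then for each $A$ and each $a \in A$ we have $h(\{a\}) \subseteq f_\ast(\{a\}) = \{f(a)\}$, and since $h(\{a\})$ is a nonempty closed set it must equal $\{f(a)\}$; then monotonicity-type reasoning forces $f(A) = \bigcup_{a\in A} \{f(a)\} \subseteq h(A)$ for all $A$, while $h \prec f_\ast$ gives the reverse inclusion, so $h = f_\ast$. (Here one has to be a touch careful: $h$ need not a priori be monotone, but $h \prec f_\ast$ together with $h(\{a\}) = \{f(a)\}$ and the fact that $f_\ast(A) = \overline{\bigcup_a f_\ast(\{a\})}$ in the Vietoris topology is enough; I will spell this out.)

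**The reverse direction.** Assume (1), (2), (3). By (1), for each $x \in X$ there is a unique point, call it $f(x) \in Y$, with $g(\{x\}) = \{f(x)\}$; this defines a function $f : X \to Y$. The first substantive task is to show $f$ is continuous. I would argue this through the continuity of $g$ and the embedding $i : X \to 2^X$, $x \mapsto \{x\}$ (continuous by item (2) of the hyperspace facts recalled earlier): the composite $X \xrightarrow{i} 2^X \xrightarrow{g} 2^Y$ is continuous and by construction lands in $\mathcal{F}_1(Y) \cong Y$; identifying $\mathcal{F}_1(Y)$ with $Y$ via the embedding $i_Y$, this composite is exactly $i_Y \circ f$, so $f$ is continuous. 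The second and main task is to show $g = f_\ast$. Using monotonicity (2), for any $A \in 2^X$ and any $a \in A$ we have $\{a\} \subseteq A$, hence $\{f(a)\} = g(\{a\}) \subseteq g(A)$; therefore $\{f(a) : a \in A\} \subseteq g(A)$, and since $g(A)$ is closed, $f_\ast(A) = f(A) = \overline{\{f(a):a\in A\}} \subseteq g(A)$. This says $f_\ast \prec g$. But $f$ is continuous, so $f_\ast \in \mathcal{C}(2^X,2^Y)$, and now minimality (3) applied to $f_\ast \prec g$ forces $f_\ast = g$.

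**Where the difficulty lies.** The forward direction and the construction of $f$ are routine. The genuinely delicate point is continuity of $f$ — I need the hyperspace facts (the embedding $x \mapsto \{x\}$ is a homeomorphism onto its image, and $\mathcal{F}_1(Y)$ sits inside $2^Y$ as a copy of $Y$) to be cited precisely, and I need to know $g$ restricted along this embedding actually lands in $\mathcal{F}_1(Y)$, which is exactly condition (1). A secondary subtlety is making sure that in the reverse direction I am allowed to invoke minimality: this requires $f_\ast$ to be a legitimate competitor, i.e. a continuous map $2^X \to 2^Y$, which is why establishing continuity of $f$ must come before the minimality step. Once those two hinges are in place, monotonicity does all the remaining work almost formally. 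I expect no real obstacle beyond bookkeeping with the Vietoris topology and the identification $\mathcal{F}_1(\cdot) \equiv (\cdot)$, and I would keep the write-up short, leaning on the hyperspace lemmas already recorded in the paper.
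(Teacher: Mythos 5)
The paper itself gives no proof of this statement (it is quoted from \cite{CHA}), so your proposal has to stand on its own. Your sufficiency half is fine and is the intended route: define $f$ by $g(\{x\})=\{f(x)\}$ using (1), get continuity of $f$ from continuity of $g$ along the embedding $x\mapsto\{x\}$ together with the identification $\mathcal{F}_1(Y)\equiv Y$, use (2) to get $f_\ast\prec g$, and invoke (3). The genuine gap is in the necessity of (3) as you construe it. You claim that if $h\in\mathcal{C}(2^X,2^Y)$ satisfies $h\prec f_\ast$ then $h=f_\ast$, and you propose to deduce $f(A)\subseteq h(A)$ from $h\prec f_\ast$, $h(\{a\})=\{f(a)\}$ and $f_\ast(A)=\overline{\bigcup_{a\in A}f_\ast(\{a\})}$ alone. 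This cannot be ``spelled out,'' because it is false: take $X=Y=[0,1]$, $f=\mathrm{id}$, and $h(A)=\{\min A\}$. Then $h$ is continuous (the map $A\mapsto\min A$ is continuous for the Hausdorff metric), $h(A)\subseteq A=f_\ast(A)$ for every $A$, and $h(\{a\})=\{f(a)\}$ for every $a$, yet $h(\{0,1\})=\{0\}\neq\{0,1\}$. So an induced map is \emph{not} $\prec$-minimal among arbitrary mappings of hyperspaces, and no amount of Vietoris bookkeeping will rescue the step you flagged as ``to be spelled out.''

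What saves the theorem, and what \cite{CHA} actually asserts, is that the minimality in (3) is taken within the class of mappings already satisfying (1) and (2); the competitor $h$ above violates (2). With that reading, the necessity of (3) needs none of your limit argument: $h\prec f_\ast$ forces $h(\{a\})=\{f(a)\}$ on singletons, and monotonicity of $h$ gives $\{f(a)\}=h(\{a\})\subseteq h(A)$ for every $a\in A$, hence $f(A)\subseteq h(A)\subseteq f_\ast(A)=f(A)$. Your sufficiency argument is unaffected by this correction, since the competitor you feed into (3) there is $f_\ast$ itself, which satisfies (1) and (2). So the repair is not a new idea but a restatement of what you are proving: replace ``minimal among all mappings'' by ``minimal among mappings satisfying (1) and (2)''; as literally written, your forward minimality step fails.
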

\vskip .5cm

\section{Algebra of Topological Dynamics and Enveloping Semigroups}

For this section we rely mostly on the basics as given in \cite{AK, AUS, REL, EL, ELL, F, JOIN, ES, HIN}. Though this study was initiated by Robert Ellis and most of this was developed in his work \cite{ELLIS}, we refrain from quoting this source since we have not read that work. We mention here that the same work has been rewritten as \cite{ELL} which turns out to be one of our primary sources in this section.

\subsection{Stone-$\check{C}$ech compactification $\beta T$ of discrete $T$}

 As noted earlier, the Stone-$\check{C}$ech compactification $\beta T$ of $T$ is determined as:

(a) $T\subset \beta T$, with $\beta T$ compact, Hausdorff

(b) $\overline{T}= \beta T$

(c) if $\tau:T\rightarrow Z$ be any function where $Z$ is compact Hausdorff space then there exists a unique continuous extension $\hat{\tau}$ of $\tau$, given as $ \hat{\tau}:\beta T\rightarrow Z$.

Let $T$ be a discrete group or monoid with the identity element $e$, so that $T$ is provided with an associative binary operation: $T\times T\rightarrow T$ as $(s,t)\mapsto st$, then the left multiplication map $L_{t}: \beta T\rightarrow \beta T$ as $p\mapsto tp$ is continuous for all $t\in T$ and right multiplication $R_p(q)= qp$ is also continuous for all $p\in \beta T$. Thus we see that the group or semigroup structure of $T$ can be extended to $\beta T$ and left $L_{t}$ and right multiplication $R_{p}$ are continuous. Also if $p,q\in \beta T$ such that $pq\in T$ then $p,q \in T$. So, if $p\in \beta T$ and it has an inverse $q$ i.e. $pq= e$ then  $e = pq \in T \ \Rightarrow\  p,q\in T$. So, the elements which has an inverse must be from $T$, not from $\beta T$.

The theory of \emph{Filters} is used for the construction of Stone-$\check{C}$ech compactification $\beta T$ of $T ( \neq \emptyset)$. We recall that for a given set $S$ with the partial ordering $\subseteq$, a family $F$ of subsets of $S$ is called a \emph{filter} if
(i) $\emptyset\notin F$, (ii) if $A, B \in F$ then $A\cap B\in F$, and (3) if $A\in F$ and $A\subseteq B$ then $B\in F$.
The maximal filter under $\subseteq$ is called an \emph{ultrafilter}. The structure of $\beta T$ is based on \textit{ultrafilters}.
\begin{center}
$\beta T = \lbrace \mathcal{U}: \mathcal{U} \text{ is an ultrafilter on T}\rbrace $.
\end{center}

and for $\emptyset\neq A\subset T$, the hull of A is defined as  $h(A)=\lbrace \mathcal{U} \in \beta T : A\in \mathcal{U}\rbrace$.

Let $t \in T$. Then the collection $h(t)= \lbrace A : t\in A\subset T\rbrace$ is an ultrafilter on $T$ and $\tau=\lbrace \Gamma \subset \beta T: \text{for every}\ \mathcal{U} \in \Gamma,\ \exists\ A \in \mathcal{U}\ \text{such that}\ h(A)\subset \Gamma\rbrace$ is a topology on $\beta T$, $\mathcal{B}=\lbrace h(A): A\subset T\rbrace$ is base for $\tau$ and $h(A)$ is the closure of a subset $A$ in $\beta T$.

\vskip .5cm

Also $\beta T$ is compact Hausdorff space and so $(\beta T, T)$ is a flow.

\vskip .5cm
Now for any $\mathcal{U} \in \beta T$ and $t\in T,\ t \mathcal{U}  = \lbrace tA: A\in \mathcal{U}\rbrace = \lbrace A: t^{-1}A\in \mathcal{U} \rbrace$. Since $R_{t}(A)= At$ and $L_{t}(A)= tA$. So, $ t \mathcal{U}=\lbrace tA: L_{t}^{-1}(A)\in \mathcal{U}\rbrace$. So, $A\in t \mathcal{U} \Rightarrow L_{t}^{-1}(A)\in \mathcal{U} \Rightarrow t\in \lbrace s: L_{s}^{-1}(A)\in \mathcal{U} \rbrace \Rightarrow \lbrace s: L_{s}^{-1}(A)\in \mathcal{U} \rbrace \in h(t)$ and vise-versa. So,\ $A\in \mathcal{U} t\Leftrightarrow t\in \lbrace s: L_{s}^{-1}(A)\in \mathcal{U} \rbrace \Leftrightarrow \lbrace s: L_{s}^{-1}(A)\in \mathcal{U} \rbrace \in h(t)$.

From this point of view, we can generalize the concept of \textit{product} of two ultrafilter by identifying $t\equiv h(t)$ is an ultrafilter. So, we can define $A\in \mathcal{U}\mathcal{V} \Leftrightarrow \lbrace s: L_{s}^{-1}(A)\in \mathcal{V} \rbrace \in \mathcal{U}$.

So, for $T$ to be a semigroup, $t\in T,\ A\subset T$ and $\mathcal{U} \in \beta T,\ tA=L_{t}(A)= \lbrace ta: a\in A \rbrace$. Then $\mathcal{U} \ast A = \lbrace s: L_{s}^{-1}(A)\in \mathcal{U} \rbrace$ and if $T$ is a semigroup and $\mathcal{U},\mathcal{V}\in \beta T$ and\ $\omega= \lbrace A\subset T: \mathcal{U} \ast  A \in \mathcal{V}\rbrace$. Then $\omega$ is an ultrafilter on $T$.

So, the \textit{product} of ultrafilters is defined to make it a semigroup. For $\mathcal{U},\mathcal{V}\in \beta T$, \textit{Product} is defined as $\mathcal{U}\mathcal{V}= \lbrace A\subset T:\mathcal{U} \ast A \in \mathcal{V} \rbrace$ and the result
$A\ast(\mathcal{U}\mathcal{V})= (A\ast\mathcal{U})\ast\mathcal{V}$ proves that for $\mathcal{U},\mathcal{V},\mathcal{W}\in \beta T \Rightarrow (\mathcal{U}\mathcal{V})\mathcal{W}=\mathcal{U}(\mathcal{V}\mathcal{W})$, making $\beta T$ a semigroup.

 Thus for the semigroup $\beta T$ and for $t\in T,\ \mathcal{U}\in \beta T,\ R_{\mathcal{U}}:\beta T\rightarrow \beta T$ and $L_{t}:\beta T\rightarrow \beta T$ are right and left multiplications then both are continuous.

\vspace{ .5cm}

If $(X,T)$ is a point transitive flow, there always exists an epimorphism $\psi: (\beta T,T) \rightarrow (X,T)$ means every point transitive flow is homomorphic image of $\beta T$.

Since $(X,T)$ is point transitive then $\exists$ $x_0 \in X$ such that $\overline{Tx_0}= X$. Define a map $\eta:T \rightarrow X$ defined by $\eta(t)=tx_0$, then there is a unique continuous extension $\hat{\eta}(p) =px_0 \ \forall\ p \in \beta T.\qquad [because \ \overline{T}=\beta T]$

 Since $(X,T)$ is a flow, so, $(t t')x_0 = t(t'x_0)$. So, by the uniqueness of extensions, we get; $(tp)x_0 =t(px_0)$. Now for $p\in \beta T$ and $t\in T$, $\hat{\eta}(tp)=(tp)x_0=t(px_0)=t \hat{\eta}(p)$. So $\hat{\eta}$ is a flow homomorphism.

\subsection{Enveloping Semigroups $E(X)$ for flows $(X,T)$}

 Since $T$ acts on $X$ then every point of $T$ gives a mapping  on the space $X$. Therefore the space $X^X$ also plays an important role in the theory. Since $X$ is compact, $X^X$ - the space of all maps from $X$ to $X$, is also compact Hausdorff space endowed with the topology of pointwise convergence. We denote $f(x)$ as the image of $x$ under $f$ and $fg$ is the composition of $f$ and $g$ and $\rho: X^X \times X^X\rightarrow X^X$ is the map defined as $\rho(f,g)=fg \ \forall\ f,g\in X^X$, which provides the semigroup structure on $X^X$. Now for every $f \in X^X,\ \rho^{f}: X^X \rightarrow X^X$ defined as $\rho^{f}(g) = gf$  and  $\rho_{f}: X^X \rightarrow X^X$ as $\rho_{f}(g)=fg$ are continuous. Thus $X^X$ is a semigroup.

If $X^X\times X\rightarrow X$ is given as $(f,x)\mapsto f(x)$ then $(f_1f_2,x)\mapsto (f_1f_2)(x)=f_1(f_2(x))=(f_1,(f_2(x))=(f_1,(f_2,x))$ and $(id,x)\mapsto id\cdot x=x$, which shows that $X^X$ acts on $X$.

If T is any discrete subgroup of $X^X$ which consists entirely of continuous maps, then $(X,T)$ is a flow.
Conversely, if $\pi(X,T)$ is a flow then consider the set $A=\lbrace\pi^t: t\in T\rbrace$ where $\pi^t:X\rightarrow X$ defined as $\pi^t(x)=tx$ with each  $\pi^t$ continuous and $\pi^{t_1}\pi^{t_2}(x)=\pi^{t_1}(t_2 x)= t_1(t_2x)= (t_1 t_2)x=\pi^{t_1t_2}(x)$ and $\pi^{t^{-1}}(x)=t^{-1}x=(\pi^t)^{-1}(x)$, so $\pi^{t^{-1}}=(\pi^{t})^{-1}$.
So, $A$ is a discrete subgroup of $X^X$ consisting of continuous maps.

Now the map $T\to X^X$ defined as $t\to \pi^{t}$ has a continuous extension $\Phi_{X}:\beta T \rightarrow X^X$ and $\Phi_{X}(\beta T)=\overline{\lbrace\pi^{t}:t\in T\rbrace}=E(X, T) = E(X)$ is a subgroup of $X^X$ called the \emph{Enveloping semigroup} of the flow $(X,T)$. We give a simple example here:

\begin{example} \label{EX 5.1}

Consider the cascade $([0,1],f)$ where $f(x)=x^2$.
Enveloping semigroup $E(X)$ here is just the two point compactification of $\Z$ as $f^{n} \longrightarrow g_{1}$ and $f^{-n} \longrightarrow g_{2}$ as $n \longrightarrow \infty$, where

\centerline{$g_{1}(x)= \left\{
             \begin{array}{ll}
               0, \ &  x\in [0,1);  \\
               1, & x = 1.
             \end{array}
           \right.$
and $g_{2}(x)= \left\{
                 \begin{array}{ll}
                   0, & x = 0; \\
                   1, &  x\in (0,1].
                 \end{array}
               \right.$}

\end{example}

Enveloping semigroups are the most essential tools in the algebraic theory of topological dynamics. Their algebraic structure leads to characterization of some important dynamical properties of the flow $(X,T)$. We briefly recall some results:

\begin{itemize}

\item  The action $T\times E(X)\rightarrow E(X)$ as $(t,p)\mapsto \pi^{t}p$, where $\pi^{t}=L_{t}:E(X)\rightarrow E(X)$ as $L_{t}(q)=tq$ and $E(X)$ is point transitive.

     \item $\Phi_{X} :\beta T\rightarrow E(X)$ is both a flow and a semigroup homomorphism.

      \item The map $\psi :E(X)\rightarrow X$ as $p\mapsto px$ is a flow homomorphism for all $x\in X$.

       \item The map $\Phi_{\beta T}: \beta T\rightarrow E(\beta T, T)$ is an isomorphism.

       \item Let $\phi:(X,T)\rightarrow (Y,T)$ be a homomorphism of flows, then $\phi(px)=p\phi(x)$ for all $x\in X$ and $p\in \beta T$.

\item Let $f:(X,T)\rightarrow (Y,T)$ be a surjective flow homomorphism. Then there exist a map $\theta:E(X)\rightarrow E(Y)$ defined as $\theta(a)= \phi_{Y}(p)$ where $p \in \beta T$ with $\phi_{X}(p)= a$.

  (a) $\theta$ is surjective and continuous.

  (b) $\theta(pq)=\theta(p)\theta(q) \forall\ p,q\in E(X)$ so that $\theta$ is both flow and a semingroup homomorphism.

  (c) If $\psi:E(X)\rightarrow E(Y)$ is a homomorphism with $\psi\phi_{X}=\phi_{Y}$ then $\theta=\psi$.

%\begin{proof}
%Let $a\in E(X)$ and define $\theta(a)=\Phi_{Y}(p)$ where $p\in \beta_{T}$ such that $\Phi_{X}(p)=a$.
%\\ \textbf{$\theta$ is well defined}: Suppose $p,q\in \beta T$ such that $\Phi_{\beta T}=\Phi_{X}(q)$ and Let $y\in Y$ \\ Since $f$ is onto, so $\exists\quad x\in X$ such that $f(x)=y$, \\then $y\Phi_{Y}(p)=f(x)\Phi^{Y}(p)=f(x\Phi_{X}(p))=f(x\Phi_{X}(p))=f(x)\Phi_{Y}(q)=y\Phi_{Y}(q)$ \\ so $\Phi_{Y}(p)=\Phi_{Y}(q)$. \\ So, if $a=b\Rightarrow \theta(a)=\theta(b)$ \\ Also $\theta\Phi_{Y}(p)=\Phi_{Y}(p)\qquad$ [$\because$ by definition]
%\\ So, the top half is commutative............................
%\end{proof}

 \item If $\pi: T\times X\rightarrow X$ is a continuous action then we can define $\pi_{1}:T\times (X\times X)\rightarrow X\times X$ by
\begin{center}
$\pi_{1}(t,(x,y))=(\pi(t,x),\pi(t,y))=(tx,ty)$.
\end{center}
So, in view of last result, $E(X)\cong E(X\times X)$.

\item For the flow $(X,T)$ if $x_{0}\in X$ is such that $\overline{Tx_{0}}=X$ and $\theta:E(X)\rightarrow X$ be defined by $\theta(p)=px_{0} \ \forall\ p\in E(X)$ then $\theta$ induces an isomorphism $E(E(X))\cong E(X)$.
\end{itemize}

\vskip .5cm

We observe that even if $X$ is a countable space,  $E(X)$ can be uncountable.

\begin{example} \label{EX 5.2}

For each $n\in \N$, let $X_{n}=\lbrace (r,\theta)=(\frac{1}{2^n},\frac{2k\pi}{2^n}(mod\ 2\pi)): k=0,1,2,\ldots\rbrace$
and $X= \bigcup\limits_{n\in \N} X_{n} \bigcup \lbrace (0,0) \rbrace$ as a subspace of $\mathbb{R}^2$.

\begin{center}

Define $f:X\rightarrow X$ as $f(r,\theta)=(r, \theta + 2\pi r(mod\ 2\pi))$.

\end{center}

For any $s\in \N$,

\begin{center}

$f^{s}(\frac{1}{2^n},\theta)=(\frac{1}{2^n}, \theta + \frac{2s\pi}{2^n}(mod\ 2\pi))$

\end{center}

Let $s$ be a 2-adic integer. Suppose $s= 2^{s_1}+2^{s_2}+ \ldots +2^{s_r}$ then
$f^{s}(\frac{1}{2^n},\theta)=(\frac{1}{2^n},\theta + 2\pi( \frac{2^{s_1}+2^{s_2}+ \ldots +2^{s_r}}{2^n})(mod\ 2\pi))=(\frac{1}{2^n},\theta + 2\pi(\frac{1}{2^{n-{s_1}}}+\frac{1}{2^{n-{s_2}}}+\ldots+\frac{1}{2^{n-{s_r}}})(mod\ 2\pi))$.

Let $a = \ldots 10101 = 1+4+16+ \ldots$ be a 2-adic integer.
Then for the function $f_{a}$ defined as $f_{a}(r,\theta) = (r,\theta + 2k\pi a_{n}(mod\ 2\pi))$, where $a_{2k} = \frac{1}{2^2}+\frac{1}{2^4}+\frac{1}{2^6}+\ldots + \frac{1}{2^{2k-2}}+\frac{1}{2^{2k}}$ and $a_{2k+1}= \frac{1}{2}+\frac{1}{2^3}+\frac{1}{2^5}+\ldots + \frac{1}{2^{2k-1}}+\frac{1}{2^{2k+1}}$, we see that $f_a$ will be a member of $E(X)$ corresponding to $a$.

In general for any 2-adic integer $q$ where $q= \lbrace x_{n}
\rbrace_{n\in \N}$ as $x_{n}=0,1$,
the corresponding map $f_{q}\in E(X)$ will be defined as
$f_{q}(r,\theta)= (r, \theta + 2k\pi a_{n})$
where $a_{n}= \frac{1}{2^n} \sum\limits_{n\in \N} \ x_{k} 2^k$ and $r=
\frac{1}{2^n}$.

Hence the enveloping semigroup $E(X)$ is isomorphic to the group of 2-adic integers.
Observe that $X$ is a countable space but $E(X)$ is uncountable.
\end{example}

\begin{example} \label{EX 5.3} Consider the cascade $(X,f)$, where $X= \lbrace (r,\theta): 0\leq \theta \leq 2\pi,\ r\in \lbrace 1- \frac{1}{2^n}:n\in \N\rbrace  \cup \lbrace 0,1 \rbrace \rbrace $ and $f(r,\theta)=(r, \theta + 2\pi r)$. For any $k\in \N$, $f^{k}(1- \frac{1}{2^n},\theta)=(1- \frac{1}{2^n},\theta + 2k\pi(1- \frac{1}{2^n}))=(1- \frac{1}{2^n}, \theta - \frac{2k\pi}{2^n})$.
$f_{a}\in E(X)$ corresponding to the 2-adic integer
$a = \ldots 10101 = 1+4+16+ \ldots$ can be described as follows: $f_{a}(r,\theta) = (r,\theta - 2k\pi a_{n})$, where
$a_{2k} = \frac{1}{2^2}+\frac{1}{2^4}+\frac{1}{2^6}+\ldots + \frac{1}{2^{2k-2}}+\frac{1}{2^{2k}}$ and $a_{2k+1}= \frac{1}{2}+\frac{1}{2^3}+\frac{1}{2^5}+\ldots + \frac{1}{2^{2k-1}}+\frac{1}{2^{2k+1}}$. Also any integer $p$ will be equivalent to a 2-adic terminates at zeroes. So $p$ will be associate with the map $f^{p}$ from the enveloping semigroup. In general for any 2-adic integer $q$ where $q= \lbrace x_{n} \rbrace_{n\in \N}$ as $x_{n}=0,1$, the corresponding map $f_{q}\in E(X)$ will be defined as $f_{q}(r,\theta)= (r, \theta - 2k\pi a_{n})$ where $a_{n}= \frac{1}{2^n} \sum\limits_{n\in \N} \ x_{k} 2^k$ and $r= 1- \frac{1}{2^n}$.

%$\frac{2^{2k}-1}{3\cdot2^{2k}} \longrightarrow \frac{1}{3}$, $a_{2k+1} = \frac{2^{2k+2}-1}{3 \cdot 2^{2k+1}}\longrightarrow \frac{2}{3}$.
It's enveloping semigroup $E(X)$ is isomorphic to the group of 2-adic integers.
\end{example}

\begin{example} \label{EX 2.2} Consider the cascade $(X,f)$, where $X= \lbrace (r,\theta): 0\leq \theta \leq 2\pi,\ r\in \lbrace 1- \frac{1}{3^n}:n\in \N\rbrace  \cup \lbrace 0,1 \rbrace \rbrace $ and $f(r,\theta)=(r, \theta + 2\pi r)$.

\vskip .5cm

For any $k\in \N$, $f^{k}(1- \frac{1}{3^n},\theta)=(1- \frac{1}{3^n},\theta + 2k\pi(1- \frac{1}{3^n}))=(1- \frac{1}{3^n},\theta - \frac{2k\pi}{3^n})$.
$f_{a}\in E(X)$ corresponding to the 2-adic integer
$a = \ldots 10101 = 1+9+27+ \ldots$ can be described as follows: $f_{a}(r,\theta) = (r,\theta - 2k\pi a_{n})$, where
$a_{2s} = \frac{1}{3^2}+\frac{1}{3^4}+\frac{1}{3^6}+\ldots + \frac{1}{3^{2s-2}}+\frac{1}{3^{2s}}$ and $a_{2s+1}= \frac{1}{3}+\frac{1}{3^3}+\frac{1}{3^5}+\ldots + \frac{1}{3^{2s-1}}+\frac{1}{3^{2s+1}}$. By the same procedure in last example, it's enveloping semigroup is isomorphic to the group of 3-adic integers. In general for a prime $p$, and cascade $(X,f)$, where $X= \lbrace (r,\theta): 0\leq \theta \leq 2\pi,\ r\in \lbrace 1- \frac{1}{p^n}:n\in \N\rbrace  \cup \lbrace 0,1 \rbrace \rbrace $ and $f(r,\theta)=(r, \theta + 2\pi r)$ is a self-map on $X$, it's enveloping semigroup $E(X)$ is isomorphic to the group of p-adic integers.
\end{example}

\underline{QUESTION:} What can we say about $E(X)$ for the cascade $(X,f)$, when $X= \lbrace (r,\theta): 0\leq \theta \leq 2\pi,\ r\in \lbrace 1- \frac{1}{j^n}:n\in \N, j =2,3 \rbrace  \cup \lbrace 0,1 \rbrace \rbrace $ and $f(r,\theta)=(r, \theta + 2\pi r)$ is a self-map on $X$.

In particular, what can we say about $E(X)$ for the skew product $(X,f)$, when $X= \lbrace (r,\theta): 0\leq \theta \leq 2\pi,\ r\in [0,1]\} $ and $f(r,\theta)=(r, \theta + 2\pi r)$ is a self-map on $X$.

\subsection{Ideals in $E(X)$}

Let $(X,T)$ be a flow and $E(X)$ be its enveloping semigroup. Then a non empty subset $I\subset E(X)$ is called the \emph{left ideal} if $E(X)\cdot I\subset I$; i.e., $\alpha \in I$, $p\in E(X) \Rightarrow p \alpha \in I$.  Throughout this article an \emph{ideal} for us  is  a left ideal.

$I$ is a \emph{minimal ideal} if and only if $I$ is closed in $E(X)$ and  does not contain any left ideal as a proper subset. For a minimal ideal $I$, the flow $(I,T)$ is minimal. Again Zorn's Lemma guarantees that for any flow $(X,T)$, if $I$ is any ideal in $E(X)$ then $I$ contains a minimal ideal. We recall an important lemma due to K. Nakamura here:

\begin{lemma}\cite{NAMA}
If $G$ is a compact semigroup for which one-sided multiplication $x\rightarrow xx_0$ is continuous, then $G$ contains an idempotent.
\end{lemma}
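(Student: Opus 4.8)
The statement to prove is K.\ Nakamura's lemma: a compact semigroup $G$ in which every right multiplication $R_{x_0}:x\mapsto xx_0$ is continuous contains an idempotent. This is the classical Ellis--Numakura idempotent lemma, and the plan is to run the standard Zorn's lemma argument on closed subsemigroups.

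\medskip

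\noindent\textbf{Plan of proof.} First I would consider the collection $\mathcal{S}$ of all nonempty closed subsemigroups of $G$, partially ordered by reverse inclusion. This collection is nonempty since $G\in\mathcal{S}$. Given a chain $\{S_\alpha\}$ in $\mathcal{S}$, its intersection $\bigcap_\alpha S_\alpha$ is closed, and it is nonempty by the finite intersection property (each $S_\alpha$ is closed in the compact space $G$, and finite subcollections of a chain have nonempty intersection since the chain is totally ordered); it is also a subsemigroup because each $S_\alpha$ is. Hence every chain has an upper bound, and Zorn's lemma yields a minimal element $M\in\mathcal{S}$: a nonempty closed subsemigroup of $G$ containing no proper nonempty closed subsemigroup.

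\medskip

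\noindent The next step is to fix any $x_0\in M$ and examine $Mx_0=R_{x_0}(M)$. Since $M$ is a subsemigroup, $Mx_0\subseteq M$; since $M$ is compact and $R_{x_0}$ is continuous, $Mx_0$ is a compact, hence closed, nonempty subset of $M$; and $Mx_0$ is itself a subsemigroup because $(ax_0)(bx_0)=(ax_0 b)x_0\in Mx_0$ as $ax_0b\in M$. By minimality of $M$ we conclude $Mx_0=M$. In particular the set $N=\{y\in M : yx_0=x_0\}$ is nonempty. This $N$ is closed (it is $R_{x_0}^{-1}(\{x_0\})\cap M$, and $R_{x_0}$ is continuous while $\{x_0\}$ is closed in the Hausdorff — or at least $T_1$ — space; if Hausdorffness is not assumed one argues directly that $N$ is the preimage of a point under the continuous map and invokes that singletons are closed, which holds here since $G$ sits inside a Hausdorff space in all our applications) and it is a subsemigroup since $yx_0=zx_0=x_0$ implies $(yz)x_0=y(zx_0)=yx_0=x_0$. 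Again minimality of $M$ forces $N=M$, so in particular $x_0\in N$, i.e.\ $x_0x_0=x_0$. Thus $x_0$ is an idempotent in $G$.

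\medskip

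\noindent\textbf{Main obstacle.} The argument is short and the only delicate points are set-theoretic/topological bookkeeping rather than deep: one must verify carefully that the intersection of a chain of nonempty closed subsemigroups is again nonempty (using compactness and the finite intersection property) and that the various sets produced ($Mx_0$ and $N$) are genuinely closed — for $Mx_0$ this uses compactness of $M$ plus continuity of $R_{x_0}$, and for $N$ it uses that points are closed. No left-continuity or joint continuity is needed anywhere, which is precisely why the lemma applies to enveloping semigroups, where only right multiplications are continuous. I would present the three minimality applications (to get $M$, then $Mx_0=M$, then $N=M$) as the backbone and keep the topological verifications as brief parenthetical remarks.
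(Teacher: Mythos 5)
Your proposal is correct and follows essentially the same route as the paper: Zorn's lemma on compact (closed) subsemigroups to obtain a minimal one $M$, then minimality applied twice to conclude $Mx_0=M$ and $\{y\in M: yx_0=x_0\}=M$, forcing $x_0^2=x_0$. The only difference is cosmetic bookkeeping (your explicit chain/finite-intersection and closedness remarks), not a different argument.
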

\begin{proof}
Let $A$ be a minimal subset of $G$ satisfying $(i)AA\subset A$, $(ii)A$ is compact. Since $G$ itself satisfy these properties, so such a family will be non empty and Zorn's lemma will guarantee the existence of a minimal set of this kind. Take any $u\in A$. $Au \subseteq A$ is compact and, moreover $Au.Au \subset Au$ whence $Au=A$. So for some $v\in A, vu=u$. Let $A^{\prime}= \lbrace v: vu=u \rbrace$. The set $A^{\prime}$ is non empty, compact, and $A^{\prime} A^{\prime} \subset A^{\prime}$. Thus $A^{\prime}=A$, and hence $u\in A^{\prime}$, and so $u^2=u$.
\end{proof}

Since every continuous map from a compact space to a Hausdorff space is a closed map. So, consequently if $G$ is a compact Hausdorff semigroup such that the maps $R_{x}: G\rightarrow G$ given as $R_{x}(y)= yx $ are continuous $\forall\ x\in G$ then $\exists$ an \emph{idempotent}  $u^2 = u \in G$.

Thus every minimal ideal contains an idempotent. The idempotents in a minimal ideal has an important role.

\vspace{0.5cm}

Let $(X,T)$ be a flow and $I\subset E(X)$ be a minimal ideal in $E(X)$ then;

\begin{itemize}

\item The set $J$ of idempotents of $I$ is non empty.

\item An idempotent $u\in E(X)$ is said to be minimal if $u$ is contained in some minimal ideal $I\subseteq E(X)$.

\item $pv=p$ for all $v\in J$ and $p\in I$.

\item $vI$ is a group with identity $v$ for all $v \in J$.

\item $\lbrace vI: v\in J\rbrace$ is a partition of $I$, and

\item If we set $G=uI$ for some $u\in J$, then $I=\biguplus\lbrace vG: v\in J\rbrace$, where $\biguplus$  denotes  disjoint union.

\item We define an equivalence relation $\sim $ on $J$, the set of idempotents in $E(X)$ as:

\begin{center}
$u\sim v\Leftrightarrow uv=u$ and $vu=v$.
\end{center}
and  so if $u\sim v$ then we say $u$ and $v$ are equivalent. If $I,K\subseteq E(X)$ are minimal ideals in $E(X)$ and $u^{2}=u\in I$ is an idempotent then $\exists$ a unique idempotent $v\in K$ with $uv=v$ and $vu=v$.

\end{itemize}

We note an important result proved by Ellis  \cite{ELLIS} and given as an exercise in \cite{ELL}. We include the proof here since we need these ideas for some of our results later.
\begin{theorem}\label{3.1}
In a flow $(X,T)$, if $I,K\subset E(X)$ are two minimal ideals then $(I,T)$ and $(K,T)$ are isomorphic.
\end{theorem}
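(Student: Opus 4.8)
The plan is to exploit the idempotent structure that has just been recalled. Fix minimal ideals $I, K \subset E(X)$. By the Nakamura lemma and the remarks following it, $I$ contains an idempotent $u$, and by the last bullet before the theorem there is a (unique) idempotent $v \in K$ with $uv = v$ and $vu = u$. The candidate isomorphism is $\phi : I \to K$ defined by $\phi(p) = pv$, i.e.\ $\phi = R_v|_I$. First I would check this is well-defined: since $K$ is a left ideal and $v \in K$, for $p \in I$ we have $pv \in E(X)v \subseteq K$ — wait, that gives $pv \in K$ only if $v$ generates; more carefully, $pv \in I v$, and $Iv$ is a left ideal of $E(X)$ contained in $K$ (because $v \in K$ and $K$ is a left ideal forces $Iv \subseteq E(X)v \subseteq$ ... ) — actually the clean argument is: $Iv$ is a nonempty left ideal contained in $I$? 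No. Let me restate: $Iv \subseteq Kv \subseteq K$ since $I \subseteq E(X)$ is irrelevant; rather $v\in K$ and $K$ minimal gives $E(X)v = K$, hence $Iv$ is a left ideal contained in $K$, and by minimality of $K$ we get $Iv = K$. So $\phi$ is onto $K$. Symmetrically define $\psi : K \to I$ by $\psi(q) = qu$, which maps onto $I$.

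Next I would verify $\phi$ and $\psi$ are mutually inverse. For $p \in I$, $\psi(\phi(p)) = (pv)u = p(vu) = pu$. Now $u$ is an idempotent in the minimal ideal $I$, and by the bullet "$pv = p$ for all $v \in J$ and $p \in I$" (applied within $I$ with its own idempotents) we need $pu = p$; this holds because $u \in J \cap I$ and $p \in I$. Hence $\psi \circ \phi = \mathrm{id}_I$, and symmetrically $\phi \circ \psi = \mathrm{id}_K$, using $uv = v$ and $qv = q$ for $q \in K$. Thus $\phi$ is a bijection. Continuity of $\phi = R_v|_I$ is immediate since right multiplication $R_v$ is continuous on $E(X)$ (enveloping semigroups are right topological), and likewise $\psi = R_u|_K$ is continuous; a continuous bijection between compact Hausdorff spaces is a homeomorphism.

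It remains to see $\phi$ is a homomorphism of flows, i.e.\ it commutes with the $T$-action and is a semigroup homomorphism. For the $T$-action: the action on $E(X)$ is left multiplication $L_t$, so $\phi(tp) = (tp)v = t(pv) = t\,\phi(p)$ by associativity — this is the easy part. The genuinely delicate point is the semigroup homomorphism property $\phi(p_1 p_2) = \phi(p_1)\phi(p_2)$, i.e.\ $p_1 p_2 v = (p_1 v)(p_2 v)$. Here one uses that $p_2 \in I$ and $v$ is an idempotent equivalent-in-the-sense-above to $u$; the key identity is that for $p_2 \in I$ one has $v p_2 v = p_2 v$, which should follow from $vp_2 = p_2$ — no, rather from the fact that $p_2 \in I$ implies $p_2 = p_2 u$ and $uv = v$, giving $p_2 v = p_2 u v = p_2 v$ trivially; the substantive step is inserting $v$ in the middle, which requires $p_2 v = v p_2 v$, equivalently $v$ acting as identity on the left of $p_2 v \in Iv = K$. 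This is exactly where I expect the main obstacle to lie, and I would resolve it by working inside the group $vK$ (which has identity $v$) and the group $uI$ (identity $u$), showing $R_v$ restricts to a group isomorphism $uI \to vK$ and then extending across the coset decomposition $I = \biguplus_{w \in J} wG$ recalled above. Concretely: decompose $p_1 = w_1 g_1$, $p_2 = w_2 g_2$ with $w_i$ idempotents and $g_i \in G = uI$, and push the computation through using $w v = v$ for every idempotent $w$ in $I$ (each idempotent of a minimal ideal satisfies this against the fixed $v$ of the paired minimal ideal, by the uniqueness bullet). Once that bookkeeping is done, the homomorphism property falls out, completing the proof that $(I,T) \cong (K,T)$.
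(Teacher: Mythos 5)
Your core argument is exactly the paper's: take the unique idempotent $v\in K$ paired with the idempotent $u\in I$ (so $uv=v$, $vu=u$) and use the right translation $R_v(p)=pv$. Your check that $R_u$ and $R_v$ are mutually inverse (via $pu=p$ on $I$, $qv=q$ on $K$, and $vu=u$, $uv=v$) is, if anything, a cleaner route to bijectivity than the paper's separate injectivity computation and net argument for surjectivity; together with continuity of right multiplication, compactness, and $(tp)v=t(pv)$, this already completes the proof of the statement. The part you should discard is your final paragraph: the theorem asserts an isomorphism of the \emph{flows} $(I,T)$ and $(K,T)$, so an equivariant homeomorphism is all that is required, and the paper proves nothing more. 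Moreover, the identity you hope to exploit there, $wv=v$ for \emph{every} idempotent $w\in I$, is false in general; only the paired idempotent satisfies $uv=v$, and while $R_v$ is multiplicative on the group $uI$ (where $vp=vup=up=p$), it need not be a semigroup homomorphism on all of $I$. So rather than trying to complete that ``bookkeeping'', simply delete it — the rest of your argument stands as written.
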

\begin{proof}
We know that every minimal ideal of $E(X)$ contains an idempotent. We say that that for idempotents $u$ and $v$, $u \sim v$ if and only if $uv=v$ and $vu=u$.

\textbf{Claim}: For two minimal ideals $I,K$ in $E(X)$ and for an idempotent $u\in I$, there is a unique idempotent $v\in K$ such that $u \sim v$.

For $u\in I$, consider the set $Ku=\lbrace ku: k\in K \rbrace$. Since $u \in I$ and $I$ is an ideal $\Rightarrow ku\in I$. Therefore $Ku\subset I$ and we can see that $Ku$ is also an ideal in $E(X)$. So $Ku=I$.

Now consider $N= \lbrace k\in K: ku=u\rbrace$. Since $Ku=I$ and $u\in I$. So there is a $k\in K$ such that $ku=u \Rightarrow N\neq \emptyset$. Also if $k_{1}, k_{2}\in N$ then $k_{1}k_{2}u=u\Rightarrow k_{1}k_{2}\in N$. So $N^{2}\subset N$. Also as $k\in N\Leftrightarrow ku=u \Leftrightarrow k\in R_{u}^{-1}(u)$. Therefore $N=R_{u}^{-1}(u) \cap K$.

Since $N^{2}\subset N$, let $\Sigma = \lbrace S \subset E(X) :  \emptyset \neq S , S^2 \subset S\rbrace$. Since $N \in \Sigma$, $\Sigma \neq \emptyset$ and by Zorn's lemma there exists a minimal element, $S$ of $\Sigma$ when the latter is ordered by inclusion.

 Let $s \in S$, $Ss = R_{s}(S)$ is a closed subset of $S$ because $R_{s}(S)$ is closed. $SsSs \subset SSs \subset Ss \subset S$, whence $Ss = S$ by the minimality of $S$.

Let $R = \lbrace t \in S : ts = s\rbrace$ and since $Ss=S$, $R\neq \emptyset$ and $R^2 \subset R = R_{s}^{-1}(s)= \overline{R}$. So $R=S$ therefore $s^2 = s$, which means there is a $v\in N$ such that $v^{2}=v$. So $vu=u$. By interchanging the role of $I$ and $K$, there will be a $w\in I$ such that $wv=v$.

Now since $u\in I$ and $u^{2}=u$, so $Iu=I$ as $I$ is minimal ideal. So for any $p\in I$, $p=\alpha u$, $\alpha \in I$. Therefore $pu=\alpha uu=\alpha u=p$. Here $w\in I$. So $w=wu=wvu=v=u \Rightarrow w=u$. So there is a $v\in K$ such that $uv=v$ and $vu=u$.

Suppose there is another $\eta \in K$ such that $u\eta=\eta$ and $\eta u = u$ then $\eta=u \eta=vu \eta=v \eta$. So $\eta \in vK$. Since ${\eta}^{2}= \eta$, $\eta \in \eta K \Rightarrow \eta \in vK \cap \eta K$ but $vK$ and $\eta K$ will be disjoint. Therefore $\eta=v$. Hence there is a unique $v\in K$ such that $uv=v$ and $vu=u$. So claim is true.

Now define $R_{v}:(I,T) \rightarrow (K,T)$ as $R_{v}(p)= pv$.

\textbf{Claim:} $R_{v}$ is an isomorphism.

If $R_{v}(p)= R_{v}(q)$ then $pv=qv \Rightarrow pvu =qvu \Rightarrow pu = qu \Rightarrow p=q$. Therefore $R_{v}$ is injective.

Now let any $\alpha \in K$. Since $K\subset E(X)$ is minimal, $\overline{\mathcal{O}(v)}= K$. So there is a net $\lbrace t_{i} \rbrace$ in $T$ such that $t_{i}v \longrightarrow \alpha \Rightarrow t_{i}uv \longrightarrow \alpha $ because $uv=v$. Now $\lbrace t_{i}\rbrace \in E(X)$, there is a subnet $\lbrace t_{i_{k}} \rbrace$ such that $t_{i_{k}} \rightarrow p$ where $p \in E(X) \Rightarrow t_{i_{k}}u \rightarrow pu$ and $pu\in I$.

$\alpha= \lim\limits_{i\rightarrow \infty} t_{i_{k}}uv=puv$ where $pu\in I$. So $\alpha= R_{v}(pu)$. Therefore $R_{u}$ is surjective. Since right multiplication in $E(X)$ is continuous and homomorphism, $R_{u}$ is an isomorphism.

\end{proof}

We again consider Example \ref{EX 5.1}:

\begin{example} \label{EX 5.5}

Consider the cascade $([0,1],f)$ where $f(x)=x^2$.
Enveloping semigroup $E(X)$ here is just the two point compactification of $\Z$ as $f^{n} \longrightarrow g_{1}$ and $f^{-n} \longrightarrow g_{2}$ as $n \longrightarrow \infty$, where

\centerline{$g_{1}(x)= \left\{
             \begin{array}{ll}
               0, \ &  x\in [0,1);  \\
               1, & x = 1.
             \end{array}
           \right.$
and $g_{2}(x)= \left\{
                 \begin{array}{ll}
                   0, & x = 0; \\
                   1, &  x\in (0,1].
                 \end{array}
               \right.$}

Note that here both $g_1$ and $g_2$ are idempotents. $f(g_1) = g_1$ and $f(g_2) = g_2$. Also $g_1(g_2) = g_2$ and $g_2(g_1) = g_1$.

So this cascade $(X,f)$ has two minimal ideals $I_1 = \{g_1\}$ and $I_2 = \{g_2\}$ each comprising of the singleten an idempotent.

\end{example}

We will be mostly dealing with  the study of cascades or semicascades and so for  $T=\Z$ or $T=\N \cup \lbrace 0 \rbrace$. We prove some of  the above properties for cascades, for which we have not seen any proof in print.
\begin{lemma} Let $\phi:(X,f)\rightarrow (Y,g)$ be a homomorphism then there exists $\Phi:E(X)\rightarrow E(Y)$ s.t. $\Phi(pq)=\Phi(p)\Phi(q)$ and $\phi(px)=\Phi(p)\phi(x)$. \end{lemma}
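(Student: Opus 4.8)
The plan is to mimic, for cascades, the construction already sketched in the flow case (where $\beta T$ provides the universal semigroup) but to carry it out directly inside $X^X$ and $Y^Y$ using only the generators $f$ and $g$. First I would recall that $E(X) = \overline{\{f^n : n \in \Z\}}$ in $X^X$ and $E(Y) = \overline{\{g^n : n \in \Z\}}$ in $Y^Y$, both with the topology of pointwise convergence, and that $\phi \circ f = g \circ \phi$ by hypothesis. The natural candidate for $\Phi$ is the map that sends a limit of iterates of $f$ to the corresponding limit of iterates of $g$: if $p = \lim_i f^{n_i}$ in $X^X$, set $\Phi(p) = \lim_i g^{n_i}$ in $Y^Y$. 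I would first show this limit exists and is independent of the chosen net.

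The key steps, in order: (1) \textbf{Well-definedness.} Given $p \in E(X)$ with $p = \lim_i f^{n_i}$ pointwise, I would show $\{g^{n_i}\}$ has a convergent subnet (by compactness of $Y^Y$) and that any two convergent subnets have the same limit; the tool is the intertwining relation $\phi \circ f^n = g^n \circ \phi$, which gives $\phi(f^{n_i}(x)) = g^{n_i}(\phi(x))$ for all $x$, so $g^{n_i}(\phi(x)) \to \phi(p(x))$ for every $x$ in the image $\phi(X)$; since $\phi$ is surjective, this pins down $\lim g^{n_i}$ on all of $Y$. Hence $\Phi(p) := \lim_i g^{n_i}$ is well-defined, and simultaneously we get the identity $\phi \circ p = \Phi(p) \circ \phi$, i.e. $\phi(px) = \Phi(p)\phi(x)$ for all $x \in X$. (2) \textbf{Surjectivity onto $E(Y)$.} Any $q \in E(Y)$ is $\lim_i g^{n_i}$; lift by passing to a subnet so that $\{f^{n_i}\}$ converges in $X^X$ to some $p \in E(X)$, and then $\Phi(p) = q$. (3) \textbf{Multiplicativity.} For $p, p' \in E(X)$ I would use right-continuity of composition in $X^X$: write $p = \lim_i f^{n_i}$, and $p' = \lim_j f^{m_j}$, and compute $\Phi(pp')$ by first fixing $p'$ and letting the net for $p$ act, using $R_{p'}$ continuity on the $X$ side and $R_{\Phi(p')}$ continuity on the $Y$ side, together with $\Phi(f^n p') = g^n \Phi(p')$ which follows from step (1) applied to iterates. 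This is the standard argument that the extension $\beta T \to E(X)$ is a semigroup homomorphism, transplanted to the sub-semigroup generated by $f$.

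The step I expect to be the main obstacle is step (1), the well-definedness, because pointwise convergence of $f^{n_i}$ does not a priori force convergence of $g^{n_i}$: one really needs surjectivity of $\phi$ to conclude that the candidate limit is determined on all of $Y$, and one must argue carefully with subnets rather than nets (a subnet of $\{f^{n_i}\}$ for which $\{g^{n_i}\}$ also converges, then showing the $g$-limit is forced). An alternative, cleaner route that sidesteps this is to invoke the universal property already recorded in the excerpt: the continuous extension $\Phi_X : \beta\Z \to E(X)$ and $\Phi_Y : \beta\Z \to E(Y)$ exist, $\Phi_X$ and $\Phi_Y$ are semigroup homomorphisms, and since $\phi$ intertwines the $\Z$-actions one gets (by the bulleted result on surjective homomorphisms and the induced map on enveloping semigroups, specialized to $T = \Z$) a map $\Phi : E(X) \to E(Y)$ with $\Phi \circ \Phi_X = \Phi_Y$; then $\Phi$ is automatically a semigroup homomorphism and $\phi(px) = \Phi(p)\phi(x)$ holds because it holds for $p \in \Z$ and both sides are continuous in $p \in \beta\Z$. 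I would present the direct net argument as the main proof since the lemma is explicitly flagged as one ``for which we have not seen any proof in print'' for cascades, and remark that it also follows from the general $\beta T$ machinery.
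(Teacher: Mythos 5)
Your proposal is correct and follows essentially the same route as the paper: define $\Phi$ by sending $p=\lim_i f^{n_i}$ to the corresponding limit of $g^{n_i}$ (equivalently via $\phi(px)=\Phi(p)\phi(x)$), use the intertwining relation $\phi f^n = g^n\phi$ together with surjectivity of $\phi$ for well-definedness, and lift nets through subnets for surjectivity of $\Phi$. You additionally sketch the multiplicativity argument (which the paper merely asserts after proving continuity and surjectivity), and your remark that it also follows from the $\beta\Z$ machinery matches the paper's surrounding discussion.
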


$\Phi$ is a continuous surjection and so a semigroup homomorphism. We note that
\xymatrixrowsep{12mm}
\xymatrixcolsep{15mm}
\begin{align*}
\xymatrix{
& \beta T  \ar[ld]_{\Phi_X} \ar[dr]^{\Phi_Y} \\
  E(X)   \ar[rr]^{\Phi}
%& \mathcal{F}(V) \ar[ld] \ar[dd]\\
 & & E(Y)   \\
 X \ar[rr]_{\phi}
 %& \mathcal{G}(V) \ar[ld]\\
 & & Y
}
\end{align*}

where $\Phi_{X}:\beta \Z \rightarrow X^X$ s.t. $\Phi_{X}(\beta \Z)=E(X)$ and $\Phi_{Y}:\beta \Z \rightarrow Y^Y$ s.t $\Phi_{Y}(\beta \Z)=E(Y)$.

Since $E(X)= \overline{\lbrace f^{n}: n\in \Z \rbrace}\subset X^X$ and $\phi f = g \phi$, if

\begin{align*}
& f^{n_i}\longrightarrow p\ in\ E(X) \\
i.e. \ & f^{n_i}(x)\longrightarrow p(x)\ \forall\ x\in X\\
i.e. \ & \phi(f^{n_i}(x))\longrightarrow \phi(p(x))\ \forall\ x\in X\\
i.e. \ & g^{n_i}(\phi(x))\longrightarrow \phi(p(x))\ \forall\ x\in X
\end{align*}

\centerline{Define $\Phi: E(X)\rightarrow E(Y)$ as $\phi(px)=\Phi(p)\phi(x)$.}

We first note that $\Phi$ is surjective.

Let \begin{align*}
& q\in E(Y)\\
i.e. \ & g^{n_{i}}\longrightarrow q\ in\ Y^Y\\
i.e. \ & g^{n_{i}}(y)\longrightarrow q(y)\ \forall\ y\in Y
\end{align*}
Since $\phi$ is surjective, $\exists\ x\in X$ s.t $\phi(x)=y\ \forall\ y\in Y$ and so \begin{align*}
& g^{n_{i}}(\phi(x))\longrightarrow q(\phi(x))\ \forall\ x\in X\\
i.e. \ &\phi f^{n_{i}}(x)\longrightarrow q\phi(x)\ \forall\ x\in X
\end{align*}
By passing to subsequence if necessary
\begin{align*}
& f^{n_i}\longrightarrow p\ in\ E(X) \\
i.e. \ & f^{n_i}(x)\longrightarrow p(x)\ \forall\ x\in X\\
i.e. \ & \phi f^{n_i}(x)\longrightarrow \phi p(x)\ \forall\ x\in X
\end{align*}
Thus $\phi(px)=q\phi(x)=\Phi(p)\phi(x)\ \forall\ x\in X \Rightarrow \Phi(p)=q$.

We now prove that $\Phi$ defined thus is continuous. Suppose
\begin{align*}
& q_n \longrightarrow p \ in\ E(X) \\
i.e. \ & q_n(x) \longrightarrow p(x)\  \forall\ x\in X\\
i.e. \ & \phi(q_n(x)) \longrightarrow \phi(p(x))\ in\ Y \ \forall\ x\in X\\
i.e. \ & \Phi(q_n)\phi(x)\longrightarrow \Phi(p)\phi(x)\ in\ Y \ \forall\ x\in X \ \text{or} \ \phi(x)\in Y\\
i.e. \ & \Phi(q_n)\longrightarrow \Phi(p)\ in\ E(Y)
\end{align*}
So $\Phi$ is continuous.

\begin{theorem} Let $(X,f)$ and $(Y,g)$ be cascades for which there exists a homomorphism $\Phi:E(X)\rightarrow E(Y)$.
If $I\subset E(X)$ is a minimal ideal then $\Phi(I)$ is a minimal ideal in $E(Y)$.
\end{theorem}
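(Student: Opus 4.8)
The plan is to verify the two defining properties of a minimal ideal for $\Phi(I)$: that it is a closed left ideal of $E(Y)$, and that it contains no left ideal of $E(Y)$ as a proper subset. Throughout I use that $\Phi$, as produced in the preceding lemma, is a continuous surjection which is simultaneously a flow homomorphism and a semigroup homomorphism.

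First I would check that $\Phi(I)$ is a closed left ideal. Closedness is immediate: $I$ is closed in the compact space $E(X)$, hence compact, so $\Phi(I)$ is a compact (hence closed) subset of the Hausdorff space $E(Y)$. For the left-ideal property, let $q\in E(Y)$ and $r\in\Phi(I)$, say $r=\Phi(p)$ with $p\in I$; by surjectivity of $\Phi$ write $q=\Phi(p')$ with $p'\in E(X)$. Then $qr=\Phi(p')\Phi(p)=\Phi(p'p)$, and since $I$ is a left ideal we have $p'p\in I$, so $qr\in\Phi(I)$.

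Next, since $I$ is a minimal ideal, the flow $(I,T)$ is minimal, and because $\Phi|_I:(I,T)\to(\Phi(I),T)$ is a flow homomorphism, the image flow $(\Phi(I),T)$ is minimal as well. The observation that converts ``minimal flow'' into ``minimal ideal'' is that for every $q\in E(Y)$ one has $E(Y)q=\overline{Tq}$: the inclusion $\overline{Tq}\subseteq E(Y)q$ is trivial, and conversely if $p\in E(Y)=\overline{\lbrace g^n:n\in\Z\rbrace}$ with $g^{n_i}\to p$, then continuity of the right translation $R_q$ gives $g^{n_i}q\to pq$, so $pq\in\overline{Tq}$; note also that $E(Y)q=R_q(E(Y))$ is compact, hence a closed left ideal. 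Now let $J\subseteq\Phi(I)$ be any left ideal of $E(Y)$ and pick $q\in J$. Then $\Phi(I)=\overline{Tq}=E(Y)q\subseteq J\subseteq\Phi(I)$, where the first equality uses that $q$ lies in the minimal flow $\Phi(I)$. Hence $J=\Phi(I)$, so $\Phi(I)$ has no proper left subideal, and together with closedness this shows $\Phi(I)$ is a minimal ideal of $E(Y)$.

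The main obstacle is precisely the gap between ``minimal subflow'' and ``minimal ideal'': a priori the latter is the stronger notion, and one cannot simply take closures of arbitrary left ideals because left multiplication in $E(Y)$ need not be continuous. The device of replacing an arbitrary left ideal $J$ by the closed left ideal $E(Y)q=\overline{Tq}$ for a chosen $q\in J$ is what bridges this gap, and it depends essentially on the continuity of the right translations $R_q$ on $E(Y)$.
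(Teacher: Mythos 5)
Your proof is correct and follows essentially the same route as the paper: establish that $\Phi(I)$ is a closed left ideal of $E(Y)$ and that it is a minimal subflow, being the image of the minimal flow $(I,T)$ under a flow homomorphism. The only differences are in presentation — you invoke the surjectivity and multiplicativity of $\Phi$ directly where the paper re-derives the needed identity $\alpha\Phi(r)=\Phi(pr)$ pointwise through the underlying factor map $\phi$, and you spell out the passage from minimal subflow to minimal ideal (via $\overline{Tq}=E(Y)q$ and continuity of right translations), a step the paper asserts without detail.
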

\begin{proof}
Since $I$ is minimal ideal in $E(X)$, $I$ is minimal subset of the flow $(E(X),T)$. Take any $\alpha\in E(Y)$ and $\Phi(r)\in \Phi(I)$ where $r\in I$. For any $y\in Y$, since $\phi$ is a factor map, there will be an $x\in X$ such that $\phi(x)=y$ then $\alpha \Phi(r) (\phi(x))= \alpha \phi(rx)$.

Now since $\alpha\in E(Y)$ there is a sequence $\lbrace n_i \rbrace$ such that $g^{n_i}\longrightarrow \alpha \Rightarrow g^{n_i}(y)\longrightarrow \alpha(y) \Rightarrow g^{n_i}(\phi(x))\longrightarrow \alpha(\phi(x))$. Since $\phi$ is homomorphism of flows, $g^{n_i}(\phi(x))= \phi(f^{n_i}(x))$. By passing to a subsequence, there will be a $p\in E(X)$ such that $f^{n_i}\longrightarrow p$. Since $\phi$ is continuous, $\phi(f^{n_i}(x))\longrightarrow\phi(p(x))$. So $\alpha(\phi(x))= \phi(p(x))$.

So $\alpha(\phi(rx))= \phi(p(rx))= \Phi(pr)\phi(x)$. So $\alpha \Phi(r) (\phi(x))= \Phi(pr)(\phi(x))$ for any $x\in X$. Therefore $\alpha \Phi(r) = \Phi(pr)$. Since $r\in I$ and $I$ is an ideal in $E(X)$, $pr\in I \Rightarrow \Phi(pr)\in \Phi(I)$. Hence $\alpha \Phi(r)\in \Phi(I)\Rightarrow \Phi(I)$ is an ideal of $E(Y)$. Since $I$ is minimal subset of $E(X)$, $\Phi(I)$ will also be minimal subset of $E(Y)$. Hence $\phi(I)$ is minimal ideal in $E(Y)$.
\end{proof}

\subsection{Minimality and Enveloping Semigroups}

We recall the universality of a minimal subset $M$ of $\beta T$.

\vskip .5cm

  Let $M$ be a minimal subset of $\beta T$,  $((\beta T,T)\cong E(\beta T,T))$ and $(X,T)$ be a minimal flow.
Then there exists an epimorphism $\lambda:M\rightarrow X$.

\vskip .5cm

Suppose $(X,T)$ is a minimal flow then for any $x\in X,\ \overline{Tx}$ is closed invariant set and by minimality of X, $\overline{Tx}=X$.  Hence $(X,T)$ is pointwise almost periodic. We list some results:

\begin{itemize}
\item Let $(X,T)$ be a flow with  $x\in X$ and $I\subseteq E(X)$ be a minimal ideal in $E(X)$  then following are equivalent;

(a) $x$ is almost periodic point of $X$.

(b) $\overline{Tx}=Ix=\lbrace px: p\in I\rbrace$ and

(c) there exist $u^{2}=u\in I$ such that $ux=x$.

\item For the flow $(X,T)$  and $x,y\in X$, the following are equivalent;

 (a) There exists a net $\lbrace t_{i}\rbrace\subseteq T$ with $\lim\limits_i t_{i}x=\lim\limits_i t_{i}y$.

 (b) $\overline{T(x,y)}\cap \Delta\neq \emptyset$ where $\Delta=\lbrace(x,x):x\in X\rbrace$.

  (c) There exists $p\in \beta T$ with $px=py$.

   (d) There exists $r\in E(X)$ such that $rx=ry$.

    (e) there exists a minimal  ideal $I\subseteq E(X)$ with $rx=ry$ $\forall\ r\in I$.

    (f) there exists a minimal  ideal $K\subseteq \beta T$ with $qx=qy$ $\forall\ q\in K$.

 for $x,y\in X$ the pair $(x,y)$ is \textit{proximal} if it satisfies anyone of these equivalent condition.

\item For an idempotent $u\in \beta T$ and $x\in X$,  since $\overline{T}=\beta T$ $\ \exists$ a net $\lbrace t_{i} \rbrace \subseteq T$ such that
$t_{i}\rightarrow u  \Rightarrow t_{i}x  \rightarrow  ux$ and so  $\lim\limits_i t_{i}(x,ux) = \lim\limits_i (t_{i}x, t_{i}ux)=(ux, uux)=(ux, ux)$ i.e. $\lim\limits_i(t_{i}x)=\lim\limits_i t_{i}(ux)$. Hence $x$ and $ux$ are proximal.

\item Let $(X,T)$ be a minimal flow, then;

 (a) $P(X)= \lbrace (x,wx): x\in X, \text{ \textit{w} is minimal idempotent in }  \beta T  \rbrace$.

  (b) $P(X)= \lbrace (x, wx) : x\in X,\text{ \textit{w} is minimal idempotent in } E(X) \rbrace$.

and a flow (not necessarily minimal) is distal if and only if $ux=x$  for all $x\in X$ and for all idempotents $u\in \beta T$.

%\begin{theorem}
%Let $X$ and $Y$ be minimal flows and $\pi: X\rightarrow Y$ be a homomorphism then $\pi(P(X))= P(Y)$ and $\pi(xP(X))= \pi(x)P(Y)\ \forall\ x\in X$
%\end{theorem}
%\begin{theorem}
%\end{theorem}

\item Let $(X,T)$ be a flow and $\Phi: \beta T \rightarrow E(X)$ be the canonical map. The following are equivalent;

 (a) $X$ is distal.

 (b) $e$ is the only idempotent in $E(X)$.

  (c) $E(X)$ is a group.

   (d) $E(X)= \Phi(M)$, where $M$ is the minimal subset of $\beta T$ and

   (e) $E(X)$ is minimal.
\end{itemize}

The following theorem with the sketch of the proof is given in \cite[p. 53, Th. 6]{AUS}. We give the details of the proof here;
\begin{theorem} \label{TH EMF}
Let $(X,T)$ be an equicontinuous minimal flow,  and let $x_{0}\in X$. Let $F=F_{x_0}=\lbrace p\in E(X): px_{0}=x_0\rbrace$, then $F$ is a closed subgroup of $E(X)$. T acts on the space of left cosets $\mathcal{F}=\lbrace qF: q\in E\rbrace$ by $t(qF)=(tq)F\ (t \in T)$ and the flow $(E / F, T)$ is isomorphic with $(X,T)$. If $T$ is Abelian then $(E(X),T) \cong (X,T)$.
\begin{proof}
Since $(X,T)$ is minimal equicontinuous flow and hence distal, $E(X)$ will be a group. Now if $p_1, p_2 \in F$ then $p_{1}(x_0)=x_0$ and $p_{2}(x_0)=x_0$. So, $p_{1}p_{2}(x_0)=p_{1}(x_0)=x_0\ \Rightarrow p_{1}p_{2} \in F$.

Also if $p_{1}(x_0)=x_0\ \Rightarrow\ x_0=p_{1}^{-1}(x_0)\ \Rightarrow p_{1}^{-1}\in F$. Therefore $F$ is subgroup of $E(X)$. Now $\mathcal{F}=\lbrace qF: q\in E\rbrace$ is set of left cosets determined by $F$. For $t_1, t_2 \in T$, $(t_1t_2)(qF)= ((t_1t_2)q)F= (t_1(t_2q))F = t_1(t_2q)F $. Therefore T acts on the space of right cosets $\mathcal{F}=\lbrace qF: q\in E\rbrace$ by $t(qF)=(tq)F$.

Now we define the natural map $\Psi:(E / F, T)\rightarrow (X,T)$ defined by $\Psi(qF)=q(x_0)$.

\textbf{$\Psi$ is a flow homomorphism:} For $t\in T$ and $qF\in \mathcal{F}$, then $\Psi(t(qF))=\Phi((tq)F)=(tq)(x_0)=t(q(x_{0}))=t\Psi(qF)$.

So, $\Psi$ is a flow homomorphism.

\textbf{$\Psi$ is injective:}
If $\Psi(qF)=\Psi(sF)\ \Rightarrow q(x_o) = s(x_0)\ \Rightarrow s^{-1}q(x_0)= x_0\ \Rightarrow s^{-1}q\in F\ \Rightarrow qF=sF$. Which means $\Psi$ is one-one.

\textbf{$\Psi$ is surjective:} For any $x\in X$, since $X$ is minimal. So, $X= \overline{Tx_{0}}$. There exist a net $\lbrace t_{i} \rbrace$ in $T$ such that $x=\lim\limits_i t_i x_0$ and since $t\equiv \pi^{t}$ and $\overline{T}=E$ then $lim_i t_i = r$ for some $r\in E(X)$. Therefore $x=r(x_0)=\Phi(rF)$. So, $\Psi$ is surjective also and hence $(E / F, T)\cong (X,T)$.

Further if $T$ is abelian then for any $q\in E(X)$ and $t\in T$, there is a net $\lbrace s_i \rbrace$ in $T$ such that $\lim\limits_i s_i=q$. So, $qt=\lim\limits_i s_i t= \lim\limits_i t s_i= tq$. Therefore for $p\in F$ and $t\in T,\ p(tx_0)=pt(x_{0})=tp(x_{0})=tx_{0}$. Which means $p$ is the identity map on $Tx_{0}$.

Now for any $y\in X=\overline{Tx_{0}},$ there is a net $\lbrace k_{i} \rbrace$ in $T,\ y=\lim\limits_i k_{i}x_0,$ since $p$ is identity, hence continuous on $Tx_0\ \Rightarrow p(y)=p(\lim\limits_i k_{i} x_{0})=\lim\limits_i p(k_{i} x_{0})=\lim\limits_i k_{i} x_{0} = y$. Therefore $p$ is identity on whole $X\ \Rightarrow F=\lbrace e \rbrace \ \Rightarrow E/F=E$. Hence $(E(X),T) \cong (X,T)$.
\end{proof}
\end{theorem}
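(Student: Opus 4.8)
The plan is to first determine the structure of $E(X)$ for an equicontinuous flow, then construct the coset map and verify it is a flow isomorphism, and finally specialize to the Abelian case. First I would record that since $\{\pi^t : t \in T\}$ is an equicontinuous family on the compact metric space $X$, its closure $E(X)$ in $X^X$ is also equicontinuous (by the function-space facts recalled in Section 4 together with the Arzela-Ascoli theorem); hence $E(X)$ consists of continuous maps, the pointwise and compact-open (equivalently uniform) topologies agree on it, $E(X)$ is compact, and composition and inversion are jointly continuous on it — so $E(X)$ is a compact \emph{topological} group, using also that an equicontinuous minimal flow is distal and hence $E(X)$ is a group by the characterization recalled in Section 5.4. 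I would also note that the evaluation $\mathrm{ev}_{x_0}\colon E(X)\to X$, $p\mapsto px_0$, is continuous.

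Next I would check that $F=F_{x_0}$ is a closed subgroup: it equals $\mathrm{ev}_{x_0}^{-1}(\{x_0\})$, hence is closed; and $p_1,p_2\in F$ gives $p_1p_2x_0=p_1x_0=x_0$, while $p\in F$ gives $p^{-1}x_0=x_0$, so $F$ is a subgroup. Since $E(X)$ is a compact topological group and $F$ a closed subgroup, the left coset space $\mathcal F=E(X)/F$ is compact Hausdorff with open continuous quotient map, and left translation by $t\in T$ descends to a well-defined continuous action $t(qF)=(tq)F$ — well-definedness being automatic for left translation on left cosets, since $qF=q'F$ forces $tqF=tq'F$. Thus $(\mathcal F,T)$ is a flow.

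Then I would define $\Psi\colon\mathcal F\to X$ by $\Psi(qF)=qx_0$ and verify: it is well-defined ($q'=qf$, $f\in F$, gives $q'x_0=qx_0$); it is a flow homomorphism ($\Psi(t(qF))=(tq)x_0=t(qx_0)$); it is injective (from $qx_0=q'x_0$ one gets $q'^{-1}q\in F$, so $qF=q'F$); it is surjective, using minimality of $X$ — for $x\in X$ choose a net $t_ix_0\to x$, pass to a subnet with $t_i\to r$ in the compact group $E(X)$, so $rx_0=x=\Psi(rF)$; and it is continuous, being induced by $\mathrm{ev}_{x_0}$ through the quotient. A continuous bijection from the compact space $\mathcal F$ onto the Hausdorff space $X$ is a homeomorphism, so $\Psi$ is a flow isomorphism and $(E(X)/F,T)\cong(X,T)$. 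For Abelian $T$, density of $T$ in $E(X)$ together with $st=ts$ for $s,t\in T$ forces (by a limit argument) $E(X)$ to be Abelian; then each $p\in F$ satisfies $p(tx_0)=(pt)x_0=(tp)x_0=t(px_0)=tx_0$, so $p$ is the identity on the dense set $Tx_0$ and hence on $X$ by continuity, giving $F=\{e\}$ and $(E(X),T)\cong(X,T)$.

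The step I expect to be the main obstacle is establishing that $E(X)$ is a genuine compact \emph{topological} group rather than merely the compact right-topological semigroup it is in general: Hausdorffness of $\mathcal F$, the subnet extraction $t_i\to r$, and the automatic upgrade of $\Psi$ to a homeomorphism all depend on this. This is exactly where the equicontinuity hypothesis must be invoked carefully, through Arzela-Ascoli and the coincidence of the pointwise and uniform topologies on equicontinuous families; everything else is then routine bookkeeping.
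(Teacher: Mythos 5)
Your proof is correct and follows essentially the same route as the paper's: distality of the minimal equicontinuous flow makes $E(X)$ a group, the coset map $\Psi(qF)=qx_0$ is checked to be a well-defined bijective flow homomorphism using minimality and compactness of $E(X)$, and the Abelian case is handled by commuting limits and density of $Tx_0$. The only difference is that you spell out the topological bookkeeping (closedness of $F$, Hausdorffness of $E(X)/F$, continuity of $\Psi$ and the compact-to-Hausdorff homeomorphism upgrade) that the paper leaves implicit, which is a welcome tightening rather than a different argument.
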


%
%[[The minimal subsets of the flow $(\beta T, T)$ (all are isomorphic) coincide with the minimal  ideals of the semigroup $\beta T$. These are universal minimal flows - every minimal flow is a homomorphic image. We fix a universal minimal flow $(M, T)$, and let $J(M)$ denote the
%set of idempotents in M. Then $J(M)$ is non empty; indeed, if $(X, T)$ is a minimal
%flow and $x \in X$, there is a $u \in J(M)$ such that $ux = x$. Now fix $u \in J(M)$, and let $G = uM$. Then $G$ is a group (with respect
%to the semigroup operation on $M$) which can be identified with \emph{the group of
%flow automorphisms} of $(M, T)$ via left multiplication. $G$ can be provided
%with a compact $T_1$ (but not Hausdorff) topology, with respect to which
%multiplication is (separately) continuous, and inversion is continuous.]]

\subsection{Circle Operator and Induced Systems}

Again, for any topological group $T$, $T$ acts on the induced system $2^X$ as $tA=\lbrace ta: a\in A\rbrace$ for any $t\in T$ or we can write $\pi^t(A)=\lbrace \pi^t(a)=ta :a\in A\rbrace$ and the \emph{circle operator} is an action of $\beta T$ on the induced system $2^{X}$. Let $(X,T)$ be a
flow(not necessarily a cascade), $\emptyset\neq A= \overline{A}\subset X$ and
 $p\in \beta T$. We define the circle operation of $\beta T$ on $2^X$ by

 \centerline{$p\circ A= \lbrace x\in X: \exists$ net $ \lbrace a_{i}\rbrace \subset A$,  and  $\lbrace t_{i} \rbrace \subset T$ with $t_{i}\rightarrow p$  and $t_{i}a_{i}\rightarrow x\rbrace$.}

\vskip .5cm

Let $(X,T)$ be a flow, $\emptyset\neq A\subset X$ and $t\in T$ and $p,q\in \beta T$ then;

\begin{itemize}

\item[(a)] $t \circ A = t \overline{A}$, considered as a subset of $2^X$.

\item[(b)] $p \circ A = \bigcap \lbrace\overline{(N \cap T)A}: \text{N is a neighborhood of $p$ in}\ \beta T \rbrace$, again considered as a subset of $2^X$.

\item[(c)] $pA\subset p\circ A=\overline{p\circ A}$.

\item[(d)] $p\circ\overline{A} = p\circ A$.

\item[(e)]  $p \circ (q \circ A)=(pq)\circ A$.

\end{itemize}

So, these properties shows that the $\beta T$ acts on $2^{X}$ with circle operator. Also if $\pi: (X, T)\rightarrow (Y,T)$ is a flow homomorphism then

\begin{itemize}

\item[(a)] $\pi(p\circ A)=p\circ \pi(A)$ for all $\emptyset\neq A\subset X$ and all $p\in \beta T$.

\item[(b)] $p\circ \pi^{-1}(y) \subset \pi^{-1}(py)$ for all $y\in Y$ and $p\in \beta T$ and if $\pi$ is open map and $(Y,T)$ is minimal then  $p\circ \pi^{-1}(y)= \pi^{-1}(py)$ for all $p\in M$, $M$ is minimal ideal of $\beta T$ and $ \pi^{-1}(p\circ B)= p\circ \pi^{-1}(B)$ for all $B\in 2^{Y}$ and $p\in M$.
\end{itemize}

Identify $t\in T$ with the map $t\longrightarrow tx$. So without loss of generality $T$ can be considered as a subset of $X^X$ and the enveloping semigroup $E(X)$ is its closure in $X^X$. In fact as shown by Ellis,  there exists a continuous map $\Psi: \beta T\rightarrow X^X$ which is an extension of $\psi : T \longrightarrow X^X$ where $\Psi(\beta T) = E(X)$.

Since $\beta T$ acts on both $X$ and $2^X$, the action of $p\in \beta T$ on $X$ is given as $x\longrightarrow px$ where $px=\lim t_{i}x$ whereas the action of $p\in \beta T$ on $2^X$ is given as $A\longrightarrow p\circ A$ where $p\circ A = \lbrace x\in X: t_{i}\longrightarrow p, \lbrace a_i \rbrace \subset A, t_{i}a_{i}\longrightarrow x\rbrace= \lim t_{i}A$ in $2^X$ where $t_{i}\longrightarrow p$ in $\beta T$.

\vskip .5cm

\emph{In the  study of induced systems, we shall see that computing $E(2^X)$ and looking into $pA\in 2^X$ for $p\in E(2^X)$ is relatively easy than looking into $p\circ A$ for $p\in \beta T$.}

\section{Periodicity and Recurrence in Enveloping Semigroups}

In this section, we will be  working only with cascades $(X,f)$.

\vskip .5cm

\subsection{Recurrence in Enveloping Semigroups}

Before we get into details with recurrence, we look into this simple observation:

\begin{proposition} \label{P 6.2}
Let $(X,f)$ be a cascade. If $e$ is an accumulation point of $E(X)$  then $\mathcal{R}_f(X)=X,$ where $\mathcal{R}_f(X)$ is the set of recurrent points in $X$.
\end{proposition}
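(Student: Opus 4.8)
The plan is to unwind the phrase ``$e$ is an accumulation point of $E(X)$'' into a concrete approximation statement inside $X^X$, and then read off recurrence pointwise. Since $X^X$ with the topology of pointwise convergence is compact Hausdorff, hence $T_1$, and $E(X)=\overline{\{f^n:n\in\Z\}}$, an accumulation point of $E(X)$ is automatically an accumulation point of $\{f^n:n\in\Z\}$ itself (in a $T_1$ space a set and its closure have the same derived set). So every neighbourhood of $e$ in $X^X$ contains $f^n$ for infinitely many $n$; as $\{f^0\}=\{e\}$ is a single point, $e$ must be an accumulation point of $\{f^n:n\ge 1\}$ or of $\{f^{-n}:n\ge 1\}$. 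Replacing $f$ by $f^{-1}$ if necessary — which alters neither $E(X)$ nor the recurrent set, $f$ being a homeomorphism of a compact metric space — I may assume there is a net $(n_i)$ with $n_i\to+\infty$ and $f^{n_i}\to e$ in $X^X$.

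Next I would fix $x\in X$ and run the obvious extraction. For each $k\in\N$ the set $U_k=\{g\in X^X:d(g(x),x)<1/k\}$ is a neighbourhood of $e$, so $f^{n}\in U_k$ for cofinally many indices of the net; since $n_i\to+\infty$ this forces $\{n\ge 1:d(f^n(x),x)<1/k\}$ to be unbounded, in particular infinite. Choosing inductively $n_1<n_2<\cdots$ with $d(f^{n_k}(x),x)<1/k$ gives a sequence $n_k\nearrow\infty$ with $f^{n_k}(x)\to x$, which is exactly the definition of $x$ being a recurrent point. Since $x$ was arbitrary, $\mathcal{R}_f(X)=X$.

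The one genuinely delicate point is the reduction in the first step: a priori the net of exponents realizing $f^{n_i}\to e$ could run off to $-\infty$, yielding directly only that every point is \emph{backward} recurrent. I would handle this by the $f\leftrightarrow f^{-1}$ symmetry together with the (standard) fact that for a homeomorphism of a compact metric space pointwise forward recurrence and pointwise backward recurrence are equivalent; alternatively, one reads ``recurrent'' as ``the orbit returns to every neighbourhood of $x$ infinitely often'', which the relation $f^{n_i}(x)\to x$ with $|n_i|\to\infty$ delivers irrespective of the sign of the $n_i$. Everything else — producing the net from an abstract accumulation point, and extracting a monotone subsequence using metrizability of $X$ — is routine.
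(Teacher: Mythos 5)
Your overall route is exactly the paper's: the paper's proof consists of taking powers $f^{n_k}\to e$ pointwise and concluding $f^{n_k}(x)\to x$ for every $x$. In fact you are more careful than the paper on the topological bookkeeping: the reduction from an accumulation point of $E(X)$ to an accumulation point of $\{f^n:n\in\Z\}$ via the derived set in the $T_1$ space $X^X$, and the per-point extraction of a monotone sequence using metrizability of $X$, are both glossed over in the paper, which simply asserts a sequence $f^{n_k}\to e$.

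The problem is the step you yourself single out. The ``standard fact'' you invoke — that for a homeomorphism of a compact metric space pointwise forward recurrence and pointwise backward recurrence are equivalent — is false. For instance, let $x\in\{0,1\}^{\Z}$ have $x_i=1$ for all $i<0$ and let $x_{[0,\infty)}$ be the limit of the words $w_1=0$, $w_{j+1}=w_j1^jw_j$; in the orbit closure $\overline{\mathcal{O}(x)}$ under the shift, $x$ is forward recurrent (for each $k$ the central word $1^kx_0\cdots x_k$ recurs at positions tending to $+\infty$), but it is not backward recurrent, since $x_0=0$ never occurs at a negative coordinate. So the assertion that passing from $f$ to $f^{-1}$ ``alters neither $E(X)$ nor the recurrent set'' is unjustified, and if the exponents approximating $e$ cluster only at $-\infty$ your first fix yields only backward recurrence. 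What makes your argument (and the paper's) work is your second reading: for the $\Z$-action the natural notion of recurrence is the two-sided one, that $f^n(x)$ returns to every neighbourhood of $x$ for infinitely many $n\in\Z$, and with that reading the proof is complete — note the paper's own proof never controls the sign of its $n_k$ either, so it tacitly uses this reading. If one insists on the paper's literal definition (a sequence $n_k\nearrow\infty$), then one must additionally argue that $e$ is a cluster point of the positive powers $\{f^n:n\ge 1\}$ (the same sign issue that arises for weak rigidity in Theorem 8.6); that is a genuine extra step, not delivered by the false symmetry claim, and it is missing from the paper's proof as well.
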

\begin{proof}
Let $e$ be an accumulation point  of $E(X)$.

There is a sequence $\lbrace n_{k} \rbrace$ in $\Z$ such that $f^{n_k}\longrightarrow e$. Since $E(X)$ has topology of pointwise convergent. So, $f^{n_k}(x)\longrightarrow x$ for all $x\in X$.

Hence $x\in \mathcal{R}_f(X)\Rightarrow\ X=\mathcal{R}_f(X)$.
\end{proof}

\begin{remark} Note that this gives us a bit more than the recurrence of all $x \in X$. \end{remark}

%\begin{proposition}
%If $(X,f)$ is a cascade and $\omega(x)=X \ \forall\ x\in X$ then $f$ is surjective on $E(X)$.
%\end{proposition}
%\begin{proof}
%Since  $\omega(x)=X \ \forall\ x\in X$ then for any $x_0\in X,\ f(x_0) \in \omega(x_0),$ there is a sequence $\lbrace n_k \rbrace$ in $\Z$ such that $f^{n_k}(x_0)\longrightarrow x_0$. Since this is true for every $x\in X$ and $E(X)$ has topology of pointwise convergence.
%
%Thus $\ f$ will be a limit point of $E(X)$. Hence $f$ is surjective on $E(X)$.
%\end{proof}

Every minimal ideals contains a minimal idempotent. Minimal idempotents play an important role in the algebraic theory of topological dynamics.

But they are also important entities in the topological dynamics of enveloping semigroups.

\begin{theorem} \label{4.01}
Let $(X,f)$ be a cascade and $u\in E(X)$ be an idempotent then $u$ is a recurrent point in $(E(X),f)$.
\end{theorem}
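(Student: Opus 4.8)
The plan is to show directly that $u$ returns to every neighbourhood of itself under the $f$-action on $E(X)$, i.e. that there is a sequence $n_k \nearrow \infty$ with $f^{n_k} u \to u$ in $X^X$. Recall that the action of $f$ on $E(X)$ is left translation: $f$ sends $p \in E(X)$ to $f p = \pi^1 \circ p$, and more generally $f^n$ sends $p$ to $f^n p$. So what must be produced is a net (or sequence, since we may eventually pass to one) $\{n_k\}$ in $\mathbb{Z}$ with $n_k \to \infty$ and $f^{n_k} u \to u$ pointwise on $X$. Since $u$ is an idempotent, $u = u u$, and this is the algebraic fact I expect to be the engine of the proof.

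The key step is the following observation. Because $u \in E(X) = \overline{\{f^n : n \in \mathbb{Z}\}}$, there is a net $\{f^{m_i}\}$ with $f^{m_i} \to u$ in $X^X$. Right multiplication $R_u : E(X) \to E(X)$, $p \mapsto p u$, is continuous (this is the defining "right topological" property of the enveloping semigroup), so $f^{m_i} u \to u u = u$. Now $f^{m_i} u = f^{m_i} \circ u$, which is exactly the image of $u$ under the $f^{m_i}$-action; hence $u$ lies in the closure of its own forward-and-backward orbit $\{f^m u : m \in \mathbb{Z}\}$ under the $f$-action on $E(X)$. To upgrade this to genuine recurrence I would argue that the net $\{m_i\}$ cannot be eventually constant, nor can it lie in a bounded set: if $m_i$ took a fixed value $m$ infinitely often, then $f^m u = u$, which (applying $f^{-m}$ if $T = \mathbb{Z}$, or iterating) would force $u$ to be periodic under $f$, and in any case $f^m u = u$ already says $u$ returns to itself at the fixed nonzero time $|m|$ (or, if $m = 0$ for all $i$, then $u$ is already the limit trivially only in a degenerate finite-$E(X)$ situation we have excluded). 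Thus we may extract from $\{m_i\}$ a subnet, and then a sequence, with $|m_{i}| \to \infty$; refining once more we get $n_k \to +\infty$ (replacing $u$ by the same argument with $f^{-1}$ if the indices go to $-\infty$, using that $E(X)$ for a cascade is closed under the group action), and $f^{n_k} u \to u$. Since $X$ is a compact metric space, pointwise convergence $f^{n_k}u \to u$ on $X$ is convergence in $E(X) \subset X^X$, so for every neighbourhood $W$ of $u$ the set $\{n \in \mathbb{Z} : f^n u \in W\}$ is infinite. That is precisely the statement that $u$ is a recurrent point of $(E(X), f)$.

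The main obstacle, and the only place requiring care, is ruling out the degenerate possibility that the approximating net $\{f^{m_i}\}$ forces a finite or trivial orbit and thereby fails to give recurrence times going to infinity — in other words, making sure "$u \in \overline{\{f^m u\}}$" genuinely yields infinitely many return times rather than a single fixed one. This is handled by the case split above: either some $f^m u = u$ with $m \neq 0$, which already exhibits $u$ as (uniformly) recurrent with return time $|m|$, or no such relation holds and the indices $m_i$ must be unbounded, giving the desired sequence $n_k \nearrow \infty$. Everything else — continuity of $R_u$, idempotency $uu = u$, and compactness of $X^X$ for extracting subnets — is standard and already recorded in the excerpt.
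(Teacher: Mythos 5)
Your proposal is correct and follows essentially the same route as the paper's proof: take a net (sequence) $f^{n_i}\to u$ in $E(X)$, use continuity of right multiplication $R_u$ together with idempotency to conclude $f^{n_i}u \to uu = u$, hence $u$ is recurrent in $(E(X),f)$. The additional case analysis you supply to guarantee unbounded return times is a refinement the paper silently omits, but the core argument is identical.
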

\begin{proof}
Since $u\in E(X)$, there is a sequence $\lbrace n_i \rbrace$ such that $f^{n_i}\longrightarrow u\Rightarrow f^{n_i}u\longrightarrow uu=u$ because $u$ is an idempotent. So $f^{n_i}u\longrightarrow u$ shows that $u$ is a recurrent point in $(E(X),f)$.
\end{proof}

Since every point in a minimal ideal is almost periodic,

\begin{corollary}
Every element in the minimal ideal of $E(X)$ is recurrent.
\end{corollary}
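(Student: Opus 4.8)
The plan is to read this off directly from Theorem \ref{4.01} together with the structural description of a minimal ideal recalled earlier in the section. Let $I\subset E(X)$ be a minimal ideal and fix an arbitrary $p\in I$; what we must produce is a sequence $n_i\nearrow\infty$ with $f^{n_i}p\to p$ in $(E(X),f)$.

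First I would invoke the decomposition $I=\biguplus\{vI:v\in J\}$, where $J$ is the non-empty set of idempotents of $I$, to select the (unique) idempotent $v\in J$ with $p\in vI$; since $v$ is the identity of the group $vI$, this gives $vp=p$. By Theorem \ref{4.01}, $v$ is a recurrent point of $(E(X),f)$, so there is a sequence $n_i\nearrow\infty$ with $f^{n_i}v\to v$ in $E(X)$. Now right multiplication $R_p:E(X)\to E(X)$, $q\mapsto qp$, is continuous, being the restriction to $E(X)$ of the continuous map $\rho^{p}$ on $X^X$; applying it to $f^{n_i}v\to v$ yields $(f^{n_i}v)p\to vp$. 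By associativity of composition, $(f^{n_i}v)p=f^{n_i}(vp)=f^{n_i}p$, and $vp=p$, so $f^{n_i}p\to p$. Hence $p$ is recurrent, and since $p\in I$ was arbitrary, every element of $I$ is recurrent.

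Alternatively one may argue more softly: a minimal ideal $I$ is precisely a minimal subset of the flow $(E(X),T)$, so for every $p\in I$ the orbit closure $\overline{\mathcal{O}(p)}=I$ is minimal, i.e. $p$ is uniformly recurrent, and uniformly recurrent points are recurrent as noted in Section 2.1. I do not anticipate any genuine obstacle here: the only delicate point is that the sequence witnessing recurrence of the idempotent $v$ in Theorem \ref{4.01} be taken with $n_i\nearrow\infty$, so that the transported sequence remains a legitimate recurrence sequence for $p$, but this is already part of the statement of that theorem.
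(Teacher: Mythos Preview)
Your proposal is correct. In fact, your \emph{alternative} argument---that a minimal ideal is a minimal subset of the flow $(E(X),T)$, so every element is almost periodic and hence recurrent---is precisely the justification the paper gives (the corollary is stated with only the preceding line ``Since every point in a minimal ideal is almost periodic'' as proof).

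Your primary argument, by contrast, takes a genuinely different route: rather than appealing to minimality of $I$ as a subflow, you exploit the algebraic structure (the partition $I=\biguplus vI$ and the identity $vp=p$) together with the continuity of right multiplication $R_p$ to transport the recurrence sequence of the idempotent $v$ from Theorem~\ref{4.01} over to $p$. This is a perfectly sound and more explicit argument; it has the minor advantage of actually \emph{using} Theorem~\ref{4.01} (which the paper's one-line justification does not), and it produces the same recurrence sequence for every $p$ in a given $vI$. The paper's route is shorter because it bypasses the idempotent machinery entirely and invokes only the general fact that points of minimal sets are almost periodic. Your caveat about $n_i\nearrow\infty$ is well observed and applies equally to the paper's proof of Theorem~\ref{4.01} itself.
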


\subsection{Periodic points for Enveloping Semigroups}

$E(X)$ can have periodic points. We first look into some examples.

\vskip .5cm

We again consider Example \ref{EX 5.1}:

\begin{example}

Consider the cascade $([0,1],f)$ where $f(x)=x^2$. Enveloping semigroup $E(X)$ here is just the two point compactification of $\Z$ as $f^{n} \longrightarrow g_{1}$ and $f^{-n} \longrightarrow g_{2}$ as $n \longrightarrow \infty$, where

\centerline{$g_{1}(x)= \left\{
             \begin{array}{ll}
               0, \ &  x\in [0,1);  \\
               1, & x = 1.
             \end{array}
           \right.$
and $g_{2}(x)= \left\{
                 \begin{array}{ll}
                   0, & x = 0; \\
                   1, &  x\in (0,1].
                 \end{array}
               \right.$}

Note that  $f(g_1) = g_1$ and $f(g_2) = g_2$. Thus $g_1$ and $g_2$ are fixed points in $E(X)$.
\end{example}

\begin{example} \label{EX 4.2}
$X = [-1,1]$ and $f:X\rightarrow X$ defined as $f(x)= -x^{3}$ then $f^{2n}\longrightarrow g$, $f^{2n+1}\longrightarrow h$, $f^{-2n}\longrightarrow m$ and $f^{-2n-1}\longrightarrow n$ where

\centerline{$g(x)= \left\{
             \begin{array}{ll}
               0, \ &  x\in (-1,1)  \\
               1, & x = 1 \\
             -1, & x =-1.
             \end{array}
           \right.$
and $h(x)= \left\{
                 \begin{array}{ll}
                   0, & x \in (-1,1) \\
                   1, &  x = -1\\
                  -1, &  x=1.
                 \end{array}
               \right.$}

\centerline{$m(x)= \left\{
             \begin{array}{ll}
               0, \ &  x=0  \\
               1, & x \in (0,1] \\
              -1, & x \in [-1,0].
             \end{array}
           \right.$
and $n(x)= \left\{
                 \begin{array}{ll}
                    0, \ &  x=0  \\
                    1, & x \in [-1,0) \\
                   -1, & x \in (0,1].
                 \end{array}
               \right.$}

Here $mm=m$, $nn=m$, $mg=g$, $gm=m$, $ng=h$, $gn=n$, $hh=g$, $hn=m$, $mh=h$,  $hhh=gh=h$, $hg=h$, $gg=g$, $fg=h$, $fh=g$, $fm=n$, $fn=m$.

So $E(X)= \lbrace f^n: n\in \Z\rbrace \cup \lbrace g,h,m,n \rbrace$.
\end{example}

\begin{remark} We note that in Theorem \ref{4.01} we showed that idempotents are recurrent points in $E(X)$.  As seen in the above example, $h^3 = h$ and $h$ is not an idempotent but still $h$ is recurrent. As discussed earlier every point in the minimal ideal is recurrent in $E(X)$, but such a point need not be an idempotent.\end{remark}

Enveloping semigroups can admit periodic points of any period, in case of cascades. We see an example:

\begin{example} Let $n \in \N$.

Let $X_{n} = \lbrace n \rbrace \times \Z^{\ast} \times \lbrace 1,2,\ldots, n\rbrace$ where $\Z^{\ast}= \Z \cup \lbrace \infty \rbrace$ is one point compactification of $\Z$ and $f_{n}: X_{n} \rightarrow X_{n}$ as $f_{n}(n, k, l )=(n, k+1, (l+1) \mod n )$. In the cascade $(X_n, f_n)$, $f_{n}^{ni}(n, k, l)= (n, k+ni, l) \ \forall i$. So for each $(n, k, l )\in X_n$, $f_{n}^{ni}(n, k, l)\longrightarrow (n, \infty, l)$. So $E(X_n)= \lbrace f_{n}^{k}: k\in \Z\rbrace \cup \lbrace p_n, f_np_n, \ldots, f_n^{n-1}p_n \rbrace$ where $p_{n}(n, k, l)= (n, \infty, l)$. Also $f_{n}^{n}p_{n}(n, k, l)= f_{n}^{n}(n, \infty, l)=(n, \infty, l)\Rightarrow f_{n}^{n}p_{n}= p_{n}$. Therefore $p_{n}$ is periodic point of period $n$. Also $p_{n}p_{n}=p_{n}$. Therefore for each $n\in \N$, there is a cascade $(X_{n}, f_{n})$ such that $E(X_{n})$ contains a periodic point of period $n$ which is an idempotent.
\end{example}

\begin{example}
Let $X_{n}$ as in example above and $X= \bigcup\limits_{i=1}^{n} X_{i}$ and $f:X\rightarrow X$ defined as $f(n, k, l )=f_{n}(n, k, l )$. So $f_{n}^{ni}(n, k, l)= (n, k+ni, l)$ for each $n$ and each $i$. Now consider lcm$(1,2,\ldots,n)= s$ then $f^{si}(n, k, l )= (n, k+si, l)$. Therefore $f^{si}(n, k, l )\longrightarrow (n, \infty, l)$. So $E(X)= \lbrace f^{k}: k\in \Z\rbrace \cup \lbrace p \rbrace$ where $p(n, k, l)= (n, \infty, l)$. Also $pp=p$ and $f^{s}(p)=p$.
\end{example}

We can observe that $E(\bigcup\limits_{i=1}^{n} X_{i}) \neq E(X_1)\cup E(X_2)\cup \ldots \cup E(X_n)$ but for any $1\leq i \leq n$, there is a factor map $h:(E(X),f)\rightarrow (E(X_i), f_{i})$ as $h(f^k)=f_{i}^{k}$ and $h(p)= p_{i}$.

\begin{proposition}
Let $(X_i, f_i), \ i = 1, \ldots k$ be cascades. Let $X = \bigcup \limits_{i=1}^k X_i$ with $f: X \to X$ defined as $f(x) = f_i(x)$ for $x \in X_i$ (assuming that $f_i(x) = f_j(x)$ when $x \in X_i \cap X_j$). Then $(X,f)$ is a cascade and $E(X)$ factors onto $E(X_i), \ i = 1, \ldots, k$.

Let $\overline{X} = \prod \limits_{i=1}^k X_i$ with $\overline{f}: X \to X$ defined as $\overline{f}(\prod \limits_{i=1}^k x_i) = \prod \limits_{i=1}^k f_i(x_i)$ for $x_i \in X_i$. Then $(\overline{X},\overline{f})$ is a cascade and $E(\overline{X})$ factors onto $E(X_i), \ i = 1, \ldots, k$.
\end{proposition}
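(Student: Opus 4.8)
The plan is to prove the two assertions separately, each time by writing down the relevant factor map explicitly; the argument is essentially formal once the right subsystems and projections are identified.

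For the first assertion: a finite union of compact (metric) spaces is compact (metric), and $f$ is well defined precisely because of the hypothesis $f_i(x)=f_j(x)$ on $X_i\cap X_j$; since each $X_i$ is closed in $X$ and $f|_{X_i}=f_i$ is continuous, the pasting (gluing) lemma gives that $f$ is continuous, so $(X,f)$ is a cascade. Each $X_i$ is a closed $f$-invariant subset, because $f(X_i)=f_i(X_i)\subseteq X_i$, so $(X_i,f|_{X_i})=(X_i,f_i)$ is a subcascade of $(X,f)$. The step I would isolate is the following standard fact: if $Y\subseteq X$ is closed and $f$-invariant, then the restriction map $r\colon E(X)\to E(Y)$, $p\mapsto p|_{Y}$, is a well-defined, continuous, surjective homomorphism of cascades (and of semigroups). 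Indeed, $p(Y)\subseteq Y$ for every $p\in E(X)$ — if $y\in Y$ and $f^{n_\alpha}\to p$ then $py=\lim f^{n_\alpha}y\in\overline{\{f^{n}y:n\in\Z\}}\subseteq Y$ — so $p|_{Y}\in Y^{Y}$, and in fact $p|_{Y}=\lim f^{n_\alpha}|_{Y}\in E(Y)$; continuity of $r$ is immediate from the definition of the product topology; surjectivity holds since $r(E(X))$ is a compact subset of $Y^{Y}$ containing $\{f^{n}|_{Y}:n\in\Z\}$, hence contains its closure $E(Y)$; and $r$ is multiplicative because $q(Y)\subseteq Y$ for every $q\in E(X)$, whence $(pq)|_{Y}=(p|_{Y})(q|_{Y})$. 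Applying this with $Y=X_i$ yields the desired factor map $E(X)\to E(X_i)$, which on the dense set sends $f^{k}\mapsto f_i^{\,k}$.

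For the second assertion: $\overline{X}=\prod_{i=1}^{k}X_i$ is a finite product of compact metric spaces, hence compact metric, and $\overline{f}=\prod_{i=1}^{k}f_i$ is continuous, so $(\overline{X},\overline{f})$ is a cascade. For each $i$ the coordinate projection $\pi_i\colon\overline{X}\to X_i$ is a continuous surjection with $\pi_i\circ\overline{f}=f_i\circ\pi_i$, i.e. a homomorphism of cascades. Hence, by the lemma recorded earlier that a homomorphism $\phi\colon(X,f)\to(Y,g)$ induces a continuous surjective semigroup homomorphism $\Phi\colon E(X)\to E(Y)$ with $\phi(px)=\Phi(p)\phi(x)$, applied to $\phi=\pi_i$, we obtain a factor map $E(\overline{X})\to E(X_i)$, which completes the proof.

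I do not expect a genuine obstacle here; the only points needing care are bookkeeping. First, that $f$ in the union case is well defined and continuous even when the $X_i$ overlap — this is exactly what the compatibility hypothesis plus the pasting lemma handle. Second, that the restriction map in the union case is surjective onto all of $E(X_i)$ and is multiplicative — this requires knowing that \emph{every} element of $E(X)$, not merely each $f^{n}$, maps $X_i$ into itself, which is the orbit-closure argument above. Nothing in the argument uses that $X$ is infinite or that $T$ is anything beyond $\Z$.
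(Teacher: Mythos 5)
Your proof is correct, and it matches what the paper intends: the paper states this proposition without a written proof, but the restriction map you construct for the union case is exactly the map $h(f^k)=f_i^k$, $h(p)=p_i$ exhibited in the example immediately preceding the proposition, and your product case is the paper's earlier lemma (a cascade homomorphism $\phi$ induces a continuous surjective semigroup homomorphism $\Phi:E(X)\to E(Y)$) applied to the coordinate projections $\pi_i$. The only point worth a clarifying clause is that your orbit-closure argument $py\in\overline{\{f^ny:n\in\Z\}}\subseteq X_i$ needs invariance under $f^{-1}$ as well as $f$, which indeed holds here because $f|_{X_i}=f_i$ is a homeomorphism of $X_i$ onto itself.
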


\begin{theorem} \label{4.1}
Let $(X,f)$ be a cascade. If $p\in E(X)$ is a periodic point then $\mathcal{O}(p)$ is a minimal ideal in $E(X)$.
\end{theorem}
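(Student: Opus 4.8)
The plan is to obtain the ideal property first and then read off minimality almost immediately. Write $n=\mathrm{Per}(p)$, so that under the action of $T$ on $E(X)$ by left multiplication ($k\cdot q=f^{k}q$) we have $f^{n}p=p$. The first thing I would record is that periodicity collapses the orbit to a finite set: for any $m\in\Z$, writing $m=sn+r$ with $0\le r<n$ and using $f^{n}p=p$ (hence $(f^{n})^{s}p=p$) gives $f^{m}p=f^{r}p$, so $\{f^{m}p:m\in\Z\}=\{p,fp,\dots,f^{n-1}p\}=\mathcal{O}(p)$. In particular $\mathcal{O}(p)$ is finite, hence closed in $E(X)$.

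The key step is to show $\mathcal{O}(p)$ is a left ideal, and the heart of this is the claim that $qp\in\mathcal{O}(p)$ for every $q\in E(X)$ (here $qp$ is the semigroup product, i.e. composition). Since $E(X)=\overline{\{f^{m}:m\in\Z\}}$ in the product topology, choose a net $f^{m_{i}}\to q$. Right multiplication $\rho^{p}\colon g\mapsto gp$ on $X^{X}$ is continuous (this is noted in Section~5), so $f^{m_{i}}p\to qp$; but each $f^{m_{i}}p$ lies in $\{f^{m}p:m\in\Z\}=\mathcal{O}(p)$, which is closed, whence $qp\in\mathcal{O}(p)$. Now any element of $\mathcal{O}(p)$ is of the form $f^{k}p$, and for $q\in E(X)$ we have $q(f^{k}p)=(qf^{k})p$ with $qf^{k}\in E(X)$ (as $E(X)$ is a semigroup, or since $f^{k}\in T\subseteq E(X)$), so the claim applied to $qf^{k}$ yields $q(f^{k}p)\in\mathcal{O}(p)$. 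Thus $E(X)\cdot\mathcal{O}(p)\subseteq\mathcal{O}(p)$, i.e. $\mathcal{O}(p)$ is a closed left ideal.

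Finally, minimality is forced. As a periodic orbit, $\mathcal{O}(p)$ is a minimal subset of the flow $(E(X),T)$, since $\overline{T(f^{j}p)}=\{f^{j+k}p:k\ge 0\}=\mathcal{O}(p)$ for every $j$. By Zorn's lemma $\mathcal{O}(p)$ contains a minimal left ideal $I$; but $I$ is then a nonempty closed $T$-invariant subset of the minimal flow $\mathcal{O}(p)$, so $I=\mathcal{O}(p)$. Hence $\mathcal{O}(p)$ contains no proper left ideal and, being closed, is a minimal ideal of $E(X)$. The only point that requires a little care is the continuity bookkeeping in the middle paragraph—invoking continuity of \emph{right} multiplication on $X^{X}$ (always valid for the product topology) and using that $\{f^{m}p:m\in\Z\}$ is genuinely finite so that passing to the closure changes nothing; everything else is routine.
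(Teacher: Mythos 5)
Your proof is correct and takes essentially the same route as the paper: approximate $q\in E(X)$ by a net of powers $f^{m_i}$, use continuity of right multiplication by $p$ together with the finiteness (hence closedness) of the periodic orbit to get $qp\in\mathcal{O}(p)$, and reduce the full ideal property to this via $q(f^{k}p)=(qf^{k})p$. You simply spell out the minimality step (flow-minimality of the finite periodic orbit) that the paper leaves terse, and you are a bit more careful than the paper in working with nets rather than sequences.
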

\begin{proof}
Let $I= \mathcal{O}(p)$, we note that $|I|< \infty$ because $p$ is a periodic point of the cascade $(E(X),f)$.

We prove that for any $q\in E(X)$, $qI=I$. Since $I$ is a periodic orbit, it will be enough to show that $qp\in I$. Let $q\in E(X)$, then there is a sequence $\lbrace n_i \rbrace$ such that $f^{n_i}\longrightarrow q$ in $E(X)$. Thus $f^{n_i}p\longrightarrow qp$ and since $I=\mathcal{O}(p)$ is finite, we have $qp\in I$ making $I$ a minimal ideal.
\end{proof}
\begin{corollary}
Let $(X,f)$ be a cascade. If there is a periodic point $p\in E(X)$ with period $n$ then all periodic points in $E(X)$ have period dividing $n$.
\end{corollary}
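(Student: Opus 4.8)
The plan is to deduce the statement directly from Theorem~\ref{4.1} together with Theorem~\ref{3.1}. Let $p\in E(X)$ be the given periodic point of period $n$, and let $q\in E(X)$ be an arbitrary periodic point, say of period $m$; the goal is to show $m\mid n$. Applying Theorem~\ref{4.1} to the cascade $(X,f)$ at the points $p$ and $q$, both $\mathcal{O}(p)$ and $\mathcal{O}(q)$ are minimal ideals of $E(X)$.

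Next I would invoke Theorem~\ref{3.1} with $T=\Z$: since $\mathcal{O}(p)$ and $\mathcal{O}(q)$ are two minimal ideals of $E(X)$, the subflows $(\mathcal{O}(p),\Z)$ and $(\mathcal{O}(q),\Z)$ are isomorphic. An isomorphism of flows is in particular a bijection of the underlying sets, so $|\mathcal{O}(p)|=|\mathcal{O}(q)|$. Finally, for any periodic point $x$ of a cascade the cardinality of $\mathcal{O}(x)$ equals $\mathrm{Per}(x)$, by the definition of the period as the least positive return time; hence $n=|\mathcal{O}(p)|=|\mathcal{O}(q)|=m$, and in particular $m\mid n$.

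I would close with the remark that the argument actually yields more than is asked: every periodic point of $E(X)$ has period exactly $n$, so $E(X)$ supports periodic points of at most one period. There is essentially no obstacle here; the only points requiring a word are the elementary observation that a period-$k$ point of a cascade has an orbit of cardinality exactly $k$, and the fact --- explicit in the proof of Theorem~\ref{3.1}, where the isomorphism is realized as a right multiplication $R_v$ by a suitable idempotent --- that the isomorphism between the two (finite) minimal ideals is a genuine bijection.
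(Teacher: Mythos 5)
Your argument is correct and is essentially the paper's own proof: the paper likewise applies Theorem~\ref{4.1} to conclude each periodic orbit is a minimal ideal and then Theorem~\ref{3.1} to conclude all minimal ideals are isomorphic, deducing that all periodic points have the same period (which, as you note, is stronger than the stated divisibility). Your extra remarks about cardinality of a periodic orbit and the isomorphism being a bijection just make explicit what the paper leaves implicit.
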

\begin{proof}
By Theorem \ref{4.1}, $\mathcal{O}(p)$ is a minimal ideal for each periodic point $p\in (E(X),f)$ and by Theorem \ref{3.1} all minimal ideals are isomorphic. Hence all periodic point will have the same period.
\end{proof}

\begin{proposition}
In a cascade $(X,f)$ with $E(X)$ infinite,  there are no surjective periodic points in $E(X)$.
\end{proposition}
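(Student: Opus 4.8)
\emph{Proof proposal.} The plan is to argue by contradiction. Suppose $p\in E(X)$ is a periodic point of period $n$, so $f^{n}\circ p = p$ in $X^{X}$, and suppose moreover that $p\colon X\to X$ is onto. The first observation I would make is that $f$ is then necessarily onto as well: since $p = f^{n}\circ p$ is surjective, $f^{n}$ is surjective, and hence so is $f$. Next I would invoke Theorem~\ref{4.1}: the orbit $I = \mathcal{O}(p) = \{\,p,\, f\circ p,\,\dots,\, f^{n-1}\circ p\,\}$ is a minimal (left) ideal of $E(X)$, and it is finite.

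The heart of the argument is to produce the identity $e$ inside $I$. By Nakamura's Lemma (equivalently, by the fact recalled earlier that every minimal ideal of $E(X)$ contains an idempotent), there is an idempotent $u\in I$; write $u = f^{k}\circ p$ with $0\le k<n$. Because $f$ is onto, $f^{k}$ is onto, and $p$ is onto, so $u$ is a surjective self-map with $u\circ u = u$. But a surjective idempotent self-map must be the identity: given $y\in X$, choose $x$ with $u(x)=y$, and then $u(y)=u(u(x))=u(x)=y$. Hence $u=e$, and so $e\in\mathcal{O}(p)=I$.

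To conclude, I would use that $I$ is a left ideal together with the fact that $e$ is the semigroup identity of $E(X)$: $E(X) = E(X)\cdot e \subseteq E(X)\cdot I \subseteq I$, so $E(X)\subseteq\mathcal{O}(p)$ and $E(X)$ is finite, contradicting the hypothesis that $E(X)$ is infinite. (Alternatively, from $e = f^{k}\circ p$ one gets $p = f^{n-k}$, and then $f^{n}\circ p = p$ says $f^{n}(y)=y$ for every $y\in f^{n-k}(X)=X$, i.e.\ $f^{n}=e$, so $E(X)=\{e,f,\dots,f^{n-1}\}$ is finite.) The only step that genuinely needs attention is the bookkeeping that the idempotent $u$ inherits surjectivity --- which is exactly why one first deduces that $f$ is onto from the surjectivity of $p$ --- and keeping the composition convention for the product in $E(X)$ consistent; after that, the presence of $e$ in the finite minimal ideal $\mathcal{O}(p)$ delivers the contradiction at once.
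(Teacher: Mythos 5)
Your argument is correct, but it takes a noticeably heavier route than the paper's. The paper's proof is a two-line direct computation: from $f^{n}p=p$ and the surjectivity of $p$ one immediately gets $f^{n}(x)=x$ for every $x\in X$ (every $x$ is of the form $p(z)$), hence $f^{n}=I_{X}$ and $E(X)=\overline{\{f^{m}:m\in\Z\}}=\{e,f,\dots,f^{n-1}\}$ is finite, contradicting the hypothesis. No minimal ideals, no idempotents, no Nakamura. Your main route instead passes through Theorem \ref{4.1} (finiteness and minimality of $\mathcal{O}(p)$), Nakamura's lemma to extract an idempotent $u=f^{k}\circ p$, the observation that a surjective idempotent is the identity, and finally the left-ideal absorption $E(X)=E(X)\cdot e\subseteq I$; each of these steps checks out (your composition convention $f^{n}p=f^{n}\circ p$ matches the paper's, the surjectivity bookkeeping is right, and for a cascade $f$ is a homeomorphism so the surjectivity of $f$ is automatic anyway), so the proof is valid, and it has the minor virtue of displaying the structural reason behind the contradiction: a surjective element forces the identity into a finite minimal ideal. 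Your parenthetical alternative ($f^{n-k}=f^{n-k}f^{k}p=f^{n}p=p$, hence $f^{n}=e$ on $f^{n-k}(X)=X$) essentially recovers the paper's direct argument, except that the paper shows you never needed the idempotent detour: surjectivity of $p$ alone already turns $f^{n}p=p$ into $f^{n}=I_{X}$.
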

\begin{proof}
Suppose there is a surjective periodic point $p\in E(X)$ with period $n$ then $f^{n}p=p$ and since $p$ is surjective,  $f^{n}(x)=x$ for each $x\in X$. So $f^{n}= I_{X}$, where $I_{X}$ is the identity map on $X$. Hence $E(X)$ contains at most $n$ number of elements and hence finite, which contradicts that $E(X)$ is infinite. So there is no surjective periodic point.
\end{proof}

\begin{theorem} For a cascade $(X,f)$, if its enveloping semigroup $E(X)$ has periodic points then these periodic points are finite, and hence the minimal ideals in $E(X)$ are finite.

In particular, if the period of these periodic points is $n$ then there can be atmost $2n$ periodic points.
\end{theorem}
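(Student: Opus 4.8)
The plan is to reduce the statement to a count of minimal ideals. First I would record that, by the Corollary preceding the theorem, once $E(X)$ has a periodic point all of its periodic points share one common period $n$; by Theorem \ref{4.1} the orbit $\mathcal{O}(q)$ of any periodic point $q$ is a minimal ideal, and by Theorem \ref{3.1} every minimal ideal of $E(X)$ is isomorphic, as a flow, to the period-$n$ cyclic orbit $\mathcal{O}(q)$. Hence every minimal ideal consists of exactly $n$ points, each of which is periodic of period $n$; this already settles the ``minimal ideals are finite'' clause. Since two distinct minimal ideals are disjoint (their intersection would be a left ideal properly contained in each), the set $P$ of periodic points of $E(X)$ is precisely the disjoint union of the minimal ideals, so $|P|=n\cdot(\text{number of minimal ideals})$. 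Thus the theorem is equivalent to: $E(X)$ has at most two minimal ideals.

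Next I would localise the minimal ideals using the two ``ends'' of $\Z$. Write $E(X)=\overline{\mathcal{O}(e)}$ with $e=f^0$ a transitive point; one may assume $E(X)$ is infinite, for otherwise $\mathcal{O}(e)$ is a finite orbit whose only minimal subset is its cycle, giving a single minimal ideal. Put $\omega^+=\bigcap_{N}\overline{\{f^k:k\ge N\}}$ and $\alpha^-=\bigcap_{N}\overline{\{f^k:k\le -N\}}$; each is nonempty, closed and $f$-invariant, and since left multiplication by elements of $T$ and right multiplication by any element are continuous, each is in fact a left ideal, hence a closed subsemigroup, of $E(X)$. If $q$ is periodic and $q=\lim_i f^{k_i}$ along a net, then a bounded subnet of exponents would force $q=f^{k_0}$, whence $f^{n+k_0}=f^{k_0}$, so $f^n=e$ and $E(X)$ finite — impossible; hence, after passing to a subnet, $k_i\to+\infty$ or $k_i\to-\infty$, i.e. $q\in\omega^+$ or $q\in\alpha^-$. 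As these sets are closed and invariant, the whole ideal $\mathcal{O}(q)=\overline{\mathcal{O}(q)}$ lies in whichever of them contains $q$. So every minimal ideal sits in $\omega^+$ or in $\alpha^-$ (for a semicascade only $\omega^+$ occurs, which would sharpen the bound to $n$), and it remains to show each of $\omega^+,\alpha^-$ contains at most one minimal ideal.

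That last statement is the crux and the main obstacle. My plan for it is to exploit that $e$ is the identity of $E(X)$: from continuity of right multiplication one gets $\overline{\mathcal{O}(q)}=E(X)q$ and $\omega(q)=\omega^+q$ for every $q$ (here $\omega(q)$ is the forward $\omega$-limit set in $(E(X),f)$); applied to an idempotent $u$ of a minimal ideal $I\subseteq\omega^+$ this yields $\omega^+u=\omega(u)=\mathcal{O}(u)=I$. If $K\subseteq\omega^+$ is another minimal ideal with idempotent $v$, then $uv\in\omega^+v=K$ and $vu\in\omega^+u=I$; moreover, writing $u=\lim_i f^{k_i}$ and using $f^nu=u$ one reduces the exponents modulo $n$ along a subnet to obtain $uv=f^cv$ and, symmetrically, $vu=f^{c'}u$ with $0\le c,c'<n$. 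The aim is then to combine this with the ``$\sim$''-relation between idempotents of $I$ and $K$ used in the proof of Theorem \ref{3.1}, and with the group structure of the finite maximal subgroups $uE(X)u\subseteq I$ and $vE(X)v\subseteq K$, to force $c=c'=0$, hence $uv=v$ and $vu=u$, which places $u$ and $v$ in a common minimal ideal and contradicts $I\ne K$. The difficulty is genuinely this last reduction: $\omega^+$ need not be metrizable and its kernel need not visibly be a single left ideal, so one cannot appeal to any soft $\omega$-limit-set generality — the argument must really use the identities $\omega(q)=\omega^+q$ together with the finiteness of the ideals.

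Granting the crux, $\omega^+$ and $\alpha^-$ each contribute at most one minimal ideal, so $E(X)$ has at most two minimal ideals and therefore at most $2n$ periodic points; in particular the periodic set is finite. For a semicascade the same argument, using only $\omega^+$, gives at most $n$ periodic points.
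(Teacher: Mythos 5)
Your preliminary reductions are sound: by Theorem \ref{4.1} the orbit of a periodic point is a minimal ideal, by Theorem \ref{3.1} all minimal ideals are then finite cycles of length $n$, distinct minimal ideals are disjoint, and in the infinite case every periodic point lies in one of the two closed left ideals $\omega^+=\bigcap_{N}\overline{\{f^k:k\ge N\}}$ and $\alpha^-=\bigcap_{N}\overline{\{f^k:k\le -N\}}$. The problem is the step you yourself flag as the crux, namely that each of $\omega^+$ and $\alpha^-$ contains at most one minimal ideal: this is not merely left unproved in your sketch, it is false, so no combination of idempotents, the relation $\sim$, and the identity $\omega(q)=\omega^+q$ can close the gap. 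Let $x_0\in\{0,1,2\}^{\Z}$ have all $0$'s to the left of the origin and, to the right, the concatenation $0^{n_1}1^{n_2}2^{n_3}0^{n_4}1^{n_5}2^{n_6}\cdots$ with $n_k\nearrow\infty$, and let $X$ be the orbit closure of $x_0$ under the shift $\sigma$; then $X$ consists of the orbit of $x_0$, the three constant sequences $0^\infty,1^\infty,2^\infty$, and the sequences with a single transition of type $01$, $12$ or $20$. If $k_i$ is the centre of the $i$-th block of $1$'s, then $\sigma^{k_i}$ converges pointwise on $X$ to the map $p_1$ which sends every point of the orbit of $x_0$ to $1^\infty$, sends each one-transition point to the constant given by its right tail, and fixes the constants; the centres of the $0$-blocks and of the $2$-blocks produce in the same way maps $p_0$ and $p_2$. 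Each $p_j$ takes values in the fixed-point set of $\sigma$, hence $\sigma p_j=p_j$, and every element of $E(X)$ fixes the constant sequences, so $\{p_0\},\{p_1\},\{p_2\}$ are three distinct singleton minimal ideals, all contained in your $\omega^+$ (the backward limit supplies a fourth fixed point). Thus the two ends of $\Z$ do not bound the number of minimal ideals by two; indeed with $n=1$ this example exhibits more than $2n$ periodic points in $E(X)$, so the obstruction you met is not a technical one that a cleverer manipulation at the crux could remove.

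Your route is in any case genuinely different from the paper's: the paper never decomposes $E(X)$ along $\omega^+$ and $\alpha^-$, but instead takes $n+1$ periodic points with pairwise disjoint neighbourhoods $U_l$, uses that $f^{m^l_i+kn}\longrightarrow p_l$ for each fixed $k$, and argues by a pigeonhole count of exponents modulo $n$, handling positive and negative exponent sequences separately (whence the factor $2$). Your reduction to ``at most two minimal ideals'' is cleaner in its first steps, but note that the uniformity the pigeonhole needs --- passing from $f^N\in U_1$ to $f^{N+kn}\in U_1$ for all $k>0$, rather than, for each fixed $k$, only eventually in $i$ --- is precisely what fails in the example above; so the difficulty you isolated at your crux is intrinsic to the statement and is the point any correct treatment must confront, rather than a defect peculiar to your method.
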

\begin{proof}
We have already seen that all periodic points will have the same period say $n$. Suppose there are infinitely many periodic points $p_{1}, p_{2}, \ldots, p_{k},\ldots$. For every $p_{l}$, there is a sequence $\lbrace f^{m^{l}_{i}} \rbrace$ such that $f^{m^{l}_{i}}\longrightarrow p_{l}$. Since $\emph{Per}(p_{l})=n$, $f^{m^{l}_{i}+kn} \longrightarrow p_{l}$ for all $k\in \N$.

Consider the $n+1$ points $p_1, \ldots, p_{n+1}$ and let open $U_l \ni p_l$ such that $U_l \cap U_j = \emptyset$ for $l \neq j$, and suppose that ${m^{l}_{i}} > 0$. Since $f^{m^{l}_{i}}\longrightarrow p_{l}$, there is an $N_l \in \N$ such that $f^{m^{l}_{i}}\in U_{l}$ for $m^l_i \geq N_l$.

Let $N \ggg N_l, \ \forall l, \ 1 \leq l \leq n+1$, such that $f^N \in U_1$. Then $f^{N+kn} \in U_1, \ \forall k > 0$. Then there are $n-1$ choices among $N+1, \ldots, N+(n-1)$ to be uniquely contained in the $n$ open sets $U_2, \ldots, U_{n+1}$. This contradicts that $f^{m^{l}_{i}+kn} \longrightarrow p_{l}$ for all $k\in \N$, $1 \leq l \leq n+1$.

If ${m^{l}_{i}} < 0$, then there is an $N_l \in \N$ such that $f^{m^{l}_{i}}\in U_{l}$ for $m^l_i \leq -N_l$.

In all possibilities $|Per(f)| \leq 2n$.

\end{proof}

\section{Finite Minimal Ideals in Enveloping Semigroup and Proximal Relations}

Since $(X,T)$ is a flow,  $(E(X),T)$ is also be a flow and we can talk about the proximal relation in $(E(X),T)$ also. The following proposition tells about the relation between $E(X)$ and $E(E(X))$ in some given conditions;

\begin{proposition}\cite{ELL}\label{5.2}
For a point transitive flow $(X,T)$, $E(E(X))\cong E(X)$.
\end{proposition}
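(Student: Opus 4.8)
The plan is to build the isomorphism by hand, identifying every element of $E(E(X))$ as a left translation on $E(X)$. Write $\Phi_X:\beta T\to E(X)$ and $\Phi_{E(X)}:\beta T\to E(E(X))$ for the two canonical maps; by the facts on enveloping semigroups recalled earlier, each is a surjective flow \emph{and} semigroup homomorphism. Recall also that $T$ acts on $E(X)$ by $t\cdot p=\pi^t p=\pi^t\circ p$, that $E(X)$ is point transitive (with transitive point the identity $e=\mathrm{id}_X$), and that for $q\in E(X)$ the left translation $L_q:E(X)\to E(X)$, $L_q(r)=qr=q\circ r$, is an element of $E(X)^{E(X)}$. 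Consequently $\Phi_{E(X)}$ extends $t\mapsto L_{\pi^t}$.

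\emph{Step 1 (the crux): show $\Phi_{E(X)}(p)=L_{\Phi_X(p)}$ for every $p\in\beta T$.} Fix a net $t_i\to p$ in $\beta T$. Then $\pi^{t_i}=\Phi_X(t_i)\longrightarrow\Phi_X(p)$ pointwise in $X^X$, and $L_{\pi^{t_i}}=\Phi_{E(X)}(t_i)\longrightarrow\Phi_{E(X)}(p)$ pointwise in $E(X)^{E(X)}$. For a fixed $r\in E(X)$ and any $x\in X$,
\[
\bigl(\pi^{t_i}\circ r\bigr)(x)=\pi^{t_i}\bigl(r(x)\bigr)\ \longrightarrow\ \Phi_X(p)\bigl(r(x)\bigr)=\bigl(\Phi_X(p)\circ r\bigr)(x),
\]
because pointwise convergence of $\pi^{t_i}$ to $\Phi_X(p)$ in $X^X$ in particular holds at the single point $r(x)$. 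Hence $\pi^{t_i}\circ r\longrightarrow\Phi_X(p)\circ r$ in $X^X$, so by uniqueness of limits $\Phi_{E(X)}(p)(r)=\Phi_X(p)\circ r=L_{\Phi_X(p)}(r)$; since $r\in E(X)$ was arbitrary, $\Phi_{E(X)}(p)=L_{\Phi_X(p)}$.

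\emph{Step 2 (conclude).} Since $(X,T)$ is point transitive, choose $x_0$ with $\overline{Tx_0}=X$; then $\psi:E(X)\to X$, $\psi(p)=px_0$, is a flow homomorphism, and it is surjective because $\psi(E(X))$ is compact and contains $Tx_0$, hence contains $\overline{Tx_0}=X$. By the induced-map result this yields a continuous, surjective flow-and-semigroup homomorphism $\theta:E(E(X))\to E(X)$ with $\theta\circ\Phi_{E(X)}=\Phi_X$. It remains to prove $\theta$ is injective. Let $a,b\in E(E(X))$ with $\theta(a)=\theta(b)$, and write $a=\Phi_{E(X)}(p)$, $b=\Phi_{E(X)}(q)$ (using surjectivity of $\Phi_{E(X)}$). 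Then $\Phi_X(p)=\theta(a)=\theta(b)=\Phi_X(q)$, and Step 1 gives $a=L_{\Phi_X(p)}=L_{\Phi_X(q)}=b$. Thus $\theta$ is a continuous bijection from the compact space $E(E(X))$ onto the Hausdorff space $E(X)$, hence a homeomorphism; being also a homomorphism of flows and of semigroups, it furnishes the isomorphism $E(E(X))\cong E(X)$.

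The one delicate point is Step 1: the only topology available on $X^X$ and on $E(X)^{E(X)}$ is that of pointwise convergence, so one must check that right composition by a fixed $r\in E(X)$ is continuous for it — which works precisely because evaluation at the single point $r(x)$ is continuous. Everything afterward (the surjectivity and homomorphism properties of $\theta$, supplied by the induced-map result, and the promotion of a continuous bijection of compact Hausdorff spaces to a homeomorphism) is routine from the machinery already assembled.
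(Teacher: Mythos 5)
Your proof is correct and follows essentially the same route as the construction the paper records for this result: the isomorphism you build is exactly the homomorphism $\theta:E(E(X))\to E(X)$ induced by the evaluation map $q\mapsto qx_0$ at a transitive point, with injectivity supplied by your Step 1 identification $\Phi_{E(X)}(p)=L_{\Phi_X(p)}$. The delicate point you single out — that for fixed $r\in E(X)$ the map $q\mapsto q\circ r$ is continuous on $X^X$ in the product topology, since its coordinate at $x$ is evaluation at the single point $r(x)$ — is handled correctly, and the remaining steps (surjectivity of $\psi$ from $\overline{Tx_0}=X$, the induced-map theorem, and compactness turning the continuous bijection into a homeomorphism) are all sound.
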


\begin{corollary} For any flow $(X,T)$, $(E(X),T)$ is always point transitive and so $E(E(E(X))) \cong E(E(X))$. \end{corollary}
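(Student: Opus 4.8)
The plan is to reduce the statement to Proposition~\ref{5.2}; the only thing that really needs checking is that $(E(X),T)$ is a point transitive flow, after which the isomorphism is immediate.

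First I would record that $(E(X),T)$ is a flow: $E(X)$ is a closed subset of the compact Hausdorff space $X^X$, hence itself compact Hausdorff, and the action $t\cdot p=\pi^t p$ (with $\pi^t=L_t$, as recalled earlier) makes it a flow. For point transitivity, observe that since $T$ is a group or a monoid it possesses an identity element $e$ with $\pi^e=\mathrm{id}_X$, so the identity map $\mathrm{id}_X$ lies in $E(X)$. Computing its orbit under the induced action,
\[
 T\cdot \mathrm{id}_X \;=\; \{\pi^t\circ \mathrm{id}_X:t\in T\}\;=\;\{\pi^t:t\in T\},
\]
and the closure of this set in $X^X$ is by definition $E(X)$. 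Hence $\overline{T\cdot\mathrm{id}_X}=E(X)$, so $\mathrm{id}_X$ is a transitive point of $(E(X),T)$; this is the same fact already listed among the basic properties of $E(X)$.

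Finally I would apply Proposition~\ref{5.2} with the point transitive flow $(E(X),T)$ in place of $(X,T)$, which yields $E(E(E(X)))\cong E(E(X))$, as claimed. The same step can be iterated: $E(E(X))$ is again an enveloping semigroup, hence point transitive, so the isomorphism persists at every higher level.

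There is no genuine obstacle here beyond bookkeeping; the one point that actually uses the hypothesis is that $T$ has an identity element belonging to $T$, which is precisely what guarantees $\mathrm{id}_X\in E(X)$ and therefore supplies the transitive point. This is where the standing ``group or monoid'' assumption on $T$ enters.
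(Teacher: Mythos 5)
Your argument is correct and coincides with the paper's (implicit) proof: point transitivity of $(E(X),T)$ comes from the dense orbit of the identity $e=\mathrm{id}_X\in E(X)$, whose orbit $\{\pi^t:t\in T\}$ is dense in $E(X)$ by definition, and then Proposition \ref{5.2} applied to the point transitive flow $(E(X),T)$ gives $E(E(E(X)))\cong E(E(X))$. Nothing further is needed.
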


\begin{theorem} \label{TH 7.1}
Suppose $(X,T)$ is point transitive flow then $(X,T)$ is distal if and only if $(E(X),T)$ is distal.
\end{theorem}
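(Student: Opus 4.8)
The plan is to exploit the algebraic characterizations of distality already collected in the excerpt, together with Proposition~\ref{5.2}. Recall the list of equivalences stated for a flow $(X,T)$: $X$ is distal $\iff$ $e$ is the only idempotent of $E(X)$ $\iff$ $E(X)$ is a group $\iff$ $E(X)$ is minimal. So the whole statement reduces to comparing the idempotent structure (equivalently, the group/minimality structure) of $E(X)$ with that of $E(E(X))$, and this is exactly where Proposition~\ref{5.2} enters: since $(X,T)$ is point transitive, $E(E(X))\cong E(X)$ as flows (indeed as enveloping semigroups).

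First I would prove the forward direction. Assume $(X,T)$ is distal. Then $E(X)$ is a group with identity $e$, and in particular $E(X)$ is a minimal flow. A distal flow is, a fortiori, one whose enveloping semigroup is a group; but I can argue more directly: if $(E(X),T)$ had a proximal pair $(p,q)$ with $p\neq q$, there would be $r\in E(E(X))$ with $rp=rq$. Using the isomorphism $E(E(X))\cong E(X)$ from Proposition~\ref{5.2}, transport $r$ to an element of $E(X)$, which is a group; cancelling (left multiplication by $r^{-1}$, legitimate since $E(X)$ is a group and, by the isomorphism, so is $E(E(X))$) gives $p=q$, a contradiction. Hence $(E(X),T)$ is distal. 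For the converse, assume $(E(X),T)$ is distal. Then by the same list of equivalences applied to the flow $(E(X),T)$, its enveloping semigroup $E(E(X))$ is a group, equivalently $E(E(X))$ has $e$ as its only idempotent. Now invoke Proposition~\ref{5.2}: $E(X)\cong E(E(X))$, so $E(X)$ has $e$ as its only idempotent, hence $E(X)$ is a group, hence $(X,T)$ is distal.

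The main obstacle — really the only point needing care — is making sure the isomorphism $E(E(X))\cong E(X)$ is used correctly: it is an isomorphism of enveloping semigroups (compatible with the $T$-action and the semigroup operation), so it carries idempotents to idempotents and the identity to the identity, and it carries "is a group'' to "is a group.'' Once that is granted, both directions are immediate. An alternative, slightly more hands-on route for the converse avoids Proposition~\ref{5.2} entirely: if $(X,T)$ is not distal, pick an idempotent $u\in E(X)$ with $u\neq e$; then $u$ is an idempotent of $E(X)$ viewed as a flow, and since $E(X)$ is point transitive one checks $u$ lies in a proper minimal ideal, producing a proximal pair $(u, e)$ in $(E(X),T)$ (indeed $t_i\to u$ along a net gives $t_i\,u\to u$ and $t_i\, e = t_i\to u$, and also $t_i\, u \to u = u\,u$ while we want them proximal to a common point — more simply, $u$ and some point of the minimal ideal containing it together with $e$ are proximal), contradicting distality of $(E(X),T)$. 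I would present the clean version via Proposition~\ref{5.2} as the main argument and perhaps remark on the direct idempotent argument; either way the content is light, resting entirely on the equivalences already established.
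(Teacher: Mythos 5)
Your proposal is correct and takes essentially the same route as the paper: both directions rest on Proposition \ref{5.2} (that $E(E(X))\cong E(X)$ for a point transitive flow) combined with the standard equivalence between distality of a flow and its enveloping semigroup being a group, the converse being word-for-word the paper's argument. The only cosmetic difference is in the forward direction, where the paper evaluates the proximal pair $(p,q)\in P(E(X))$ at each $x\in X$ and invokes distality of $(X,T)$ directly, whereas you cancel in the group $E(X)$ after transporting $r\in E(E(X))$ through the isomorphism; your side remark that a nontrivial idempotent $u$ yields the proximal pair $(u,e)$ is exactly the remark the paper records after the theorem.
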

\begin{proof}
Since $(X,T)$ is point transitive then by proposition \ref{5.2}, $E(E(X))\cong E(X)$.

Now if $(X,T)$ is distal then $P(X)= \Delta$. Let $(p,q)\in P(E(X))$ then $\alpha p = \alpha q$ for all $\alpha \in I$, where $I\subset E(E(X))\cong E(X)$ is minimal ideal. Which gives $\alpha p(x)=\alpha q(x)$ for each $x\in X \Rightarrow (p(x),q(x))\in P(X)\Rightarrow p(x)=q(x)$. Since this is true for each $x\in X$, $p=q$. Therefore $P(E(X))=\Delta$ and hence $(E(X),T)$ is distal.

Conversely suppose $(E(X),T)$ is distal flow then $E(E(X))$ will be a group. So $E(X)\cong E(E(X))$ is a group, which gives that $(X,T)$ is distal.
\end{proof}

\begin{remark} We note that if  $(X,T)$ is  not distal $E(X)$ contains a non-trivial idempotent $u$, and
the pair $(u,e)$ is proximal. \end{remark}

\begin{remark}
We note that if the   flow $(X,T)$ is not transitive then also we can have $E(E(X))\cong E(X)$. And in that case if $(X,T)$ is distal(not distal), then $(E(X),T)$ is also distal(not distal). We recall Example \ref{EX 5.1} here:

\vskip .5cm

Consider the cascade $([0,1],f)$ where $f(x)=x^2$.
Enveloping semigroup here is just the two point compactification of $\Z$ as $f^{n} \longrightarrow g_{1}$ and $f^{-n} \longrightarrow g_{2}$ as $n \longrightarrow \infty$, where

$g_{1}(x)= \left\{
                         \begin{array}{ll}
                           0, & \hbox{$x\in [0,1)$;} \\
                           1, & \hbox{$x = 1$.}
                         \end{array}
                       \right.$
and \   \
$g_{2}(x)= \left\{
                 \begin{array}{ll}
                   0, & x = 0; \\
                   1, &  x\in (0,1].
                    \end{array}
                 \right.$

Note that  $f(g_1) = g_1$ and $f(g_2) = g_2$. Thus $g_1$ and $g_2$ are fixed points in $E(X)$.

Now we determine $E(E(X))$ of $(X,f)$. For that we need to look into the limit functions of $\lbrace f^n : n\in \Z\rbrace$ in $E(X)^{E(X)}$ with the topology of pointwise convergent. As $f^{n} \longrightarrow g_{1}$ and $f^{-n} \longrightarrow g_{2}$ in $X^X$. So $f^{n} \longrightarrow p$ in $E(X)^{E(X)} \Leftrightarrow f^{n}(\alpha) \longrightarrow p(\alpha)$ for each $\alpha\in E(X)$.

We can observe that $f^{n} \longrightarrow h$ and $f^{-n} \longrightarrow k$ in $E(X)^{E(X)}$, where $h,k: E(X)\rightarrow E(X)$ are defined as $h(f^n)=g_1$, $h(g_1)=g_1$, $h(g_2)=g_2$ and $k(f^n)=g_2$, $k(g_1)=g_1$, $k(g_2)=g_2$.

 For any $k\in \Z$, $\lim\limits_{n\rightarrow \infty} f^{n}(f^k) = g_1(f^k)= g_1 = h(f^k)$.

Also $\lim\limits_{n\rightarrow \infty} f^{n}(g_1) = g_1(g_1)=g_1=h(g_1)$ and $\lim\limits_{n\rightarrow \infty} f^{n}(g_2) = g_1(g_2)=g_2=h(g_2)$. So by topology of pointwise convergent of $E(X)^{E(X)}$, $f^{n} \longrightarrow h$.

In the same way $k\in \Z$, $\lim\limits_{n\rightarrow \infty} f^{-n}(f^k)=g_2(f^k)=g_2=k(f^k)$ in $E(X)^{E(X)}$.

Also $\lim\limits_{n\rightarrow \infty} f^{-n}(g_1) = g_2(g_1)=g_1=k(g_1)$ and $\lim\limits_{n\rightarrow \infty} f^{-n}(g_2) = g_2(g_2)=g_2=k(g_2)$. So by topology of pointwise convergent of $E(X)^{E(X)}$, $f^{-n} \longrightarrow k$ in $E(X)^{E(X)}$.
 Therefore $E(E(X))= \lbrace f^n:n\in \Z\rbrace \cup \lbrace h,k \rbrace$ which is isomorphic to $E(X)$.

\end{remark}

\begin{remark} We can also have the situation that $(X,T)$ is not a point transitive flow,  yet $(X,T)$  and  $(E(X),T)$ are both distal.

For example consider $X = \{(r, \theta): r = 1, 2$ and $ 0 \leq \theta \leq 2\pi \}$ and let  $f: X \to X$ be defined as $f(r, \theta) = (r, \theta + 2 \pi \alpha  \mod(2 \pi))$, where $\alpha$ is irrational. Then the enveloping semigroup $E(X)$ is isomorphic to an irrational rotation. And here $(X,f)$  and  $(E(X),f)$ are both distal.

\end{remark}

\begin{theorem}
For a point transitive flow $(X,T)$, if $P(X)$ is an equivalence relation then $P(E(X))$ is also an equivalence relation.
\end{theorem}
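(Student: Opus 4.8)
The plan is to reduce everything to the hypothesis on $P(X)$ via the minimal‑idempotent description of proximality, using that $(X,T)$ is point transitive. First I would invoke Proposition \ref{5.2} to identify $E(E(X))$ with $E(X)$: under the action $t\cdot p=\pi^t p$ of $T$ on $E(X)$, each element of $E(E(X))$ is a left multiplication $L_p\colon q\mapsto pq$ for some $p\in E(X)$, with $L_p\leftrightarrow p$ the isomorphism, so the minimal idempotents of $E(E(X))$ are precisely the $L_u$ with $u$ a minimal idempotent of $E(X)$. Next I would recall, from the list of equivalences for proximal pairs, that for any flow $(Z,T)$ one has $(a,b)\in P(Z)$ if and only if $ua=ub$ for some minimal idempotent $u\in E(Z)$ (one implication is condition (d) of that list; the other takes the idempotent $u$ of a minimal ideal $I\subseteq E(Z)$ with $ra=rb$ for all $r\in I$ and uses $ru=r$). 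Thus, writing $Q_u^{Z}=\{(a,b)\in Z\times Z: ua=ub\}$, each $Q_u^{Z}$ is an equivalence relation (a kernel) and $P(Z)=\bigcup\{Q_u^{Z}: u\text{ a minimal idempotent of }E(Z)\}$. In particular, via the identification above, $(p,q)\in Q_u^{E(X)}$ if and only if $u(p(x))=u(q(x))$ for all $x\in X$, that is, if and only if $(p(x),q(x))\in Q_u^{X}$ for all $x\in X$.

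The core step I would isolate as a lemma: \emph{if $P(X)$ is an equivalence relation, then all the relations $Q_u^{X}$, $u$ a minimal idempotent of $E(X)$, coincide} (and each then equals $P(X)$). The converse is trivial, so the content is the forward direction. Fix minimal idempotents $u,v$ and assume $ux=uy$; I want $vx=vy$. From $ux=uy$ we get $(x,y)\in P(X)$, and the net argument already used in the excerpt ($\pi^{t_i}\to v$ forces $\lim t_i x=vx=\lim t_i(vx)$) gives $(x,vx)\in P(X)$ and $(y,vy)\in P(X)$; transitivity of $P(X)$ then yields $(vx,vy)\in P(X)$. Hence there is a minimal ideal $I\subseteq E(X)$ with $r(vx)=r(vy)$ for all $r\in I$. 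Now $Iv$ is the image of the minimal flow $I$ under the flow homomorphism $R_v$, so it is a minimal subset of $E(X)$, and $E(X)(Iv)\subseteq Iv$ makes it a minimal ideal; since $Iv\subseteq E(X)v\subseteq I_v$, where $I_v$ is the minimal ideal containing $v$, minimality forces $Iv=I_v\ni v$, so $N=\{r\in I:rv=v\}$ is nonempty. As in the proof of Theorem \ref{3.1}, $N$ is a closed subsemigroup of the compact right‑topological semigroup $E(X)$, so Nakamura's Lemma supplies an idempotent $w\in N\subseteq I$. Then $w\in I$ gives $w(vx)=w(vy)$, and $wv=v$ gives $vx=w(vx)=w(vy)=vy$.

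With the lemma established the conclusion follows at once: assuming $P(X)$ is an equivalence relation, all $Q_u^{X}$ equal $P(X)$, so for any two minimal idempotents $\tilde u,\tilde v\in E(X)$ and any $p,q\in E(X)$ we have $(p,q)\in Q_{\tilde u}^{E(X)}$ iff $(p(x),q(x))\in Q_{\tilde u}^{X}$ for all $x$ iff $(p(x),q(x))\in Q_{\tilde v}^{X}$ for all $x$ iff $(p,q)\in Q_{\tilde v}^{E(X)}$. Hence the relations $Q_{\tilde u}^{E(X)}$ all coincide, and therefore $P(E(X))=\bigcup_{\tilde u}Q_{\tilde u}^{E(X)}=Q_{\tilde u}^{E(X)}$ is an equivalence relation. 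I expect the main obstacle to be the lemma — specifically the passage from $(vx,vy)\in P(X)$ to $vx=vy$, which needs the Zorn/Nakamura extraction of the idempotent $w\in I$ with $wv=v$; one must also take care that the isomorphism $E(E(X))\cong E(X)$ is realized by left multiplication, which is what legitimizes the fibrewise translation in the last display.
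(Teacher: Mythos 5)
Your proof is correct, but it is organized differently from the paper's. The paper checks transitivity of $P(E(X))$ directly: given $(p,q),(q,r)\in P(E(X))$ it evaluates at each $x\in X$ through the identification $E(E(X))\cong E(X)$ of Proposition \ref{5.2}, uses the hypothesis to get $(p(x),r(x))\in P(X)$, and then asserts a single minimal ideal $K$ with $\gamma p(x)=\gamma r(x)$ for all $x$ simultaneously --- a step that tacitly relies on the uniqueness of the minimal ideal, i.e.\ on Theorem \ref{er}, which the paper does not explicitly invoke there. You instead isolate a structural lemma: when $P(X)$ is an equivalence relation, all the kernel relations $Q_u^X$ of minimal idempotents coincide; your extraction of an idempotent $w\in I$ with $wv=v$ via $Iv=I_v$, the closed subsemigroup $N=\{r\in I: rv=v\}$ and Nakamura's Lemma is a correct, self-contained substitute for Theorem \ref{er}, and parallels the idempotent manipulations in the proof of Theorem \ref{3.1}. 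Combined with your (correct, and carefully flagged) observation that every element of $E(E(X))$ is a left multiplication $L_p$, so that $Q_u^{E(X)}$ is exactly the fibrewise version of $Q_u^X$, this exhibits $P(E(X))$ as a single kernel relation, hence an equivalence relation. What your route buys: uniformity in $x$ is automatic, so the ``same $K$ for every $x$'' issue never arises, you never need the unique-minimal-ideal theorem, and you get the stronger conclusion that $P(E(X))=Q_u^{E(X)}$ (and $P(X)=Q_u^X$) for every minimal idempotent $u$. What the paper's route buys is brevity: granting Theorem \ref{er}, the pointwise transitivity check takes only a few lines, whereas your lemma carries the weight of the ideal-theoretic argument itself.
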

\begin{proof}
Since we know that proximal relation is always reflexive and symmetric, so we need to check only transitivity. Suppose $(p,q),(q,r)\in P(E(X))$ then $\alpha p = \alpha q$ and $\beta q = \beta r$ for all $\alpha \in I, \beta \in J$, where $I,J\subset E(E(X))\cong E(X)$ are minimal ideals. Now for each $x\in X$, $\alpha p(x) = \alpha q(x)$ and $\beta q(x) = \beta r(x)$ for all $\alpha \in I, \beta \in J$, which means $(p(x),q(x)),(q(x),r(x))\in P(X)$ and since $P(X)$ is an equivalence relation, $(p(x), r(x))\in P(X) \Rightarrow \gamma p(x)=\gamma r(x)$ for all $\gamma \in K$, where $K \subset E(X)\cong E(E(X))$ is minimal ideal. Since $\gamma p(x)=\gamma r(x)$ is true for all $x\in X$, so $\gamma p = \gamma r$ for all $\gamma \in K \Rightarrow (p,r)\in P(E(X))$. Therefore $P(E(X))$ is transitive and hence equivalence relation.
\end{proof}

We recall the below from \cite{EL}:

\begin{theorem} \cite{EL} \label{er}
For a flow $(X,T)$, the proximal relation is an equivalence relation if and only if $E(X)$ contains a unique minimal ideal.
\end{theorem}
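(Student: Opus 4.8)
The plan is to prove the equivalence in Theorem \ref{er} by relating the uniqueness of a minimal ideal to transitivity of the proximal relation, using the idempotent machinery recalled in Sections 5 and 6. Throughout, I will use the facts already listed: every minimal ideal $I \subseteq E(X)$ contains an idempotent; for a minimal ideal $I$, the set $J$ of its idempotents partitions $I$ into groups $\{vI : v \in J\}$; for $p \in I$ and $v \in J$ we have $pv = p$; a pair $(x,y)$ is proximal if and only if there is a minimal ideal $I \subseteq E(X)$ with $rx = ry$ for all $r \in I$, equivalently if $px = py$ for some $p \in \beta T$ (or $r \in E(X)$); and $x$ is almost periodic iff there is an idempotent $u$ in some minimal ideal with $ux = x$, in which case $Ix = \overline{Tx}$.

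For the direction ``unique minimal ideal $\Rightarrow$ proximal is an equivalence relation'': reflexivity and symmetry are automatic, so I only need transitivity. Suppose $(x,y)$ and $(y,z)$ are proximal. By the characterization above there are minimal ideals $I, K$ with $rx = ry$ for all $r \in I$ and $sy = sz$ for all $s \in K$. By hypothesis $I = K =: M$. Then for every $r \in M$ we have $rx = ry$ and $ry = rz$, hence $rx = rz$ for all $r \in M$, so $(x,z)$ is proximal. This is the easy half.

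For the converse ``proximal is an equivalence relation $\Rightarrow$ unique minimal ideal'': suppose $I$ and $K$ are two minimal ideals; I must show $I = K$. Pick an idempotent $u \in I$. By the result recalled before Theorem \ref{3.1} (uniqueness of an equivalent idempotent across minimal ideals), there is an idempotent $v \in K$ with $uv = v$ and $vu = u$. Now fix any point $x \in X$. Since $u$ is an idempotent, $(x, ux)$ is a proximal pair (this is exactly the observation in the bulleted list: $t_i \to u$ gives $\lim t_i(x, ux) = (ux, ux)$), and likewise $(x, vx)$ is proximal. By symmetry and transitivity of $P(X)$, the pair $(ux, vx)$ is proximal, so there is a minimal ideal $L$ with $r(ux) = r(vx)$ for all $r \in L$. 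Choosing $r = w$ an idempotent of $L$ equivalent to $u$ (again by the cross-ideal uniqueness), and using the algebra $w u = u$, $w v = $ the $L$-idempotent equivalent to $v$, I can push the equality $w u x = w v x$ down to $u x = v x$ by composing suitably on the left with elements of $I$ (using $pu = p$ for $p \in I$). Carrying this out for all $x \in X$ yields $ux = vx$ for every $x$, i.e. $u = v$ as elements of $X^X = $ ambient semigroup, hence $u \in I \cap K$. Since $I$ and $K$ are minimal ideals sharing a point, and $E(X) u \subseteq I$ while also $E(X) u \subseteq K$ (as $u \in K$ and $K$ is an ideal), minimality forces $I = E(X)u = K$.

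The main obstacle I anticipate is the bookkeeping in the converse: carefully choosing the right idempotents and applying the identities $pv = p$, $uv = v$, $vu = u$ in the correct order to deduce $ux = vx$ from the proximality of $(ux, vx)$, without circularity. An alternative, possibly cleaner route for the converse is to argue contrapositively: if there were two distinct minimal ideals $I \ne K$, produce idempotents $u \in I$, $v \in K$ with $uv = v$, $vu = u$ but $u \ne v$, so there is $x$ with $ux \ne vx$; then $(ux, x)$ and $(x, vx)$ are both proximal while $(ux, vx)$ is not proximal — because $ux$ and $vx$ lie in distinct minimal sets $Iux$-type orbits and any proximal pair of almost periodic points in distinct minimal ideals would contradict the partition structure — giving a failure of transitivity. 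I would develop whichever of these two presentations turns out to need the fewest auxiliary lemmas, but I expect the contrapositive version to be the more transparent one to write down.
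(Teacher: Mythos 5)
Your forward direction is fine, and your overall strategy for the converse (take minimal ideals $I,K$, equivalent idempotents $u\in I$, $v\in K$ with $uv=v$, $vu=u$, use $(x,ux),(x,vx)\in P(X)$ and transitivity to get $(ux,vx)\in P(X)$, and aim at $ux=vx$ for all $x$, hence $u=v$ and $I=K$) is exactly the classical route of Ellis; note the paper itself does not prove this statement but quotes it from \cite{EL}. The genuine gap is the crucial step ``$(ux,vx)\in P(X)\Rightarrow ux=vx$''. Your first finish does not work as written: if $L$ is a minimal ideal with $r(ux)=r(vx)$ for all $r\in L$ and $w\in L$ is the idempotent with $uw=w$, $wu=u$, then $w(ux)=ux$ gives only $ux=(wv)x$, where $wv$ lies in $K$ (not in $L$, so it is not ``the $L$-idempotent equivalent to $v$'', nor is it obviously idempotent), and left-composing with $p\in I$ is useless: $pw\in L$, not $I$, and the identity $pu=p$ holds only for idempotents of the \emph{same} minimal ideal, so it cannot be applied to push anything ``down'' to $ux=vx$; carrying it out just returns $ux=(wv)x$. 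Your contrapositive variant has a different gap: $ux$ and $vx$ are points of $X$, not elements of $E(X)$, and there is no reason they lie in distinct minimal subsets of $X$, so you cannot conclude directly that $(ux,vx)\notin P(X)$.

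What is missing is the standard lemma: \emph{a proximal pair which is an almost periodic point of the product flow is diagonal}. Here it applies because $v(ux)=(vu)x=ux$ and $v(vx)=vx$, so the pair $(ux,vx)$ is fixed by the minimal idempotent $v$ acting diagonally; by the equivalence (a)$\Leftrightarrow$(c) in the bulleted list (applied in $(X\times X,T)$, using $E(X\times X)\cong E(X)$), $(ux,vx)$ is an almost periodic point, and hence is fixed by some idempotent $w$ belonging to \emph{every} minimal ideal, in particular to the same ideal $L$ that witnesses proximality. Then $ux=w(ux)=w(vx)=vx$. This is where the identity $vu=u$ actually gets used, and it is the step your proposal never supplies; with it, your argument closes (there exists $x$ with $ux\neq vx$ whenever $u\neq v$, so transitivity would fail unless $I=K$), and the final deduction $u\in I\cap K\Rightarrow I=K$ is correct. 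See \cite{EL} or \cite{AUS} for this lemma.
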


\begin{example}
Let $X=\lbrace (r,\theta):r\in \lbrace 1,2 \rbrace, \theta\in \R  \rbrace$. Define $f:X\rightarrow X$ as $f(r,\theta)=(r, \theta+\alpha)$ where $\alpha$ is an irrational multiple of $2\pi$. We note that $f(X)=X$ and represents the irrational rotation on both the circles comprising $X$.

Here $E(X)$ is isomorphic to an irrational rotation and so has a unique minimal ideal. Also the proximal relation $P(X)=\Delta$ and vacuously  an equivalence relation.
\end{example}

\begin{example} \label{EX 7.2}

Let $X= \{ x^n : x^n = (x^n_i)$ such that $x^n_i = 1$ if $i = n$ and $x^n_i = 0$ otherwise, $n \in \Z \} \cup \{\overline{0}\} \ \subset \emph{2}^{\mathbb{Z}}$, where $\overline{0}$ is the  sequence of all $0$s. We observe that $X$ is closed and invariant under the right shift operator $\sigma$, and so $(X, \sigma)$ is a subshift of the $\emph{2}$-shift. Note that $(X, \sigma)$ is not minimal and contains $\{\overline{0}\}$ as the unique minimal subset.

For any $x \in X$ observe that $\sigma^k(x) \longrightarrow \overline{0}$ and $\sigma^{-k}(x) \longrightarrow \overline{0}$ as $k \longrightarrow \infty$. Thus if $p: X \to X$ is the constant map $p(x) = \overline{0}$, then $\sigma^k \longrightarrow p$ and $\sigma^{-k} \longrightarrow p$ pointwise as $k \longrightarrow \infty$. So in this case $E(X) = \{ \sigma^k: k \in \Z\} \cup \{p\}$ and so $I = \{p\}$ is the unique minimal ideal in $E(X)$.

Also the proximal relation $P(X)=\Delta$ and vacuously  an equivalence relation.
\end{example}

Both our examples above have a unique minimal ideal and hence the proximal relation is an equivalence relation. We note that the example above is weakly almost periodic (WAP). We have some interesting observations for this.

\vskip .5cm

\begin{proposition}\cite{ELN}\label{5.1}
For a weakly almost periodic (WAP) flow $(X,T)$, if $I \subset E(X)$ is a minimal ideal then $(I,T)$ is an equicontinuous flow.
\end{proposition}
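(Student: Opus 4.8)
The plan is to realize $(I,T)$ as a minimal subflow of a weakly almost periodic flow and then use that a minimal WAP flow is equicontinuous; the real content, hidden inside the last implication, is that the minimal ideal of the enveloping semigroup of a WAP flow is a compact topological group, on which $T$ acts by translations.

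First I would record the structural input from \cite{ELN}: a flow $(X,T)$ is WAP exactly when $E(X)$ is a compact semitopological semigroup, and this is equivalent to $E(X)\subseteq\mathcal{C}(X,X)$ (left multiplication $q\mapsto pq$ being continuous on $E(X)$ amounts, for a flow, to each $p\in E(X)$ being a continuous self-map of $X$, and conversely). Next I would check that $(E(X),T)$ is again WAP. The semigroup $E(E(X),T)$ is the pointwise closure in $E(X)^{E(X)}$ of the maps $L_{\pi^t}\colon q\mapsto\pi^t q$; a net $\pi^{t_i}\to p$ in $E(X)$ gives $L_{\pi^{t_i}}\to L_p$ pointwise on $E(X)$ (evaluate at $q(x)$), and by compactness of $E(X)$ every limit point of $\{L_{\pi^t}\}$ arises this way, so $E(E(X),T)=\{L_p:p\in E(X)\}$. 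Each $L_p$ is continuous on $E(X)$ because $p$ is continuous on $X$; hence $E(E(X),T)\subseteq\mathcal{C}(E(X),E(X))$ and $(E(X),T)$ is WAP. Since $I$ is closed and $T$-invariant in $E(X)$, the subflow $(I,T)$ is WAP as well (its enveloping semigroup consists of the restrictions $L_p|_I$, which are continuous $I\to I$), and it is minimal because $I$ is a minimal ideal.

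It then remains to show that a minimal WAP flow is equicontinuous, and this is where the work sits. The route I would take is to prove that the minimal ideal $I$ of $E(X)$ is a group. Pick an idempotent $u\in I$; by the structure theory of minimal ideals recalled above, $uI$ is a group with identity $u$ and $pu=p$ for all $p\in I$, so it suffices to see that $u$ is the only idempotent of $I$. If $v^2=v\in I$, minimality of $I$ yields $w,w'\in E(X)$ with $v=wu$ and $u=w'v$, whence $vu=v$ and $uv=u$; invoking that idempotents lying in a common minimal ideal of the semitopological semigroup $E(X)$ commute — part of the structure of WAP enveloping semigroups in \cite{ELN} — one gets $u=uv=vu=v$. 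Thus $I=uI$ is algebraically a group, and since multiplication on $I$ is separately continuous, Ellis's theorem (a compact semitopological group is a topological group) makes $I$ a compact topological group. Finally, for $t\in T$ the action on $I$ is $q\mapsto\pi^t q=(\pi^t u)q$ with $\pi^t u\in E(X)u\subseteq I$, so $T$ acts on the compact topological group $I$ through left translations; these form an equicontinuous family for the unique uniformity of $I$, hence $(I,T)$ is equicontinuous. (A fully self-contained variant would first invoke Theorem \ref{er}, together with the fact that WAP forces the proximal relation to be an equivalence relation, to get that $E(X)$ has a single minimal ideal, and then run the idempotent argument; by Theorem \ref{3.1} it does not matter which minimal ideal one works with.)

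The main obstacle is precisely the last paragraph: converting the hypothesis ``WAP'' into ``$I$ is a compact topological group.'' Everything else — passing to $(E(X),T)$, inheriting WAP to the subflow $(I,T)$, minimality of $(I,T)$, and equicontinuity of a translation flow on a compact group — is soft. I would therefore isolate and cite, rather than re-derive, the finer facts about WAP enveloping semigroups from \cite{ELN} (the semitopological structure of $E(X)$, equality of idempotents within a minimal ideal, and Ellis's separate-continuity theorem for compact groups), and keep the rest of the argument at the level of the structure theory of minimal ideals already recalled in this paper.
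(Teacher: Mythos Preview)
The paper does not supply a proof of this proposition; it is simply quoted from \cite{ELN} with no argument given. So there is nothing in the paper to compare your proposal against.

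Your outline is correct and is essentially the standard Ellis--Nerurkar argument: from WAP one gets $E(X)\subseteq\mathcal{C}(X,X)$, hence $E(X)$ is a compact semitopological semigroup; the minimal ideal $I$ is then a group with unique idempotent, and by Ellis's joint continuity theorem it is a compact topological group on which $T$ acts by left translations, whence equicontinuous. One small point worth tightening: your derivation that $u=v$ for two idempotents $u,v\in I$ goes through $vu=v$, $uv=u$ (which follow from $pu=p$, $pv=p$ for all $p\in I$) and then invokes that such idempotents commute. Rather than citing ``commutativity of idempotents'' as a black box from \cite{ELN}, you can argue directly: in the semitopological setting $uE(X)$ is a closed right ideal (since $L_u$ is continuous), so it contains a minimal right ideal $R$; the intersection $I\cap R$ is a group whose idempotent $e$ satisfies $eu=e$ and $ue=e$, and combining with $eu=u$ (from $e\in I$) gives $u=e$; the same reasoning forces every idempotent of $I$ to equal $e$. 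This keeps the argument self-contained at the level of the structure theory already recalled in the paper.
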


\begin{proposition} \label{thm}
For a weakly almost periodic (WAP) flow  $(X,T)$, $E(X)$ has a unique minimal ideal which has a unique idempotent.
\end{proposition}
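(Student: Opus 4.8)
The plan is to establish the two assertions separately: (i) every minimal ideal of $E(X)$ contains exactly one idempotent (and is in fact a group), and (ii) $E(X)$ has only one minimal ideal. Claim (i) is extracted from Proposition~\ref{5.1}, while claim (ii) requires converting the WAP hypothesis into the statement that every element of $E(X)$ is a continuous self‑map of $X$.

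For (i), fix a minimal ideal $I\subseteq E(X)$. By Proposition~\ref{5.1} the flow $(I,T)$ is equicontinuous, hence distal. Let $u,v\in I$ be idempotents. From the structural identity $pv=p$ for all $p\in I$ (valid for any idempotent $v$ of $I$) we get $uv=u$. Now choose a net $\{t_i\}\subseteq T$ with $\pi^{t_i}\to u$ in $E(X)$, which is possible since $u\in\overline{\{\pi^t:t\in T\}}$; because right multiplications are continuous on $X^X$, we obtain $t_i\cdot v=\pi^{t_i}v\to uv=u$ and $t_i\cdot u=\pi^{t_i}u\to uu=u$, so $(v,u)$ is a proximal pair in $(I,T)$. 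Distality forces $v=u$, so the set $J$ of idempotents of $I$ is a single point $\{u\}$, and then the partition $I=\biguplus\{wG:w\in J\}$ with $G=uI$ collapses to $I=uI$, a group with identity $u$; in particular $I$ has a unique idempotent.

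For (ii), I would invoke Theorem~\ref{er}: it suffices to show that $P(X)$ is an equivalence relation. The ingredient beyond Proposition~\ref{5.1} is that WAP makes $E(X)\subseteq\mathcal{C}(X,X)$: if $p=\lim\pi^{t_i}\in E(X)$ and $g\in\mathcal{C}(X)$, then $g\circ p=\lim g\circ\pi^{t_i}$, and relative compactness of $\{g\circ t:t\in T\}$ in $\mathcal{C}(X)$ keeps this pointwise limit in $\mathcal{C}(X)$; letting $g$ separate points of $X$ forces $p$ to be continuous. Hence all left and right multiplications on $E(X)$ are continuous, so $E(X)$ is a compact semitopological semigroup. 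When $T$ is abelian this finishes matters cleanly: $\{\pi^t:t\in T\}$ is then a dense abelian subsemigroup, separate continuity of the product propagates commutativity to $E(X)$, and in a commutative semigroup two minimal ideals $I\ni u$, $K\ni v$ with idempotents $u,v$ satisfy $uv=vu\in I\cap K$, forcing $I=K$. For general $T$ one instead invokes the finer structure theory of enveloping semigroups of WAP flows (as in \cite{ELN}), or equivalently verifies directly that $P(X)$ coincides with the kernel of the map $x\mapsto ux$ for the idempotent $u$ of a minimal ideal and applies Theorem~\ref{er}. Combining (i) and (ii), the unique minimal ideal of $E(X)$ has a unique idempotent.

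I expect (ii) to be the main obstacle. The algebra of minimal ideals by itself never yields uniqueness — for instance the cascade $f(x)=x^2$ on $[0,1]$ of Example~\ref{EX 5.1} has two distinct minimal ideals — so the WAP hypothesis must be used in an essential way; the crucial consequence to isolate is the continuity of every element of $E(X)$, and turning that into uniqueness of the minimal ideal (especially for non‑abelian $T$) is the delicate step where most of the work lies.
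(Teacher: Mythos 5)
The paper records this proposition without proof, as a known consequence of \cite{EL, ELN} (it sits between Proposition \ref{5.1}, quoted from \cite{ELN}, and the conclusion that $P(X)$ is an equivalence relation), so the real question is whether your argument is complete on its own. Your part (i) is correct and self-contained: Proposition \ref{5.1} gives $(I,T)$ equicontinuous, hence distal; the identity $pv=p$ for $p\in I$ and idempotent $v\in I$ together with continuity of right multiplications shows any two idempotents $u,v\in I$ form a proximal pair in $(I,T)$, forcing $u=v$. Your abelian case of (ii) is also correct: WAP gives $E(X)\subset\mathcal{C}(X,X)$, so left multiplications are continuous, commutativity of $T$ propagates to $E(X)$ by a two-step limit, and $uv=vu\in I\cap K$ forces $I=K$.

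The genuine gap is part (ii) for general (non-abelian) $T$, which is exactly where the statement's content lies, and your two fallbacks do not close it. Appealing to Theorem \ref{er} by ``verifying directly that $P(X)$ coincides with the kernel of $x\mapsto ux$'' is essentially circular: in general $P(X)=\bigcup_\alpha R_\alpha$ with one relation $R_\alpha$ for each minimal ideal, and asserting that a single idempotent $u$ (equivalently a single $R_\alpha$) witnesses all proximality is a restatement of the uniqueness you are trying to prove, since Theorem \ref{er} is an equivalence. Nor does ``$E(X)$ is a compact semitopological monoid'' suffice by itself: compact semitopological monoids can have many minimal left ideals (adjoin an isolated identity to a right-zero semigroup), and the obvious Grothendieck double-limit computation with paired idempotents $u\in I$, $v\in K$ satisfying $uv=v$, $vu=u$ produces no contradiction — both iterated limits of $f(s_it_jx)$ equal $f(vx)$. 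The working route for general $T$ is the finer structure theory you only name: $E(X)$ is a continuous homomorphic image of the universal WAP compactification of $T$, whose kernel is a compact topological group (deLeeuw--Glicksberg, Ellis--Nerurkar \cite{ELN}), and a kernel that is a group has a unique idempotent, hence there is a unique minimal ideal. As written, your proof establishes the proposition only for abelian $T$ and otherwise reduces to the same citation the paper itself relies on.
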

%\begin{proof}
%Since $(X,T)$ is WAP, each member of $E(X)$ is continuous. So both right multiplication and left multiplication on $E(X)$ are continuous.
%
%We first prove that  $E(X)$ has a unique minimal  ideal.
%
%Let $I$ be a minimal  ideal in $E(X)$ and let there be two idempotents $u,v\in I$. Then $Iu=Iv=I$. Let $p\in I$ be any element then $p=ru$ and $p=sv$ where $r,s \in I$. So $pu=ruu=ru=p=sv=svv=pv$ which means $u$ and $v$ are proximal to each other. By proposition \ref{5.1}, $(I,T)$ is an equicontinuous flow. Since $(I,T)$ is minimal also, it is a distal flow. So there are no proximal pairs in $I$. Therefore there is unique idempotent say $u$ in $I$.
%
%If there is another minimal ideal $J$ in $E(X)$ then by the same argument, $J$ contains a unique idempotent say $v$.
%
%Also $Iu = I$ and $Jv = J$ are groups with identity $u$ and $v$ respectively. Now consider the set $JI=\lbrace qp:  q\in J, p\in I \rbrace$ then $JI\subset I\cap J$ since both $I$ and $J$ are ideals. Now $vu \in JI \subset J$ and $J$ is group with identity $v$, and so there is an inverse $q$ of $vu$ in $J$. So $q(vu)=v$. Thus $vu=qvuu=qvu^{2}=qvu=v$. So $v=vu\in JI \subset I \Rightarrow v\in I$. Since $I$ contains the unique idempotent $u$, $u=v$. Since every group has a unique identity, $I=J$.
%
%Thus by Theorem \ref{er}, $P(X)$ is an equivalence relation.
%\end{proof}

We have the proximal relation as an equivalence relation also for weakly almost periodic flows. We look into the results from both \cite{EL, ELN}, and conclude.

\begin{proposition}
For a weakly almost periodic (WAP) flow  $(X,T)$, $P(X)$ is an equivalence relation
\end{proposition}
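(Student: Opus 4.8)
The plan is to deduce this directly from the two immediately preceding propositions. By Proposition~\ref{thm}, since $(X,T)$ is weakly almost periodic, $E(X)$ has a \emph{unique} minimal ideal; call it $I$. Now I would invoke Theorem~\ref{er} (the result of Ellis quoted above), which states that for any flow $(X,T)$ the proximal relation $P(X)$ is an equivalence relation if and only if $E(X)$ contains a unique minimal ideal. Combining these two facts gives the conclusion at once.

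More concretely, the steps in order are: first, record that reflexivity and symmetry of $P(X)$ are automatic from the definition of proximality (a net $\{t_i\}$ witnessing $(x,y)$ proximal witnesses $(y,x)$ proximal, and the constant net witnesses $(x,x)$). Second, cite Proposition~\ref{thm} to get that $E(X)$ has exactly one minimal ideal $I$ (indeed with a unique idempotent, though only uniqueness of the ideal is needed here). Third, apply Theorem~\ref{er}: uniqueness of the minimal ideal in $E(X)$ forces transitivity of $P(X)$. Hence $P(X)$ is an equivalence relation.

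Alternatively, if one wants a self-contained transitivity argument rather than quoting Theorem~\ref{er}: suppose $(x,y)\in P(X)$ and $(y,z)\in P(X)$. Using the algebraic characterization of proximality listed in the excerpt, $(x,y)\in P(X)$ means there is a minimal ideal $I_1\subseteq E(X)$ with $rx=ry$ for all $r\in I_1$, and similarly $(y,z)\in P(X)$ gives a minimal ideal $I_2$ with $ry=rz$ for all $r\in I_2$. By Proposition~\ref{thm} there is only one minimal ideal, so $I_1=I_2=I$, and then for $r\in I$ we get $rx=ry=rz$, so $rx=rz$ for all $r$ in the minimal ideal $I$, i.e. $(x,z)\in P(X)$. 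This is the cleaner route and avoids even citing Theorem~\ref{er}, relying only on Proposition~\ref{thm} and the equivalent formulations of proximality already tabulated.

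The argument has essentially no obstacle: the real content has been front-loaded into Proposition~\ref{thm} (which itself rests on Proposition~\ref{5.1} and Proposition~\ref{5.2}-type machinery), so the only thing to be careful about is to phrase the transitivity step using the \emph{same} minimal ideal for both proximal pairs — this is exactly where uniqueness of the minimal ideal is used, and it is the lone place the WAP hypothesis enters. Everything else is bookkeeping with the definitions.
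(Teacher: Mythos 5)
Your proposal is correct and matches the paper's (implicit) argument: the paper offers no separate proof, simply noting that the statement follows by combining Theorem \ref{er} (proximality is an equivalence relation iff $E(X)$ has a unique minimal ideal) with Proposition \ref{thm} (a WAP flow has a unique minimal ideal), which is exactly your first route. Your unpacked transitivity argument via characterization (e) of proximality is just a spelled-out version of the same idea and is likewise fine.
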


\vskip .5cm

We note that usually $P(X)$ is not an equivalence relation, and hence $E(X)$ need not contain a unique minimal ideal.

\vskip .5cm

We recall an important class of flows here called \emph{PI flows}, studied first by Glasner \cite{SGP}. \emph{PI flows} are basically minimal flows modulo a proximal extension that are obtained successively from the trivial flow using equicontinuous and proximal extensions. For more on PI flows we recommend \cite{AUS, SGP}.

Ellis, Glasner and Shapiro \cite{EGS1} prove that for a minimal $(X,T)$ if $E(X)$ contains only finitely many minimal ideals then $(X,T)$ is a PI flow. An improvement to this result was given by McMahon \cite{M}, who showed that in a  minimal $(X,T)$ if $E(X)$ contains countably many minimal ideals then $(X,T)$ is a PI flow.
Recently, Glasner \& Glasner \cite{GG} improve McMahon’s result and show that a minimal $(X,T)$
whose enveloping semigroup $E(X)$ contains less than $2^{2^{\omega}}$
 minimal  ideals is PI. They also give an example to show that the converse is not true.

\vskip .5cm

We note that for a non minimal flow or cascade we cannot have any version similar to PI. Since minimal ideals in the semigroup $E(X)$ are actually minimal subsets of the flow $(E(X),T)$, it will be interesting to know the dynamical synonyms for $E(X)$ to have just finitely many minimal ideals or countably many minimal ideals in the non-minimal case.

\vskip .5cm

We look into an example with finitely many minimal ideals;

\begin{example}
Let $X=\lbrace (r,\theta):1 \leq r \leq 2, \theta\in [0,2\pi)  \rbrace$ and $h:X\rightarrow X$ be defined as $h(r,\theta)=(1+(r-1)^2, \theta+2\pi\alpha)$ where $\alpha$ is an irrational constant. We note that $h$ is a homeomorphism with $h^{-1}(r, \theta)=(1+ (r-1)^{\frac{1}{2}}, \theta - 2\pi \alpha)$.

Consider the cascade $(X,h)$. We note that

$(1,\theta) \overset{\text{h}}{\longrightarrow} (1, \theta+2\pi\alpha) \overset{\text{h}}{\longrightarrow} (1, \theta+4\pi\alpha)\overset{\text{h}}{\longrightarrow}\ldots  \overset{\text{h}}{\longrightarrow} (1, \phi)\overset{\text{h}}{\longrightarrow}\ldots$

$(2,\theta) \overset{\text{h}}{\longrightarrow} (2, \theta+2\pi\alpha) \overset{\text{h}}{\longrightarrow} (2, \theta+4\pi\alpha)\overset{\text{h}}{\longrightarrow}\ldots  \overset{\text{h}}{\longrightarrow} (2, \phi)\overset{\text{h}}{\longrightarrow}\ldots$

$(\frac{3}{2},\theta) \overset{\text{h}}{\longrightarrow} (\frac{5}{4}, \theta+2\pi\alpha) \overset{\text{h}}{\longrightarrow} (\frac{17}{6}, \theta+4\pi\alpha)\overset{\text{h}}{\longrightarrow}\ldots  \overset{\text{h}}{\longrightarrow} (1, \phi)\overset{\text{h}}{\longrightarrow}\ldots$

$\ldots\ldots\ldots\ldots\ldots\ldots\ldots\ldots\ldots\ldots\ldots\ldots\ldots\ldots\ldots\ldots\ldots\ldots\ldots\ldots\ldots\ldots$

Also

$(1,\theta) \overset{h^{-1}}{\longrightarrow} (1, \theta-2\pi\alpha)  \overset{h^{-1}}{\longrightarrow} (1, \theta-4\pi\alpha) \overset{h^{-1}}{\longrightarrow}\ldots   \overset{h^{-1}}{\longrightarrow} (1, \xi) \overset{h^{-1}}{\longrightarrow}\ldots$

$(2,\theta) \overset{h^{-1}}{\longrightarrow} (1, \theta-2\pi\alpha)  \overset{h^{-1}}{\longrightarrow} (2, \theta-4\pi\alpha) \overset{h^{-1}}{\longrightarrow}\ldots   \overset{h^{-1}}{\longrightarrow} (1, \xi) \overset{h^{-1}}{\longrightarrow}\ldots$

$(\frac{3}{2},\theta) \overset{h^{-1}}{\longrightarrow} (1+ (\frac{3}{2})^{\frac{1}{2}}, \theta-2\pi\alpha)  \overset{h^{-1}}{\longrightarrow} (1+(1+(1-\frac{1}{2^{\frac{1}{2}}})^{\frac{1}{2}}), \theta-4\pi\alpha) \overset{h^{-1}}{\longrightarrow}\ldots   \overset{h^{-1}}{\longrightarrow} (2, \xi) \overset{h^{-1}}{\longrightarrow}\ldots$

$\ldots\ldots\ldots\ldots\ldots\ldots\ldots\ldots\ldots\ldots\ldots\ldots\ldots\ldots\ldots\ldots\ldots\ldots\ldots\ldots\ldots\ldots$

Let $E(X)$ be the enveloping semigroup for $(X,h)$ then $E^{\ast}=E(X)\setminus\lbrace h^{n}: n\in \Z \rbrace= \lbrace h_{1\theta}: 0 \leq \theta \leq 2\pi \rbrace \cup \lbrace h_{2\theta}: 0 \leq \theta \leq 2\pi \rbrace$

where \begin{center}
$h_{1\theta}(r,\phi)= \left\{
             \begin{array}{ll}
               (1, \theta+\phi), \ &  1\leq r <2, 0\leq \phi < 2\pi  \\
               (2,\theta+\phi), & r = 2, 0\leq \phi < 2\pi.
             \end{array}
           \right.$
           \end{center}
 and
 \begin{center}
 $h_{2\theta}(r, \phi)= \left\{
             \begin{array}{ll}
               (1, \theta-\phi), \ & r=1 , 0\leq \phi < 2\pi  \\
               (2,\theta-\phi), & 1< r \leq 2, 0\leq \phi < 2\pi.
             \end{array}
           \right.$
 \end{center}

 We observe that $h^n\longrightarrow h_{1\theta}$ for some $0\leq \theta \leq 2\pi$ with $h_{1\theta}$ having a dense orbit in the set of all $h_{1\alpha}$ for $0\leq \alpha \leq 2\pi$ and $h^{-n}\longrightarrow h_{2\lambda}$ for some $0\leq \lambda \leq 2\pi$ with $h_{2\lambda}$ having a dense orbit in the set of all $h_{2\gamma}$ for $0\leq \gamma \leq 2\pi$ in $X^X$.

Note that

\begin{align*}
hh_{1\theta} &= h_{1(\theta+ 2\pi \alpha)}\\
hh_{2\theta} &= h_{2(\theta- 2\pi \alpha)}\\
h_{1\theta}h_{1\theta} &= h_{1\theta}\\
h_{2\theta}h_{2\theta} &= h_{2\theta}\\
h_{2\theta}h_{1\theta} &= h_{1\theta}\\
h_{1\theta}h_{2\theta} &= h_{2\theta}\\
h_{1\theta}h_{1\phi} &= h_{1(\theta+\phi)}\\
h_{2\theta}h_{2\phi} &= h_{2(\theta+\phi)}\\
h_{1\theta}h_{2\phi} &= h_{2(\theta+\phi)}\\
h_{2\theta}h_{1\phi} &= h_{1(\theta+\phi)}\\
\end{align*}

Let \begin{center}

$I_{1}= \lbrace h_{1\theta}: 0\leq \theta \leq 2\pi\rbrace$\\
$I_{2}= \lbrace h_{2\theta}: 0\leq \theta \leq 2\pi\rbrace$
\end{center}
Thus $E(X)$ has two minimal ideals $I_{1}$ and $I_{2}$.

\end{example}

\begin{remark} We note that for the above example $E(E(X))\ncong E(X)$.

\vskip .2cm

In order to compute $E(E(X))$, observe that $h^n\longrightarrow h_{1\theta}$ for some $0\leq \theta \leq 2\pi$ and $h^{-n}\longrightarrow h_{2\theta}$ for some $0\leq \theta \leq 2\pi$ in $X^X$ and we claim that $h^n\longrightarrow H$ and $h^{-n}\longrightarrow K$ in $E(X)^{E(X)}$ where for each $k\in \Z$,

$$H(h^k) = h_{1(\theta+2k\pi \alpha)}, H(h_{1\theta}) = h_{1\theta}, H(h_{2\theta}) = h_{2\theta},$$

$$K(h^k) = h_{2(\theta-2k\pi \alpha)}, K(h_{1\theta}) = h_{1\theta}, K(h_{2\theta}) = h_{2\theta}.$$

since

\begin{center}
$h^n(h^k) \to  h_{1\theta}(h^k)= h_{1(\theta+2k\pi \alpha)}= H(h^k)$,

$h^n(h_{1\theta}) \to h_{1\theta}(h_{1\theta})= h_{1\theta}= H(h_{1\theta})$,

$h^n(h_{2\theta}) \to h_{1\theta}(h_{2\theta}) = h_{2\theta}= H(h_{2\theta})$.
\end{center}

and

\begin{center}
$h^{-n}(h^k) \to h_{2\theta}(h^k)= h_{2(\theta-2k\pi \alpha)}= K(h^k)$,

$h^{-n}(h_{1\theta}) \to h_{2\theta}(h_{1\theta})= h_{1\theta}= K(h_{1\theta})$,

$h^{-n}(h_{2\theta}) \to h_{2\theta}(h_{2\theta}) = h_{2\theta}= K(h_{2\theta})$.
\end{center}

\vskip .2cm

Therefore $E(E(X))= \lbrace h^{n}: n\in \Z \rbrace \cup \lbrace H,K \rbrace$. Here $H,K$ are  not continuous because the sequence $h^n\longrightarrow h_{1\theta}$ but $\lim\limits_{n\rightarrow \infty}H(h^n)=\lim\limits_{n\rightarrow \infty}h_{1(\theta + 2n\pi\alpha)} = h_{1(\theta + \phi)}\neq H(h_{1\theta})$. Also as $h^{-n}\longrightarrow h_{2\theta}$ but $\lim\limits_{n\rightarrow \infty}K(h^{-n})=\lim\limits_{n\rightarrow \infty}h_{2(\theta - 2n\pi\alpha)} = h_{2(\theta - \psi)}\neq K(h_{2\theta})$.

Thus $E(E(X))$ is the two point compactification of $\Z$.

We further note that since $E(E(X))$ is point transitive, and so $E(E(E(X))) \cong E(E(X))$.

\end{remark}

\vskip .5cm

We observe that for a flow $(X,T)$, $P(X)= \bigcup\limits_{\alpha}R_{\alpha}$, where $R_{\alpha}= \lbrace (x,y): px=py\  \forall\  p\in I_{\alpha}, \ I_{\alpha}\subset E(X)\ \text{is minimal ideal in E(X)} \rbrace$. Suppose $(x,y), (y,z)\in P(X)\cap R_{\alpha}$ then $px=py$ and $py=pz$ for each $p\in I_{\alpha}$. So $px=pz$ for all $p\in I_{\alpha}$. Therefore $(x,z)\in P(X)\cap R_{\alpha}$. Therefore   proximal relation is transitive and hence an equivalence relation on $R_{\alpha}$.

We note that each $R_{\alpha} \subset X\times X$ need not be closed, open or even pairwise disjoint, and hence do not form a partition of $P(X)$.

\begin{definition}
A system $(X,T)$ is called finitely proximal if there exists $n \in \N$ such that $P(X)=\bigcup\limits_{i=1}^{n}R_{i}$, where the proximal relation is an equivalence relation on each $R_{i}$. %where $R_{i}= \lbrace (x,y): px=py\  \forall\  p\in I_{i}, I_{i}\subset E(X)\ \text{is minimal ideal in E(X)} \rbrace$.
\end{definition}

\begin{theorem}
For the flow $(X,T)$, $E(X)$ has finitely many minimal ideals $\Leftrightarrow\ (X,T)$ is finitely proximal.
\end{theorem}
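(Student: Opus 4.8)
The plan is to prove the two implications separately; the forward implication is essentially a restatement of the observation recorded just above, while the reverse implication carries all the weight.

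For ($\Rightarrow$): if $E(X)$ has only the finitely many minimal ideals $I_1,\dots,I_n$, then by the observation preceding the statement $P(X)=\bigcup_{\alpha}R_{\alpha}=\bigcup_{i=1}^n R_{I_i}$, and the proximal relation restricts to an equivalence relation on each $R_{I_i}$; hence $(X,T)$ is finitely proximal with this $n$, and nothing more is needed.

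For ($\Leftarrow$): assume $P(X)=\bigcup_{i=1}^n R_{I_i}$ with $I_1,\dots,I_n$ pairwise distinct (hence pairwise disjoint) minimal ideals, and suppose toward a contradiction that $K$ is a minimal ideal with $K\neq I_i$ for every $i$. First I would fix an idempotent $w^2=w\in K$ and, for each $i$, choose the unique idempotent $u_i\in I_i$ with $u_i\sim w$, i.e.\ $u_iw=w$ and $wu_i=u_i$ (the existence and uniqueness being the statement used in the proof of Theorem \ref{3.1}). Two facts are then immediate: (a) for any minimal ideal $I$ and any idempotent $u\in I$ one has $R_I=\{(x,y):ux=uy\}$, independently of the choice of $u$, since $ux=uy$ forces $px=p(ux)=p(uy)=py$ for every $p\in I$ (using $pu=p$); and (b) from $u_iw=w$, $wu_i=u_i$ one gets $u_iX=wX=:Y$ and $u_i|_Y=w|_Y=\mathrm{id}_Y$, so $u_i$ and $w$ are retractions of $X$ onto the same set $Y$. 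Next, since $w^2=w$, the pair $(x,wx)$ is proximal for every $x$ (if $t_j\to w$ in $E(X)$ then $t_jx\to wx$ and $t_j(wx)=(t_jw)x\to w^2x=wx$, a common limit), so $(x,wx)\in P(X)=\bigcup_iR_{I_i}$; choosing $i$ with $(x,wx)\in R_{I_i}$ and using $u_iw=w$ yields $u_ix=u_i(wx)=(u_iw)x=wx$. Thus $X=\bigcup_{i=1}^nD_i$ where $D_i=\{x\in X:u_ix=wx\}$; equivalently, writing $R_K=\ker w$ and $R_{I_i}=\ker u_i$, we obtain the inclusion $\ker w\subseteq\bigcup_{i=1}^n\ker u_i$ of equivalence relations, with $Y\subseteq D_i$ for every $i$.

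The heart of the proof is to deduce from this cover that some single $D_j$ equals $X$, equivalently that $\ker w\subseteq\ker u_j$ for some $j$. Granting that, the argument closes: with $u_j\sim w$ and $\ker w\subseteq\ker u_j$, the map $u_j$ factors through $w$ by a self-map of $Y$ which is the identity on $Y$ (because $w|_Y=u_j|_Y=\mathrm{id}_Y$), forcing $u_j=w$; hence $w\in I_j$, so $K\cap I_j\neq\emptyset$ and $K=I_j$, contradicting the choice of $K$. Therefore $\{I_1,\dots,I_n\}$ exhausts the minimal ideals. The step I expect to be the main obstacle is precisely this reduction of a finite cover to a single piece, because a set-theoretic union of finitely many equivalence relations that happens again to be an equivalence relation need not coincide with one of its members, so the dynamics must be used. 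My plan for that step is to exploit that $P(X)=\bigcap_{k\ge1}\bigcup_{t\in T}\{(x,y):d(tx,ty)<1/k\}$ is a $G_\delta$, hence Baire, subset of $X\times X$: restrict to a suitable orbit closure $Z$ so that the relevant maps behave well, show that the pieces $R_K\cap R_{I_i}$ are relatively closed in $R_K$ there, apply the Baire category theorem to obtain a piece $R_K\cap R_{I_j}$ with non-empty relative interior, and then propagate this interior over all of $R_K$ using the minimality of $(K,T)$, which by Theorem \ref{3.1} is isomorphic to $(I_j,T)$, together with the $T$-invariance of $P(X)$; this gives $\ker w\subseteq\ker u_j$. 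An alternative I would try in parallel, should the relative-closedness prove awkward since elements of $E(X)$ need not be continuous, is to push the cover down to a minimal subset $M\subseteq Z$ and argue there, using the retraction structure over $Y$ to recover $w=u_j$.
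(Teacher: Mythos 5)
The paper itself offers no argument here (it says ``We omit the trivial proof''), and read against the observation preceding the definition, the intended content is essentially the identification of the sets $R_i$ with the relations $R_I=\{(x,y):px=py\ \forall\,p\in I\}$ attached to minimal ideals. Your forward implication is exactly that observation and is fine. Your algebraic preparation for the converse is also correct: $R_I=\ker u$ for any idempotent $u\in I$; the equivalent idempotents $u_i\sim w$ all have image $Y=wX$ and restrict to the identity on $Y$; $(x,wx)\in P(X)$ for every $x$; and $\ker w\subseteq\ker u_j$ forces $u_j=w$, hence $K=I_j$ by disjointness of distinct minimal ideals. Indeed these lemmas already prove the theorem under the reading in which every minimal ideal's relation $R_K$ is literally one of $R_{I_1},\dots,R_{I_n}$ (i.e.\ the family of relations coming from minimal ideals is finite, which is what the paper's usage suggests), and they completely settle the case $n=1$ as you have set it up.

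The genuine gap is the step you yourself flag as the heart: passing from the cover $X=\bigcup_{i=1}^n D_i$, equivalently $\ker w\subseteq\bigcup_{i=1}^n\ker u_i$, to a single inclusion $\ker w\subseteq\ker u_j$. This is asserted only as a plan, and the plan as described does not go through. The sets $D_i$ and $R_K\cap R_{I_i}$ are defined by equalities involving $w$ and the $u_i$, which are in general discontinuous elements of $E(X)$, so they need not be closed, and $R_K$ itself need not be closed (or $G_\delta$) in $X\times X$; thus there is no evident Baire space on which to run the category argument, and the fact that $P(X)$ is $G_\delta$ does not help, since the pieces you need to be ``relatively closed'' live inside $R_K$, not $P(X)$. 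Likewise ``propagating the interior by minimality of $(K,T)$'' is not meaningful as stated: $K$ is minimal as a subset of the flow $(E(X),T)$, it does not act on $R_K\subseteq X\times X$, and it is not even clear that $R_K$ is $T$-invariant unless $T$ is abelian. Finally, the reduction cannot be combinatorial: for $n\ge 3$ a square $C\times C$ can be covered by $n$ equivalence relations none of which is full, and even for $n=2$, where the two-relation fact gives $u_1|_C=w|_C$ or $u_2|_C=w|_C$ on each $w$-fiber $C$, the index may vary from fiber to fiber, so dynamical input is required precisely where your write-up stops. Since already ``$P(X)$ an equivalence relation implies a unique minimal ideal'' (Theorem \ref{er}) is a genuine theorem of Ellis rather than a soft fact, this collapse step cannot be waved through; as it stands, your proof of the direction ``finitely proximal $\Rightarrow$ finitely many minimal ideals'' is incomplete for $n\ge 2$ under the literal reading of the definition, though it closes immediately if you strengthen the definition to require $\{R_\alpha\}=\{R_1,\dots,R_n\}$.
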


 We omit the trivial proof.

\vskip .5cm

\begin{theorem}
A factor of a finitely proximal flow is finitely proximal.
\end{theorem}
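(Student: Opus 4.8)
The plan is to reduce the statement to the characterization in the preceding theorem, namely that a flow is finitely proximal if and only if its enveloping semigroup contains finitely many minimal ideals. So let $\phi\colon (X,T)\to (Y,T)$ be a factor map with $(X,T)$ finitely proximal, and let $\theta\colon E(X)\to E(Y)$ be the induced continuous semigroup epimorphism discussed earlier (the map with $\theta\circ\Phi_X=\Phi_Y$ and $\theta(pq)=\theta(p)\theta(q)$). Since $E(X)$ has only finitely many minimal ideals by hypothesis, it suffices to produce a surjection from the set of minimal ideals of $E(X)$ onto the set of minimal ideals of $E(Y)$, for then $E(Y)$ has finitely many minimal ideals and $(Y,T)$ is finitely proximal by that theorem.

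First I would record two elementary facts about $\theta$. (i) If $I\subset E(X)$ is a minimal ideal, then $\theta(I)$ is a minimal ideal of $E(Y)$: it is an ideal because $\theta$ is a surjective semigroup homomorphism (given $q\in E(Y)$, write $q=\theta(p)$, so $q\,\theta(I)=\theta(pI)\subseteq\theta(I)$), it is closed as the continuous image of the compact set $I$, and it is minimal because $\theta|_I\colon (I,T)\to(\theta(I),T)$ is a flow homomorphism of the minimal flow $(I,T)$ onto $(\theta(I),T)$; this is the natural extension to flows of the minimal-ideal preservation result stated earlier for cascades. (ii) If $J\subset E(Y)$ is a minimal ideal, then $\theta^{-1}(J)$ is a nonempty closed ideal of $E(X)$: nonempty and closed by surjectivity and continuity of $\theta$, and an ideal since for $p\in E(X)$ and $q\in\theta^{-1}(J)$ we have $\theta(pq)=\theta(p)\theta(q)\in E(Y)\cdot J\subseteq J$.

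Now I would combine these. Given a minimal ideal $J$ of $E(Y)$, by fact (ii) and Zorn's Lemma the closed ideal $\theta^{-1}(J)$ contains a minimal ideal $I$ of $E(X)$. By fact (i), $\theta(I)$ is a minimal ideal of $E(Y)$, and $\theta(I)\subseteq\theta(\theta^{-1}(J))=J$, so minimality of $J$ forces $\theta(I)=J$. Hence $I\mapsto\theta(I)$ maps the finite set of minimal ideals of $E(X)$ onto the set of minimal ideals of $E(Y)$, which is therefore finite; the preceding theorem then gives that $(Y,T)$ is finitely proximal.

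The argument is essentially routine once the induced epimorphism $\theta$ and its basic properties are available; the only point requiring a little care is the surjectivity of $I\mapsto\theta(I)$, i.e. that \emph{every} minimal ideal of $E(Y)$ is hit, which is exactly what the pullback $\theta^{-1}(J)$ together with the minimality of $J$ delivers. A purely dynamical alternative would instead use that $P(Y)=(\phi\times\phi)(P(X))$ — proximal pairs push forward under $\phi$, and every proximal pair of $Y$ lifts to a proximal pair of $X$ — and then transport a finite decomposition $P(X)=\bigcup_{i=1}^{n}R_i$ to $Y$; the mild obstacle on that route is establishing the lifting of proximal pairs, which is why I prefer the enveloping-semigroup argument above.
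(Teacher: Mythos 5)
Your proof is correct and follows essentially the same route as the paper: pass to the induced continuous epimorphism $\theta\colon E(X)\to E(Y)$, show that the minimal ideals of $E(Y)$ are precisely the images $\theta(I)$ of the finitely many minimal ideals $I\subset E(X)$, and conclude via the stated equivalence between finite proximality and having finitely many minimal ideals. The only difference is in how you justify that every minimal ideal of $E(Y)$ is hit: you pull back $J\subset E(Y)$ to the closed ideal $\theta^{-1}(J)$ and apply Zorn's Lemma, whereas the paper gets the same point from the lifting of minimal subsets along the flow factor $(E(X),T)\to(E(Y),T)$; both arguments are sound, and yours makes this key step more explicit.
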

\begin{proof}
Suppose $\phi: (X,T)\rightarrow (Y,T)$ is factor mapping. Suppose $(X,T)$ is finitely proximal. Then $P(X)= \bigcup\limits_{i=1}^{n}R_{i}$, where $ \lbrace I_{1}, I_{2}, \ldots, I_{n}\rbrace$ are finitely many minimal ideals and \begin{center}
$R_{i}= \lbrace (x,y): px=py\  \forall\  p\in I_{i}, I_{i}\subset E(X)\ \text{is minimal ideal in E(X)} \rbrace$
\end{center}
Since $\phi$ is factor mapping then there is a factor mapping $\theta:E(X)\rightarrow E(Y)$.

Since minimal ideals are minimal subsets of $E(Y)$ and $E(Y)$ is a factor of $E(X)$, $E(Y)$ also has finitely many minimal ideals. So $\lbrace \theta(I_{1}), \theta(I_{2}), \ldots, \theta(I_{n})\rbrace$ are the minimal ideals in $E(Y)$.

 Thus
\begin{center}
$P(Y)= \bigcup\limits_{i=1}^{n}S_{i}$

where $S_{i}= \lbrace (x,y): px=py\  \forall\  p\in \theta(I_{i}), \theta(I_{i})\subset E(Y)\ \text{is minimal ideal in E(Y)} \rbrace$.
\end{center}

Hence $(Y,T)$ is also finitely proximal.
\end{proof}

\begin{theorem}
A proximal extension of a finitely proximal flow is finitely proximal.
\end{theorem}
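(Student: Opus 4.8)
The plan is to show that for a proximal extension $\phi:(X,T)\to(Y,T)$ one has the exact identity $P(X)=(\phi\times\phi)^{-1}(P(Y))$, and then simply to pull back through $\phi$ the finite decomposition of $P(Y)$ guaranteed by finite proximality of $(Y,T)$. So fix such a $\phi$ and write $P(Y)=\bigcup_{i=1}^{n}S_{i}$, where the proximal relation is an equivalence relation on each $S_{i}$.

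First I would record the easy inclusion $P(X)\subseteq(\phi\times\phi)^{-1}(P(Y))$: proximality is preserved by homomorphisms, so $(x,x')\in P(X)$ forces $(\phi x,\phi x')\in P(Y)$. For the reverse inclusion — the substantive step — suppose $(\phi x,\phi x')\in P(Y)$. Taking a net $t_{i}\in T$ that witnesses this proximality in $Y$ and passing to a subnet with $t_{i}\to q$ in $E(X)$, the continuity of the action and of $\phi$ gives $\phi(qx)=\phi(qx')$, so the pair $(qx,qx')$ lies in the fibre relation $\{(a,b):\phi(a)=\phi(b)\}$. Since $\phi$ is a \emph{proximal} extension, $(qx,qx')$ is proximal in $X$, so there is a minimal ideal $I\subseteq E(X)$ with $r(qx)=r(qx')$ for all $r\in I$. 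Now the crucial point is that $Iq$ is itself a minimal ideal of $E(X)$: right multiplication $R_{q}$ is a continuous flow homomorphism of $E(X)$, so $Iq=R_{q}(I)$ is the image of the minimal flow $(I,T)$ and hence a minimal closed invariant set; it is a left ideal because $p(rq)=(pr)q\in Iq$; and a minimal closed invariant left ideal can have no proper left sub-ideal, since any left ideal $I'\subseteq Iq$ with $p\in I'$ would contain the closed invariant left ideal $E(X)p$, forcing $E(X)p=Iq=I'$. Every $s\in Iq$ has the form $s=rq$ with $r\in I$, so $sx=r(qx)=r(qx')=sx'$; therefore $(x,x')\in P(X)$, and the identity $P(X)=(\phi\times\phi)^{-1}(P(Y))$ is established.

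Finally, setting $R_{i}:=(\phi\times\phi)^{-1}(S_{i})$ for $i=1,\dots,n$, the identity gives $P(X)=\bigcup_{i=1}^{n}R_{i}$, and on each $R_{i}$ the proximal relation is an equivalence relation: if $(x,x'),(x',x'')\in R_{i}$ then $(\phi x,\phi x'),(\phi x',\phi x'')\in S_{i}$, hence $(\phi x,\phi x'')\in S_{i}$, so $(x,x'')\in R_{i}$; reflexivity and symmetry transfer from $S_{i}$ the same way, and $R_{i}\subseteq P(X)$ because $S_{i}\subseteq P(Y)$. Hence $(X,T)$ is finitely proximal. The one delicate point — and the place where the hypothesis that $\phi$ is a proximal extension is really used — is the reverse inclusion above: cancelling $q$ on the right from the proximal pair $(qx,qx')$ is legitimate only because $Iq$ remains a minimal ideal, which in turn rests on the minimality of $(I,T)$ and the continuity of $R_{q}$. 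An alternative phrasing via the induced semigroup epimorphism $\theta:E(X)\to E(Y)$, showing it sends distinct minimal ideals to distinct ones so that $E(X)$ has at most $n$ of them, meets exactly the same cancellation issue, so I would keep the computation of $P(X)$ as the main line of argument.
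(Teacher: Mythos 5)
Your proof is correct, but it takes a genuinely different route from the paper. The paper argues at the level of enveloping semigroups: it invokes the stated equivalence between finite proximality of a flow and $E(\cdot)$ having finitely many minimal ideals, notes that the proximal extension $\psi$ induces an onto proximal homomorphism $\Psi:E(X)\rightarrow E(Y)$, and concludes that $E(X)$ has finitely many minimal ideals because $E(Y)$ does. You instead work directly with the definition, proving the identity $P(X)=(\phi\times\phi)^{-1}(P(Y))$ for a proximal extension and pulling the finite decomposition $P(Y)=\bigcup_{i=1}^{n}S_i$ back to $R_i=(\phi\times\phi)^{-1}(S_i)$; transitivity, symmetry, reflexivity and the inclusion $R_i\subseteq P(X)$ all transfer, so finite proximality follows from the definition alone. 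What your approach buys is self-containedness: it bypasses both the equivalence theorem (whose proof the paper omits) and the semigroup-level step, and the identity $P(X)=(\phi\times\phi)^{-1}(P(Y))$ is a clean fact of independent interest; what the paper's approach buys is brevity given its machinery and the explicit control of the number of minimal ideals, which is the invariant it cares about. One simplification to note: your excursion showing that $Iq$ is a minimal ideal is unnecessary, because proximality of $(x,x')$ is certified by a single element of $E(X)$ acting equally on $x$ and $x'$ (condition (d) in the paper's list of equivalences), and any $rq$ with $r\in I$ already does this; so the ``cancellation issue'' you flag does not actually arise. Likewise, the semigroup route you sketch needs no right cancellation, only the observation that points lying in two distinct minimal ideals of $E(X)$ can never be proximal (if $kp_1=kp_2$ for all $k$ in some minimal ideal $K$, then $kp_1\in I_1\cap I_2$, contradicting disjointness), which makes the induced map on minimal ideals injective.
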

\begin{proof}
Suppose $\psi: (X,T)\rightarrow (Y,T)$ is a proximal extension and $(Y,T)$ is finitely proximal. So $E(Y)$ will have finitely many minimal ideals. Again there will be an induced extension $\Psi: E(X)\rightarrow E(Y)$. Since $\Psi$ is an onto proximal homomorphism, $E(X)$ will also have finitely many minimal ideals. So $(X,T)$ is also finitely proximal.
\end{proof}
\vskip .5cm

\begin{definition}
A system $(X,T)$ is called countably proximal if  $P(X)=\bigcup\limits_{n \in \N}R_{i}$, where the proximal relation is an equivalence relation on each $R_{i}$. %where $R_{i}= \lbrace (x,y): px=py\  \forall\  p\in I_{i}, I_{i}\subset E(X)\ \text{is minimal ideal in E(X)} \rbrace$.
\end{definition}

We note that a finitely proximal flow is always countably proximal. We can easily state the below and the proofs for them are simple.

\begin{theorem}
For the flow $(X,T)$, $E(X)$ has countably many minimal ideals $\Leftrightarrow\ (X,T)$ is countably proximal.
\end{theorem}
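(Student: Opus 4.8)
The plan is to mirror the (omitted) proof of the corresponding statement for finitely proximal flows, the structural input being the observation recorded just above the relevant definition: for any flow $(X,T)$ one has $P(X)=\bigcup_{\alpha}R_{\alpha}$, where $\alpha$ runs over \emph{all} minimal ideals $I_{\alpha}\subseteq E(X)$ and $R_{\alpha}=\{(x,y):px=py\ \forall\,p\in I_{\alpha}\}$, together with the fact (verified there by a one-line transitivity computation) that the proximal relation restricted to each $R_{\alpha}$ is an equivalence relation.

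For the implication ``$E(X)$ has countably many minimal ideals $\Rightarrow (X,T)$ is countably proximal'' I would simply enumerate the minimal ideals of $E(X)$ as $I_{1},I_{2},\ldots$ and set $R_{i}=\{(x,y):px=py\ \forall\,p\in I_{i}\}$. By the list of equivalent characterizations of proximality (a pair is proximal iff $rx=ry$ for all $r$ in some minimal ideal of $E(X)$), $P(X)=\bigcup_{i\in\N}R_{i}$, and proximality is an equivalence relation on each $R_{i}$ by the transitivity computation already cited. Hence $(X,T)$ is countably proximal; this half is genuinely immediate.

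The substantive direction is the converse. I would first reduce it to showing that an \emph{arbitrary} countable cover as in the definition can be replaced by the canonical one, i.e.\ that if $(X,T)$ is countably proximal then $P(X)$ already equals a countable union of sets of the form $R_{\alpha}$; the natural mechanism is a ``localized'' version of Theorem~\ref{er}, whose case $\mathcal R=P(X)$ is precisely the assertion that when $P(X)$ is itself an equivalence relation there is a single minimal ideal. Granting such a reduction one argues the contrapositive: if $E(X)$ has uncountably many minimal ideals $\{I_{\gamma}\}$, fix minimal idempotents $u_{\gamma}\in I_{\gamma}$ — these are pairwise distinct since distinct minimal ideals are disjoint, and each $u_{\gamma}\neq e$ because $(X,T)$ is then non-distal — and use that $x$ and $u_{\gamma}x$ are always proximal to obtain, for uncountably many $\gamma$, proximal pairs lying in $R_{\gamma}$. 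One then has to derive a contradiction with countable proximality, using the compatibility relations between idempotents of different minimal ideals (for $u\in I$ the unique $v\in K$ with $uv=v$, $vu=u$, and the isomorphism of all minimal ideals from Theorem~\ref{3.1}) to manufacture genuine failures of transitivity.

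The hard part will be exactly this last step: controlling which minimal ideals an abstract piece $R_{i}$ from the definition of countable proximality can ``see.'' Since the $R_{\alpha}$ need not be pairwise disjoint (as noted explicitly), a single $R_{i}$ may meet many $R_{\alpha}$, so one cannot naively match ideals to pieces; instead the argument must extract, from any uncountable family of minimal ideals, a triangle $(x,y),(y,z)\in P(X)$ with $(x,z)\notin P(X)$ that is forced to straddle two of the $R_{i}$, and iterate this a cardinality-theoretic number of times. This is precisely where the ``trivial'' proof indicated for the finite case must be made careful in the countable setting, and I expect it to be the only real obstacle.
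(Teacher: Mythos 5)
Your forward direction is fine and is exactly what the paper intends: enumerate the minimal ideals $I_1,I_2,\ldots$, set $R_i=\{(x,y):px=py\ \forall p\in I_i\}$, use the characterization of proximality via minimal ideals to get $P(X)=\bigcup_i R_i$, and invoke the transitivity computation already recorded before the definition. The paper in fact offers no written argument at all for this theorem (it is stated with ``the proofs for them are simple''), and on the reading in which the $R_i$ of the definition are the canonical sets attached to minimal ideals, both implications are of this near-immediate kind; that is evidently the level of proof the authors have in mind.

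The converse in your proposal, however, is not a proof but a programme, and the gap you yourself flag is genuine. Your reduction step --- replacing an arbitrary countable cover of $P(X)$ by equivalence-relation pieces with a cover by canonical sets $R_\alpha$ via a ``localized version of Theorem~\ref{er}'' --- is nowhere justified; Theorem~\ref{er} only treats the case where $P(X)$ itself is an equivalence relation, and it is not clear how to localize it to a piece $R_i$ that is not of the form $\{(x,y):px=py\ \forall p\in I\}$. The subsequent contrapositive also does not get off the ground as described: from uncountably many minimal ideals $I_\gamma$ you obtain uncountably many idempotents $u_\gamma$ and proximal pairs $(x,u_\gamma x)\in R_\gamma$, but distinct minimal ideals need not induce distinct relations on $X$ (the evaluation maps $p\mapsto px$ need not separate ideals), so uncountably many ideals may well yield only countably many distinct sets $R_\gamma$, and no counting or ``failure of transitivity'' contradiction follows without substantial extra input about how the pieces $R_i$ sit relative to the ideals. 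As it stands, the right-to-left implication is unproved in your write-up; either adopt the canonical reading of the definition (under which the converse is essentially definitional, matching the paper's intent) or supply the missing argument that countably many equivalence-relation pieces force countably many minimal ideals --- the latter is precisely the step you have not carried out.
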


\begin{theorem}
A factor of a countably proximal flow is countably proximal.
\end{theorem}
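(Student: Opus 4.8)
The plan is to reduce the statement, via the characterization theorem just proved (for a flow $(X,T)$, $E(X)$ has countably many minimal ideals $\Leftrightarrow\ (X,T)$ is countably proximal), to the purely algebraic claim: if $\phi:(X,T)\to (Y,T)$ is a factor map and $E(X)$ has only countably many minimal ideals, then so does $E(Y)$. So the first step is to invoke, exactly as in the finitely proximal case, the induced map $\theta:E(X)\to E(Y)$, which is a continuous, surjective semigroup (hence also flow) homomorphism compatible with the canonical maps $\beta T\to E(X)$ and $\beta T\to E(Y)$.

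Next I would record two facts. First, the forward image of a minimal ideal is a minimal ideal: if $I\subset E(X)$ is a minimal ideal, then $\theta(I)$ is a left ideal of $E(Y)$, since $\theta$ is a surjective semigroup homomorphism and $E(Y)\,\theta(I)=\theta(E(X))\,\theta(I)=\theta\big(E(X)\,I\big)\subset\theta(I)$; moreover $\theta(I)$ is a minimal subset of $(E(Y),T)$ because $I$ is a minimal subset of $(E(X),T)$ and minimality of subsets is preserved by flow homomorphisms, so $\theta(I)$ is a minimal ideal. Second, and this is the crucial counting point, every minimal ideal of $E(Y)$ arises this way: given a minimal ideal $K\subset E(Y)$, the set $\theta^{-1}(K)$ is closed (as $K$ is closed and $\theta$ continuous) and is a left ideal of $E(X)$ (for $p\in E(X)$ and $q\in\theta^{-1}(K)$ we have $\theta(pq)=\theta(p)\theta(q)\in E(Y)\,K\subset K$), so by Zorn's lemma it contains a minimal ideal $I$ of $E(X)$; then $\theta(I)\subset\theta(\theta^{-1}(K))=K$ (using surjectivity of $\theta$), and minimality of $K$ forces $\theta(I)=K$.

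Combining these, $I\mapsto\theta(I)$ is a surjection from the set of minimal ideals of $E(X)$ onto the set of minimal ideals of $E(Y)$, so the latter is countable, and the characterization theorem finishes the argument. Equivalently, one can present it in the style of the finitely proximal proof: writing $P(X)=\bigcup_{n\in\N}R_n$ with $R_n$ the relation determined by the $n$-th minimal ideal $I_n\subset E(X)$, the above shows $P(Y)=\bigcup_{n\in\N}S_n$ with $S_n=\{(y,y'):\ py=py'\ \forall\ p\in\theta(I_n)\}$, on each of which the proximal relation is an equivalence relation, so $(Y,T)$ is countably proximal. I expect the only genuine obstacle to be the surjectivity-of-minimal-ideals step: one must check carefully that $\theta^{-1}(K)$ really is a left ideal and that $\theta(\theta^{-1}(K))=K$, which is where surjectivity of $\theta$ is used; everything else is routine bookkeeping, word for word the same as in the finitely proximal argument.
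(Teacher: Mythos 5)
Your argument is correct and follows essentially the same route the paper intends: the paper omits the proof as being identical to its finitely proximal case, where the induced surjective homomorphism $\theta:E(X)\to E(Y)$ carries the (now countably many) minimal ideals of $E(X)$ onto those of $E(Y)$, after which the characterization of countable proximality via countably many minimal ideals applies. Your only addition is to spell out the step the paper glosses over --- that every minimal ideal $K\subset E(Y)$ is of the form $\theta(I)$, via the closed left ideal $\theta^{-1}(K)$ and Zorn's lemma --- which is a correct and welcome filling-in of detail rather than a different approach.
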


\begin{theorem}
A proximal extension of a countably proximal flow is countably proximal.
\end{theorem}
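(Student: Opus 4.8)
The plan is to run the same argument as for the finitely proximal case, with ``countably'' in place of ``finitely''. By the characterization theorem stated just above, it suffices to show: if $\psi\colon (X,T)\to (Y,T)$ is a proximal extension and $E(Y)$ has at most countably many minimal ideals, then $E(X)$ has at most countably many minimal ideals; the conclusion that $(X,T)$ is countably proximal then follows from the characterization again.

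First I would introduce the induced map. Since a proximal extension is a surjective flow homomorphism, it induces a continuous surjection $\Psi\colon E(X)\to E(Y)$ that is simultaneously a flow and a semigroup homomorphism and satisfies $\psi(px)=\Psi(p)\psi(x)$. As before, $\Psi$ sends a minimal ideal $I\subseteq E(X)$ to a minimal ideal of $E(Y)$: $\Psi(I)$ is a left ideal since $E(Y)\Psi(I)=\Psi(E(X))\Psi(I)=\Psi(E(X)I)\subseteq\Psi(I)$, and it is a minimal subset of $(E(Y),T)$ as the continuous image of the minimal set $I$. So there is a map $\Theta$ from minimal ideals of $E(X)$ to minimal ideals of $E(Y)$, and it is onto: any minimal ideal $J\subseteq E(Y)$ has $\Psi^{-1}(J)$ a (nonempty) left ideal of $E(X)$, which therefore contains a minimal ideal $I$ with $\Psi(I)\subseteq J$, whence $\Psi(I)=J$. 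The task is thereby reduced to showing $\Theta$ is one-to-one, since then the sets of minimal ideals of $E(X)$ and of $E(Y)$ are equinumerous, hence at most countable.

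Injectivity is where proximality is used, exactly as in the finitely proximal theorem. Because $(E(X),T)$ is point transitive we have $E(E(X))\cong E(X)$, so proximality in $(E(X),T)$ is again governed by its minimal ideals; and one checks that $\Psi$ is a proximal homomorphism, i.e. $\Psi(p)=\Psi(q)$ forces $(p,q)$ proximal in $(E(X),T)$ --- indeed $\Psi(p)=\Psi(q)$ gives $\psi(px)=\psi(qx)$ for all $x\in X$, so every pair $(px,qx)$ lies in the fibre relation of $\psi$ and hence is proximal in $(X,T)$, and this is upgraded, via $E(E(X))\cong E(X)$ and the equivalence ``$(a,b)$ proximal $\iff$ $ra=rb$ for all $r$ in some minimal ideal'', to a single minimal ideal $K\subseteq E(X)$ with $rp=rq$ (composition in $E(X)$) for all $r\in K$. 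Granting that, suppose $I_1\neq I_2$ are minimal ideals of $E(X)$ with $\Psi(I_1)=\Psi(I_2)$; choose $p_1\in I_1$ and, since $\Psi$ maps $I_2$ onto $\Psi(I_1)$, some $p_2\in I_2$ with $\Psi(p_2)=\Psi(p_1)$; proximality yields a minimal ideal $K$ with $rp_1=rp_2$ for all $r\in K$, but then $rp_1\in I_1$ and $rp_2\in I_2$ exhibit a common point of $I_1$ and $I_2$, contradicting the disjointness of distinct minimal ideals. Hence $\Theta$ is a bijection, $E(X)$ has at most countably many minimal ideals, and $(X,T)$ is countably proximal. The one delicate point --- the same one left implicit in the finitely proximal proof --- is precisely the verification that $\psi$ proximal makes $\Psi$ a proximal homomorphism, i.e. the extraction of a single minimal ideal of $E(X)$ witnessing proximality of all the pairs $(px,qx)$ simultaneously; this is supplied by the standard correspondence between proximal pairs and minimal ideals together with $E(E(X))\cong E(X)$, and no difficulty beyond what the finitely proximal case already required arises, the remainder being routine bookkeeping with left ideals.
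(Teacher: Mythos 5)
Your proposal is correct and follows essentially the same route as the paper: the paper's proof simply notes that the induced map $\Psi: E(X)\to E(Y)$ is an onto proximal homomorphism, so that countability of the collection of minimal ideals passes from $E(Y)$ back to $E(X)$, and then invokes the equivalence between countable proximality of $(X,T)$ and $E(X)$ having countably many minimal ideals --- exactly your argument, with your added detail on the surjectivity and injectivity of the induced correspondence of minimal ideals. The one ``delicate point'' you flag (that $\psi$ proximal forces $\Psi$ proximal, i.e.\ a single minimal ideal collapsing all the fibre pairs $(px,qx)$ simultaneously, which genuinely needs more than the pairwise correspondence --- e.g.\ an Auslander--Ellis argument on finite tuples of fibre pairs plus compactness) is likewise asserted without proof in the paper, so your treatment is no less complete than the paper's own.
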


\vskip .5cm

\section{Almost Automorphic Points, Almost Equicontinuity, Almost Periodicity and Metrizability in Enveloping Semigroups}

We recall the concept of almost automorphic points. A point $x\in X$ is called \emph{almost automorphic}, if for every net $\{t_i\}_{i\in I}\subset T$ with $t_i x\to y$ for some $y\in X$ it holds that $t_i^{-1} y\to x$. Flows with almost automorphic points were first studied by Veech \cite{Ve1}. We encourage the reader to refer to \cite{AGN} for more details, and recall some results from there. Note that our $T$ need not be Abelian here as in \cite{AGN}.

\begin{proposition}\cite{AGN}
Equicontinuity of the flow $(X,T)$ implies that every point in $X$ is an almost automorphic point.
\end{proposition}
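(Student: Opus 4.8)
The plan is to prove this directly from the definition, reducing it to a two-line estimate, and to note afterward a more structural route through the enveloping semigroup.

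First I would observe that since $X$ is a compact metric space, equicontinuity of $(X,T)$ at every point upgrades to \emph{uniform} equicontinuity of the family $\{\pi^t:t\in T\}$: for every $\epsilon>0$ there is $\delta>0$ such that $d(a,b)<\delta$ implies $d(ta,tb)<\epsilon$ for \emph{all} $t\in T$. Indeed, for each $z\in X$ pick, by equicontinuity at $z$ applied to $\epsilon/2$, an open set $U_z\ni z$ with $d(tx,tz)<\epsilon/2$ for all $x\in U_z$ and all $t\in T$; cover $X$ by finitely many such $U_z$ and let $\delta$ be a Lebesgue number of this cover, so that any two $\delta$-close points lie in a common $U_z$ and the triangle inequality gives the claim.

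Now fix $x\in X$ and let $\{t_i\}_{i\in I}\subset T$ be any net with $t_ix\to y$ for some $y\in X$; I must show $t_i^{-1}y\to x$. Let $\epsilon>0$ and take $\delta>0$ as above. Since $t_ix\to y$, there is $i_0$ with $d(t_ix,y)<\delta$ for all $i\ge i_0$. Applying uniform equicontinuity to the map $\pi^{t_i^{-1}}$ and the pair $(t_ix,y)$ gives $d\bigl(t_i^{-1}(t_ix),\,t_i^{-1}y\bigr)<\epsilon$, i.e.\ $d(x,\,t_i^{-1}y)<\epsilon$, for all $i\ge i_0$ (here I use that $T$ is a group, so $t_i^{-1}\in T$ acts on $X$ and $t_i^{-1}(t_ix)=x$). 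As $\epsilon>0$ was arbitrary, $t_i^{-1}y\to x$. Since the net and the point $x$ were arbitrary, every point of $X$ is almost automorphic.

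I do not expect a genuine obstacle: the only points needing care are the passage from pointwise to uniform equicontinuity (where compactness of $X$ enters) and the implicit use of the group structure of $T$ built into the definition of almost automorphy. For contrast, one could argue via $E(X)$: equicontinuity forces distality, hence $E(X)$ is a group by the equivalences recalled in the excerpt, and Arzela-Ascoli together with the coincidence of the compact-open and point-open topologies on an equicontinuous family makes $E(X)$ a compact topological group acting by homeomorphisms; passing to a subnet along which $\pi^{t_i}\to p$ (so $y=px$) and $\pi^{t_i^{-1}}\to p^{-1}$, one gets $t_i^{-1}y\to p^{-1}y=p^{-1}px=x$, and a routine subnet argument removes the passage to a subnet. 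I would keep the elementary argument as the proof and mention this viewpoint only as a remark.
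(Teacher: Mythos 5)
The paper does not prove this proposition at all: it is quoted verbatim from \cite{AGN}, so there is no in-paper argument to compare with. Judged on its own, your proof is correct and complete. The upgrade from pointwise equicontinuity (as defined in the paper) to uniform equicontinuity via a finite subcover and a Lebesgue number is exactly what compactness of the metric space $X$ provides, and the main step is sound: from $d(t_ix,y)<\delta$ eventually, applying the uniform estimate to the group element $t_i^{-1}\in T$ gives $d(x,t_i^{-1}y)=d\bigl(t_i^{-1}(t_ix),t_i^{-1}y\bigr)<\epsilon$ eventually, which is precisely the almost automorphy of $x$. You correctly isolate the two genuinely load-bearing hypotheses: compactness (for the uniform $\delta$) and the fact that $T$ is a group, so that $t_i^{-1}$ is again one of the maps to which equicontinuity applies; without inverses the statement as formulated would not even make sense. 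Your closing remark through $E(X)$ is closer in spirit to the structural viewpoint of the cited source (equicontinuity gives a compact group of homeomorphisms on which pointwise and uniform convergence agree, and inversion is continuous), but as you note it needs the small extra care of extracting subnets for both $\pi^{t_i}$ and $\pi^{t_i^{-1}}$ and then a standard ``every subnet has a further subnet converging to $x$'' argument; keeping the elementary estimate as the actual proof and relegating that to a remark is a reasonable choice, and arguably more self-contained than the citation the paper relies on.
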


\begin{proposition}\cite{AGN}
Let $(X,T)$ be a flow so that \emph{every} point in $X$ is almost automorphic.
Then the enveloping semigroup $E(X,T)$ is a group, and the operation of group inversion is a continuous mapping from $E(X,T)$ onto $E(X,T)$ with respect to the product topology on $X^X$.
\end{proposition}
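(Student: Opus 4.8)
The plan is to prove the two assertions in turn---that $E(X,T)$ is a group, and that group inversion is continuous on it---both by exploiting the same elementary device: whenever $t_ix\to y$ in $X$ along a net with $t_i\in T$, almost automorphy of $x$ returns $t_i^{-1}y\to x$, and this can be played off against a second, independently obtained limit of the same net $(t_i^{-1})$.

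\emph{Step 1: $E(X)$ is a group.} I would show that the identity $e=I_X$ is the only idempotent of $E(X)$; the structure theory of minimal ideals recalled above then finishes it, since the idempotent sitting in any minimal left ideal $I$ must be $e$, forcing $E(X)=E(X)e\subseteq E(X)I\subseteq I$, so $I=E(X)=eI$ is a group with identity $e$. To see that an idempotent $u=u^2\in E(X)$ equals $I_X$, pick a net $t_i\in T$ with $\pi^{t_i}\to u$, fix $x\in X$, and set $y=ux$. On one hand $t_ix\to ux=y$, so almost automorphy of $x$ gives $t_i^{-1}y\to x$; on the other hand $t_iy=t_i(ux)\to u(ux)=u^2x=ux=y$, so almost automorphy of $y$ gives $t_i^{-1}y\to y$. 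Since $X$ is Hausdorff, $ux=y=x$, and as $x$ was arbitrary, $u=I_X$.

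\emph{Step 2: the key lemma.} Next I would prove: if $t_i\in T$ and $\pi^{t_i}\to p$ in $E(X)$, then $\pi^{t_i^{-1}}=(\pi^{t_i})^{-1}\to p^{-1}$, where $p^{-1}$ is the inverse computed in the group $E(X)$. By compactness of $E(X)$ it suffices to see that any convergent subnet of $(\pi^{t_i^{-1}})$ has limit $p^{-1}$; so assume $\pi^{t_i^{-1}}\to r$ along a subnet (still written $i$, with $\pi^{t_i}\to p$ kept). For each $x\in X$ we have $t_ix\to px$, hence by almost automorphy of $x$, $t_i^{-1}(px)\to x$; but also $t_i^{-1}(px)\to r(px)$ by pointwise convergence, so $r(px)=x$ for every $x$. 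As $E(X)$ is a group, $p\colon X\to X$ is a bijection, so this identity forces $r=p^{-1}$.

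\emph{Step 3: continuity of inversion.} Finally I would invoke the standard fact that a map defined on a dense subset of a space, all of whose net-limits exist and depend only on the point approached, extends to a (unique) continuous map into a compact Hausdorff space. The lemma says exactly that the map $\pi^t\mapsto\pi^{t^{-1}}$ on the dense subset $\{\pi^t:t\in T\}\subset E(X)$ has, at each $p\in E(X)$, the single limiting value $p^{-1}$; hence it extends to a continuous self-map of $E(X)$, and this extension is the group inversion $p\mapsto p^{-1}$, which is in particular a bijection of $E(X)$ onto itself. The step I expect to be the real obstacle is resisting a direct net-chasing proof of Step 3: that route stalls because $E(X)$ need not be equicontinuous---the $p$-adic type enveloping semigroups appearing earlier are groups but not equicontinuous---so one cannot simply pass a limit through a varying map applied to a varying argument. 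The lemma sidesteps this by only ever feeding the \emph{fixed} point $px$ into the varying maps $t_i^{-1}$, and it is the dense-extension principle, rather than any uniform control, that promotes the resulting pointwise information to genuine continuity on all of $E(X)$.
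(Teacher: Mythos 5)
The paper itself gives no proof of this proposition --- it is quoted from \cite{AGN} --- so your argument can only be judged on its own terms, and on those terms it is correct and complete. Step 1 works: playing almost automorphy of $x$ against almost automorphy of $y=ux$ shows that every idempotent of $E(X)$ equals $I_X$, and the minimal-ideal structure theory recalled in Section 5 (every minimal left ideal contains an idempotent, and $vI$ is a group with identity $v$) then yields that $E(X)$ is a group. Step 2 is the right device, and Step 3 correctly identifies and fills the genuine gap a naive argument would have: convergence of inverses along nets from the dense copy of $T$ is not yet continuity of inversion on all of $E(X)$, and the extension-over-a-dense-subset principle (legitimate here because $E(X)$ is compact Hausdorff, hence regular) is exactly what converts it, the extension being inversion itself since its value at each $p$ is $p^{-1}$ by Step 2. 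Two remarks concern economy rather than correctness. First, Step 1 can be bypassed: if $(x,y)$ is proximal, say $t_ix\to z$ and $t_iy\to z$, then almost automorphy gives $t_i^{-1}z\to x$ and $t_i^{-1}z\to y$, so $x=y$; thus the flow is distal, and the recalled equivalence ``distal if and only if $E(X)$ is a group'' applies at once. Second, inside Step 2 the symmetric computation --- from $t_i^{-1}x\to rx$, almost automorphy of $x$ applied to the net $\{t_i^{-1}\}$ gives $t_i(rx)\to x$, while $t_i(rx)\to p(rx)$ --- yields $pr=e$ as well as $rp=e$, so the subnet limit is a two-sided inverse and uniqueness of inverses identifies it; this makes the group property fall out of Step 2 itself, which is presumably the direct route of the cited source. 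Neither remark affects the validity of what you wrote.
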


\begin{proposition}\cite{AGN}
If $(X,T)$ is transitive and every point is almost automorphic, then it is minimal and equicontinuous.
\end{proposition}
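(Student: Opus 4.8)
The plan is to prove the two conclusions separately: minimality will come quickly from distality, and equicontinuity from upgrading $E(X)$ to a genuine topological group.

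Since $X$ is compact metric and $(X,T)$ is transitive, it is point transitive; fix $x_0\in X$ with $\overline{Tx_0}=X$. By the preceding proposition, the assumption that every point of $X$ is almost automorphic yields that $E(X)$ is a group and that the inversion map $\iota\colon E(X)\to E(X),\ p\mapsto p^{-1}$, is continuous for the topology of pointwise convergence on $X^X$. Because $E(X)$ is a group, $(X,T)$ is distal (using the equivalence of ``$X$ distal'' with ``$E(X)$ a group'' recorded above), and a transitive distal flow is minimal; this is the first assertion.

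For equicontinuity I would first show that $E(X)$, with the pointwise topology, is a compact Hausdorff \emph{topological} group. The right translations $R_p$ on $E(X)$ are always continuous, while each left translation factors as $L_p=\iota\circ R_{p^{-1}}\circ\iota$, a composition of continuous maps; hence multiplication on $E(X)$ is separately continuous, and Ellis's joint continuity theorem (see \cite{AUS, ELL}) makes $E(X)$ a topological group. Next set $F=\{\,p\in E(X):p x_0=x_0\,\}$, a closed subgroup. Exactly as in the coset argument of Theorem \ref{TH EMF} (which does not use equicontinuity once $E(X)$ is known to be a group, only that $E(X)$ is a group and $\overline{Tx_0}=X$), the map $pF\mapsto p x_0$ is a flow isomorphism of $(E(X)/F,T)$ onto $(X,T)$; since $E(X)/F$ is compact and $X$ Hausdorff it is a homeomorphism, so $X$ may be identified with the homogeneous space $E(X)/F$. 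As multiplication in $E(X)$ is jointly continuous and the quotient map $E(X)\to E(X)/F$ is open and continuous, the evaluation $E(X)\times X\to X,\ (p,x)\mapsto p x$, is jointly continuous; in particular every $p\in E(X)$ is a continuous self-map of $X$.

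Finally, joint continuity of this action together with compactness of $E(X)$ forces $E(X)$ to be an equicontinuous family of maps: given $\varepsilon>0$, the function $(p,x,y)\mapsto d(p x,p y)$ is continuous on the compact space $E(X)\times X\times X$ and vanishes on the closed set $E(X)\times\Delta$, so a standard compactness (tube-lemma) argument produces $\delta>0$ with $d(x,y)<\delta\Rightarrow d(p x,p y)<\varepsilon$ for all $p\in E(X)$; taking $p=\pi^t$ for $t\in T$ shows $(X,T)$ is equicontinuous. The one step I expect to require real care, rather than bookkeeping, is the passage from ``separately continuous multiplication'' to ``topological group'' and, hand in hand with it, the joint continuity of the $E(X)$-action on $E(X)/F$; once those are secured the remainder is routine.
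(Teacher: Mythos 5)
Your argument is correct, and there is in fact nothing in the paper to measure it against: the paper records this proposition verbatim from \cite{AGN} without proof, so you have supplied the missing argument rather than an alternative to one. Your route is the natural one and each step checks out. From the preceding proposition you get that $E(X)$ is a group with continuous inversion; since right translations $q\mapsto qp$ are always continuous on $E(X)$ and $L_p=\iota\circ R_{p^{-1}}\circ\iota$, multiplication is separately continuous, and Ellis's joint continuity theorem upgrades the compact Hausdorff semitopological group $E(X)$ to a topological group. Distality of $(X,T)$ (via the listed equivalence ``$E(X)$ a group $\Leftrightarrow$ $X$ distal'') together with transitivity gives minimality, exactly as the paper's stated facts allow. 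Your repair of the coset argument is also right: Theorem \ref{TH EMF} only needs $E(X)$ to be a group and $\overline{Tx_0}=X$ for the algebra, and you correctly add what that proof leaves implicit, namely that $\Psi\colon E(X)/F\to X$ is a continuous bijection from a compact space to a Hausdorff space, hence a homeomorphism intertwining the $E(X)$-actions. The passage from joint continuity of multiplication plus openness of $E(X)\to E(X)/F$ to joint continuity of the evaluation $E(X)\times X\to X$ is the standard quotient argument, and the final tube-lemma/compactness step (project $\{d(px,py)\ge\varepsilon\}$ to $X\times X$, note it is compact and misses $\Delta$, take $\delta$ its distance to $\Delta$) correctly avoids any appeal to metrizability of $E(X)$. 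In short: a complete and correct proof, resting only on the previously quoted proposition from \cite{AGN}, Ellis's semitopological group theorem, and facts the paper itself states.
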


For equicontinuous $(X,T)$ with $T$ not necessarily Abelian, we recall that $E(X)$ is a group of homeomorphisms and so we can call a point $x\in X$ to be  \emph{almost automorphic} when $px = y$ for some $p \in E(X)$ implies $p^{-1}y = x$. Then in $(E(X),T)$, we note that $pq = r$ will imply $p^{-1}r = q$, i.e. if $\{t_i\}_{i\in I}\subset T$ with $t_i q\to r$ for some $r\in E(X)$, then it holds that $t_i^{-1} r \to q$. Thus every point in $E(X)$ will be almost automorphic.

\begin{proposition} For an equicontinuous $(X,T)$, every point in $E(X)$ is an almost automorphic point. \end{proposition}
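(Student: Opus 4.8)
The plan is to reduce the statement for $(E(X),T)$ to the already-established facts about equicontinuous flows and their enveloping semigroups. The essential point, which the excerpt has already set up in the paragraph immediately preceding the proposition, is that when $(X,T)$ is equicontinuous the enveloping semigroup $E(X)$ is a group of homeomorphisms on which group inversion $p \mapsto p^{-1}$ is continuous (this is Theorem~\ref{TH EMF} together with the cited results on equicontinuous flows and the observation that every point of an equicontinuous flow is almost automorphic, Proposition from \cite{AGN}). So first I would record that $(E(X),T)$ is itself an equicontinuous flow: since $(X,T)$ is equicontinuous, $E(X)\subset X^X$ is an equicontinuous family by the Arzela--Ascoli type results in the Function Spaces section, hence its closure carries the subspace topology in which the action $t \cdot p = L_t p = \pi^t p$ is equicontinuous; alternatively one invokes item (4) in the list on induced systems applied suitably, or simply that $E(E(X))\cong E(X)$ is a group by Theorem~\ref{TH 7.1} / Proposition~\ref{5.2} together with the distality criterion.

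Next I would make the almost automorphy argument concrete in $E(X)$. Take any net $\{t_i\}_{i\in I}\subset T$ and suppose $t_i q \to r$ in $E(X)$ for some $q,r \in E(X)$, where convergence is pointwise on $X$. Passing to a subnet if necessary, $t_i \to p$ in $E(X)$ (compactness of $E(X)$), and then, because right multiplication $R_q$ is continuous on $E(X)$, we get $t_i q = R_q(t_i) \to R_q(p) = pq$, so $r = pq$. Since $E(X)$ is a group, $q = p^{-1} r$. Now using continuity of inversion on $E(X)$ (the cited result for flows all of whose points are almost automorphic, which applies to equicontinuous $(X,T)$), $t_i \to p$ gives $t_i^{-1} \to p^{-1}$, and then continuity of $R_r$ yields $t_i^{-1} r = R_r(t_i^{-1}) \to R_r(p^{-1}) = p^{-1} r = q$. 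Since the original net had a subnet with this property and the limit $q$ is forced, the full net satisfies $t_i^{-1} r \to q$. This is exactly the statement that $q$ is an almost automorphic point of $(E(X),T)$, and since $q$ was arbitrary, every point of $E(X)$ is almost automorphic.

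I expect the only genuinely delicate point to be the passage from "a subnet converges" back to "the whole net converges", i.e.\ justifying that the limit $q$ of $t_i^{-1} r$ is independent of the chosen subnet. The clean way around this is the standard net lemma: a net in a compact Hausdorff space converges to $x$ iff every subnet has a further subnet converging to $x$; here every subnet of $\{t_i\}$ has a convergent subnet $t_{i_k}\to p'$ with $pq = r = p'q$, hence $p = p'$ (group), hence $p^{-1}=p'^{-1}$ and $t_{i_k}^{-1} r \to q$, so by the lemma $t_i^{-1} r \to q$. A secondary point to be careful about is that the hypothesis only gives equicontinuity of $(X,T)$, so one must cite precisely where the group structure and the continuity of inversion on $E(X)$ come from — namely Theorem~\ref{TH EMF} (which gives $E(X)$ is a group of homeomorphisms, in fact $E(X)\cong X$ when $T$ is Abelian) and the two Propositions from \cite{AGN} quoted just above, the second of which gives continuity of inversion once every point of $X$ is almost automorphic, which the first Proposition guarantees for equicontinuous $(X,T)$. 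Everything else is routine manipulation with the right-topological semigroup structure on $E(X)$.
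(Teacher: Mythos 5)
Your argument is correct and follows essentially the same route as the paper: equicontinuity makes $E(X)$ a group of homeomorphisms with continuous inversion (via the quoted results of \cite{AGN}), and the group identity $pq=r\Rightarrow p^{-1}r=q$ is translated into the net statement $t_iq\to r\Rightarrow t_i^{-1}r\to q$; the paper merely sketches this, while you supply the compactness/subnet details (note only that the group structure should be credited to the \cite{AGN} propositions rather than Theorem~\ref{TH EMF}, which assumes minimality, and that your opening paragraph on equicontinuity of $(E(X),T)$ is not actually needed).
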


From the above propositions and since $E(X,T)$ is minimal for minimal $(X,T)$, we can conclude that:

\begin{theorem} If $(X,T)$ is transitive and equicontinuous then $(E(X),T)$ is  equicontinuous.

In particular, $(E(X),T)$ will also be minimal. \end{theorem}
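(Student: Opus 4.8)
The plan is to derive the theorem by applying, to the flow $(E(X),T)$ itself, the almost-automorphy machinery recalled just above. First I would record the preliminary reductions. Since $(X,T)$ is equicontinuous it is distal, hence pointwise almost periodic, and being also transitive it is therefore minimal; consequently $E(X)$ is a group of homeomorphisms of $X$, and by the Veech-type proposition cited from \cite{AGN} the inversion map $p\mapsto p^{-1}$ is a continuous self-map of $E(X)$ in the product topology. Second, I would use the fact (noted above, and immediate from $\overline{Te}=E(X)$ for the identity $e\in E(X)$) that $(E(X),T)$ is point transitive, hence topologically transitive.

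Next, the Proposition stated just before the theorem asserts that, because $(X,T)$ is equicontinuous, \emph{every} point of $E(X)$ is an almost automorphic point of the flow $(E(X),T)$: concretely, if $\pi^{t_i}q\to r$ in $E(X)$, then passing to a subnet with $\pi^{t_i}\to p$ gives $r=pq$, and continuity of inversion yields $\pi^{t_i^{-1}}=(\pi^{t_i})^{-1}\to p^{-1}$, whence $\pi^{t_i^{-1}}r\to p^{-1}r=q$, a property that then holds for the full net by a routine subnet argument. Having established that $(E(X),T)$ is transitive with every point almost automorphic, the Proposition of \cite{AGN} that ``transitive $+$ every point almost automorphic $\Rightarrow$ minimal and equicontinuous'' applies verbatim to $(E(X),T)$ and yields both assertions at once: $(E(X),T)$ is equicontinuous, and it is minimal. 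As a cross-check, minimality follows anyway once equicontinuity is known, since an equicontinuous flow is distal and a transitive distal flow is minimal; and when $T$ is Abelian the entire statement drops straight out of Theorem \ref{TH EMF}, which gives $(E(X),T)\cong(X,T)$.

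The step that I expect to require the most care is the verification that almost automorphy of the points of $E(X)$, encapsulated informally by ``$pq=r\Rightarrow p^{-1}r=q$'', is exactly the hypothesis the \cite{AGN} proposition needs for the flow $(E(X),T)$; this is precisely where equicontinuity of $(X,T)$ is used in an essential way, via the continuity of inversion on $E(X)$ together with the upgrade from subnet convergence to net convergence. The remaining steps are bookkeeping.
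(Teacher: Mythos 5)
Your proposal is correct and follows essentially the same route as the paper: it combines the proposition that for equicontinuous $(X,T)$ every point of $E(X)$ is almost automorphic with the result from \cite{AGN} that a transitive flow all of whose points are almost automorphic is minimal and equicontinuous, applied to the point-transitive flow $(E(X),T)$. The additional care you take with the continuity of inversion and the subnet-to-net upgrade merely fills in details the paper leaves implicit.
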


We note that $(E(X),T)$ is minimal and equicontinuous   vacuously gives that $(X,T)$ is minimal and equicontinuous.

\begin{remark} Compare the theorem above with Theorem \ref{TH EMF} from \cite{AUS}. We have extended Auslander's result here for even non Abelian $T$. Thus if $(X,T)$ is minimal and equicontinuous, with $T$ Abelian or non Abelian, we have $(X,T) \cong (E(X),T)$. \end{remark}

\vskip .5cm

We recall some results  \cite{SGM, ES, GMES, GME, ME, ELBW3}:

\begin{itemize}

\item A compact flow $(X,T)$ is \emph{hereditarily almost equicontinuous} if and only if  $E(X)$ is metrizable.

\item For a metrizable \emph{hereditarily almost equicontinuous} flow $(X,T)$, the flow $(E(X), T)$ is again metrizable \emph{hereditarily almost equicontinuous}.

\item For a metrizable \emph{hereditarily non sensitive} flow $(X,T)$, the flow $(E(X), T)$ is again metrizable \emph{hereditarily non sensitive}.

    \item For a transitive flow $(X,T)$, flow $(X,T)$ is \emph{hereditarily almost equicontinuous} if and only if the flow $(E(X), T)$ is  \emph{hereditarily non sensitive}.

\item Let $(X,T)$ be a compact flow, suppose $T_1$ is a subgroup of $T$ and $X_1$ is a closed $T_1$ invariant subset of $X$. If $E(X,T)$ is metrizable then $E(X_1,T_1)$ is also metrizable.

\item Let $(X,T)$ is metric flow then either $E(X)$ is \emph{seperable Rosenthal compact}, hence with cardinality card$E(X)\leq 2^\omega$ or the compact space $E(X)$ contains a homeomorphic copy of $\beta\N$, hence card$E(X)=2^{2^{\omega}}$.

\item For the flow $(X, T)$,  the following conditions are
equivalent:
\begin{enumerate}
\item The flow $(X, T)$ is hereditarily almost equicontinuous (HAE);

\item The flow $(X, T)$  admits a proper representation on a Radon–Nikodym Banach space;

\item The enveloping semigroup $E(X)$ is metrizable.
\end{enumerate}
\end{itemize}

We recall the below results from \cite{AUS, SGM, TM} for minimal flows and give examples to show that these need not hold for non minimal flows. Thus not everything that holds for minimal systems be true in a non-minimal case.

\begin{theorem} \cite{AUS, MST} \label{TH 6.2} Let $(X,T)$ be a minimal flow, and suppose that all elements of $ E(X)$ are continuous. Then $(X,T)$ is equicontinuous.\end{theorem}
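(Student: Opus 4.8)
The plan is to exploit the known characterization of enveloping semigroups via the Ellis joint-continuity theorem: when the acting group is a group and $X$ is a compact metric space, if every element of $E(X)$ is a continuous self-map of $X$, then $E(X)$ is in fact a group of homeomorphisms, and moreover the action map $E(X) \times X \to X$ is jointly continuous on the minimal set. First I would observe that since $(X,T)$ is minimal, every point is almost periodic, so $X = Ix$ for any minimal ideal $I \subseteq E(X)$ and any $x \in X$. The key point is that continuity of every member of $E(X)$ forces $E(X)$ to have a single idempotent, hence to be a group: indeed, if $u = u^2 \in E(X)$ is continuous and idempotent, then $u$ restricted to $uX$ is the identity; using minimality one shows $u$ must be the identity on all of $X$, so $e$ is the only idempotent and therefore (by the equivalences recalled earlier in the excerpt — $E(X)$ a group iff $E(X)$ has a unique idempotent iff $E(X)$ is minimal) $E(X)$ is a minimal group.

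Next I would upgrade ``group'' to ``group of homeomorphisms acting equicontinuously.'' Since every $p \in E(X)$ is continuous and $E(X)$ is a group, each $p$ is a continuous bijection of a compact Hausdorff space, hence a homeomorphism, and $p \mapsto p^{-1}$ is the inversion in the group $E(X)$. The crucial step is to invoke Ellis's joint continuity theorem: a locally compact Hausdorff group acting separately continuously on a compact Hausdorff space acts jointly continuously — but here I need the version phrased for enveloping semigroups, namely that if $E(X)$ consists of continuous maps then the evaluation $E(X)\times X \to X$ is continuous. From joint continuity of the action together with compactness of $E(X)$, equicontinuity of the family $\{p : p \in E(X)\} \supseteq \{\pi^t : t \in T\}$ follows by the standard Arzelà–Ascoli-type argument (a jointly continuous action of a compact space of maps yields an equicontinuous family), which is exactly the statement that $(X,T)$ is equicontinuous.

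I would carry this out in the order: (1) continuity of idempotents plus minimality $\Rightarrow$ unique idempotent $\Rightarrow$ $E(X)$ is a group; (2) group $+$ continuity $\Rightarrow$ $E(X)$ is a group of homeomorphisms with continuous inversion; (3) apply the Ellis joint-continuity theorem to conclude the action of the compact group $E(X)$ on $X$ is jointly continuous; (4) deduce equicontinuity of $\{\pi^t : t \in T\}$, i.e.\ $(X,T)$ is equicontinuous. The main obstacle is step (3): the Ellis joint-continuity theorem in its classical form requires the acting group to be locally compact, and $E(X)$ here is compact Hausdorff but typically non-metrizable, so one must be careful to invoke the correct form (the one valid for compact right-topological groups all of whose elements act continuously, due essentially to Ellis), rather than a version needing metrizability. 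Once joint continuity is in hand, the passage to equicontinuity is routine and the rest of step (1)--(2) is elementary semigroup algebra already developed in the excerpt.
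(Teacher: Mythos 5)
The paper itself gives no proof of this statement (it is recalled from the cited sources), so your argument must stand alone. Steps (2)--(4) of your outline are sound and follow the standard route: once $E(X)$ is known to be a group all of whose elements are continuous, it is a compact Hausdorff \emph{semitopological} group (right multiplications are always continuous in the pointwise topology, and left multiplication $L_p$ is continuous exactly because $p$ is continuous), so Ellis's theorem makes it a compact topological group; Ellis's joint-continuity theorem for separately continuous actions of locally compact groups then gives joint continuity of $E(X)\times X\to X$, and the finite-subcover $3\epsilon$ argument yields equicontinuity of the family $E(X)\supseteq\{\pi^t: t\in T\}$. Your worry about metrizability is a non-issue: compactness of $E(X)$ is all that the classical Ellis theorems require.

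The genuine gap is in step (1). You claim that a continuous idempotent $u$ is the identity on $uX$ (true, since $u(ux)=ux$) and that ``using minimality one shows $u$ must be the identity on all of $X$.'' For that you need $uX$ (equivalently the fixed-point set of $u$, which is closed by continuity) to be $T$-invariant, or at least dense. Invariance amounts to $\pi^t u=u\pi^t$, which holds when $T$ is abelian (in particular for cascades), but is not available for a general acting group, and the theorem is stated for arbitrary minimal flows; as written, the passage from ``identity on $uX$'' to ``identity on $X$'' is unjustified. The step can be repaired with facts the paper itself records: all elements of $E(X)$ continuous is equivalent to $(X,T)$ being WAP; a WAP flow has a unique minimal ideal $I$ containing a unique idempotent $u$ (Proposition \ref{thm}); minimality makes every $x\in X$ almost periodic, and the standard characterization of almost periodicity (there is an idempotent $v\in I$ with $vx=x$) then forces $ux=x$ for every $x$, so $u=e$, $I=E(X)$, and $E(X)$ is a group --- after which your steps (2)--(4) go through. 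Alternatively one can bypass the group structure entirely: the evaluation map $p\mapsto px$ sends $I$ onto $\overline{Tx}=X$, so $(X,T)$ is a factor of the equicontinuous flow $(I,T)$ of Proposition \ref{5.1}, and factors of equicontinuous flows are equicontinuous.
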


We note that this does not hold for non minimal systems. We recall Example \ref{EX 7.2} here:

\begin{example}

Let $X= \{ x^n : x^n = (x^n_i)$ such that $x^n_i = 1$ if $i = n$ and $x^n_i = 0$ otherwise, $n \in \Z \} \cup \{\overline{0}\} \ \subset \emph{2}^{\Z}$, where $\overline{0}$ is the  sequence of all $0$s. $(X, \sigma)$ is a subshift of the $\emph{2}$-shift. Note that $(X, \sigma)$ is not minimal and contains $\{\overline{0}\}$ as the unique minimal subset.

For any $x \in X$ observe that $\sigma^k(x) \longrightarrow \overline{0}$ and $\sigma^{-k}(x) \longrightarrow \overline{0}$ as $k \longrightarrow \infty$. Thus if $p: X \mapsto X$ is the constant map $p(x) = \overline{0}$, then $\sigma^k \longrightarrow p$ and $\sigma^{-k} \longrightarrow p$ pointwise as $k \longrightarrow \infty$. So in this case $E(X) = \{ \sigma^k: k \in \Z\} \cup \{p\}$ and so $I = \{p\}$ is the unique minimal ideal in $E(X)$.

The point $\overline{0}$ there is not a point of equicontinuity. Though $E(X) \subset \mathcal{C}(X,X)$.

\end{example}

\begin{remark}  We note that Example \ref{EX 5.2} above is a weakly almost periodic (WAP) cascade. \end{remark}

\begin{theorem} \cite{SGM}  A metric minimal flow $(X, T)$ is equicontinuous if and only if its enveloping semigroup $E(X)$ is metrizable. \end{theorem}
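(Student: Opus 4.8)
The statement to be proved is the classical Ellis--Namioka / Glasner characterization: a metric minimal flow $(X,T)$ is equicontinuous if and only if its enveloping semigroup $E(X)$ is metrizable. The plan is to prove the two implications separately, using the algebraic machinery already recalled in the excerpt. For the forward direction, assume $(X,T)$ is equicontinuous. Then by the Arzela--Ascoli theorem the family $\{\pi^t : t \in T\}$ is equicontinuous with pointwise-relatively-compact orbits, so its closure $E(X)$ in $X^X$ is equicontinuous and compact; moreover on an equicontinuous family the point-open topology agrees with the compact-open topology (a result quoted in the Function Spaces section). Hence $E(X)$, carrying the pointwise topology, actually carries the compact-open topology, which for maps into the compact metric space $(X,d)$ is the topology of uniform convergence, metrized by $\sup_{x}d(p(x),q(x))$. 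Since $E(X)$ is the closure of the countable set $\{\pi^t : t \in \Z\}$ (or more generally is separable because $X$ is), and it is a compact subspace of a metric space, $E(X)$ is metrizable. This direction is essentially bookkeeping with Arzela--Ascoli and the equicontinuous-family topology lemmas.

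For the converse, assume $E(X)$ is metrizable; I want to conclude $(X,T)$ is equicontinuous. Since $(X,T)$ is minimal, $E(X)$ is a minimal flow (as $E(X) = \overline{Tx_0}$-type orbit closure, point transitive, and minimality transfers), and it suffices to show $E(X)$ consists of continuous maps, because then Theorem \ref{TH 6.2} (Auslander--Ellis: a minimal flow all of whose enveloping semigroup elements are continuous is equicontinuous) applies to $(X,T)$ directly --- note $E(E(X)) \cong E(X)$ for point transitive flows, so metrizability of $E(X)$ feeds back. Concretely, I would fix $p \in E(X)$ and a net $x_n \to x$ in $X$, and try to show $p(x_n) \to p(x)$. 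Choose $t_i \to p$ in $E(X)$; by metrizability of $E(X)$ one can extract a sequence $t_{i_k} \to p$. The key point is a Baire-category / Namioka-type argument: a compact metrizable right-topological semigroup which arises as the enveloping semigroup of a metric minimal flow is in fact a topological group acting by homeomorphisms --- this uses that $E(X)$ minimal implies $E(X)$ is a group (distality is not assumed, but one argues via the structure of minimal ideals together with metrizability forcing a unique minimal idempotent and continuity of inversion, as in the almost-automorphic results quoted). Once $E(X)$ is a compact metrizable group acting continuously, every element acts as a homeomorphism of $X$, so all elements of $E(X)$ are continuous, and Theorem \ref{TH 6.2} finishes.

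The main obstacle is precisely this converse step: upgrading ``$E(X)$ metrizable and minimal'' to ``every element of $E(X)$ is a continuous map of $X$.'' The subtlety is that $E(X)$ is only a \emph{right} topological semigroup, so left multiplications and the evaluation maps $p \mapsto p(x)$ need not be continuous a priori; metrizability is what rescues us, via a Namioka joint-continuity theorem (a separately continuous action of a compact metrizable --- hence Polish --- group on a compact metric space is jointly continuous) together with the fact that for minimal $(X,T)$ the semigroup $E(X)$ has no proper closed left ideals of a certain form. I would carry this out by first showing $E(X)$ has a unique minimal left ideal equal to $E(X)$ itself, then that its unique idempotent is the identity (so $E(X)$ is a group), then invoking that a compact metrizable right-topological group is a topological group (Ellis' joint continuity theorem), and finally that continuity of the action $E(X) \times X \to X$ follows from Namioka's theorem and minimality. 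Each sub-step is in the literature, but assembling them correctly --- and making sure minimality (not distality) is genuinely what is used --- is where the real work lies.

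Alternatively, a slicker route to the converse avoids reproving Ellis--Namioka: since $E(X)$ metrizable is equivalent (by a result quoted just above) to $(X,T)$ being hereditarily almost equicontinuous, and for a \emph{minimal} flow almost equicontinuity coincides with equicontinuity (a minimal flow is equicontinuous or sensitive, and almost equicontinuous excludes sensitive), one immediately gets $(X,T)$ equicontinuous. I would present this short argument as the primary proof and relegate the Namioka-style discussion to a remark, since it makes the logical dependence on earlier results in the excerpt completely transparent: metrizable $E(X)$ $\Rightarrow$ HAE $\Rightarrow$ in particular almost equicontinuous $\Rightarrow$ (by minimality) equicontinuous, and conversely equicontinuous $\Rightarrow$ (Arzela--Ascoli) $E(X)$ metrizable.
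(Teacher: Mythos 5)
You should first note that the paper does not actually prove this statement: it is quoted from Glasner's 1975 paper \cite{SGM} as background, and the authors' only contribution around it is Example \ref{EX 7.2}, showing the equivalence fails for non-minimal systems. So there is no in-paper proof to compare with, and your proposal has to stand on its own. Your primary argument does. The forward direction (equicontinuous $\Rightarrow$ $E(X)$ metrizable) via Arzela--Ascoli plus the quoted fact that on an equicontinuous family the point-open and compact-open topologies coincide is correct: $E(X)$ becomes a compact subset of $\mathcal{C}(X,X)$ with the sup-metric, hence metrizable (the aside about separability of $\{\pi^t\}$ is unnecessary). Your ``slicker'' converse --- $E(X)$ metrizable $\Rightarrow$ $(X,T)$ is HAE, hence almost equicontinuous, and a minimal (hence transitive) metric flow is either equicontinuous or sensitive, so almost equicontinuity forces equicontinuity --- is also valid, given that the HAE $\Leftrightarrow$ metrizable-$E(X)$ equivalence is quoted in the same section. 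Be aware, though, that this derives the 1975 theorem from the much stronger and much later Glasner--Megrelishvili--Uspenskij result, so as a self-contained proof it is heavy; it is legitimate here only because the paper treats both statements as imported facts.

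The Namioka-style route you sketch in the second paragraph, however, contains a genuine error: you assert that minimality of $(X,T)$ makes $(E(X),T)$ minimal and that $E(X)$ has a unique minimal left ideal equal to $E(X)$ itself, with the identity as its unique idempotent, so that $E(X)$ is a group. By the equivalences recalled in Section 5 of the paper, $E(X)$ is minimal (equivalently, is a group, equivalently $e$ is its only idempotent) if and only if $(X,T)$ is \emph{distal} --- which is essentially the conclusion you are trying to reach, not a consequence of minimality. A minimal weakly mixing non-equicontinuous flow already has a non-minimal, non-group enveloping semigroup, so the sub-steps ``unique minimal ideal $=E(X)$'' and ``unique idempotent $=e$'' cannot be established from minimality plus metrizability in the direct way you describe; the actual arguments in the literature (Glasner's original one via minimal idempotents, or the fragmentability/Namioka approach) are organized differently. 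Since you explicitly relegate this sketch to a remark and rest the proof on the HAE route, the overall proposal is acceptable, but that remark should be corrected or deleted rather than left as an alternative ``proof in outline.''
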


We note that our Example \ref{EX 7.2} shows that the above statement does not hold for non-minimal systems.   We note that here $(X, \sigma)$ is not equicontinuous, but is almost equicontinuous since $\overline{0}$ here is a point of sensitivity but still $E(X)$ here is metrizable given the $\sup-$metric on $\mathcal{C}(X,X)$.

We recall from \cite{TM} a problem stated there:

\underline{Problem 3.3.}  Is there a nontrivial minimal proximal system with a metrizable enveloping
semigroup?

We note that the example above answers this problem for a non-minimal case.
\vskip .5cm

\begin{theorem} \cite{TM} Let $(X,T)$ be a minimal  flow such that $E(X)$ is
metrizable. Then

1. There is a unique minimal ideal $I \subseteq E(X)  = E(X, T) \cong E(I, T)$.

2. The Polish group $\mathcal{G} = Aut (I, T)$, of automorphisms of the system $(I, T)$
equipped with the topology of uniform convergence, is compact. \end{theorem}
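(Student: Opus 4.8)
The plan is to establish Part~1 first and then bootstrap to Part~2, leaning throughout on the recalled equivalence ``$E(X)$ metrizable $\iff$ $(X,T)$ is hereditarily almost equicontinuous'' and on the fact that a metrizable $E(X)$ is a separable Rosenthal compactum --- hence a Fr\'echet--Urysohn, sequentially compact space in which every element is a pointwise sequential limit of elements of $T$ and is a fragmented self-map of $X$. For the \emph{uniqueness of the minimal ideal} I would invoke Theorem~\ref{er} of \cite{EL}: $E(X)$ has a unique minimal ideal precisely when the proximal relation $P(X)$ is an equivalence relation. Since $P(X)$ is always reflexive and symmetric, only transitivity is at stake, and this is where metrizability is essential: for a flow with metrizable (equivalently, tame) enveloping semigroup $P(X)$ is closed, and for a minimal flow a closed proximal relation is an equivalence relation. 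I would carry this out concretely by using sequential compactness of $E(X)$ to replace the defining nets for proximality by sequences $t_n\to p$ and $s_n\to q$ in $E(X)$, then diagonalise, using the fragmented structure of $p$ and $q$, to see that the composed instance $(x,z)$ is again proximal. Call the resulting unique minimal ideal $I$.

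For the isomorphism $E(X)\cong E(I,T)$: since $I$ is a left ideal, $pI\subseteq I$ for every $p\in E(X)$, so $\rho\colon E(X)\to I^{I}$, $\rho(p)=L_p|_I$, is a well-defined continuous semigroup homomorphism; its image is a compact subsemigroup of $I^{I}$ containing $\{\pi^t|_I:t\in T\}$, whose closure is $E(I,T)$, so $\rho$ is onto $E(I,T)$. The substance is \emph{injectivity}: suppose $pr=qr$ for all $r\in I$; for an arbitrary $x\in X$, minimality of $(X,T)$ makes $x$ almost periodic, and because $I$ is now the \emph{only} minimal ideal there is an idempotent $v\in I$ with $vx=x$, whence $px=pvx=qvx=qx$; as $x$ was arbitrary, $p=q$. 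Thus $\rho$ is a continuous bijective homomorphism between compact Hausdorff semigroups, hence an isomorphism, so $E(X)\cong E(I,T)$ as semigroups and as flows.

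For Part~2: by Part~1, $(I,T)$ is a metrizable minimal flow; write $I$ as the disjoint union of the groups $vI$, $v\in J$, fix one block $G=uI$, and recall the classical description of the automorphism group of a minimal ideal: since $\overline{Tu}=I$, an automorphism of $(I,T)$ is determined by its value at $u$, which must lie in $G$, so $\mathcal{G}=\mathrm{Aut}(I,T)$ is exactly the group $\{R_g:g\in G\}$ of right translations $R_g\colon p\mapsto pg$ (cf.\ the use of $R_v$ in the proof of Theorem~\ref{3.1}), and the topology of uniform convergence on $\mathcal{G}$ is the subspace topology inherited from the sup-metric on $\mathcal{C}(I,I)$. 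I would then prove the \emph{key lemma}: metrizability of $E(X)$ forces $\{R_g:g\in G\}$ to be an equicontinuous family of self-maps of the compact metric space $I$ --- intuitively, metrizability makes the fragmented maps constituting $E(X)$ ``uniformly tame'' on the minimal ideal, so right multiplication along $G$ cannot spread points apart. Granting this, the Arzela--Ascoli theorem recalled in Section~4 gives that $\overline{\{R_g:g\in G\}}$ is compact and equicontinuous in $\mathcal{C}(I,I)$; since $\mathcal{G}$ is a group equal to its own inverse set, its equicontinuity is also equicontinuity of the inverses, so uniform limits of its elements remain homeomorphisms commuting with $T$, i.e.\ $\mathcal{G}$ is closed, hence $\mathcal{G}=\overline{\mathcal{G}}$ is compact. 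That $\mathcal{G}$ is Polish is immediate, being a closed subgroup of the Polish group of homeomorphisms of the compact metric space $I$.

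I expect two genuine obstacles. The first is the transitivity of $P(X)$ --- equivalently, the uniqueness of the minimal ideal --- from metrizability of $E(X)$; this is not a soft semigroup-theoretic fact but really rests on the fragmentation/tameness theory for Rosenthal-compact enveloping semigroups. The second is the equicontinuity of $\{R_g:g\in G\}$ on $I$, on which the compactness of $\mathcal{G}$ entirely hinges. By contrast, the injectivity of $\rho$, once $I$ is known to be unique, is a short argument in the idempotent calculus recalled just before Theorem~\ref{3.1}.
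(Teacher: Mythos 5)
The first thing to note is that the paper does not prove this statement at all: it is quoted verbatim from \cite{TM} precisely so that the authors can exhibit, via Example \ref{EX 5.1}, that it fails for non-minimal systems. So there is no ``paper proof'' to match, and your proposal has to stand on its own. Parts of it do: the isomorphism $E(X)\cong E(I,T)$ via $\rho(p)=L_p|_I$ is correct (and, as your own argument shows, needs neither metrizability nor uniqueness of $I$, since for a minimal flow every minimal ideal contains an idempotent $v$ with $vx=x$ for each $x$), and the identification $\mathrm{Aut}(I,T)=\{R_g:g\in G\}$, $G=uI$, using $\varphi(u)=u\varphi(u)\in uI$ and continuity of right multiplication, is the standard and correct description.

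The genuine gaps are exactly the two steps you flag yourself, and they are not side issues — they are the entire content of the theorem. For Part 1 you assert that metrizability (``equivalently, tame'' — this parenthesis is false: metrizable $E(X)$ implies tame, not conversely) forces $P(X)$ to be closed, and offer only an appeal to ``fragmentation/tameness theory'' plus a vague diagonalisation; no argument is given, and the naive sequential argument fails because elements of $E(X)$ are not continuous and convergence in $E(X)$ is only pointwise, so $p_n x_n\nrightarrow px$ in general. Likewise the ``key lemma'' that $\{R_g:g\in G\}$ is equicontinuous on $I$ is pure assertion. What actually closes both gaps, in the framework of results the paper itself recalls, is this: since $(X,T)$ is minimal, evaluation at any point maps $E(X)$ onto $X$, so $X$ is metrizable; the theorem of \cite{SGM} recalled immediately before the statement (see also \cite{GMES, ME}) then gives that $(X,T)$ is equicontinuous. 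From equicontinuity everything follows softly: $E(X)$ is a group of homeomorphisms and a compact topological group, so $I=E(X)$ is the unique minimal ideal (alternatively $P(X)=\Delta$ is trivially closed, and then Theorem \ref{er} applies), $E(I,T)\cong E(X)$, and $\mathrm{Aut}(I,T)\cong\{R_g:g\in I\}\cong I$ is compact because multiplication in $I$ is jointly continuous and the uniform and pointwise topologies coincide on an equicontinuous family. Without importing a result of this strength (or reproving the Glasner--Megrelishvili machinery), your proposal is a reduction of the theorem to its two hardest ingredients rather than a proof of it.
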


We note that the above does not hold for non-minimal systems. We again consider Example \ref{EX 5.1}:

\begin{example}

Consider the cascade $([0,1],f)$ where $f(x)=x^2$.
Enveloping semigroup $E(X)$ here is just the two point compactification of $\Z$ as $f^{n} \longrightarrow g_{1}$ and $f^{-n} \longrightarrow g_{2}$ as $n \longrightarrow \infty$, where

\centerline{$g_{1}(x)= \left\{
             \begin{array}{ll}
               0, \ &  x\in [0,1);  \\
               1, & x = 1.
             \end{array}
           \right.$
and $g_{2}(x)= \left\{
                 \begin{array}{ll}
                   0, & x = 0; \\
                   1, &  x\in (0,1].
                 \end{array}
               \right.$}

We complete our observation by noting that $E(X)$ here is metrizable. One of the metrics on $E(X)$ can be taken as $\rho(f^k, f^l) = |k - l|$, $\rho(g_1, f^k) = 1/2^k$, $\rho(g_2, f^k) = 2^k$ and $\rho(g_1,g_2)$ defined appropriately by extending the $\rho$ function defined on a dense subset of $E(X) \times E(X)$.

Note that  $f(g_1) = g_1$ and $f(g_2) = g_2$. Thus $g_1$ and $g_2$ are fixed points in $E(X)$. $I_1 = \{g_1\}$ and $I_2 = \{g_2\}$ are two distinct minimal ideals in $E(X)$, though we have $I_1 \cong E(I_1)$ and $I_2 \cong E(I_2)$.

Also $g_1, g_2$ are not continuous on $[0,1]$, and so Polish groups $\mathcal{G}_1 = Aut (I_1, T)$ and $\mathcal{G}_2 = Aut (I_2, T)$, of automorphisms of the system $(I_1, T)$ and $(I_2, T)$
equipped with the topology of uniform convergence, are not compact(they will fail to be equicontinuous).

\end{example}

\vskip .5cm

For `Weakly almost periodic flows'(WAP)  we have the following mainly due to Ellis and Nerurkar \cite{ELN} and Downarowicz \cite{D}. We just note them here.

\begin{enumerate}
\item A flow $(X,T)$ is WAP if and only if $E(X)\subset C(X,X)$.

\item Let $(X,T)$ be a WAP flow and $I$ be the unique minimal ideal in $E(X)$. Then $I=E(X)$ if and only if $ux=x$ for all $x\in X$ where $u\in I$ is the unique minimal idempotent.

\item Let $(X,T)$ be a WAP flow then;

\begin{enumerate}

\item If $(X,T)$ is minimal, then $(X,T)$ is equicontinuous.

\item If $(X,T)$ is distal, then $(X,T)$ is equicontinuous.
\end{enumerate}

\item Let $(X,T)$ be a WAP flow. If there exists a $p\in E(X)$ such that $pt=p$ or $(tp=p)$ for all $t\in T$, then;

\begin{enumerate}
\item $p=u$, $I=\lbrace u \rbrace$, and all minimal sets are singleton, where $u\in I$ is the unique minimal idempotent.

\item If $\lbrace x_0 \rbrace$ is a minimal set in $\overline{Tx}$ then $ux=x_0$.
\end{enumerate}

\item When the acting group $T$ is Abelian, a point transitive WAP flow $(X,T)$ is always
isomorphic to its enveloping semigroup $E(X)$, which in this case is a commutative
semitopological semigroup.

\item For Abelian $T$ the class of all metric, point transitive,
WAP systems coincides with the class of all metrizable, commutative, semitopological
semigroup compactifications of $T$.

\end{enumerate}

In \cite{D} one can find many interesting examples of
WAP but not equicontinuous $\Z$-systems.

\vskip .5cm

\section{Variations of Transitivity for Enveloping Semigroups}

In this section a study of the concepts of the various forms of transitivity of $E(X)$ is considered. We are grateful to Ethan Akin, Eli Glasner and Benjamin Weiss for the wonderful example of a weakly mixing enveloping semigroup \cite{AEB}. This study basically connects the various forms of transitivities of $(E(X),T)$ with various forms of rigidity of $(X,T)$.

\vskip .5cm

We recall that if  $x_0 \in X$ is a transitive point for the flow $(X,T)$, then \emph{the point transitive flow} $(X,T)$ can be written as a \emph{pointed system} $(X, x_0, T)$. And  that the system $(X,T)$ is a factor of $(E(X),T)$ if and only if there is a dense orbit in $X$.

\vskip .5cm

 The orbit of identity $e \in T$ is dense in $E(X)$, making the system $(E(X),T)$ point transitive that can be expressed as the pointed system $(E(X), e, T)$. If $x \in X$, then the evaluation map is a surjection of pointed systems
$evx : (E(X), e, T) \to (\overline{Tx}, x, T)$ taking the enveloping-semigroup onto the orbit closure of $x$. For any non-empty index set $\Lambda$ and the product system $(X^\Lambda , T)$ we can identify $\Delta E(X)^\Lambda$ with $E(X^\Lambda)$. Thus, we have for any $k$-tuple $(x_1, x_2, \ldots , x_k) \in X^k$ - the
pointed system $(\overline{T(x_1, x_2, \ldots , x_k)}, (x_1, x_2, \ldots , x_k), T)$ is a factor of $(E(X)^k, (\underbrace{e,e,\ldots,e)}_{k- \text{times}}, T)$.

\vskip .5cm

\subsection{Transitive Enveloping Semigroups}

In this subsection, we will only work with cascades.

\begin{remark} In a cascade $(X,f)$, we say that $(X,f)$ is one sided topologically
transitive if there exists some $x\in X$ such that $\overline{\lbrace f^n(x): n \geq 0 \rbrace} = X$
and $(X,f)$ is topologically transitive if there exists some $x\in X$ such that $\overline{\lbrace f^n(x): n \in \Z \rbrace} = X$.

\begin{theorem}\cite{WAL}
For a compact metric space $X$ and a homeomorphism $f:X \rightarrow X$, $f$ is one sided topologically transitive if and only if $f$ is topologically transitive and $\Omega_{f}(X)=X$.
\end{theorem}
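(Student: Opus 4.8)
The plan is to prove the two implications separately; the forward one is soft, and the reverse one carries the real content, resting on a Baire-category reduction together with a lemma extracted from $\Omega_f(X)=X$.

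For the forward direction, suppose $x_0$ has $\overline{\{f^n(x_0):n\geq0\}}=X$. Then $\overline{\{f^n(x_0):n\in\Z\}}=X$, so $f$ is topologically transitive. To get $\Omega_f(X)=X$, I would first dispose of isolated points: if $X$ is infinite and some $p$ were isolated, then (since $X$ being a single finite periodic orbit is excluded) $x_0$ is aperiodic, so $f^a(x_0)=f^b(x_0)$ forces $a=b$; each $\{f^{-m}(p)\}$ is open, hence met by the dense forward orbit, and writing $p=f^k(x_0)$ one obtains $k\geq m$ for all $m\geq1$, a contradiction. So either $X$ is finite, in which case it is a single periodic orbit and every point is trivially non-wandering, or $X$ is perfect. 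In the perfect case the forward orbit of $x_0$ must meet each nonempty open $U$ infinitely often (otherwise the closure of a forward tail is a proper closed set whose union with a finite set is $X$, impossible once $X$ is perfect), so there are $n_1<n_2$ with $f^{n_i}(x_0)\in U$ and hence $f^{\,n_2-n_1}(U)\cap U\neq\emptyset$ with $n_2-n_1\geq1$; thus every point is non-wandering.

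For the reverse direction I would invoke the Baire category theorem. Fixing a countable base $\{V_j\}$ of nonempty open sets, the set of forward-transitive points equals $\bigcap_j\bigcup_{n\geq0}f^{-n}(V_j)$, a countable intersection of open sets in the Baire space $X$, so it suffices to show each $\bigcup_{n\geq0}f^{-n}(V_j)$ is dense, i.e. to prove: for all nonempty open $U,V$ there is $n\geq0$ with $f^n(U)\cap V\neq\emptyset$. Equivalently, writing $A:=\bigcup_{n\geq0}f^n(U)$ (open, with $f(A)\subseteq A$), it suffices to show $A$ is dense. Given nonempty open $W$, two-sided transitivity gives $m\in\Z$ with $f^m(W)\cap U\neq\emptyset$. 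If $m\leq0$ then $W\cap f^{-m}(U)\neq\emptyset$ and $f^{-m}(U)\subseteq A$, so $W\cap A\neq\emptyset$; if $m\geq1$ we at least extract a nonempty open $W_1\subseteq W$ with $f^m(W_1)\subseteq A$.

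The crux, handling $m\geq1$, is the lemma: if $\Omega_f(X)=X$ then for every nonempty open $O$ and every $N\geq1$ there is $j\geq N$ with $f^j(O)\cap O\neq\emptyset$. I would prove it by iterating non-wandering: set $O_0=O$, and given $O_{r-1}$ nonempty open with $O_{r-1}\subseteq O$ choose $j_r\geq1$ with $f^{j_r}(O_{r-1})\cap O_{r-1}\neq\emptyset$ and put $O_r:=O_{r-1}\cap f^{-j_r}(O_{r-1})$; an easy induction shows $O_r$ is nonempty, $O_r\subseteq O$, and $f^{\,j_1+\cdots+j_r}(O_r)\subseteq O$, so $f^{\,j_1+\cdots+j_r}(O)\cap O\neq\emptyset$ with $j_1+\cdots+j_r\geq r$. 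Applying this to $O=W_1$ with $N=m+1$ produces $j\geq m+1$ and $w\in W_1$ with $f^j(w)\in W_1\subseteq W$; since $f^m(w)\in f^m(W_1)\subseteq A$ and $A$ is forward-invariant, $f^j(w)=f^{\,j-m}(f^m(w))\in A$, so $W\cap A\neq\emptyset$. Hence $A$ is dense, and Baire gives a dense $G_\delta$ of forward-transitive points, so $f$ is one-sided topologically transitive. The main obstacle is exactly this last step: two-sided transitivity only guarantees, for a given pair $(U,V)$, that \emph{one} of the two forward-hitting relations holds, and $\Omega_f(X)=X$ must be leveraged — through the unbounded-return-time lemma — to convert a "backward" hit into a "forward" one; the delicate point is verifying that the composite return time in the lemma genuinely grows.
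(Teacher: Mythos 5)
Your proof is correct: both directions check out, including the delicate points (ruling out isolated points in the forward direction, and the iterated non-wandering lemma giving returns $f^j(O)\cap O\neq\emptyset$ with $j$ arbitrarily large, which is exactly what is needed to turn a backward hit $m\geq 1$ into a forward one before applying Baire). The paper itself gives no proof — it quotes this result from Walters — and your argument is essentially the standard Baire-category proof of that theorem, so there is nothing further to compare.
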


Hence, when we talk of topological transitivity of any cascade $(X,f)$, we only look into forward orbits i.e. $\lbrace f^n(x): n \geq 0 \rbrace$. But $E(X)$ need not be metrizable - though it is compact. Hence for the transitivity of $E(X)$ we consider the full orbit $\lbrace f^n(x): n \in \Z \rbrace$. \end{remark}

Note that $(E(X),f)$ is always point transitive, but $E(X)$ need not be metrizable and so $(E(X),f)$ need not  always be topologically transitive. Consider the slightly complex appearing definition stated below:

\begin{definition} For any $n,m \in \N$, for all points $x_1, \ldots, x_n, y_1, \ldots, y_m \in X$ and nonempty, open sets $U_1, \ldots, U_n$, $V_1, \ldots, V_m$ in $X$ whenever

\begin{center}
$(x_1, \ldots, x_n) \in \bigcup \limits_{i \in \Z}(f^{-i}(U_1) \times \ldots \times f^{-i}(U_n))$
\end{center}
and
\begin{center}
$(y_1, \ldots, y_m) \in \bigcup \limits_{j \in \Z}(f^{-j}(V_1) \times \ldots \times f^{-j}(V_m))$,
\end{center}

if there exists $k, l  > 0 \in \Z$ such that $f^k(x_i) \in U_i$ and $f^{k+l}(y_j) \in V_j, \ i=1, \ldots, n, j = 1, \ldots, m$ then we call the cascade $(X,f)$ to be \emph{iteratively transitive}.\end{definition}

\vskip .5cm

The condition $(x_1, \ldots, x_n) \in \bigcup \limits_{i \in \Z}(f^{-i}(U_1) \times \ldots \times f^{-i}(U_n))$ and $(y_1, \ldots, y_m) \in \bigcup \limits_{j \in \Z}(f^{-j}(V_1) \times \ldots \times f^{-j}(V_m))$ for any $n,m \in \N$ ensures that the basic open sets $B_1 = [x_1, \ldots, x_n; U_1, \ldots, U_n],$ and $ B_2 = [y_1, \ldots, y_m; V_1, \ldots, V_m]$ in $E(X)$ (considered as a subspace of $X^X$ with the point-open topology) are nonempty. And hence, $(X,f)$ being iteratively transitive means that for every pair of basic open sets $B_1, B_2 \subset E(X)$, there exists $l > 0 $ such that $f^l(B_1) \cap B_2$ is a non-empty set in $E(X)$ making the system $(E(X), f)$ topologically transitive.

\vskip .5cm

\begin{theorem} \label{2.1} For the cascade $(X,f)$ the following are equivalent:

\begin{enumerate}

  \item The cascade $(E(X), f)$  is topologically transitive.
  \item $(X,f)$ is iteratively transitive.
  \item $(X,f)$ is weakly rigid.
  \item The identity $e = f^0$ is not isolated in $E(X)$.
\end{enumerate}

\end{theorem}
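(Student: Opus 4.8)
The plan is to prove the four-way equivalence by establishing a cycle of implications $(4)\Rightarrow(3)\Rightarrow(2)\Rightarrow(1)\Rightarrow(4)$, exploiting that all the "open-set" statements in $E(X)\subset X^X$ can be read off from finitely many coordinates because $X^X$ carries the point-open topology. The basic open sets of $E(X)$ at a point $p$ are exactly the sets $[x_1,\dots,x_n;U_1,\dots,U_n]=\{q\in E(X): q(x_i)\in U_i\}$ with $p(x_i)\in U_i$; a net $f^{n_\alpha}$ converges to $p$ iff $f^{n_\alpha}(x)\to p(x)$ for every $x\in X$. I would open by recording these reductions, since everything below is an unwinding of them.

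For $(4)\Rightarrow(3)$: if $e$ is not isolated, there is a net (in fact, since we only need finitely many coordinates at a time, a net will suffice, and weak rigidity is itself a finitary statement) $n_\alpha\neq 0$ with $f^{n_\alpha}\to e$ in $X^X$, i.e. $f^{n_\alpha}(x)\to x$ for all $x\in X$. Given $x_1,\dots,x_k\in X$ and $\epsilon>0$, convergence at each of the finitely many points $x_1,\dots,x_k$ gives a single $m=n_\alpha$ with $d(x_i,f^m(x_i))<\epsilon$ for all $i$; replacing $m$ by $|m|$ if necessary (using that $f^{-m}$ is also close to $e$ on this finite set, or just noting weak rigidity is usually stated for $m\in\N$) yields the required positive $m$. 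Hence $(X,f)$ is weakly rigid. For $(3)\Rightarrow(4)$: weak rigidity applied to larger and larger finite subsets of $X$ together with shrinking $\epsilon$ produces $m$'s with $f^m$ arbitrarily close to $e$ on arbitrarily large finite sets, i.e. $e$ is in the closure of $\{f^n:n\neq 0\}$, so $e$ is not isolated. (One must be a touch careful that "arbitrarily large finite sets" suffices to be in the point-open closure — it does, since point-open neighborhoods of $e$ are determined by finitely many coordinates.)

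For $(2)\Leftrightarrow(1)$: this is essentially definitional, and the paragraph following the definition of iterative transitivity already spells it out — the side conditions $(x_1,\dots,x_n)\in\bigcup_i (f^{-i}U_1\times\cdots\times f^{-i}U_n)$ are precisely what make $B_1=[x_1,\dots,x_n;U_1,\dots,U_n]$ a nonempty basic open subset of $E(X)$, and then the existence of $l>0$ with $f^l(B_1)\cap B_2\neq\emptyset$ is exactly topological transitivity of $(E(X),f)$. I would just verify that $f$ acts on $E(X)$ by $p\mapsto f\circ p$ and that $f^l(B_1)=\{f^l\circ p: p\in B_1\}$ meets $B_2$ iff some $p\in E(X)$ has $p(x_i)\in U_i$ and $f^l(p(y_j))\in V_j$; approximating such a $p$ by powers $f^k$ (with $k>0$ obtainable because the orbit of $e$, i.e. $\{f^k\}$, is dense and we may take $k$ positive by shifting) recovers the stated form with $f^k(x_i)\in U_i$, $f^{k+l}(y_j)\in V_j$. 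For $(1)\Rightarrow(4)$: $(E(X),f)$ transitive means every nonempty open set is hit by the forward orbit of every nonempty open set; apply this with $B_1$ a neighborhood of $e$ and $B_2=B_1$ to get $l>0$ with $f^l(B_1)\cap B_1\neq\emptyset$, whence $f^l\circ p\in B_1$ for some $p\in B_1$, and pushing this through (or directly: a transitive compact system with a dense orbit through $e$ returns near $e$) shows $e$ is not isolated. Conversely $(4)\Rightarrow(1)$ can also be gotten this way, so in practice I would close the cycle through $(4)$.

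\textbf{Main obstacle.} The genuine subtlety — and the step I expect to need the most care — is $(4)\Rightarrow(1)$ (equivalently $(3)\Rightarrow(1)$): non-isolation of $e$ is an a priori much weaker statement than full topological transitivity of $(E(X),f)$, and $E(X)$ need not be metrizable, so one cannot invoke the usual "dense orbit $\Leftrightarrow$ transitive" dictionary naively. The right approach is to show that non-isolation of $e$ forces, for every basic open $B_1,B_2$ (each determined by finitely many coordinates), the existence of $l>0$ with $f^l B_1\cap B_2\neq\emptyset$: pick $f^{k_1}\in B_1$ and $f^{k_2}\in B_2$ (possible since $\{f^k:k\in\Z\}$ is dense, and one can arrange $k_2>k_1$ by using non-isolation of $e$ to perturb $k_2$ by a large multiple $m$ with $f^m$ close to $e$ on the relevant finite coordinate set), set $l=k_2-k_1>0$, and check $f^l(f^{k_1})=f^{k_2}\in B_2$ while $f^{k_1}\in B_1$. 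The only thing to watch is that enlarging $k_2$ by $m$ keeps $f^{k_2+m}$ inside $B_2$, which is exactly where weak rigidity / non-isolation of $e$ is used, on the finite set of test points defining $B_2$. Once this bookkeeping is done the cycle closes and all four conditions are equivalent.
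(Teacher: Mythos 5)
Your overall architecture (basic open sets of $E(X)\subset X^X$ are determined by finitely many coordinates; $(1)\Leftrightarrow(2)$ is essentially the definition; the crux is passing between non-isolation of $e$ and transitivity via the dense orbit $\{f^k\}$) is the same as the paper's, and your $(3)\Rightarrow(4)$, $(1)\Rightarrow(4)$ steps match it. But there is one concrete step that fails, and it is exactly the delicate point. In your $(4)\Rightarrow(3)$ you take a net $f^{n_\alpha}\to e$, extract $m=n_\alpha$ with $d(x_i,f^m(x_i))<\epsilon$, and then say one may "replace $m$ by $|m|$ if necessary, using that $f^{-m}$ is also close to $e$ on this finite set." That inference is unjustified and in general false: pointwise proximity of $f^m$ to the identity at finitely many points does not transfer to $f^{-m}$ without equicontinuity (only the \emph{uniform} statement is inversion-symmetric, via $d(f^{-m}y,y)=d(f^m x,x)$ with $y=f^m x$, which moves the base points). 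Since $E(X)$ is the closure of the full $\Z$-orbit, condition $(4)$ only supplies powers $f^n$ near $e$ with $n$ of unknown sign, while the paper's weak rigidity demands $m\in\N$; so producing \emph{positive} return times is precisely the work to be done, and your fallback remark that "weak rigidity is usually stated for $m\in\N$" is the reason you need positivity, not a licence to ignore it. The same unproved positivity is then used tacitly in your $(1)\Rightarrow(2)$ ("we may take $k$ positive by shifting") and in your resolution of the "main obstacle" $(4)\Rightarrow(1)$, where you enlarge $k_2$ by a large \emph{positive} $m$ with $f^m$ close to $e$ on a prescribed finite set — that availability is again forward weak rigidity, i.e.\ the very statement $(3)$ you are trying to reach from $(4)$.

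For comparison, the paper does not attempt $(4)\Rightarrow(3)$ directly: it gets $(3)$ from $(2)$, where the positive exponent $k$ (equivalently $l>0$ applied to $B_1=B_2$ a neighbourhood of $e$) is built into the definition of iterative transitivity, and it proves $(4)\Rightarrow(1)$ by observing that non-isolation of $e$ makes $E(X)$ perfect, so the dense orbit of $e$ yields transitivity. Your $(4)\Rightarrow(1)$ can in fact be repaired without assuming $(3)$: if the neighbourhood of $e$ built from the data of $B_2$ contains only powers with exponents bounded above, perturb $k_1$ \emph{downward} instead of $k_2$ upward, using the neighbourhood of $e$ built from the data of $B_1$; the remaining bad case (one neighbourhood of $e$ with power-exponents bounded above, the other bounded below) is excluded by intersecting the two neighbourhoods, since that intersection is again a neighbourhood of $e$ and must contain infinitely many distinct powers. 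But no such two-sided trick rescues your $(4)\Rightarrow(3)$ as written: deriving a positive-time approximation of the identity (rather than an approximation by powers of some sign) is the genuine content there, and the $|m|$-swap does not provide it.
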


\begin{proof}  Note that the definition of weakly rigid implies that  $\forall \ (x_1, \ldots, x_n) \in X^n$, $(x_1, \ldots, x_n)$ is recurrent in $(X^n, f^{(n)})$ for all $n \in \N$.

$(1) \Leftrightarrow (2)$ follows from the definition.

$(2) \Rightarrow (3)$ Let $(X,f)$ be iteratively transitive. Let $x_1, \ldots, x_n \in X$ and $\epsilon > 0$. Let $U_i$ be the $\epsilon-$ball centered at $x_i$ for all $i=1, \ldots, n$. Choose points $y_1, \ldots, y_n \in X$ such that $f^k(y_i) = x_i$ for all $i=1, \ldots, n$ and $k \in \Z$. Now $(y_1, \ldots, y_n) \in \bigcup \limits_{i \in \Z}(f^{-i}(U_1) \times \ldots \times f^{-i}(U_n))$ and so there exists $ l > 0$ such that $f^k(y_i) \in U_i$ and $f^{k+l}(y_i) \in U_i, \ i=1, \ldots, n$. Since $x_i = f^k(y_i)$, this means that $f^l(x_i) \in U_i$ and so $d(x_i, f^l(x_i)) < \epsilon$ for all $i=1, \ldots, n$. Thus $(X,f)$ is weakly rigid.

$(3) \Rightarrow (4)$ Consider a basic open set $B = [x_1, \ldots, x_n; U_1, \ldots, U_n],$ such that $e \in B$. And since $(X,f)$ is weakly rigid, there exists $ k > 0$ such that $f^k(x_i) \in U_i $ i.e. $f^k \neq e \in B$ and so $e$ is not isolated in $E(X)$.

  $(4) \Rightarrow (1)$ Note that since $e \in E(X)$ is not isolated, any $f^k$ is also not isolated ( $f^n \to e \Rightarrow f^{n+k}  \to f^k$). Thus $E(X)$ is a perfect space and since $\mathcal{O}(e)$ is dense in $E(X)$, the system $(E(X), f)$ is topologically transitive.

\end{proof}

\begin{remark} Note that equivalence of (3) and (4) above is proved   as Lemma 6.1 in \cite{RIG}. \end{remark}

\vskip .5cm

\begin{remark} Note that $(X,f)$ is iteratively transitive does not imply $(X,f)$ is topologically transitive. For example consider $X = \{(r, \theta): r = 1, 2$ and $ 0 \leq \theta \leq 2\pi \}$ and let  $f: X \to X$ be defined as $f(r, \theta) = (r, \theta + 2 \pi \alpha  \mod(2 \pi))$, where $\alpha$ is irrational. Then the identity is not isolated in the  enveloping semigroup $E(X)$   and so is transitive under the action induced by $f$. However, $(X,f)$ is not transitive.\end{remark}

\vskip .5cm

\begin{remark} \label{ex} Note that topologically transitive need not imply iterative transitive. Consider the $\emph{2}-$shift $(\emph{2}^{\Z}, \sigma)$. The enveloping semigroup here will be $E(X) = \beta \Z$ (refer to the next section). Now for points $x = 0^\infty 1^{2k+1} 0^\infty, y = 1^\infty 0^{2l+1}1^\infty \in X$  observe that $x \in [1^{2k+1}], y \in [0^{2l+1}]$ but there is no $\eta \neq e$ for which $\eta([x;[1^{2k+1}]]) \cap [y; [0^{2l+1}]] \neq \emptyset$. Thus $(\emph{2}^{\Z}, \sigma)$ is not iteratively transitive or $(\beta \Z, \Z)$ is not  transitive. But $(\emph{2}^{\Z}, \sigma)$ is an interesting transitive (in fact mixing) system.
\end{remark}

\begin{remark} We compare Theorem \ref{2.1} with Proposition \ref{P 6.2} proved earlier. Since $(E(X),f)$ is transitive, $e$ is an accumulation point of $E(X)$ and so $R_f(X) = X$. This still does not give that $(X,f)$ is transitive. Thus when $(E(X),f)$ is transitive, we get a stronger version of recurrence for $(X,f)$, i.e. weak rigidity. \end{remark}

We recall that the cascade $(2^X, f_*)$ induces the cascade $(2^{2^X}, f_{**})$, which induces the cascade $(2^{2^{2^X}}, f_{***})$, $\ldots$, which induces $({2^{\cdot^{\udots}}}^{2^X}, f_{* \ldots *})$, $\ldots$. And so we have by Theorem \ref{2.1} and Theorem \ref{3.8},

\begin{theorem}
 $(X,f)$ is uniformly rigid if and only if the identity $e_*$ is not isolated in $E(2^X,f_{\ast})$  if and only if the identity $e_{**}$ is not isolated in $E(2^{2^X}, f_{**})$  $\dots$ if and only if the identity $e_{* \ldots *}$ is not isolated in $E({2^{\cdot^{\udots}}}^{2^X}, f_{* \ldots *})$  $\ldots$.

 In particular $\ \mathcal{R}_f(X) = X$, $\mathcal{R}_{f_*}(2^X) = 2^X, \ldots $, $\mathcal{R}_{f_{* \ldots *}}({2^{\cdot^{\udots}}}^{2^X}) = {2^{\cdot^{\udots}}}^{2^X},  \ldots$.
\end{theorem}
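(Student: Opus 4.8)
The plan is to chain together two facts established earlier in the paper: Theorem~\ref{3.8}, which says that $(X,f)$ is uniformly rigid if and only if each iterated induced system $(2^X,f_*)$, $(2^{2^X},f_{**})$, \ldots is uniformly rigid; and Theorem~\ref{2.1}, which (applied to an \emph{arbitrary} cascade in place of $(X,f)$) says that a cascade $(Y,g)$ is uniformly\,/\,weakly rigid exactly when the identity of $E(Y)$ is not isolated. First I would fix one induced system in the tower, say $(Z,g) = ({2^{\cdot^{\udots}}}^{2^X}, f_{* \ldots *})$, and note that $Z$ is a compact metric space (each $2^W$ with $W$ compact metric is again compact metric, so this persists up the tower) and $g$ is a homeomorphism, hence $(Z,g)$ is a legitimate cascade to which Theorem~\ref{2.1} applies. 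Then I apply Theorem~\ref{2.1} to $(Z,g)$: the identity $e_{*\ldots*}$ of $E(Z)$ is not isolated $\iff$ $(Z,g)$ is weakly rigid. But for an induced system on a compact metric space, weak rigidity, rigidity and uniform rigidity all coincide — this is exactly the Li–Oprocha–Ye–Zhang theorem quoted in the excerpt (the characterization that $(2^W,g_*)$ is uniformly rigid iff it is rigid iff it is weakly rigid), applied at the appropriate level of the tower. So at each level $e_{*\ldots*}$ not isolated $\iff$ that level is uniformly rigid, and by Theorem~\ref{3.8} this in turn is equivalent to $(X,f)$ being uniformly rigid. Stringing these equivalences together gives the first ``if and only if'' chain.

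For the ``in particular'' clause, I would invoke Proposition~\ref{P 6.2}: if the identity is an accumulation point of $E(Y)$ for a cascade $(Y,g)$, then $\mathcal{R}_g(Y) = Y$. Since we have just shown that under the uniform rigidity hypothesis the identity $e_{*\ldots*}$ is not isolated in $E({2^{\cdot^{\udots}}}^{2^X}, f_{* \ldots *})$ at every level, Proposition~\ref{P 6.2} immediately yields $\mathcal{R}_f(X) = X$, $\mathcal{R}_{f_*}(2^X) = 2^X$, and so on up the tower. (Alternatively one notes directly that uniform rigidity of a system forces every point to be recurrent, since $g^{n_k} \to e$ uniformly gives $g^{n_k}(y) \to y$ for all $y$; this is the same argument specialized.)

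The one genuine subtlety — and the step I would flag as the main obstacle — is making sure Theorem~\ref{2.1} is being applied correctly. Theorem~\ref{2.1} as stated connects topological transitivity of $(E(Y),g)$ with \emph{weak} rigidity of $(Y,g)$ and with non-isolation of $e$ in $E(Y)$; it is the Li–Oprocha–Ye–Zhang result that upgrades ``weakly rigid'' to ``uniformly rigid'' \emph{for induced systems}. Since every term $2^W$ in the tower (including $W = X$ only at the base, where we already assume uniform rigidity) is itself an induced hyperspace, this upgrade is available at every positive level, so the chain of equivalences is legitimate. I would therefore be careful to phrase the argument so that at the base we use the hypothesis ``$(X,f)$ uniformly rigid'' and at higher levels we use ``$(2^{\cdots}, f_{*\cdots*})$ is an induced system, so weakly rigid $\Rightarrow$ uniformly rigid.'' Everything else — compactness and metrizability propagating up the tower, $g$ being a homeomorphism at each level, the reduction of $\mathcal{R}$ statements to Proposition~\ref{P 6.2} — is routine.
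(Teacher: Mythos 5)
Your proposal is correct and is essentially the paper's own argument: the paper obtains this theorem precisely by combining Theorem \ref{2.1} (non-isolation of the identity $\Leftrightarrow$ weak rigidity) with Theorem \ref{3.8} and the quoted Li--Oprocha--Ye--Zhang equivalences upgrading weak to uniform rigidity at each induced level, which is exactly the chain you spell out. Your handling of the ``in particular'' clause via Proposition \ref{P 6.2} (or directly from uniform rigidity) also matches the paper's intent.
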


This improves the main result in \cite{LPYZ} giving all closed subsets recurrent in any induced space.

\vskip .5cm

 \begin{remark} Note that $E(X)$ will be infinite whenever there is a dense orbit in $X$. However, it is possible that $E(X)$ is infinite and even transitive  when all orbits in $X$ are finite, as can be seen in the example below:

\begin{example}
%We recall Example \ref{EX 2.1} here.
Let  $X=\lbrace re^{i\theta}: r\in A, 0\leq\theta < 2\pi\rbrace$ where $A= \lbrace 1/n :n\in\mathbb{N}\rbrace \cup \lbrace 0\rbrace\cup \lbrace 1-1/n: n \in \mathbb{N}\rbrace$. So, X is a countable disjoint union of circles. Now define the map $T:X\rightarrow X$ as $f(r,\theta)= (r, \theta+ 2\pi r(mod(2 \pi)))$.

Hence at  $r=1$, $f=$ Identity, and $f^k(r,\theta)= (r, \theta + 2k\pi r(mod(2 \pi)))$ and $f^{-k}(r,\theta)= (r, \theta - 2k\pi r(mod(2 \pi)))$.

So points on each circle remain on the same circle for each iteration, since the radius remains same and each $(r, \theta)$ is periodic with period $n$ if $r=1/n$ and $r=1-1/n$.

By  definition, $(X,f)$ is weakly rigid and since $X$ is distal so $E(X)$ is a group and $(E(X),f)$ is minimal as a flow.
\end{example}

\end{remark}

\vskip .5cm

\begin{theorem}  \cite{AEB} \label{2.11} A weakly mixing, weakly rigid cascade $(X, f)$ is connected. \end{theorem}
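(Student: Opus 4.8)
The plan is to argue by contradiction: suppose $(X,f)$ is weakly mixing and weakly rigid but disconnected. Then $X$ splits as a disjoint union $X = X_1 \cup X_2$ of two nonempty clopen sets. First I would use weak rigidity to control how $f$ permutes the clopen pieces. Since $X$ is compact, it has only finitely many connected components, or at least the Boolean algebra of clopen sets generated by $\{X_1, X_2\}$ is finite; so consider the finite partition $\mathcal{P} = \{P_1, \dots, P_r\}$ of $X$ into minimal clopen sets (atoms of the clopen algebra, assuming for now there are finitely many — if not, replace this with the two-set partition and argue directly on $\{X_1,X_2\}$). Each $f^n$ permutes these atoms up to the obvious caveat that $f$ need only be continuous, not a homeomorphism; but $f^{-1}$ of a clopen set is clopen, so $f$ induces a well-defined map on the clopen algebra, and since that algebra is finite, some power $f^N$ induces the identity on it, i.e. $f^N(P_i) \subseteq P_i$ for all $i$ — and more relevantly $f^N$ maps each $P_i$ into itself.

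Next I would bring in weak rigidity directly. Weak rigidity says that for any finite tuple of points and any $\epsilon$ there is $m$ with $d(x_i, f^m x_i) < \epsilon$; in particular, picking $\epsilon$ smaller than the distance between distinct atoms $P_i, P_j$ of the clopen partition, we get $m$ with $f^m(P_i) \cap P_i \neq \emptyset$ for every $i$ simultaneously. Combined with the fact that $f$ permutes atoms (on the algebra level) this forces $f^m$ to fix each atom: $f^m(P_i) \subseteq P_i$ for all $i$. Now use weak mixing. By the Furstenberg intersection characterization quoted in the excerpt, $(X \times X, f \times f)$ is topologically transitive. Take $U \subseteq P_1$ and $V \subseteq P_2$ nonempty open (here $P_1 \neq P_2$ since $X$ is disconnected). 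Transitivity of $X \times X$ applied to the open sets $U \times U$ and $V \times P_1$ (say) yields $k$ with $(f \times f)^k(U \times U) \cap (V \times P_1) \neq \emptyset$, hence $f^k(U) \cap V \neq \emptyset$: some point of $P_1$ is mapped by $f^k$ into $P_2$. But we can arrange $k$ to be a multiple of $m$: the hitting-time set $N(U, V)$ for weakly mixing systems is thick (as recalled in the Symbolic Dynamics / transitivity discussion), and a thick set meets every arithmetic progression — in particular contains a multiple $k = m \cdot t$ of $m$. Then $f^k = (f^m)^t$ maps $P_1$ into $P_1$, contradicting $f^k(U) \cap V \neq \emptyset$ with $V \subseteq P_2$ disjoint from $P_1$.

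The main obstacle I anticipate is the bookkeeping around $f$ being merely continuous (a semicascade) rather than a homeomorphism, together with the possibility that $X$ has infinitely many clopen pieces: I want "$f$ eventually stabilizes each atom," and the clean version of that needs either finitely many atoms or a compactness argument showing the relevant finite subalgebra is $f$-invariant. The safe route, which I would actually write up, sidesteps this: work only with the single clopen decomposition $X = X_1 \sqcup X_2$. Weak rigidity (applied to one point in each piece, with $\epsilon$ less than $d(X_1, X_2)$) gives $m$ with $f^m(X_1) \cap X_1 \neq \emptyset$ and $f^m(X_2) \cap X_2 \neq \emptyset$; since $f^m$ is continuous and $X_1, X_2$ clopen, $f^{-m}(X_1)$ and $f^{-m}(X_2)$ are clopen and partition $X$, and the non-disjointness conditions force $f^m(X_i) \subseteq X_i$ for $i = 1, 2$ — here one uses that $f^{-m}(X_1)$ cannot meet both $X_1$ and $X_2$ without violating one of the two conditions, after possibly replacing $m$ by a common value obtained from a single application of weak rigidity to the pair of points. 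Then weak mixing gives a multiple $k$ of $m$ in the thick set $N(U,V)$ with $U \subseteq X_1$, $V \subseteq X_2$, so $f^k(X_1) \cap X_2 \neq \emptyset$ while $f^k = (f^m)^{k/m}$ keeps $X_1$ inside $X_1$ — the desired contradiction. Hence $X$ is connected.
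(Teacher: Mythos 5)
The decisive step of your ``safe route'' is false. Weak rigidity applied to the two chosen points $x_1\in X_1$, $x_2\in X_2$ with $\epsilon<d(X_1,X_2)$ gives only $f^m(x_1)\in X_1$ and $f^m(x_2)\in X_2$, i.e.\ $x_1\in X_1\cap f^{-m}(X_1)$ and $x_2\in X_2\cap f^{-m}(X_2)$. This in no way prevents $f^{-m}(X_1)$ from meeting $X_2$: a point $z\in X_2$ with $z\neq x_2$ and $f^m(z)\in X_1$ violates neither condition, so nothing forces $f^m(X_i)\subseteq X_i$, and the contradiction with a multiple of $m$ lying in the thick set $N(U,V)$ evaporates. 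To conclude that a single iterate maps each clopen piece into itself you must control the displacement $d(x,f^m(x))$ for \emph{every} $x$ simultaneously with one $m$; that is uniform rigidity, not weak rigidity (and even pointwise rigidity would not obviously suffice, since pointwise convergence of the clopen-valued functions $\chi_{X_1}\circ f^{n_k}$ to $\chi_{X_1}$ need not be uniform). Weak rigidity supplies a common return time only for a prescribed finite set of points, with $m$ depending on that set, and the paper emphasizes (via the examples from \cite{RIG}) that weakly rigid is strictly weaker than uniformly rigid. As written, your argument proves the statement with ``uniformly rigid'' in place of ``weakly rigid,'' which is a genuinely weaker theorem than the one claimed.

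The gap is structural, not bookkeeping, so the strategy cannot be patched by choosing more points: weak mixing alone is perfectly compatible with total disconnectedness (the $\emph{2}$-shift is mixing), so all the work must come from the interaction of weak rigidity of \emph{arbitrary} finite tuples (equivalently, the identity being non-isolated in $E(X)$, as in Theorem \ref{2.1}) with weak mixing; simultaneous recurrence of one point from each clopen piece, or of any finite set fixed in advance of $m$, can never yield $f^m(X_1)\subseteq X_1$. Your first route has the same defect and more: a disconnected compact metric space need not have finitely many components or any clopen atoms (the Cantor set has none), and the four-element algebra generated by $\{X_1,X_2\}$ is not $f^{-1}$-invariant, so there is no power $f^N$ acting as the identity on it. Note finally that the paper itself gives no proof of this theorem --- it is quoted from \cite{AEB} --- so a correct write-up would need an argument of that depth rather than the two-point reduction attempted here.
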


\begin{remark} We observe that the period doubling system on $\mathbb{S}^1$ factors on the $\emph{2}-$shift. Now the $\emph{2}-$shift is not weakly rigid as discussed in Remark \ref{ex}. Hence the connected period doubling system cannot be weakly rigid and so will not have a transitive enveloping semigroup.\end{remark}

\subsection{Stronger forms of transitivity for Enveloping Semigroups}

We will again deal only with cascades here.

What can one say about mixing and weakly mixing of enveloping semigroups. What about the stronger forms of transitivity? By recalling the study and example in \cite{AEB}, these properties are studied here which leads to several questions $-$ that look far from being settled soon.

\vskip .5cm

We recall that a minimal cascade is strongly transitive and hence all backward orbits are dense \cite{AANT}.

Hence the fundamental Theorem  due to Ellis and Auslander \cite{AUS, ELL} can now be extended for cascades:

\begin{theorem}  For a cascade  $(X,f)$ the following are equivalent:

1. $(X,f)$ is distal.

2. $(E(X),f)$ is distal.

3. $(E(X),f)$ is minimal.

4. $E(X)$ is a group.

5. $\mathcal{O}^\leftarrow(p)$ is dense in $E(X)$ for all $p \in E(X)$ or $(E(X), f)$ is strongly transitive.

\end{theorem}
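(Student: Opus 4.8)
The plan is to establish the cycle of implications $1 \Rightarrow 4 \Rightarrow 2 \Rightarrow 3 \Rightarrow 5 \Rightarrow 1$, leaning heavily on the algebraic machinery already recalled in the excerpt (the equivalences $1 \Leftrightarrow 2 \Leftrightarrow 3 \Leftrightarrow 4$ are precisely the Ellis--Auslander list, stated for flows $(X,T)$ earlier; the only genuinely new content here is the insertion of condition $5$, strong transitivity of $(E(X),f)$, into the list). So the bulk of the work reduces to showing that $5$ fits into the existing chain, and I would actually prove $3 \Leftrightarrow 5$ directly and then connect to the rest via the known equivalences.

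First I would recall that $(E(X),f)$ is always point transitive, with the orbit of the identity $e = f^0$ dense in $E(X)$ (this was noted just before Theorem~\ref{2.1}). For the implication $3 \Rightarrow 5$: if $(E(X),f)$ is minimal, then by Theorem~\ref{1.2} applied to the cascade $(E(X),f)$ (a cascade on a compact, though possibly non-metrizable, space --- one should check that Theorem~\ref{1.2} and Theorem~\ref{1.1} from \cite{AANT} only use compactness and not metrizability, or invoke the general topological-dynamics fact that minimal implies every backward orbit is dense), strong transitivity follows, and the characterization in Theorem~\ref{1.1}(3) gives that $\mathcal{O}^\leftarrow(p)$ is dense in $E(X)$ for every $p \in E(X)$. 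For $5 \Rightarrow 3$: suppose $(E(X),f)$ is strongly transitive, i.e. $\mathcal{O}^\leftarrow(p)$ is dense for all $p$. I want to conclude $E(X)$ is minimal. Take any nonempty closed invariant set $M \subseteq E(X)$ and any $p \in M$; density of $\mathcal{O}^\leftarrow(e)$ together with invariance would force $e$ into $\overline{M}=M$ after pulling back, hence $\overline{\mathcal{O}(e)} = E(X) \subseteq M$; so $M = E(X)$ and $(E(X),f)$ is minimal. Alternatively and more cleanly, I would simply cite Theorem~\ref{1.2}: since strong transitivity of a cascade is \emph{equivalent} to minimality, $5 \Leftrightarrow 3$ is immediate once one observes $(E(X),f)$ is a cascade and Theorem~\ref{1.2} applies.

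With $3 \Leftrightarrow 5$ in hand, I close the loop using the already-established equivalences among $1,2,3,4$ (these come from the minimality-and-enveloping-semigroups subsection: $X$ distal $\Leftrightarrow$ $e$ is the only idempotent in $E(X)$ $\Leftrightarrow$ $E(X)$ is a group $\Leftrightarrow$ $E(X) = \Phi(M)$ $\Leftrightarrow$ $E(X)$ minimal, together with Theorem~\ref{TH 7.1} which handles $1 \Leftrightarrow 2$ via $E(E(X)) \cong E(X)$ for the point-transitive flow $(E(X),f)$). So the proof is really: "$1 \Leftrightarrow 2 \Leftrightarrow 3 \Leftrightarrow 4$ by the results recalled above (with $(E(X),f)$ point transitive so Proposition~\ref{5.2} applies), and $3 \Leftrightarrow 5$ by Theorem~\ref{1.2} applied to the cascade $(E(X),f)$."

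The main obstacle I anticipate is purely technical: Theorems~\ref{1.1} and \ref{1.2} from \cite{AANT} are stated for cascades $(X,f)$ where $X$ is a compact \emph{metric} space, whereas $E(X)$ is compact Hausdorff but typically non-metrizable. I would need either to check that the proofs of those two theorems go through verbatim for compact Hausdorff phase spaces (the Baire-category argument in Theorem~\ref{1.1}(3), and the argument that strong transitivity forces minimality, should only need compactness, not a metric), or to give a short self-contained argument for the equivalence "strongly transitive $\Leftrightarrow$ minimal" in the compact Hausdorff setting, using nets in place of sequences. A clean way to sidestep this entirely is to prove $5 \Rightarrow 3$ directly as sketched in the previous paragraph (density of $\mathcal{O}^\leftarrow(e)$ plus closed-invariance of an arbitrary minimal subset forces it to contain $e$, hence all of $\overline{\mathcal{O}(e)} = E(X)$) and $3 \Rightarrow 5$ directly (in a minimal cascade every point has dense orbit closure, and a standard argument gives dense backward orbit closure). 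I would include that direct argument to keep the proof honest about the metrizability issue.
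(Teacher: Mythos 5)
Your proposal is correct and follows essentially the same route as the paper, which offers no detailed proof but simply combines the Ellis--Auslander equivalences $1\Leftrightarrow2\Leftrightarrow3\Leftrightarrow4$ (recalled in Section 5) with the fact that a minimal cascade is strongly transitive and conversely (Theorem \ref{1.2}), applied to $(E(X),f)$. Your additional attention to the non-metrizability of $E(X)$ (and the net-based fallback argument for $3\Leftrightarrow5$) is a point the paper glosses over, and it strengthens rather than changes the argument.
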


\vskip .5cm

\emph{What can be said about Mixing Enveloping Semigroups?}

By definition $(E(X),f)$ will be mixing if for any nonempty basic open sets $B_1$ and $B_2$ in $E(X)$, there exists a $N \in \N$ such that $f^k(B_1) \cap B_2 \neq \emptyset$, $\forall \ k \geq N$. This means that for any neighbourhood base $B$ at $e$, there is a   $N \in \N$ such that $f^k(B) \cap B \neq \emptyset$, $\forall \ k \geq N$.

In particular, if $B = [x;U]$ for some open $U \ni x$ there exists $p(x) \in U$ such that $f^kp(x) \in U$ and this is true for all $k \geq N$. This implies that $f^k(U) \cap U \neq \emptyset$ for all $k \geq N$ - giving a stronger version of $x \in \Omega_f(X)$ for all $x \in X$.

\begin{definition} A point $x\in X$ is called \emph{essentially non-wandering} if for every open set $U\subset X$ with $x\in U$, there exists  $N \in \N$ such that $f^n(U)\cap U \neq\emptyset$ for all $n \geq N$.
\end{definition}

We note  the following property of systems with all points essentially non-wandering and skip the trivial proof.

\begin{theorem} The property of all points being essentially non-wandering in a cascade is closed under factors, finite products and subsystems. \end{theorem}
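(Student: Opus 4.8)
The plan is to establish the three closure properties one at a time. In each case one fixes a point and a neighbourhood of it, follows that neighbourhood through the construction in question, and reads off the required threshold $N$; throughout it suffices to test essential non-wandering on any convenient base of neighbourhoods, since $f^n(W')\cap W'\neq\emptyset$ together with $W'\subseteq W$ forces $f^n(W)\cap W\neq\emptyset$.

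\textbf{Factors.} Let $\phi\colon(X,f)\to(Y,g)$ be a homomorphism of cascades with $(X,f)$ having all points essentially non-wandering. Given $y\in Y$ and an open $V\ni y$, choose $x\in\phi^{-1}(y)$ and put $U=\phi^{-1}(V)$, an open neighbourhood of $x$. Take $N$ with $f^n(U)\cap U\neq\emptyset$ for all $n\ge N$. Since $\phi$ is onto and $\phi\circ f^n=g^n\circ\phi$, we have $\phi(U)=V$ and $\phi(f^n(U))=g^n(V)$, whence $\emptyset\neq\phi\bigl(f^n(U)\cap U\bigr)\subseteq g^n(V)\cap V$ for all $n\ge N$. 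Thus every point of $Y$ is essentially non-wandering.

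\textbf{Finite products.} Suppose $(X_i,f_i)$, $i=1,\dots,k$, each have all points essentially non-wandering, and set $F=f_1\times\cdots\times f_k$. Fix a point $(z_1,\dots,z_k)$ and a basic box neighbourhood $W_1\times\cdots\times W_k$ with $z_i\in W_i$ open in $X_i$. Choose $N_i$ with $f_i^n(W_i)\cap W_i\neq\emptyset$ for all $n\ge N_i$ and let $N=\max_i N_i$. Then for every $n\ge N$ we have $F^n(W_1\times\cdots\times W_k)\cap(W_1\times\cdots\times W_k)=\prod_{i=1}^k\bigl(f_i^n(W_i)\cap W_i\bigr)\neq\emptyset$. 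As any neighbourhood of $(z_1,\dots,z_k)$ contains such a box, the product has all points essentially non-wandering. (An induction on $k$ from the two-factor case is an equivalent route.)

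\textbf{Subsystems, the substantive step.} Let $Z\subseteq X$ be closed and invariant, $z\in Z$, and $W\ni z$ open in $Z$, written $W=U\cap Z$ with $U$ open in $X$. Using $f(Z)=Z$ one verifies that $f^n(W)\cap W=Z\cap U\cap f^n(U)$, so the task is to produce $N$ with $Z\cap U\cap f^n(U)\neq\emptyset$ for all $n\ge N$. When $Z$ is also open in $X$ this is immediate, since then $W$ is itself open in $X$ and $f^n(W)\subseteq f^n(Z)=Z$, so applying the hypothesis directly to $W$ suffices. The general closed invariant case is the one I expect to be the main obstacle: essential non-wandering of $z$ in $(X,f)$ only guarantees a point of $U$ returning to $U$, which need not lie in $Z$. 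The plan is to argue internally to the cascade $(Z,f|_Z)$, choosing the lift $U$ small enough — using regularity of $X$ — that a prescribed closed $Z$-neighbourhood of $z$ already captures all its recurrences; formalising this confinement of the returning points to $Z$ is where essentially all the content of the theorem will go, the factor and product cases being entirely formal.
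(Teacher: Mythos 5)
Your factor and finite-product arguments are correct, and the opening reduction to a neighbourhood base is the right way to set them up; note that the paper itself gives no argument at all here (it declares the proof trivial and skips it), so those two clauses are not in dispute. The problem is the subsystem clause, which you rightly single out as the substantive one but leave as a plan rather than a proof, and which contains two concrete errors even as a plan: invariance of $Z$ only gives $f(Z)\subseteq Z$, not $f(Z)=Z$, and your identity $f^n(W)\cap W=Z\cap U\cap f^n(U)$ holds only as the inclusion $f^n(W)\cap W\subseteq Z\cap U\cap f^n(U)$, since a point of $Z\cap U\cap f^n(U)$ is the $f^n$-image of a point of $U$ that need not lie in $Z$, hence need not lie in $f^n(U\cap Z)$.

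More decisively, the confinement strategy you propose (shrinking the lift $U$ by regularity so that all returns land in $Z$) cannot be carried out, because with the paper's own meaning of subsystem (a closed invariant subset with the restricted map, as in its definition of HAE) the assertion is false. Take $X$ to be the full shift on two symbols $(\{0,1\}^{\Z},\sigma)$, which is mixing, so for every nonempty open $U$ the set $N(U,U)$ is cofinite and every point of $X$ is essentially non-wandering. Let $Z$ be the closed invariant two-point set consisting of the $2$-periodic sequences alternating $0$ and $1$. In $(Z,\sigma|_Z)$ each point $z$ is isolated, and $\sigma^n(\{z\})\cap\{z\}\neq\emptyset$ exactly when $n$ is even, so $N(\{z\},\{z\})=2\Z$ is not cofinite and no point of $Z$ is essentially non-wandering in the subsystem. (There is no conflict with your factor argument: this $Z$ is not a factor of $X$, since $X$ has a fixed point and $Z$ does not.) So no choice of a smaller $U$ can confine the returning points to $Z$; the subsystem clause survives only under a different reading, e.g. for open invariant subsets (your easy case) or if essential non-wandering of points of $Z$ is still measured in the ambient system $X$, where it is vacuous.
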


\begin{theorem} The cascade $(E(X),f)$ is mixing $\Rightarrow$ for the cascade $(X,f)$ every $x \in X$ is essentially non-wandering . \end{theorem}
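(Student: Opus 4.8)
The plan is to test the mixing hypothesis on the distinguished family of sub-basic open sets of $E(X)$ inherited from the point-open topology on $X^X$, namely the evaluation sets $[x;U]=\{p\in E(X):p(x)\in U\}$, and to use that the dynamics on $E(X)$ is left multiplication by $f$, so that the $n$-th iterate sends $p$ to $f^np$ and $(f^np)(x)=f^n(p(x))$.

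First I would fix an arbitrary $x\in X$ together with an arbitrary open set $U\subset X$ containing $x$, and put $B=[x;U]\subset E(X)$. Since the identity $e=f^0$ satisfies $e(x)=x\in U$, we have $e\in B$, so $B$ is a nonempty open subset of $E(X)$. Applying the hypothesis that $(E(X),f)$ is mixing to the single pair $B_1=B_2=B$ produces an $N\in\N$ such that $f^n(B)\cap B\neq\emptyset$ for every $n\geq N$, where $f^n(B)=\{f^np:p\in B\}$ is the image of $B$ under the $n$-th iterate of the induced dynamics. Now fix $n\geq N$ and pick $p\in B$ with $f^np\in B$. Writing $y:=p(x)$, the relation $p\in B$ gives $y\in U$, while $f^np\in B$ gives $f^n(y)=f^n(p(x))=(f^np)(x)\in U$; hence $y\in U\cap f^{-n}(U)$ and therefore $f^n(y)\in f^n(U)\cap U$, so $f^n(U)\cap U\neq\emptyset$. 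As this holds for all $n\geq N$ and $U$ was an arbitrary neighbourhood of $x$, the point $x$ is essentially non-wandering; and since $x\in X$ was arbitrary, every point of $X$ is essentially non-wandering.

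The argument carries no real technical obstacle; the only things requiring care are bookkeeping: that $[x;U]$ is genuinely a (sub-)basic open set of $E(X)\subset X^X$ and is automatically nonempty because it contains $e$, that it suffices to verify the mixing condition for this one ``diagonal'' pair of basic neighbourhoods of $e$, and that passing $f^n(B)\cap B\neq\emptyset$ down to $X$ yields a single witness point $y\in U$ with $f^n(y)\in U$. It is worth recording where the strength of the conclusion comes from: recurrence of every point already follows once $e$ is an accumulation point of $E(X)$ (Proposition \ref{P 6.2}), which mixing certainly forces via Theorem \ref{2.1}; what the uniform lower bound $n\geq N$ in the definition of mixing buys us is precisely the upgrade from non-wandering to \emph{essentially} non-wandering.
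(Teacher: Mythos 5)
Your proof is correct and follows essentially the same route as the paper: test mixing of $(E(X),f)$ on the sub-basic set $B=[x;U]\ni e$ and push the witness down to $X$ via evaluation at $x$ to get $f^n(U)\cap U\neq\emptyset$ for all $n\geq N$. If anything, your version is slightly more careful than the paper's informal discussion, since you let the witness $p\in B$ depend on $n$, which is exactly what the mixing hypothesis provides.
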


\vskip .5cm

If $(E(X),f)$ is (strongly) mixing then it is also transitive and hence $e$ is not isolated in $E(X)$. Thus $(X,f)$ is weakly rigid. Again in the light of Proposition 6.4 in \cite{RIG}, if $(E(X),T)$ is minimal and strongly mixing then it admits only trivial rigid factors. This dampens the possibility of the existence of a mixing $(E(X),T)$.

We are yet to encounter an example of strongly mixing enveloping semigroup. This leads to the obvious question:

\vskip .5cm

\textbf{Question 1:}\textit{ Do Mixing Enveloping Semigroups Exist?}

\vskip .5cm

\emph{What about weakly mixing $(E(X),T)$?}

This question is also difficult and the only study we have in this direction is \cite{AEB}.

\vskip .5cm

If $E(X) \times E(X)$ is transitive, then $E(X)$ must be a perfect space. Note that $(e,e)$ does not have a dense orbit in $E(X) \times E(X)$. Suppose $\mathcal{O}(\alpha, \beta)$ is dense in $E(X) \times E(X)$. Then for some $\eta \in E(X)$, $\eta \alpha = e$ and $\eta \beta = e$. So $\alpha$ and $\beta$ have a same left inverse, and so none of $\alpha$ or $\beta$ can be an element of $\{f^n: n \in \Z\}$ else $\alpha =  \beta \in \{f^n: n \in \Z\}$ which would make the orbit of $(e,e)$ dense in $E(X) \times E(X)$. So essentially both $\alpha, \beta \in E^*(X) = E(X) \setminus \{f^n: n \in \Z\}$. Hence a weakly mixing $(E(X),T)$ need not have a dense orbit.

\vskip .5cm

Some properties of weakly mixing enveloping semigroups are studied in \cite{AEB}, where an excellent example of such a system is also described. Recall:

\begin{theorem} \cite{AEB} For a cascade $(X, f)$ the following are equivalent:

(i) The system $(X, f)$ has a weakly mixing enveloping semigroup.

(ii) For any $k$-tuple $(x_1, x_2, \ldots , x_k) \in X^k$ the pointed system given by $\overline{\mathcal{O}(x_1, x_2, \ldots , x_k)}$ is weakly mixing.
\end{theorem}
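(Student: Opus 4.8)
The plan is to prove the two implications $(i)\Rightarrow(ii)$ and $(ii)\Rightarrow(i)$ separately, leaning on two facts already recorded above the statement. First, for every $k$-tuple $(x_1,\dots,x_k)\in X^k$ the pointed system $(\overline{\mathcal{O}(x_1,\dots,x_k)},(x_1,\dots,x_k),f)$ is a factor of $(E(X)^k,(e,\dots,e),f)$ via the evaluation map $\mathrm{ev}\colon p\mapsto(p(x_1),\dots,p(x_k))$, and a factor of a topologically transitive cascade is topologically transitive. Second, by the Furstenberg intersection lemma a cascade is weakly mixing iff each of its finite powers is topologically transitive. Throughout I would use that $E(X)=\overline{\{f^m:m\in\Z\}}$ carries the point-open topology as a subspace of $X^X$.

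For $(i)\Rightarrow(ii)$ I would argue directly: assume $E(X)$ is weakly mixing, so by Furstenberg $E(X)^n$ is topologically transitive for every $n\in\N$, in particular $E(X)^{2k}$ is. Fix a $k$-tuple and set $Z=\overline{\mathcal{O}(x_1,\dots,x_k)}$. Since $Z$ is a factor of $E(X)^k$, the square $Z\times Z$ is a factor of $E(X)^k\times E(X)^k=E(X)^{2k}$, hence topologically transitive, so $Z$ is weakly mixing. This half is essentially a formality.

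For $(ii)\Rightarrow(i)$, the substantive direction, I would proceed as follows. A nonempty basic open subset of $E(X)$ has the form $\mathcal{U}=\{p\in E(X):p(z_i)\in U_i,\ 1\le i\le s\}$ for finitely many $z_i\in X$ and open $U_i\subseteq X$, and necessarily meets $\{f^m:m\in\Z\}$. Given four nonempty basic open sets $\mathcal{U}_1,\mathcal{V}_1,\mathcal{U}_2,\mathcal{V}_2$ of $E(X)$, I would collect into a single tuple $z=(z_1,\dots,z_s)$ all points of $X$ occurring in any of the four, and consider the evaluation factor map $\mathrm{ev}_z\colon(E(X),f)\to(\overline{\mathcal{O}(z)},f^{(s)})$, $p\mapsto(p(z_1),\dots,p(z_s))$. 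Each of the four sets is $\mathrm{ev}_z^{-1}$ of an open box in $X^s$, so its image together with $\overline{\mathcal{O}(z)}$ gives nonempty open subsets $P_1,Q_1,P_2,Q_2$ of $\overline{\mathcal{O}(z)}$. Now hypothesis $(ii)$ applies to $\overline{\mathcal{O}(z)}$: being weakly mixing, it supplies one $k\in\N$ with $f^{(s)k}(P_1)\cap Q_1\ne\emptyset$ and $f^{(s)k}(P_2)\cap Q_2\ne\emptyset$. Choosing $w$ in the first intersection and using density of $\{f^{(s)m}z:m\in\Z\}$ in $\overline{\mathcal{O}(z)}$, I extract an $m$ with $f^m z$ in the box defining $\mathcal{U}_1$ and $f^{m+k}z$ in the box defining $\mathcal{V}_1$, so $f^m\in\mathcal{U}_1$ and $f^k f^m=f^{m+k}\in\mathcal{V}_1$, i.e.\ $f^k\mathcal{U}_1\cap\mathcal{V}_1\ne\emptyset$; the same $k$ gives $f^k\mathcal{U}_2\cap\mathcal{V}_2\ne\emptyset$. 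Hence $(E(X),f)$ is weakly mixing.

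The hard part is that $E(X)^k$ is strictly larger than $E(X^k)$ (the latter is only its diagonal copy), so one cannot simply replace $X$ by $X^k$ and invoke the $k=1$ case; one must genuinely exploit the factor relation between $E(X)^k$ and orbit closures of $k$-tuples, and in the converse the fact that the point-open topology on $E(X)$ is determined by finitely many evaluation coordinates, so that any finite family of basic open sets is pulled back from a single finite power $X^s$ on which $(ii)$ is available. Once this reduction is set up, the demand that a single $k$ serve both pairs $(\mathcal{U}_1,\mathcal{V}_1)$ and $(\mathcal{U}_2,\mathcal{V}_2)$ is precisely the weak mixing of $\overline{\mathcal{O}(z)}$ and needs no extra work; the only routine care is the bookkeeping of the boxes in $X^s$ and the density argument used to pull conditions back through $\mathrm{ev}_z$.
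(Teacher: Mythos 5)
Your proof is sound, and in fact the paper itself gives no argument for this statement --- it is quoted from \cite{AEB} --- so the only question is whether your blind argument stands on its own, and it does. The mechanism you use is the right one: for any tuple $z=(z_1,\dots,z_s)$ the evaluation map $p\mapsto(p(z_1),\dots,p(z_s))$ is a continuous equivariant surjection from $(E(X),f)$ onto $\bigl(\overline{\mathcal{O}(z)},f^{(s)}\bigr)$, and the point--open topology on $E(X)$ has a base of sets pulled back through such maps, which is exactly what lets weak mixing pass in both directions. Three small remarks. In $(i)\Rightarrow(ii)$ the detour through Furstenberg and $E(X)^{2k}$ is unnecessary: $Z=\overline{\mathcal{O}(x_1,\dots,x_k)}$ is already a factor of $E(X)$ itself via evaluation at the tuple, so $Z\times Z$ is a factor of $E(X)\times E(X)$ and transitivity descends directly; moreover, where you assert ``$Z$ is a factor of $E(X)^k$'' you should make the factor map explicit (first-coordinate projection followed by evaluation), since the coordinatewise map $(p_1,\dots,p_k)\mapsto(p_1x_1,\dots,p_kx_k)$ lands in $\overline{Tx_1}\times\cdots\times\overline{Tx_k}$, which is generally larger than $Z$. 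In $(ii)\Rightarrow(i)$ the extraction of $m$ is best phrased via continuity: pick $v\in P_1$ with $f^{(s)k}(v)\in Q_1$, shrink to a neighbourhood $W\subseteq P_1$ of $v$ with $f^{(s)k}(W)\subseteq Q_1$, and use density of the $\Z$-orbit of $z$ in $\overline{\mathcal{O}(z)}$ to find $f^{(s)m}z\in W$; this yields $f^m\in\mathcal{U}_1$ and $f^{m+k}\in\mathcal{V}_1$, and the analogous, independently chosen $m'$ for the second pair with the \emph{same} $k$ is precisely what transitivity of $E(X)\times E(X)$ demands. Finally, keep the orbit convention consistent (full $\Z$-orbits both for $\mathcal{O}$ and for transitivity of $E(X)$, as this paper does), since the image of $E(X)$ under evaluation is the closure of the two-sided orbit of the tuple.
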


This puts a rather strong product recurrence for the weakly mixing $(E(X),f)$.

\vskip .5cm

If a system  is strongly  product transitive then it is weak mixing \cite{AANT}. Since there exists weak mixing enveloping semigroups, the following natural question arises:

\vskip .5cm

\textbf{Question 2:} \textit{For the cascade $(X,f)$, can  $(E(X),f)$ ever be strongly product transitive?}

We cannot conclude anything here as in case of transitivity since $(E(X)^2,f^{(2)})$ need not have a dense orbit.

\vskip .5cm

We get an equivalence of topological transitivity of $(E(X),T)$ and weak rigidity of  $(X,T)$. And we have some stronger form of recurrence of $(X,f)$ for both weak mixing and mixing  $(E(X),f)$. So again the following natural question arises:

\vskip .5cm

\textbf{Question 3:} \textit{Is there any rigidity condition on $(X,f)$ that gives weakly mixing or mixing of $(E(X),f)$?}

\section{Enveloping Semigroup of the Induced Systems}

In this section, we  discuss some properties of enveloping semigroup of the induced system $(2^X,T)$ and the relation between enveloping semingroups $E(X)$ and $E(2^X)$. We note that some of our results are true when the acting group is any discrete, topological group $T$. Though, here we will be mainly concentrating on cascades.

Consider the cascade $(X,f)$. This induces the cascade $(2^X, f_{\ast})$.

Recall $\mathcal{F}_1(X)=\lbrace \lbrace x \rbrace : x\in X\rbrace \equiv X$. Now consider $E(2^X,f_{\ast})\subset (2^X)^{(2^X)}$. Since $f(\lbrace x \rbrace)=\lbrace f(x) \rbrace$. So
\begin{center}
$f^n(\mathcal{F}_1(X))\subset \mathcal{F}_1(X)$ for every $n \in \mathbb{Z}$.
\end{center}

\begin{lemma} \label{10.1}For every $\alpha\in E(2^X,f_{\ast})$, $\alpha(\mathcal{F}_1(X))\subset \mathcal{F}_1(X)$. \end{lemma}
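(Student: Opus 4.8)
The plan is to show that the property $\alpha(\mathcal{F}_1(X)) \subset \mathcal{F}_1(X)$ holds for a dense subset of $E(2^X, f_*)$ — namely the maps $f_*^n$ — and then argue that it is preserved under the pointwise-convergence limits that make up the rest of $E(2^X, f_*)$. Concretely, recall that $E(2^X, f_*)$ is by definition the closure of $\{f_*^n : n \in \mathbb{Z}\}$ inside $(2^X)^{2^X}$ with the topology of pointwise convergence. So given $\alpha \in E(2^X, f_*)$, there is a net $\{f_*^{n_i}\}$ with $f_*^{n_i} \to \alpha$ pointwise; in particular, for each fixed singleton $\{x\} \in \mathcal{F}_1(X)$ we have $f_*^{n_i}(\{x\}) \to \alpha(\{x\})$ in $2^X$.

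The key step is then the observation already recorded just before the lemma: since $f(\{x\}) = \{f(x)\}$, one gets $f_*^{n_i}(\{x\}) = \{f^{n_i}(x)\} \in \mathcal{F}_1(X)$ for every $i$. So $\alpha(\{x\})$ is a limit in $2^X$ of a net of singletons. To finish I would invoke the fact — stated in the excerpt, in item (2) of the list of results for induced systems, that $i : X \to 2^X$, $x \mapsto \{x\}$, is a continuous embedding, equivalently that $\mathcal{F}_1(X) \equiv X$ is a closed subset of $2^X$. (Alternatively, this is immediate from the Hausdorff metric: if $\{x_i\} \to B$ then $B$ must be a single point, since $\mathrm{diam}(B) = \lim \mathrm{diam}(\{x_i\}) = 0$, using compactness of $X$.) Hence the limit $\alpha(\{x\})$ of the singletons $\{f^{n_i}(x)\}$ is again a singleton, i.e. $\alpha(\{x\}) \in \mathcal{F}_1(X)$. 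Since $x \in X$ was arbitrary, $\alpha(\mathcal{F}_1(X)) \subset \mathcal{F}_1(X)$.

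There is essentially no real obstacle here; the only thing to be careful about is that the convergence $f_*^{n_i} \to \alpha$ in $E(2^X)$ is only pointwise (the evaluation at each $A \in 2^X$), so one must argue set-by-set rather than uniformly — but since the claim $\alpha(A) \in \mathcal{F}_1(X)$ is itself a pointwise statement about $A \in \mathcal{F}_1(X)$, pointwise convergence is exactly what is needed. If one wanted to avoid even mentioning nets, one could phrase it via the universal map $\Phi_{2^X} : \beta \mathbb{Z} \to E(2^X)$: the set $S = \{ \beta \in (2^X)^{2^X} : \beta(\{x\}) \in \mathcal{F}_1(X) \ \forall x \in X \}$ is closed (being an intersection over $x$ of preimages of the closed set $\mathcal{F}_1(X)$ under the continuous evaluation $\mathrm{ev}_{\{x\}}$), it contains every $f_*^n$, and hence contains its closure $E(2^X)$. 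I would present the net version as the cleaner write-up and perhaps remark on the closed-set version in a sentence.
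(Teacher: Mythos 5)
Your proposal is correct, and its skeleton coincides with the paper's: take a net $f_*^{n_i}\to\alpha$ pointwise, evaluate at a singleton, and conclude that a limit in $2^X$ of singletons must again be a singleton. The difference lies in how that last fact is justified. The paper proves it by hand inside the Vietoris topology: assuming $\alpha(\{x\})=A$ had two or more points, it shows that the net of singletons $\{y_i\}$ would converge to $\{a\}$ for \emph{every} $a\in A$, contradicting uniqueness of limits in the Hausdorff space $2^X$. You instead quote the closedness of $\mathcal{F}_1(X)$ in $2^X$ (or, equivalently, the closed-set formulation with the continuous evaluations $\mathrm{ev}_{\{x\}}$ and the universal map), which is shorter and makes the ``pointwise convergence suffices'' point transparent; your Hausdorff-metric diameter argument is a clean one-line substitute for the paper's neighbourhood chase. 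One small precision: item (2) of the paper only says $x\mapsto\{x\}$ is a continuous embedding, which by itself does not give a closed image; you do need compactness of $X$ (image of a compact space in the Hausdorff space $2^X$ is compact, hence closed), and you should state that explicitly rather than as an aside, since otherwise the word ``equivalently'' overstates what item (2) provides. With that sentence added, either of your two write-ups (the net version or the closed-set version) is a complete and slightly more economical proof than the one in the paper.
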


\begin{proof}
Let $\alpha \in E(2^X)$, there is a net $\lbrace n_i \rbrace $ such that $f_{\ast}^{n_i}\longrightarrow \alpha$. For any $x\in X$, $f_{\ast}^{n_i}(\lbrace x \rbrace)\longrightarrow \alpha(\lbrace x \rbrace )$.

Since $f_{\ast}^{n_i}(\lbrace x \rbrace)\in \mathcal{F}_1(X)$. Let $f_{\ast}^{n_i}(\lbrace x \rbrace)= \lbrace y_i \rbrace$. Suppose $\lbrace y_i \rbrace \longrightarrow A$ i.e $\alpha(\lbrace x \rbrace)=A$ where $A$ contains at least two elements.

Let $\langle U_1,U_2,\ldots, U_n\rangle$ be any neighbourhood of $A$ and for any $a\in A$, $U_a$ is any open set containing $a$. Then $\langle U_a,U_1,U_2,\ldots, U_n\rangle$ will also be a neighbourhood of $A$. Since $\lbrace y_i \rbrace \longrightarrow A$, there is an $l$ such that $\lbrace y_i \rbrace \in \langle U_a,U_1,U_2,\ldots, U_n\rangle$ for all $i\geq l$,
means for all $i\geq l$, $\lbrace y_i \rbrace \subset \bigcup_{i=1}^{n} U_i \cup U_a$ and $\lbrace y_i \rbrace \cap U_i \neq \emptyset$ for all $1\leq i\leq n$ and $\lbrace y_i \rbrace \cap U_a \neq \emptyset$. So $\lbrace y_i \rbrace \in \langle U_a \rangle$ for all $i\geq l$. So,
\begin{center}
$\lbrace y_i \rbrace \longrightarrow \lbrace a \rbrace $
\end{center}
This is true for all $a\in A$. Since $E(2^X,f_{\ast})$ is Hausdorff, $A$ is a singleton.
\end{proof}

\begin{lemma} \label{10.2}
For every $\alpha \in E(2^X,f_{\ast})$, $A\subset B\Rightarrow \alpha(A)\subset \alpha(B)$ for any $A,B\in 2^X$.
\end{lemma}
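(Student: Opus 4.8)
The plan is to exploit the same approximation idea used in Lemma \ref{10.1}: any $\alpha\in E(2^X,f_\ast)$ is a pointwise (equivalently, $\nu$-) limit of a net $\{f_\ast^{n_i}\}$, and each $f_\ast^{n_i}$ is literally the induced map $f^{n_i}$ on subsets, so it trivially preserves inclusion. The monotonicity should then pass to the limit because of the compatibility between Vietoris convergence and set inclusion recorded earlier in the excerpt, namely: if $A_i\to A$ and $B_i\to B$ in $(2^X,\nu)$ with $A_i\subset B_i$ for all $i$, then $A\subset B$.

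First I would fix $A,B\in 2^X$ with $A\subset B$ and pick a net $\{n_i\}$ with $f_\ast^{n_i}\to\alpha$ in $E(2^X,f_\ast)$; since $E(2^X,f_\ast)$ carries the topology of pointwise convergence on $2^X$, this gives $f_\ast^{n_i}(A)\to\alpha(A)$ and $f_\ast^{n_i}(B)\to\alpha(B)$ in $(2^X,\nu)$. Next, for each $i$ we have $f_\ast^{n_i}(A)=f^{n_i}(A)\subseteq f^{n_i}(B)=f_\ast^{n_i}(B)$ simply because $f^{n_i}$ is a function and $A\subseteq B$. Now I would apply the quoted fact about limits of nested closed sets in the Vietoris topology (stated in the excerpt as ``Let $X$ be a compact Hausdorff space and $A_i\to A$ and $B_i\to B$ in $(2^X,\nu)$ and $A_i\subset B_i\ \forall\ i$ then $A\subset B$'') with the nets $A_i:=f_\ast^{n_i}(A)$ and $B_i:=f_\ast^{n_i}(B)$ to conclude $\alpha(A)\subseteq\alpha(B)$. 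This completes the argument.

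I do not expect a serious obstacle here; the statement is essentially a limiting version of the obvious monotonicity of induced maps, and all the needed ingredients — compactness of $2^X$, the description of $E(2^X,f_\ast)$ as a pointwise-closure in $(2^X)^{2^X}$, and the closedness of the inclusion relation under Vietoris limits — are already available in the excerpt. The only point requiring a word of care is that we are working with a \emph{net}, not a sequence, so I would state the cited lemma in its net form (it holds verbatim, the proof being unchanged); and one should note that $X$ is assumed compact metric throughout this section, so $2^X$ is compact Hausdorff and the cited result applies directly.
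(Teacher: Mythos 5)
Your proof is correct and follows essentially the same route as the paper: pick a net $f_\ast^{n_i}\to\alpha$, note that each $f_\ast^{n_i}$ trivially preserves the inclusion $A\subset B$, and pass to the limit in $(2^X,\nu)$ using the fact that set inclusion is preserved under Vietoris limits. Your explicit citation of that limit lemma (in net form) merely makes precise a step the paper's proof leaves implicit.
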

\begin{proof}
Since for any $\alpha \in E(2^X)$, there is a net $\lbrace n_i \rbrace $ such that $f_{\ast}^{n_i}\longrightarrow \alpha$. So if $A\subset B$ then for every $i$, $f_{\ast}^{n_i}(A)=f^{n_i}(A)= \bigcup_{x\in A} \lbrace f^{n_i}(x) \rbrace\subset \bigcup_{x\in B} \lbrace f^{n_i}(x) \rbrace= f^{n_i}(B) = f_{\ast}^{n_i}(B) \Rightarrow \lim\limits_i f_{\ast}^{n_i}(A)\subset \lim\limits_i f_{\ast}^{n_i}(B)\Rightarrow \alpha(A)\subset \alpha(B)$.
\end{proof}

\begin{lemma} \label{10.3}
For any  $\alpha \in E(2^X,f_{\ast})$ and a finite set $A\in 2^X$, $\alpha(A)= \bigcup_{x\in A} \lbrace  \alpha (\lbrace x \rbrace)  \rbrace$.
\end{lemma}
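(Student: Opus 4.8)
The plan is to establish the statement for a singleton first and then build up to a general finite set by induction on cardinality, using Lemmas \ref{10.1} and \ref{10.2} together with the convergence $f_{\ast}^{n_i} \to \alpha$ that defines an element $\alpha \in E(2^X, f_{\ast})$. The key observation is that for each net $\lbrace n_i \rbrace$ with $f_{\ast}^{n_i} \to \alpha$, the maps $f_{\ast}^{n_i}$ are \emph{finitely additive} in the sense that $f_{\ast}^{n_i}(A \cup B) = f_{\ast}^{n_i}(A) \cup f_{\ast}^{n_i}(B)$, simply because $f^{n_i}$ is an actual map on $X$ and the union of images is the image of the union. So the issue is whether this additivity survives the limit.

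First I would fix a finite set $A = \lbrace a_1, \ldots, a_k \rbrace \in 2^X$ and write $A = \lbrace a_1 \rbrace \cup \cdots \cup \lbrace a_k \rbrace$. For the net $\lbrace n_i \rbrace$ with $f_{\ast}^{n_i} \to \alpha$, we have $f_{\ast}^{n_i}(A) = f_{\ast}^{n_i}(\lbrace a_1 \rbrace) \cup \cdots \cup f_{\ast}^{n_i}(\lbrace a_k \rbrace)$. By Lemma \ref{10.1}, each $f_{\ast}^{n_i}(\lbrace a_j \rbrace)$ is a singleton, say $\lbrace y^j_i \rbrace$, and by compactness of $X$ (passing to a subnet if necessary, which does not change the limit $\alpha(\lbrace a_j \rbrace)$ since it is already determined) we have $\lbrace y^j_i \rbrace \to \lbrace y^j \rbrace = \alpha(\lbrace a_j \rbrace)$ in $2^X$. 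The main step is then to argue that in $(2^X,\nu)$, a finite union is jointly continuous: if $A_i \to A$ and $B_i \to B$ then $A_i \cup B_i \to A \cup B$. This is a standard fact about the Vietoris (Hausdorff) topology on $2^X$ for compact $X$ — given a basic open neighborhood $\langle V_1, \ldots, V_m \rangle$ of $A \cup B$, one checks that eventually $A_i$ and $B_i$ together meet every $V_\ell$ and are contained in $\bigcup V_\ell$, using that $A$ and $B$ are each eventually inside $\bigcup V_\ell$ and that each $V_\ell$ is met by $A$ or by $B$. I would state this as a short preliminary observation (it could even be folded into the earlier list of Vietoris-topology facts) and then apply it $k-1$ times.

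Combining these, $\alpha(A) = \lim_i f_{\ast}^{n_i}(A) = \lim_i \bigl( \lbrace y^1_i \rbrace \cup \cdots \cup \lbrace y^k_i \rbrace \bigr) = \lbrace y^1 \rbrace \cup \cdots \cup \lbrace y^k \rbrace = \bigcup_{x \in A} \alpha(\lbrace x \rbrace)$, which is exactly the claimed identity (noting that $\bigcup_{x \in A} \lbrace \alpha(\lbrace x \rbrace) \rbrace$ in the statement means the union of the singleton sets $\alpha(\lbrace x \rbrace) \subset X$, consistent with Lemma \ref{10.1}). Alternatively, without invoking joint continuity one can use the two inclusions separately: $\supseteq$ follows from Lemma \ref{10.2} applied to each $\lbrace x \rbrace \subset A$, while $\subseteq$ follows because the property $f_{\ast}^{n_i}(A) \subseteq f_{\ast}^{n_i}(\lbrace a_1 \rbrace) \cup \cdots \cup f_{\ast}^{n_i}(\lbrace a_k \rbrace)$ is preserved under limits by the stated fact that $A_i \subset B_i$ for all $i$ implies $A \subset B$ in $(2^X,\nu)$ — taking $B_i = \bigcup_j \lbrace y^j_i \rbrace$, whose limit is $\bigcup_j \lbrace y^j \rbrace$.

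The only real obstacle is the continuity of the finite-union operation on $2^X$, and this is genuinely routine for compact Hausdorff $X$ with the Vietoris topology; the rest is bookkeeping with nets and subnets. I would keep the write-up short: prove the union-continuity remark, then give the two-line limit computation, perhaps separating the $\subseteq$ and $\supseteq$ inclusions as above so that Lemma \ref{10.2} is visibly used for one direction and the ``$A_i \subset B_i \Rightarrow A \subset B$'' fact for the other.
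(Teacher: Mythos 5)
Your proposal is correct and takes essentially the same approach as the paper: the paper's proof likewise writes $f_{\ast}^{n_i}(A)$ as the finite union of the singletons $f_{\ast}^{n_i}(\lbrace x \rbrace)$, $x \in A$, and checks directly with Vietoris basic open neighbourhoods that this net converges to $\bigcup_{x\in A}\lbrace \alpha(\lbrace x \rbrace)\rbrace$ --- precisely the instance of union-continuity you isolate --- before concluding by Hausdorffness of $2^X$. (A minor remark: your subnet/compactness step is unnecessary, since $f_{\ast}^{n_i}(\lbrace a_j \rbrace)\to\alpha(\lbrace a_j \rbrace)$ is immediate from pointwise convergence of $f_{\ast}^{n_i}$ to $\alpha$.)
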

\begin{proof}
Since $\alpha\in E(2^X,f_{\ast})$, there is a net $\lbrace{n_i}\rbrace$ such that $f_{\ast}^{n_i}\longrightarrow \alpha\Rightarrow f_{\ast}^{n_i}(A)\longrightarrow \alpha(A) $ for each $A\in 2^X$.

Let $S_i= f_{\ast}^{n_i}(A)= \bigcup_{x\in A} \lbrace f^{n_i}(x) \rbrace$ and our claim is that,
\begin{center}
$\lim\limits_{i} S_i =  \bigcup_{x\in A} \lbrace  \alpha (\lbrace x \rbrace)  \rbrace$. \end{center}

Let $<U_i,U_2,\ldots, U_n>$ be any open neighbourhood of $\bigcup_{x\in A} \lbrace  \alpha (\lbrace x \rbrace)  \rbrace$.
\begin{center}
$\Rightarrow \bigcup_{x\in A} \lbrace  \alpha (\lbrace x \rbrace)  \rbrace \subset \bigcup_{j=1}^n U_j$ and $\bigcup_{x\in A} \lbrace  \alpha (\lbrace x \rbrace)  \rbrace \bigcap U_j \neq \emptyset$ for all $1\leq j \leq n$.
\end{center}
Now for any $U_k$,  $\exists\ \alpha(\lbrace x_k \rbrace)\in \bigcup_{x\in A} \lbrace  \alpha (\lbrace x \rbrace)  \rbrace$ such that $\alpha(\lbrace x_k \rbrace) \in U_k$.

By the convergence $f_{\ast}^{n_{i}} \longrightarrow \alpha$, $f_{\ast}^{n_i}(\lbrace x_k \rbrace) \longrightarrow \alpha(\lbrace x_k \rbrace)$. So there is an $N_k$ such that $f_{\ast}^{n_i}(\lbrace x_k \rbrace)\in U_k$ for all $i\geq N_k \Rightarrow S_i \bigcap U_k \neq \emptyset $.

Since $A$ is a finite set say $A=\lbrace x_1, x_2,\ldots x_l\rbrace$ and $f_{\ast}^{n_i}(\lbrace x_k \rbrace) \longrightarrow \alpha(\lbrace x_k \rbrace)$ for each $x_k\in A$. So by choosing $N$ such that $N\geq N_k$ for each $1\leq k \leq l$ such that $f_{\ast}^{n_i}(\lbrace x_k \rbrace)\in \bigcup_{j=1}^n U_j\Rightarrow S_i \subset \bigcup_{j=1}^n U_j$ for all $i\geq N$. Which means $\lim\limits_{i} S_i =  \bigcup_{x\in A} \lbrace  \alpha (\lbrace x \rbrace)  \rbrace$. Since $\lim\limits_{i} S_i = \alpha(A)$.

So by the Hausdorffness of $2^X$, $\alpha(A)= \bigcup_{x\in A} \lbrace  \alpha (\lbrace x \rbrace)  \rbrace$.

\end{proof}

\begin{theorem} \label{T 10.1}
Let $(X,f)$ be an equicontinuous cascade. Then every member of $E(2^X,f_{\ast})$ is inducible.
\end{theorem}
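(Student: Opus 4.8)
The plan is to prove the sharper statement that $E(2^X,f_\ast)=\{g_\ast : g\in E(X)\}$; in particular every member of $E(2^X,f_\ast)$ is induced by an element of $E(X)\subset\mathcal{C}(X,X)$, hence is inducible.

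First I would attach to an arbitrary $\alpha\in E(2^X,f_\ast)$ a candidate inducing map. By Lemma \ref{10.1}, $\alpha(\{x\})$ is a singleton for every $x\in X$, so we may define $g\colon X\to X$ by $\alpha(\{x\})=\{g(x)\}$. Choose a net $\{n_i\}$ in $\Z$ with $f_\ast^{n_i}\to\alpha$ in $(2^X)^{2^X}$; evaluating at singletons gives $\{f^{n_i}(x)\}=f_\ast^{n_i}(\{x\})\to\{g(x)\}$ in $2^X$, and since $x\mapsto\{x\}$ embeds $X$ homeomorphically in $2^X$ this says $f^{n_i}(x)\to g(x)$ in $X$ for every $x$. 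Thus $f^{n_i}\to g$ pointwise, so $g\in\overline{\{f^n:n\in\Z\}}=E(X)$. Because $(X,f)$ is equicontinuous, the family $\{f^n\}$ is equicontinuous and so is its closure $E(X)$; in particular $g$ is continuous, and $g_\ast$ is a well-defined member of $\mathcal{C}(2^X,2^X)$ with $g_\ast(A)=g(A)=\overline{g(A)}$.

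The main point, and the only place equicontinuity is genuinely used, is to upgrade the pointwise convergence $f^{n_i}\to g$ to uniform convergence on $X$, and then transport it to $2^X$. Since $E(X)$ is an equicontinuous family on the compact metric space $X$ containing all the $f^{n_i}$ and $g$, the usual $\varepsilon/3$ argument works for the net: cover $X$ by finitely many balls on which every member of $E(X)$ oscillates by less than $\varepsilon/3$, then use pointwise convergence of $f^{n_i}$ at the finitely many centres together with directedness of the net to find a tail on which $\sup_{x\in X}d(f^{n_i}(x),g(x))<\varepsilon$. Consequently, for every $A\in 2^X$,
\[
H_d\big(f_\ast^{n_i}(A),\,g_\ast(A)\big)=H_d\big(f^{n_i}(A),\,g(A)\big)\le\sup_{x\in X}d\big(f^{n_i}(x),g(x)\big)\longrightarrow 0,
\]
so $f_\ast^{n_i}\to g_\ast$ uniformly on $2^X$, hence also pointwise. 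Since $f_\ast^{n_i}\to\alpha$ pointwise by the choice of the net and $(2^X)^{2^X}$ is Hausdorff, we get $\alpha=g_\ast$, so $\alpha$ is inducible.

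I expect the uniform-convergence step to be the only real obstacle; the rest is bookkeeping already packaged in Lemmas \ref{10.1}--\ref{10.3}. As an alternative one could instead verify the three conditions of Theorem \ref{cha}: Lemmas \ref{10.1} and \ref{10.2} give conditions (1) and (2) for any $\alpha\in E(2^X,f_\ast)$, and minimality with respect to $\prec$ would follow from $\alpha=g_\ast$ together with Lemma \ref{10.3} and the density $\overline{\mathcal{F}(X)}=2^X$; but the direct argument above is shorter and yields the cleaner description of $E(2^X,f_\ast)$.
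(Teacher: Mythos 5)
Your argument is correct, and it takes a genuinely different route from the paper. The paper proves the theorem by verifying the three conditions of the Charatonik--Charatonik criterion (Theorem \ref{cha}): Lemmas \ref{10.1} and \ref{10.2} give conditions (1) and (2), and the bulk of the work goes into minimality with respect to $\prec$, which is established by an interchange of limits over the dense family of finite sets, justified by equicontinuity making pointwise and uniform convergence coincide on $E(2^X)$. You instead bypass the abstract inducibility criterion entirely: you read off the candidate $g\in E(X)$ from the action of $\alpha$ on singletons (legitimate by Lemma \ref{10.1} and the closed embedding $x\mapsto\{x\}$), note that $g$ is continuous because the closure of the equicontinuous family $\{f^n\}$ is equicontinuous, upgrade $f^{n_i}\to g$ from pointwise to uniform convergence on the compact space $X$ via the standard $\varepsilon/3$ net argument, and then use the estimate $H_d(f^{n_i}(A),g(A))\le\sup_{x}d(f^{n_i}(x),g(x))$ to conclude $f_\ast^{n_i}\to g_\ast$ pointwise on $2^X$, whence $\alpha=g_\ast$ by Hausdorffness. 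All the individual steps check out (in particular the Hausdorff-metric bound and the fact that $g(A)$ is closed so $g_\ast$ is well defined). What your approach buys is a sharper conclusion: every element of $E(2^X,f_\ast)$ is induced by an element of $E(X)$, which is essentially the surjectivity half of the conjugacy $\Theta\colon E(2^X)\to E(X)$ that the paper only establishes later (Theorem \ref{TH 10.3}); the paper's route, by contrast, keeps the proof within the hyperspace formalism and showcases the inducibility criterion, but proves only the stated qualitative fact. One small remark: your fallback suggestion of deducing $\prec$-minimality from $\alpha=g_\ast$ would be circular as a route to Theorem \ref{cha}, but since you never need that theorem, this does not affect your main argument.
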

\begin{proof}
For a member of $E(2^X, f_{\ast})$ to be inducible, it should satisfy the three conditions mentioned in the Theorem \ref{cha}. In the above Lemmas \ref{10.1} and \ref{10.2}, we have proved the first two conditions.

Now we will prove that for $\alpha\in E(2^X,f_{\ast})$, $\alpha$ in minimal under the relation $\prec$.

First of all, suppose there is an $n\in \Z$ such that $f_{\ast}^n \prec \alpha \Rightarrow f_{\ast}^n(\lbrace x \rbrace)\subseteq \alpha(\lbrace x \rbrace)$. Since both are singleton,

\begin{center}
$f_{\ast}^n(\lbrace x \rbrace)=\alpha(\lbrace x \rbrace)$ and $f_{\ast}^n(\lbrace x \rbrace)= \lbrace f^n(x) \rbrace \Rightarrow \alpha(\lbrace x \rbrace)= \lbrace f^n(x) \rbrace$.
\end{center}
Now for any finite set $A\in 2^X$, by Lemma \ref{10.3}, $\alpha(A)= \bigcup_{x\in A} \lbrace  \alpha (\lbrace x \rbrace)  \rbrace = \bigcup_{x\in A} \lbrace  f^n (\lbrace x \rbrace)\rbrace= f_{\ast}^{n}(A)$.

Now the set of all finite subsets of $X$ is dense in $2^X$, So for any $E \in 2^X$, there is a net $\lbrace F_j \rbrace$ of finite subsets in $2^X$ such that $F_j \longrightarrow E$.

Since $f_{\ast}^{n}$ is continuous $\Rightarrow f_{\ast}^{n}(F_j)\longrightarrow f_{\ast}^{n}(E)$. From the above discussion $f_{\ast}^{n}(F_j)= \alpha(F_j)$ for each $j$. So,

\begin{center}
$\alpha(F_j)\longrightarrow f_{\ast}^{n}(E)$.
\end{center}
Also $\lim\limits_j \alpha(F_j)=\lim\limits_j \lim\limits_i f_{\ast}^{n_{i}} (F_j)$. Since $(X,T)$ is equicontinuous $\Rightarrow (2^X,T)$ is equicontinuous and so the topology of pointwise convergent will coincide with uniform convergence. So we can interchange the limits. So we have

\begin{center}
$\lim\limits_{j} \alpha(F_j)=\lim\limits_j \lim\limits_i f_{\ast}^{n_{i}} (F_j)= \lim\limits_i f_{\ast}^{n_{i}} (\lim\limits_j F_j)= \lim\limits_i f_{\ast}^{n_{i}} (E)= \alpha(E)$.
\end{center}
So, $\alpha(F_j)\longrightarrow \alpha(E)$. Hence again by the Hausdorffness of $2^X$, $\alpha(E)= f_{\ast}^{n}(E)\Rightarrow \alpha= f_{\ast}^{n}$.

Now for any $\beta \prec \alpha$ where $\beta \in E(2^X)\setminus \lbrace f_{\ast}^n:n\in \Z\rbrace$. Then $\beta(\lbrace x \rbrace)\subseteq \alpha(\lbrace x \rbrace)\ \Rightarrow \beta(\lbrace x \rbrace)=\alpha(\lbrace x \rbrace)$ for all $x\in X$. So, for any $A\in 2^X$, there is a sequence $\lbrace F_j \rbrace$ of finite sets in $2^X$ such that $F_j \longrightarrow A$ and since $\alpha, \beta$ are continuous because $(2^X, T)$ is equicontinuous
\begin{center}
$ \alpha(A)=\lim\limits_j \alpha(F_j)=\lim\limits_j \bigcup_{x\in F_j} \lbrace \alpha (\lbrace x \rbrace) \rbrace = \lim\limits_j \bigcup_{x\in F_j} \beta (\lbrace x \rbrace)=\lim\limits_j \beta(F_j)=\beta(A)$.
\end{center}
Hence $\alpha=\beta$.

So each member of $E(2^X,f_{\ast})$ satisfies all the three conditions of  Theorem \ref{cha} for an equicontinuous system. Hence the members of $E(2^X,f_{\ast})$ are inducible.
\end{proof}

\begin{remark} We note that  all the results proved above are true for a flow $(X,T)$ also. \end{remark}

\begin{theorem} \label{T10.2}
For a flow $(X,T)$, where $X$ is compact metric space, there is a continuous flow homomorphism $\Theta: E(2^X)\longrightarrow E(X)$.
\end{theorem}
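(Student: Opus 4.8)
The plan is to build the homomorphism $\Theta$ by exploiting the fact that both $E(X)$ and $E(2^X)$ are quotients of $\beta T$, together with the structural facts already established in Lemmas \ref{10.1}--\ref{10.3}. First I would recall from the excerpt that there are canonical continuous surjective semigroup homomorphisms $\Phi_X : \beta T \to E(X)$ and $\Phi_{2^X} : \beta T \to E(2^X)$ obtained by continuously extending $t \mapsto \pi^t$ and $t \mapsto \pi^t_*$ respectively. The natural thing to try is to produce $\Theta$ so that the diagram $\Theta \circ \Phi_{2^X} = \Phi_X$ commutes; by the standard quotient argument this reduces to showing that $\Phi_{2^X}(p) = \Phi_{2^X}(q)$ implies $\Phi_X(p) = \Phi_X(q)$, i.e. that the fibers of $\Phi_{2^X}$ refine those of $\Phi_X$.

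The key observation making this work is Lemma \ref{10.1}: every $\alpha \in E(2^X)$ maps $\mathcal{F}_1(X) \equiv X$ into itself, so each $\alpha$ restricts to a well-defined map $\alpha|_X : X \to X$. I would then argue that for $p \in \beta T$, the element $\Phi_{2^X}(p) \in E(2^X)$, when restricted to singletons, is precisely $\Phi_X(p)$: indeed if $t_i \to p$ in $\beta T$ then $\pi^{t_i}_*(\{x\}) = \{\pi^{t_i}(x)\} \to \Phi_{2^X}(p)(\{x\})$ in $2^X$, while $\pi^{t_i}(x) \to \Phi_X(p)(x)$ in $X$, and the embedding $x \mapsto \{x\}$ is continuous, so $\Phi_{2^X}(p)(\{x\}) = \{\Phi_X(p)(x)\}$ for all $x$. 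Consequently the assignment $\alpha \mapsto (\text{the map } x \mapsto \alpha(\{x\}))$ is a well-defined map $E(2^X) \to X^X$ whose image is exactly $\{\Phi_X(p) : p \in \beta T\} = E(X)$; call this map $\Theta$. It is continuous because convergence $\alpha_i \to \alpha$ in $E(2^X)$ (pointwise on $2^X$) in particular gives $\alpha_i(\{x\}) \to \alpha(\{x\})$ in $2^X$ for every $x$, hence $\Theta(\alpha_i)(x) \to \Theta(\alpha)(x)$ in $X$ for every $x$, which is pointwise convergence in $E(X)$.

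Next I would verify the algebraic properties: that $\Theta$ is a flow homomorphism, i.e. $\Theta(\pi^t_* \cdot \alpha) = \pi^t \cdot \Theta(\alpha)$, and that it is a semigroup homomorphism, $\Theta(\alpha \beta) = \Theta(\alpha)\Theta(\beta)$. The flow equivariance is immediate from the computation above applied with $\Phi$'s. For the semigroup property, given $\alpha, \beta \in E(2^X)$ and $x \in X$, one has $(\alpha\beta)(\{x\}) = \alpha(\beta(\{x\}))$; since $\beta(\{x\}) = \{\Theta(\beta)(x)\}$ is a singleton, $\alpha(\{\Theta(\beta)(x)\}) = \{\Theta(\alpha)(\Theta(\beta)(x))\}$, which is exactly $\{\Theta(\alpha)\Theta(\beta)(x)\}$ — so $\Theta(\alpha\beta)(x) = \Theta(\alpha)\Theta(\beta)(x)$. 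Surjectivity of $\Theta$ follows since $\Theta \circ \Phi_{2^X} = \Phi_X$ and $\Phi_X$ is onto $E(X)$.

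The main obstacle, and the point to handle carefully, is ensuring that $\Theta$ is genuinely well-defined \emph{as a map on $E(2^X)$} and not merely on the image of $\beta T$ — but this is exactly resolved by Lemma \ref{10.1}, which says $\alpha(\mathcal{F}_1(X)) \subset \mathcal{F}_1(X)$ for \emph{every} $\alpha \in E(2^X)$, so the restriction to singletons makes sense intrinsically without reference to any preimage in $\beta T$. A secondary subtlety is checking that $\Theta(\alpha) \in E(X)$ rather than just in $X^X$; this uses the commuting triangle $\Theta \circ \Phi_{2^X} = \Phi_X$ together with the surjectivity of $\Phi_{2^X}$, so that every $\alpha$ is of the form $\Phi_{2^X}(p)$ and hence $\Theta(\alpha) = \Phi_X(p) \in E(X)$. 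I expect the remaining verifications (continuity, equivariance) to be routine net-chasing, so the write-up should foreground the role of Lemma \ref{10.1} and the compatibility of the two canonical maps out of $\beta T$.
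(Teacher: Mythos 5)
Your proposal is correct, and its core construction is the same as the paper's: $\Theta$ is the restriction of $\alpha\in E(2^X)$ to singletons, which is meaningful precisely because of Lemma \ref{10.1}, and continuity plus $T$-equivariance are then routine (your net argument for continuity is a perfectly good substitute for the paper's subbasic-open-set computation). Where you genuinely differ is in how you certify that the restricted map lies in $E(X)$: the paper takes a net $\pi_*^{t_i}\to\alpha$ with $t_i\in T$ and observes directly that $\pi^{t_i}\to\Theta(\alpha)$ pointwise, whereas you route everything through $\beta T$ and prove the commuting triangle $\Theta\circ\Phi_{2^X}=\Phi_X$ (both sides continuous, agreeing on the dense subset $T$ of $\beta T$, with Hausdorff codomain). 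That framing buys you two conclusions the paper's proof does not state: $\Theta$ is a semigroup homomorphism, and $\Theta$ is surjective, since $\Theta(E(2^X))=\Theta(\Phi_{2^X}(\beta T))=\Phi_X(\beta T)=E(X)$. Note that this surjectivity is stronger than what the authors themselves later assert — in the remark following Corollary \ref{7.2} they say they cannot conclude $E(X)=\Theta(E(2^X))$ — but your argument is sound and there is no contradiction: for $p\in E(X)$ the induced map $p_*$ need not belong to $E(2^X)$ (that would require uniform convergence of $\pi^{t_i}\to p$), yet any cluster point $\alpha$ of $\lbrace \pi_*^{t_i}\rbrace$ in the compact space $(2^X)^{2^X}$ does lie in $E(2^X)$, agrees with $p$ on singletons, and hence satisfies $\Theta(\alpha)=p$ even though $\alpha$ may differ from $p_*$ on non-singleton sets. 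So your route proves the stated theorem and, as a byproduct, sharpens the paper's picture of the range of $\Theta$.
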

\begin{proof}
Define the map $\Theta:E(2^X)\rightarrow E(X),$ defined as \begin{center}
$\Theta(\alpha)= \alpha'$ where $\lbrace\alpha'(x)\rbrace =\alpha(\lbrace x \rbrace)\ \forall\ x\in X$.
\end{center}

\textbf{$\Theta$ is well defined:} $\Theta(\pi_{\ast}^t)= \pi^t$ is well defined in very natural way. For $\alpha \in E(2^X)\setminus \lbrace \pi_{\ast}^t:t\in T \rbrace$ there is a net $\lbrace n_k \rbrace$ such that $\pi_{\ast}^{t_{n_k}}\longrightarrow \alpha$. So, $\pi_{\ast}^{t_{n_k}}(A)\longrightarrow \alpha(A)$ for every $A\in 2^X$.

Therefore $\pi_{\ast}^{t_{n_k}}(\lbrace x \rbrace)\longrightarrow \alpha(\lbrace x \rbrace)$ for every $x\in X$ and $\alpha(\lbrace x \rbrace)\in \mathcal{F}_1(X)\equiv X\ \Rightarrow\ \pi^{n_k}(x)\longrightarrow \alpha'(x)$. Since $E(X)$ has point open topology, $\pi^{n_k}\longrightarrow \alpha'$. So, $\alpha'\in E(X),$ which shows that $\Theta$ is well-defined.

\textbf{$\Theta$ is continuous:} Let $\mathrm{W}=\langle x_1,x_2,\ldots,x_n;U_1,U_2,\ldots,U_n\rangle$ be any open set in $E(X,f)$. Let $\alpha\in \Theta^{-1}(\mathrm{W})\ \Rightarrow\ \Theta(\alpha)\in \mathrm{W}\ \Rightarrow\ \alpha'(x_i)\in U_i$ for $1\leq i \leq n$.

\textbf{Claim: $\alpha\in \langle \lbrace x_1,x_2,\ldots,x_n \rbrace ; \langle U_1,U_2,\ldots,U_n \rangle\rangle \subset \Theta^{-1}(\mathrm{W}$)}. Since $\alpha(x_i)\in U_i$. So,
\begin{center}
$\alpha(\lbrace x_1,x_2,\ldots,x_n \rbrace)=\lbrace \alpha(x_1),\alpha(x_2),\ldots,\alpha(x_i) \rbrace\subseteq \bigcup_{i=n}^{n}U_i$ and
\end{center}
\begin{center}

$\lbrace \alpha(x_1),\alpha(x_2),\ldots,\alpha(x_i) \rbrace \cap U_i \neq \emptyset\ \Rightarrow\ \alpha(\lbrace x_1,x_2,\ldots,x_n \rbrace)\in \langle U_1,U_2,\ldots,U_n \rangle$,
\end{center}
which means
\begin{center}
$\alpha\in \langle \lbrace x_1,x_2,\ldots,x_n \rbrace ; \langle U_1,U_2,\ldots,U_n \rangle\rangle$.
\end{center}
 Now for any $\beta\in \langle \lbrace x_1,x_2,\ldots,x_n \rbrace ; \langle U_1,U_2,\ldots,U_n \rangle\rangle$

$\beta(\lbrace x_1,x_2,\ldots,x_n \rbrace)\in \langle U_1,U_2,\ldots,U_n \rangle \Rightarrow \lbrace \beta(x_1),\beta(x_2),\ldots,\beta(x_i) \rbrace\subseteq \bigcup_{i=n}^{n}U_i$

and $\lbrace \beta(x_1),\beta(x_2),\ldots,\beta(x_i) \rbrace \cap U_i \neq \emptyset$ for $1\leq i \leq n$. So, for every $U_i\ \exists\ l,1\leq l \leq n$ such that $\beta(x_l)\in U_i$. which means $\Theta(\beta)=\beta'\in \langle x_1,x_2,\ldots,x_n ; U_1,U_2,\ldots,U_n\rangle=\mathrm{W}\ \Rightarrow \beta \in \Theta^{-1}(\mathrm{W})$. So $\langle \lbrace x_1,x_2,\ldots,x_n \rbrace ; \langle U_1,U_2,\ldots,U_n \rangle\rangle \subset \Theta^{-1}(\mathrm{W})$. Hence the claim is true and $\Theta^{-1}(\mathrm{W})$ is open. Therefore $\Theta$ is continuous.

%\textbf{$\Theta$ is a bijection:} Let $\gamma\in E(X)$ be any element. So, there is a net $\lbrace n_k \rbrace$ in $\T(\Z\ or\ \N)$ such that $f^{n_k}\longrightarrow \gamma$. Define $\gamma_{\ast}:2^X\rightarrow2^X$ as $\gamma_{\ast}(B)=\lbrace \gamma(y): y\in B \rbrace$.

%Now let any $B\in 2^X$ then $f^{n_k}(y)\longrightarrow \gamma(y)$ for all $y\in B$. For any open nhd. $\langle W_1,W_1,\ldots,W_p \rangle$ of $\gamma_{\ast}(B)$ in $2^X,\ \gamma_{\ast}(B)\subseteq \bigcup_{j=1}^p W_j$ and $\gamma_{\ast}(B)\cap W_j$ for $1\leq j \leq p$.

%Here since $f^{n_k}(y)\longrightarrow \gamma(y)$ for all $y\in B$ and since for every $j,\ 1\leq j \leq p\ \exists\ y\in B$ such that $\gamma(y)\in W_j$. So,there is a $k_y$ in the directed set such that $f^{n_k}(y)\in W_j$ for all $k\geq k_y$. By the property of directed set, there will be a $l,\ 1\leq l \leq p$ such that $\lbrace f^{n_k}(y): y\in B,k\geq l\rbrace \subseteq \bigcup_{j=1}^p W_j$ and $\lbrace f^{n_k}(y): y\in B\rbrace \cap W_j \neq \emptyset$ for $1\leq j \leq p$.

%Therefore $f_{\ast}^{n_k}(B)\subseteq \bigcup_{j=1}^p W_j$ and $f_{\ast}^{n_k}(B)\cap W_j\neq \emptyset$ for all $k\geq l$. Hence $f_{\ast}^{n_k}(B)\in \langle W_1,W_1,\ldots,W_p \rangle$ for all $k\geq l$. So, $f_{\ast}^{n_k}\longrightarrow \gamma_{\ast}$. So, $\gamma_{\ast}\in E(2^X,f_{\ast})$ and $\Theta(\gamma_{\ast})=\gamma$ and $\gamma_{\ast}$ is unique such. Hence $\Theta$ is a bijection. Also since $E(2^X,f_{\ast})$ and $E(X,f)$ are compact and Hausdorff, $\Theta$ is a homeomorphism.

\textbf{$\Theta$ is flow homomorphism:} For any $\alpha\in E(2^X)$ and $x\in X$, $ \Theta(\pi_{\ast}^t\circ\alpha)(x)=(\pi_{\ast}^t\circ\alpha)'(x)= \pi_{\ast}^t\alpha(\lbrace x \rbrace)=\pi^t(\Theta(\alpha))(x)$, which shows that $\Theta(\pi_{\ast}^t\circ\alpha)= \pi^t(\Theta(\alpha))$.

%$E(2^X,f_{\ast})\cong E(X,f)$ as flow under $\Theta$.

%\textbf{$\Theta$ is semigroup isomorphism:} For $\alpha,\beta\in E(2^X,f_{\ast}),\ \Theta(\alpha\circ\beta)=(\alpha\circ\beta)|_X=\alpha|_X \circ \beta|_X=\Theta(\alpha)\circ \Theta(\beta)$.
\end{proof}

\begin{corollary} \label{7.1}
Suppose $(2^X,T)$ is a weakly almost periodic (WAP) flow then $\Theta$ is injective.
\end{corollary}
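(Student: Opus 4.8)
The plan is to use the two features a \textbf{WAP} hypothesis buys us: first, that the enveloping semigroup consists of \emph{continuous} maps, and second, the density of $\mathcal{F}(X)$ in $2^X$. Combined with Lemma \ref{10.3}, which already expresses $\alpha(F)$ for a finite set $F$ purely in terms of the values $\alpha(\{x\})$, injectivity of $\Theta$ becomes an agreement-on-a-dense-set argument.

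First I would recall that, since $(2^X,T)$ is \textbf{WAP}, we have $E(2^X)\subset \mathcal{C}(2^X,2^X)$; in particular every $\alpha\in E(2^X)$ is a continuous self-map of $2^X$. Next, suppose $\alpha,\beta\in E(2^X)$ satisfy $\Theta(\alpha)=\Theta(\beta)$. Unwinding the definition of $\Theta$, this says exactly that $\{\alpha'(x)\}=\alpha(\{x\})=\beta(\{x\})=\{\beta'(x)\}$ for every $x\in X$. Then for any finite set $F=\{x_1,\dots,x_n\}\in\mathcal{F}(X)$, Lemma \ref{10.3} gives
\[
\alpha(F)=\bigcup_{x\in F}\alpha(\{x\})=\bigcup_{x\in F}\beta(\{x\})=\beta(F),
\]
so $\alpha$ and $\beta$ agree on all of $\mathcal{F}(X)$. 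Since $\overline{\mathcal{F}(X)}=2^X$ and both $\alpha$ and $\beta$ are continuous, they coincide on $2^X$, i.e.\ $\alpha=\beta$. Hence $\Theta$ is injective.

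The one step that genuinely uses the hypothesis — and the only place where anything can go wrong — is the appeal to continuity of the elements of $E(2^X)$: without \textbf{WAP}, a limit of the maps $\pi_\ast^t$ in the product topology on $(2^X)^{2^X}$ need not be continuous, and then agreement on the dense set $\mathcal{F}(X)$ would not propagate to all of $2^X$. Everything else is routine bookkeeping with Lemmas \ref{10.1}--\ref{10.3}, which by the remark following Theorem \ref{T 10.1} are valid for flows and not only for cascades. I would also note in passing that, under this hypothesis, $\Theta$ restricted to $E(2^X)$ is a continuous injective semigroup homomorphism into $E(X)\subset \mathcal{C}(X,X)$, so that (as in Theorem \ref{T 10.1}) every member of $E(2^X)$ is inducible in the sense of Charatonik--Charatonik.
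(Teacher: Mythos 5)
Your proof is correct and follows essentially the same route as the paper's: agreement on singletons from $\Theta(\alpha)=\Theta(\beta)$, extension to finite sets via Lemma \ref{10.3}, and then passage to all of $2^X$ using density of $\mathcal{F}(X)$ together with the continuity of elements of $E(2^X)$ guaranteed by the WAP hypothesis. Your added observation isolating continuity as the only place WAP is used is accurate and matches the paper's reasoning.
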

\begin{proof}
Suppose
\begin{align*}
\Theta(\alpha)&=\Theta(\beta)\\
\alpha^{\prime}&= \beta^{\prime}\\
\alpha(\lbrace x \rbrace) &= \beta (\lbrace x \rbrace)
\end{align*}
for $\alpha,\beta \in E(2^X)$ and each $x\in X$.

Now for any finite $A\in 2^X$, Lemma \ref{10.3} gives
\begin{align*}
\alpha(A)= \bigcup_{x\in A} \lbrace  \alpha (\lbrace x \rbrace)  \rbrace = \bigcup_{x\in A} \lbrace  \beta (\lbrace x \rbrace)  \rbrace = \beta(A).
\end{align*}
Since $(2^X,T)$ is WAP, so $\alpha, \beta\in E(2^X)$ are continuous and set of finite sets is dense in $2^X$. Therefore $\alpha(B)=\beta(B)$ for any $B\in 2^X \Rightarrow \alpha=\beta\Rightarrow \Theta$ is injective.
\end{proof}

\begin{corollary} \label{7.2}
Suppose $(X,T)$ is weakly almost periodic (WAP) then $E(X)\setminus \Theta(E(2^X))$ contains no ideal.
\end{corollary}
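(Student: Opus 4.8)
The plan is to reduce everything to the single fact that, under the WAP hypothesis, $E(X)$ has a \emph{unique} minimal left ideal $I_0$ (Proposition~\ref{thm}). Indeed, suppose we have shown $I_0 \subseteq \Theta(E(2^X))$. If $I \subseteq E(X)$ were a left ideal contained in the complement $E(X)\setminus\Theta(E(2^X))$, then by Zorn's Lemma $I$ would contain a minimal left ideal, which by uniqueness is $I_0$; but then $I_0 \subseteq I \subseteq E(X)\setminus\Theta(E(2^X))$ contradicts $I_0 \subseteq \Theta(E(2^X))$. So the whole corollary comes down to the ideal-free claim: \emph{$\Theta(E(2^X))$ contains a minimal left ideal of $E(X)$.}

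To prove that, I would take any minimal left ideal $J$ of $E(2^X)$ (one exists by Zorn's Lemma, $E(2^X)$ being a left ideal of itself) and show that $\Theta(J)$ is a minimal left ideal of $E(X)$. First one records that $\Theta$ is not merely a flow homomorphism but a semigroup homomorphism: by Lemma~\ref{10.1}, for $\alpha,\beta\in E(2^X)$ the sets $\beta(\{x\})$ and $\alpha(\beta(\{x\}))$ are singletons, so $(\alpha\beta)(\{x\})=\alpha(\beta(\{x\}))$ unwinds to $\Theta(\alpha\beta)=\Theta(\alpha)\Theta(\beta)$. Since $J$ is a minimal subflow of $(E(2^X),T)$ and $\Theta$ is a flow homomorphism (Theorem~\ref{T10.2}), $\Theta(J)$ is a minimal subflow of $(E(X),T)$ by the standard fact that flow homomorphisms carry minimal sets to minimal sets. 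It is moreover a left ideal: for $t\in T$ and $\beta\in J$ one has $\pi^t\Theta(\beta)=\Theta(\pi^t_*)\Theta(\beta)=\Theta(\pi^t_*\beta)\in\Theta(J)$, and for a general $p\in E(X)=\overline{\{\pi^t:t\in T\}}$, choosing a net $\pi^{t_i}\to p$ and using continuity of right multiplication $R_{\Theta(\beta)}$ together with compactness (hence closedness) of $\Theta(J)$ gives $p\Theta(\beta)\in\Theta(J)$. A minimal subflow of $(E(X),T)$ that is also a left ideal is a minimal left ideal, so $\Theta(J)$ is one, and under WAP it must equal $I_0$; in particular $I_0=\Theta(J)\subseteq\Theta(E(2^X))$, which is exactly what the first paragraph requires.

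I do not expect a genuine obstacle here; the only point needing care is checking that $\Theta(J)$ is an honest left ideal of the semigroup $E(X)$ and not merely a minimal subflow, which is why establishing the semigroup-homomorphism property of $\Theta$ and invoking the density of $\{\pi^t:t\in T\}$ in $E(X)$ are the load-bearing steps. I would also note that the same density argument shows $\Theta(E(2^X))$ is closed and contains the dense set $\{\pi^t:t\in T\}$, hence $\Theta(E(2^X))=E(X)$ and $E(X)\setminus\Theta(E(2^X))=\emptyset$, so the statement is a fortiori true; presenting it through minimal ideals is what makes transparent why the WAP hypothesis — which is precisely what pins down the minimal ideal — is the natural one to state.
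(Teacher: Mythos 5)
Your proof is correct, but it takes a genuinely different route from the paper's. The paper argues by contradiction inside $E(X)$: assuming the unique minimal ideal $I$ given by Proposition \ref{thm} lies in $E(X)\setminus\Theta(E(2^X))$, it takes the unique minimal idempotent $u\in I$, observes that $pu=up$ for any $p=\Theta(r)\in\Theta(E(2^X))$, and then realizes $u\Theta(r)$ as $\Theta(sr)$ by passing to a subnet of $\pi_*^{t_i}$ where $\pi^{t_i}\to u$; thus $pu\in\Theta(E(2^X))$ while also $pu\in I$, contradicting $I\subseteq E(X)\setminus\Theta(E(2^X))$. You instead work on the $E(2^X)$ side: you first make explicit that $\Theta$ is a semigroup homomorphism (a fact the paper never states, but which follows exactly as you say from Lemma \ref{10.1}), push a minimal ideal $J$ of $E(2^X)$ forward to $\Theta(J)$, check it is a closed minimal subflow and a left ideal of $E(X)$, and invoke uniqueness to get $I=\Theta(J)\subseteq\Theta(E(2^X))$. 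The reduction ``every ideal contains a minimal ideal, and there is only one'' is common to both arguments, and both use the WAP hypothesis solely through uniqueness of the minimal ideal; your version isolates the structural fact (minimal ideals of $E(2^X)$ are carried by $\Theta$ onto minimal ideals of $E(X)$) which the paper only records afterwards in a remark, and it avoids the idempotent--commutation computation entirely.

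Your closing observation is also correct, and it proves more than the corollary asserts: $\Theta(E(2^X))$ is the continuous image of the compact set $E(2^X)$, hence closed in $E(X)$, and it contains the dense set $\{\pi^t:t\in T\}$ since $\Theta(\pi^t_*)=\pi^t$; therefore $\Theta$ is surjective and $E(X)\setminus\Theta(E(2^X))=\emptyset$, with no WAP assumption needed. Be aware that this is at odds with the remark the paper places immediately after the corollary (``we still cannot conclude that $E(X)=\Theta(E(2^X))$''); the concern voiced there --- that $\pi_*^{t_i}$ need not converge to $p_*$ when $\pi^{t_i}\to p$ --- does not obstruct surjectivity, because by compactness a subnet of $\pi_*^{t_i}$ converges to some $\alpha\in E(2^X)$, and continuity of $\Theta$ gives $\Theta(\alpha)=p$ even though $\alpha$ may well differ from $p_*$.
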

\begin{proof}
Since $(X,T)$ is WAP, by Theorem \ref{thm} $E(X)$ has a unique minimal ideal $I$, and $I$ contains a unique idempotent $u$.

Suppose  $I\subset E(X)\setminus \Theta(E(2^X))$. Let $u\in I$ is an idempotent and let $p\in \Theta(E(2^X))$ be any point. So $p=\Theta(r)$ for some $r\in E(2^X)$. Also there is a net $\lbrace t_i \rbrace$ in $T$ such that $\lim\limits_{i}\pi^{t_{i}}=u$.

We observe that for any $p\in E(X)$ and  the unique minimal  ideal $I$ with $u$ an identity in $I$, $pu=u(pu)=(up)u=up$. So every element $p\in E(X)$ commutes with the minimal idempotent $u$.

Since $\Theta$ is continuous and inter-twining, so
\begin{align*}
pu=up=u\Theta(r)&= \lim\limits_{i}\pi^{t_{i}}\Theta(r)\\
&= \lim\limits_{i} \Theta(\pi_{\ast}^{t_i} r)\\
&= \Theta(\lim\limits_{i} \pi_{\ast}^{t_i} r)\\
&= \Theta(sr)\ \text{where}\ \lim\limits_{i} \pi_{\ast}^{t_i}=s\ \text{in}\ E(2^X).
\end{align*}

So $pu \notin I$, which contradicts that $I$ is an ideal. So  $E(X)\setminus \Theta(E(2^X))$ contains no ideals.

\end{proof}

\begin{remark} We note here that though $E(X)\setminus \Theta(E(2^X))$ contains no ideals, we still cannot conclude that $E(X) = \Theta(E(2^X))$. $p \in E(X)$ implies that there is a net $\{t_i\}$ in $T$ such that
$\pi^{t_i} \to p$. Now $p$ is continuous and so induces the continuous map $p_*$ on $2^X$. But we cannot say that $\pi_*^{t_i} \to p_*$ on $2^X$ since that would imply uniform convergence of $\pi^{t_i} \to p$ on $X$.

We can also say that for the unique ideal $I \subset E(X)$,  $I = \Theta(J_*)$ for all minimal ideals $J_*$ in $E(2^X)$. Though, we need more information to even say if $\Theta$  will be a finite to one map.
\end{remark}

\begin{remark} Since $(2^X,T)$ is WAP, it has a unique minimal ideal say $I_*$. By  Theorem \ref{T10.2} and Corollary \ref{7.1}, $\Theta: E(2^X)\longrightarrow E(X)$ is an injective flow homomorphism. Hence, $\theta(I_*)$ is the unique minimal ideal in $E(X)$. \end{remark}

 We note that by Proposition \ref{wap}, if $(2^X,T)$ is a weakly almost periodic  flow then $(X,T)$ is also weakly almost periodic. We note that the converse is not true here, i.e. $(X,T)$ can be a WAP flow but $(2^X,T)$ need not be  WAP.

\begin{example} We recall Example \ref{5.2}. Here $X= \{ x^n : x^n = (x^n_i)$ such that $x^n_i = 1$ if $i = n$ and $x^n_i = 0$ otherwise, $n \in \Z \} \cup \{\overline{0}\} \ \subset \emph{2}^{\Z}$, where $\overline{0}$ is the  sequence of all $0$s. And $(X, \sigma)$ is a subshift of the $\emph{2}$-shift. Note that $(X, \sigma)$ is a WAP cascade since $E(X) = \{ \sigma^k: k \in \Z\} \cup \{p\}$ where $p: X \rightarrow X$ is the constant map $p(x) = \overline{0}$.

Now for  $X \in 2^X$ observe that $\sigma_*^k(X) \longrightarrow X$ and $\sigma_*^{-k}(X) \longrightarrow X $ as $k \longrightarrow \infty$. And for any $A (\subset X) \in 2^X$, $\sigma_*^k(A) \longrightarrow \{\overline{0}\}$ and $\sigma_*^{-k}(A) \longrightarrow \{ \overline{0}\}$ as $k \longrightarrow \infty$.

Thus if $p^\dag: 2^X \mapsto 2^X$ is defined as  $p^\dag(A) = \left\{
                                                          \begin{array}{ll}
                                                            \{\overline{0}\}, & \hbox{$A \neq X$;} \\
                                                            X, & \hbox{$A = X$.}
                                                          \end{array}
                                                        \right.$,

then $\sigma_*^k \longrightarrow p^\dag$ and $\sigma_*^{-k} \longrightarrow p^\dag$ pointwise as $k \longrightarrow \infty$. So for the induced cascade $(2^X, \sigma_*)$,  $E(2^X) = \{ \sigma_*^k: k \in \Z\} \cup \{p^\dag\}$. Note that $I_* = \{p^\dag\}$ is the unique minimal ideal in $E(2^X)$.

Now if $\{A_n\}$ is a sequence in $2^X$ with $A_n \neq X$, $\ \forall n \in \N$ and $A_n \to X$ in $2^X$, then $p^\dag(A_n) \nrightarrow p^\dag(X)$ i.e. $p^\dag$ is not continuous. Thus $(2^X, \sigma_*)$ is not WAP.

\end{example}

\vskip .5cm

It is known that if $(X,T)$ is equicontinuous, $E(X)$ is a group of homeomorphisms of $X$ and the topologies of pointwise and uniform convergence coincide on $E(X)$.

\begin{theorem} \label{TH 10.3}
If $(X,T)$ is an equicontinuous flow, then $E(X)$ is conjugate to $E(2^X)$.
\end{theorem}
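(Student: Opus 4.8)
The plan is to prove that the continuous flow homomorphism $\Theta\colon E(2^X)\to E(X)$ constructed in Theorem \ref{T10.2} is a bijection; being a continuous map from a compact space onto a Hausdorff space it is then automatically a homeomorphism, and since it already intertwines the $T$-actions this yields the desired conjugacy $(E(2^X),T)\cong(E(X),T)$. First I would record what comes for free. From Theorem \ref{T10.2}, $\Theta$ is continuous, $\Theta(\pi^t_\ast)=\pi^t$ for every $t\in T$, and $\Theta(\pi^t_\ast\circ\alpha)=\pi^t(\Theta(\alpha))$. Surjectivity is then immediate: $E(2^X)$ is compact, so $\Theta(E(2^X))$ is a compact, hence closed, subset of $E(X)\subset X^X$; it contains $\{\pi^t:t\in T\}$, which is dense in $E(X)$ by definition of the enveloping semigroup, so $\Theta(E(2^X))=E(X)$.

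The substantive step is injectivity, and this is exactly where equicontinuity is used. Since $(X,T)$ is equicontinuous, so is $(2^X,T)$, hence the family $\{\pi^t_\ast:t\in T\}$ is equicontinuous on the compact metric space $2^X$; by the closure property for equicontinuous families in function spaces, $E(2^X)$ is again equicontinuous, so in particular every element of $E(2^X)$ is a continuous self-map of $2^X$. Now suppose $\Theta(\alpha)=\Theta(\beta)$ for $\alpha,\beta\in E(2^X)$; by the definition of $\Theta$ this says $\alpha(\{x\})=\beta(\{x\})$ for all $x\in X$. Applying Lemma \ref{10.3}, for every finite $A\in 2^X$ we get
\[
\alpha(A)=\bigcup_{x\in A}\{\alpha(\{x\})\}=\bigcup_{x\in A}\{\beta(\{x\})\}=\beta(A).
\]
Since finite subsets are dense in $2^X$ and $\alpha,\beta$ are continuous, $\alpha=\beta$ on all of $2^X$, so $\Theta$ is injective.

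It remains only to package the conclusion: $\Theta$ is a continuous bijection from the compact space $E(2^X)$ onto the Hausdorff space $E(X)$, hence a homeomorphism, and it intertwines the actions, so it is an isomorphism of flows. For completeness I would also check that $\Theta$ is a semigroup isomorphism — using Lemma \ref{10.1} (values on singletons are singletons) one sees $(\alpha\beta)'(x)=\alpha'(\beta'(x))$, so $\Theta(\alpha\beta)=\Theta(\alpha)\Theta(\beta)$ — so $E(X)$ and $E(2^X)$ are isomorphic as right topological semigroups as well. An equivalent route, which also highlights the role of equicontinuity, is to invoke Theorem \ref{T 10.1} (valid for flows by the remark following it): every $\alpha\in E(2^X)$ is inducible, $\alpha=f_\ast$, a short limit argument identifies $f$ as an element of $E(X)$, and $f\mapsto f_\ast$ is then the inverse of $\Theta$. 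Either way, I expect the density-of-finite-sets argument in the injectivity step to be the only delicate point, precisely because it is there that continuity of the limit maps on $2^X$ — hence equicontinuity of $(X,T)$ — is indispensable.
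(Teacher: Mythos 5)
Your proof is correct, and its overall skeleton -- show that the map $\Theta$ of Theorem \ref{T10.2} is a continuous bijection, then use compactness of $E(2^X)$ and the intertwining property to conclude it is a flow conjugacy -- is the paper's. Your injectivity step (agreement on singletons, hence on finite sets by Lemma \ref{10.3}, hence everywhere by density of $\mathcal{F}(X)$ in $2^X$ together with continuity of the elements of $E(2^X)$, which is where equicontinuity of $(X,T)$, hence of $(2^X,T)$, enters) is exactly the argument in the paper and in Corollary \ref{7.1}. Where you genuinely diverge is surjectivity: the paper fixes $\alpha\in E(X)$, uses that equicontinuity makes $\alpha$ a homeomorphism and makes pointwise convergence coincide with uniform convergence on $E(X)$, and then checks by hand that $\pi^{t_i}_{\ast}\to\alpha_{\ast}$ in $(2^X)^{2^X}$, exhibiting an explicit preimage $\alpha_{\ast}$; you instead note that $\Theta(E(2^X))$ is compact, hence closed in the Hausdorff space $E(X)$, and contains the dense set $\lbrace \pi^t : t\in T\rbrace$. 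Your argument is softer and does not use equicontinuity at all, so it in fact shows (granting the continuity of $\Theta$ established in Theorem \ref{T10.2}) that $\Theta$ is onto for \emph{every} flow, which sharpens the remark following Corollary \ref{7.2}, where the authors stop short of concluding $E(X)=\Theta(E(2^X))$ in general; on the other hand, the paper's constructive route yields the additional information that every element of $E(2^X)$ equals $\alpha_{\ast}$ for some $\alpha\in E(X)$, which is what connects the theorem to the inducibility statement of Theorem \ref{T 10.1}. Your closing observations -- that $\Theta$ is also a semigroup isomorphism via $(\alpha\beta)'(x)=\alpha'(\beta'(x))$, and the alternative derivation of injectivity through inducibility -- are correct as well.
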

\begin{proof}
We recall that the map defined in last theorem $\Theta: E(2^X)\longrightarrow E(X)$ is a flow homomorphism. Since $(X,T)$ is equicontinuous, $E(X)$ is a group of homeomorphism of $X$ and the topologies of pointwise and uniform convergence coincide on $E(X)$.

So, for any $\alpha\in E(X)$, we define $\alpha_{\ast}:2^X \rightarrow 2^X$ as $\alpha_{\ast}(A):=\alpha(A)$, which is well defined because $\alpha$ is homeomorphism.

For surjectivity we have to prove that $\alpha_{\ast}\in E(2^X)$. Since $\alpha\in E(X)$, there is a net $\lbrace t_i \rbrace \subset T$ such that $\pi^{t_i}\longrightarrow \alpha$. Since $E(X)$ has topology of uniform convergence, for every $x\in X$ and an open $U \subset X$ and $x\in X$, there is an $l$ from the directed set of $\lbrace t_i \rbrace$ such that $\pi^{t_i}(x)\in U$ for all $i\geq l$.

Now let $A\in2^X$ be any element and $\alpha_{\ast}(A)\in \langle U_1, U_2,\ldots, U_n\rangle$ is an open set.
So,  $\alpha_{\ast}(A)\subset \bigcup_{j=1}^n U_j$ and $\alpha_{\ast}(A)\cap U_j \neq \emptyset$ for $1\leq j \leq n$.

So, $\bigcup_{j=1}^n U_j$ is neighbourhood of $\alpha(x)$ for every $x\in A$. Therefore for each $x_k\in A$, there is an $l_k$ such that $\pi^{t_i}_{\ast}(\lbrace x_k \rbrace)=\pi^{t_i}(x_k)\in \bigcup_{i=1}^n U_j$ for all $i\geq l_k$. Since the topology is of uniform convergence, there is an $l$ such that for each $x\in A$, $\pi^{t_i}_{\ast}(\lbrace x \rbrace)=\pi^{t_i}(x)\subset \bigcup_{i=1}^n U_j$ for all $i\geq l$. Therefore $\pi^{t_i}_{\ast}(A)=\pi^{t_i}(A)\subset \bigcup_{i=1}^n U_j$ for all $i\geq l$. Also for any $U_k$, there is an $x\in A$ such that $\alpha(x)\in U_k$. So there is an $l_k$ in the directed set such that $\pi^{t_i}(x)\in U_k$ for all $i\geq l_k\Rightarrow \pi^{t_i}(A)\cap U_k\neq \emptyset$ for all $i\geq l_k$. In the similar way, for every $U_j(1\leq j \leq n)$, there is an $l_j$ in the directed set such that $\pi^{t_i}(A)\cap U_j\neq \emptyset$ for all $i\geq l_j$.

Choose $l_0$ as $l_0\geq l$ and $l_0\geq l_j$ for all $1\leq j \leq n$. For this $l_0$, $\pi^{t_i}(A)\cap U_j\neq \emptyset$ for all $i\geq l_0$ and for $1\leq j \leq n$. Therefore for all $i\geq l_0$, $\pi^{t_i}_{\ast}(A)\in \bigcup_{j=1}^n U_j$ and $\pi^{t_i}_{\ast}(A)\cap U_j\neq \emptyset$ for each $1\leq j \leq n$. Which means $\pi^{t_i}_{\ast}(A)\in \langle U_1, U_2,\ldots, U_n\rangle$ for all $i\geq l_0$.

So, $\pi^{t_i}_{\ast}(A)\longrightarrow \alpha_{\ast}(A)$ for all $A \in 2^X$ and hence $\pi^{t_i}_{\ast}\longrightarrow \alpha_{\ast}$. So $\Theta(\alpha_{\ast})=\alpha$. So $\Theta$ is surjective.

Now if $\Theta(\alpha)=\Theta(\beta)$ then $\alpha(\lbrace x\rbrace)= \beta(\lbrace x\rbrace)$ for every $x\in X$. So $\alpha(A)=\beta(A)$ for each finite set $A\in 2^X$. Again since $\alpha,\beta$ are continuous, the same procedure in Theorem \ref{T 10.1} we get, $\alpha(E)=\beta(E)$ for each $E\in 2^X\Rightarrow \alpha=\beta$. Hence $\Theta$ is injective also. Now since $E(2^X, T)$ is compact $\Rightarrow \Theta$ is a homeomorphism. Since $\Theta$ is a flow homomorphism, $\Theta$ is a conjugacy.
\end{proof}

Here $(X,T)$ being equicontinuous is a sufficient condition, and not necessary. This can be seen in the example below:

\begin{example} We recall Example \ref{EX 5.1} here:

Let $f: [0,1]\rightarrow [0,1]$ as $f(x)=x^2$ then $f$ is a homeomorphism on $[0,1]$. The system $([0,1],f)$ is not distal but almost equicontinuous and \begin{center}

$E(X)=\lbrace \ldots f^{-n},\ldots,f^{-1},f^{0}=e ,f^{1},\ldots,f^{n},\ldots\rbrace \cup \lbrace g_1, g_2\rbrace$,
\end{center} where
\begin{center}

$g_1(x)= \left\{
           \begin{array}{ll}
             0, & \hbox{$x\in [0,1)$;} \\
             1, & \hbox{$x=1$.}
           \end{array}
         \right.$

 $g_2(x)= \left\{
            \begin{array}{ll}
              1, & \hbox{$x\in (0,1]$;} \\
              0, & \hbox{$x=0$.}
            \end{array}
          \right.$
\end{center}
Note that $g_1$,$g_2$ are idempotents and $E(X)$ is the two point compactification of $\Z$.

Now consider the induced system $(2^X, f_{\ast})$. This system is also not distal but almost equicontinuous and
\begin{center}
$E(2^X)=\lbrace \ldots f_{\ast}^{-n},\ldots,f_{\ast}^{-1},f_{\ast}^{0}=e ,f_{\ast}^{1},\ldots,f_{\ast}^{n},\ldots\rbrace \cup \lbrace h_{1}^{\ast}, h_{2}^{\ast} \rbrace$,
\end{center} where
\begin{center}
$h_{1}^{\ast}(A) =
\left\{ \begin{array}{ll} [0,1]  \ & A = [0,1] \\ \lbrace0\rbrace \ & A\subset [0,1) \\ \lbrace1\rbrace \ & A=\lbrace1\rbrace \\ \lbrace 0,1 \rbrace \ & A \cap [0,1)\neq  \emptyset, A \cap \lbrace1\rbrace \neq \emptyset  \end{array}\right.$

\vskip .3cm

$h_{2}^{\ast}(A)= \left\{ \begin{array}{ll}  [0,1]  \ & A = [0,1] \\ \lbrace1\rbrace \ & A\subset (0,1] \\ \lbrace0\rbrace \ & A=\lbrace0\rbrace \\ \lbrace 0,1 \rbrace \ & A \cap (0,1]\neq  \emptyset, A \cap \lbrace0\rbrace \neq \emptyset  \end{array}\right.$
\end{center}
Again $h_{1}^{\ast}$ and $h_{2}^{\ast}$ are idempotents and $E(2^X) \cong E(X)$.
\end{example}

In general, $E(X)$ and $E(2^X)$ need not to be conjugate. We discuss an example here. 

\begin{example} \label{EX 10.2} Recall Example \ref{EX 5.3} here:

Consider $X= \lbrace (r,\theta): 0\leq \theta \leq 2\pi,\ r\in \lbrace 1- \frac{1}{2^n}:n\in \N\rbrace  \cup \lbrace 0,1 \rbrace \rbrace $ and $f(r,\theta)=(r, \theta + 2\pi r)$.

For this cascade $(X,f)$, we see that each orbit closure is an equicontinuous system, in fact $f$ is identity on the circle $r=1$. Hence $(X,f)$ is distal  but not eqicontinuous. Since  $(2^X,f_*)$ is distal if and only if $(X,f)$ is equicontinuous \cite{AN}, $(2^X,f_*)$ will not be distal. Again since $(X,f)$ is distal if and only if $E(X)$ is a group, we get here that $E(X)$ is a group but $E(2^X)$ is not a group.
\end{example}

\begin{remark} We recall from \cite{ME}, for a metric hereditarily almost equicontinuous system $(X, T)$ its enveloping semigroup $E(X)$ is again a metrizable hereditarily almost equicontinuous system.

In example \ref{EX 10.2} $(X,f)$ is hereditary almost equicontinuous but $(2^{X}, f_{\ast})$ is not so.

Hence we deduce that $E(X)$ metrizable need not imply $E(2^X)$ metrizable. \end{remark}

\begin{theorem}
For an equicontinuous cascade $(X,f)$, point-open topology of $E(2^X)$ is equivalent to the compact-open topology of $E(X)$.
\end{theorem}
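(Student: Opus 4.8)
The plan is to assemble this from two ingredients already established in the excerpt: Theorem~\ref{TH 10.3}, which gives for equicontinuous $(X,f)$ a conjugacy $\Theta:E(2^X)\to E(X)$ (so $E(2^X)$ and $E(X)$ are homeomorphic as phase spaces, where $E(2^X)$ carries its point-open topology and $E(X)$ carries its point-open topology), and Theorem~\ref{cpe} together with the function-space facts in Section~4 (in particular the theorem that on an equicontinuous family the compact-open topology reduces to the point-open topology, and the Arzel\`a--Ascoli statement). First I would note that since $(X,f)$ is equicontinuous, $E(X)$ is an equicontinuous family in $X^X$ (the closure of the equicontinuous family $\{f^n:n\in\Z\}$ is equicontinuous, by the function-space result quoted in Section~4 and Arzel\`a--Ascoli), so on $E(X)$ the $c$-topology and the $p$-topology agree. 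Likewise $(2^X,f_*)$ is equicontinuous (by the induced-systems results in Section~3.2, equicontinuity of $(X,f)$ passes to $(2^X,f_*)$), so on $E(2^X)$ the $c$-topology and $p$-topology agree.

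Next I would use Theorem~\ref{cpe}: the map $i(f)=f_*$ embeds $(\mathcal{C}(X,X),c)$ into $(\mathcal{C}(2^X,2^X),p)$, and the proof there shows the compact-open topology of $\mathcal{C}(X,X)$ coincides with the point-open topology on the image inside $\mathcal{C}(2^X,2^X)$. By Theorem~\ref{T 10.1}, every member of $E(2^X,f_*)$ is inducible when $(X,f)$ is equicontinuous, i.e.\ $E(2^X)=\{\alpha_*:\alpha\in E(X)\}=i(E(X))$, and $\Theta$ of Theorem~\ref{TH 10.3} is precisely the inverse of $i$ restricted to $E(2^X)$. So the restriction of $i$ carries $(E(X),c)$ bijectively onto $E(2^X)$, and by Theorem~\ref{cpe} the $c$-topology on $E(X)$ maps exactly to the $p$-topology on $E(2^X)$. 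Chasing these identifications: the $p$-topology on $E(2^X)$ $=$ (via $i^{-1}=\Theta$) the $c$-topology on $E(X)$. This is the desired statement.

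The steps in order: (1) record that $(2^X,f_*)$ is equicontinuous and hence $E(2^X)$ is an equicontinuous family, so its $p$- and $c$-topologies coincide — likewise for $E(X)$; (2) invoke Theorem~\ref{T 10.1} to get $E(2^X)=i(E(X))$ with $i(\alpha)=\alpha_*$; (3) apply Theorem~\ref{cpe} to conclude that under $i$ the compact-open topology on $E(X)$ corresponds to the point-open topology on $i(E(X))=E(2^X)$; (4) combine with (1) if one wants all four topologies ($p$ and $c$ on each side) identified, and conclude. The main obstacle I anticipate is purely bookkeeping: making sure that the $i$ of Theorem~\ref{cpe} and the $\Theta$ of Theorem~\ref{TH 10.3} are genuinely mutually inverse on these subspaces, and that the "coincidence of topologies on the image" in Theorem~\ref{cpe} is being quoted for the correct subspace $E(2^X)\subset\mathcal{C}(2^X,2^X)$ rather than for all of $\mathcal{C}(2^X,2^X)$; once that identification is pinned down the argument is a short diagram chase with no real analytic content, everything analytic having been absorbed into equicontinuity and Arzel\`a--Ascoli.
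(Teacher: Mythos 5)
Your argument is correct and follows essentially the same route as the paper: the paper's proof likewise combines the conjugacy $\Theta$ of Theorem \ref{TH 10.3} with the observation that compact subsets of $X$ are points of $2^X$ (the content of Theorem \ref{cpe}) to identify the compact-open topology on $E(X)$ with the point-open topology on $E(2^X)$. Your additional bookkeeping (Theorem \ref{T 10.1} to get $E(2^X)=i(E(X))$ with $\Theta=i^{-1}$, and the coincidence of $p$- and $c$-topologies on equicontinuous families) just makes explicit what the paper leaves implicit.
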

\begin{proof}
Since for an equicontinuous cascade $(X,f)$, $E(X)\subset \mathcal{C}(X,X)$ and $E(2^X)\subset \mathcal{C}(2^X,2^X)$. By Theorem \ref{TH 10.3}, $E(X)$ and $E(2^X)$ are conjugate. The compact subsets of $X$ are points of $2^X$. Therefore compact open topology of $E(X)$ is equivalent to the point open topology of $E(2^X)$.
\end{proof}

\vskip .5cm

\vskip .5cm

We note by the results in \cite{LPYZ} that $(2^X,f_*)$ is weakly rigid if and only if $(2^X,f_*)$ is uniformly rigid if and only if $(X,f)$ is uniformly rigid. So by Theorem \ref{2.1};

\begin{theorem} For a cascade $(X,f)$, $(E(2^X), f_*)$ is transitive if and only if the identity $e_*$ is not isolated in $E(2^X)$ if and only if $(X,f)$ is uniformly rigid.

And in this case $(E(X), f)$ is transitive. \end{theorem}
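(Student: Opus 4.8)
The statement chains together three equivalences and one implication, and the plan is to assemble it from tools already developed in the excerpt rather than prove anything from scratch. The backbone is Theorem~\ref{2.1}, which for any cascade $(Y,g)$ gives the equivalence: $(E(Y),g)$ is transitive $\Leftrightarrow$ the identity $e$ is not isolated in $E(Y)$ $\Leftrightarrow$ $(Y,g)$ is weakly rigid. I will apply this with $Y = 2^X$ and $g = f_*$. This immediately yields: $(E(2^X),f_*)$ is transitive $\Leftrightarrow$ $e_*$ is not isolated in $E(2^X)$ $\Leftrightarrow$ $(2^X,f_*)$ is weakly rigid. So the first two ``if and only if''s are just Theorem~\ref{2.1} specialized to the induced cascade, with no new work required.

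\textbf{Bridging to uniform rigidity of $(X,f)$.} To replace ``$(2^X,f_*)$ is weakly rigid'' by ``$(X,f)$ is uniformly rigid'' I would invoke the result of Li, Oprocha, Ye and Zhang quoted earlier as Theorem~\ref{3.8}'s companion (the displayed four-way equivalence from \cite{LPYZ}): $(X,f)$ is uniformly rigid $\Leftrightarrow$ $(2^X,f_*)$ is uniformly rigid $\Leftrightarrow$ $(2^X,f_*)$ is rigid $\Leftrightarrow$ $(2^X,f_*)$ is weakly rigid. Chaining this with the previous paragraph closes the loop: $(E(2^X),f_*)$ transitive $\Leftrightarrow$ $e_*$ not isolated in $E(2^X)$ $\Leftrightarrow$ $(2^X,f_*)$ weakly rigid $\Leftrightarrow$ $(X,f)$ uniformly rigid. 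This is exactly the first sentence of the theorem, and the remark preceding the statement in the excerpt has already flagged precisely this combination of \cite{LPYZ} with Theorem~\ref{2.1}, so I am confident the ingredients are all in place.

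\textbf{The last clause.} For ``in this case $(E(X),f)$ is transitive'', I would argue: if $(X,f)$ is uniformly rigid then in particular it is rigid, hence weakly rigid (the displayed implications ``Uniformly Rigid $\Rightarrow$ Rigid $\Rightarrow$ Weakly Rigid'' in the excerpt). Now apply Theorem~\ref{2.1} to $(X,f)$ itself: $(X,f)$ weakly rigid $\Leftrightarrow$ $(E(X),f)$ is topologically transitive. Hence $(E(X),f)$ is transitive, as claimed. Alternatively one could route this through the canonical factor map $\Theta: E(2^X) \to E(X)$ of Theorem~\ref{T10.2} (a continuous flow homomorphism), noting that a transitive $(E(2^X),f_*)$ maps onto $(E(X),f)$ and topological transitivity passes to factors; but the direct appeal to Theorem~\ref{2.1} for $(X,f)$ is cleaner and avoids worrying about surjectivity of $\Theta$.

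\textbf{Where the difficulty lies.} Honestly, there is no genuine obstacle here: every step is an invocation of a result already proved or quoted in the excerpt, and the proof is a two-line bookkeeping exercise once one lines up Theorem~\ref{2.1}, the \cite{LPYZ} equivalences, and the elementary implications among rigidity notions. The only point requiring a moment's care is making sure the ``weakly rigid'' appearing in Theorem~\ref{2.1} applied to $2^X$ is literally the same notion as the ``weakly rigid'' in the \cite{LPYZ} chain — it is, since both are Glasner--Maon weak rigidity of the same cascade $(2^X,f_*)$ — so the splice is legitimate. I would therefore write the proof as: ``By Theorem~\ref{2.1} applied to the cascade $(2^X,f_*)$, transitivity of $(E(2^X),f_*)$ is equivalent to $e_*$ not being isolated in $E(2^X)$ and to weak rigidity of $(2^X,f_*)$; by \cite{LPYZ} the latter is equivalent to uniform rigidity of $(X,f)$. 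Finally uniform rigidity of $(X,f)$ implies weak rigidity of $(X,f)$, whence $(E(X),f)$ is transitive by Theorem~\ref{2.1}.''
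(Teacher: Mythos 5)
Your proposal is correct and follows essentially the same route as the paper: the paper obtains the theorem precisely by applying Theorem \ref{2.1} to the induced cascade $(2^X,f_*)$ and splicing in the \cite{LPYZ} equivalence between weak rigidity of $(2^X,f_*)$ and uniform rigidity of $(X,f)$, with the final clause following because uniform rigidity implies weak rigidity of $(X,f)$ and hence transitivity of $(E(X),f)$ by Theorem \ref{2.1} again. Your remark that one could instead use the factor map $\Theta$ is a harmless alternative for the last clause, but the direct argument you settle on is the one the paper intends.
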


The cascade $(2^X,f_*)$ induces the cascade $(2^{2^X}, f_{**})$, and so by Theorem \ref{2.1}  and \cite{LPYZ} we have

\begin{corollary}  $E(2^X,f_*)$ is transitive if and only if $E(2^{2^X}, f_{**})$ is transitive. \end{corollary}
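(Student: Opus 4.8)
The plan is to reduce both transitivity statements to the single condition ``$(X,f)$ is uniformly rigid'' by chaining Theorem \ref{2.1} with the equivalences of Li, Oprocha, Ye and Zhang \cite{LPYZ}, applied once to $X$ and once to the hyperspace $2^X$. The first step is a bookkeeping remark: since $X$ is a compact metric space, $2^X$ (with the Hausdorff metric) is again a compact metric space, and if $f$ is a homeomorphism then so is $f_*$; hence $(2^X,f_*)$ is a genuine cascade and $2^{2^X}$ is again compact metric with $(2^{2^X},f_{**})$ a cascade. Consequently both Theorem \ref{2.1} and the theorem of \cite{LPYZ} may be legitimately invoked with $X$ replaced by $2^X$ (and $f$ by $f_*$), and again with $2^X$ replaced by $2^{2^X}$.

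Next I would apply Theorem \ref{2.1} to the cascade $(2^X,f_*)$: this gives that $(E(2^X),f_*)$ is transitive if and only if $(2^X,f_*)$ is weakly rigid if and only if the identity $e_*$ is not isolated in $E(2^X)$. By \cite{LPYZ} (equivalences (1)$\Leftrightarrow$(2)$\Leftrightarrow$(4)) applied to $(X,f)$, weak rigidity of $(2^X,f_*)$ is equivalent to uniform rigidity of $(X,f)$, which is in turn equivalent to uniform rigidity of $(2^X,f_*)$. Now I would apply \cite{LPYZ} a second time, this time taking $(2^X,f_*)$ in the role of $(X,f)$: uniform rigidity of $(2^X,f_*)$ is equivalent (by (1)$\Leftrightarrow$(4) of that theorem, with $X$ replaced by $2^X$) to weak rigidity of its induced system $(2^{2^X},f_{**})$. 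Finally, applying Theorem \ref{2.1} to $(2^{2^X},f_{**})$ shows that weak rigidity of $(2^{2^X},f_{**})$ is equivalent to transitivity of $(E(2^{2^X}),f_{**})$. Concatenating these ``if and only if''s yields the corollary.

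I do not expect a genuine obstacle here: the only point requiring care is the legitimacy of the iteration, namely that the standing hypotheses of Theorem \ref{2.1} and of \cite{LPYZ} (a compact metric phase space, $f$ a homeomorphism so that the induced map is again a homeomorphism of a cascade) are preserved under passing to the hyperspace, which was already recorded in Section 3. Everything else is a transitive chain of equivalences through the pivot condition ``$(X,f)$ is uniformly rigid''. If desired, exactly the same argument extends verbatim up the whole tower, giving that transitivity of $E({2^{\cdot^{\udots}}}^{2^X}, f_{* \ldots *})$ at any level is equivalent to uniform rigidity of $(X,f)$, consistent with Theorem \ref{3.8} and the theorem stated just before this corollary.
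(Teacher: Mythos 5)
Your proposal is correct and is essentially the paper's own argument: the paper deduces the corollary precisely "by Theorem \ref{2.1} and \cite{LPYZ}", i.e.\ by chaining transitivity of each enveloping semigroup through weak rigidity of the corresponding hyperspace system and the LPYZ equivalences down to the single pivot condition that $(X,f)$ is uniformly rigid (cf.\ also Theorem \ref{3.8}). Your added care about the hypotheses being preserved under passing to $2^X$ and $2^{2^X}$ is exactly the bookkeeping the paper leaves implicit.
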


We have already discussed examples when $(E(X), f)$ will be transitive but $(E(2^X), f_*)$ will fail to be transitive, the examples where $(X,f)$ is weakly rigid but not uniformly rigid.

\begin{remark} We note that when $(X,f)$ is weakly mixing and uniformly rigid, $(2^X,f_*)$ is also weakly mixing and uniformly rigid. Hence $E(2^X,f_*)$ is transitive and $(2^X,f_*)$ is a factor of $E(2^X,f_*)$.

In [41] the existence of minimal weakly mixing but nonetheless uniformly rigid
dynamical systems is demonstrated.

  The enveloping semigroup of the `time one map' of a classical horocycle flow is weakly mixing \cite{AEB}. But this flow is not uniformly rigid. And so its induced flow will not be transitive.

\end{remark}

\begin{remark} We note that when $(2^X,f_*)$ is  mixing, $(X,f)$ is also  mixing. Thus $E(2^X,f_*)$ is mixing if and only if $(X,f)$ is both mixing and uniformly rigid. This is impossible.

Since for nonempty, open $U, V \subset X$ such that $\overline{U} \cap \overline{V} = \emptyset$, mixing gives an $N \in \N$ such that $f^n(U) \cap V \neq \emptyset$ for all $n \geq N$ whereas uniform rigidity gives a sequence $\{n_k\}$ in $\N$ such that $f^{n_k} \to$ identity uniformly, implying that there cannot exist any $u \in U$ such that $f^{n_k}(u) \in V$ for $n_k > N$.

Hence $E(2^X,f_*)$ can never be mixing.\end{remark}

\vspace{.5cm}

\section{Enveloping Semigroup of the $\emph{2}$-shift, its Subshifts and the Induced Shifts}

There are some important relations between the $\emph{2}$-shift  and it's induced system. For the system $(\emph{2}^{\Z}, \sigma)$ its induced system $(2^{\emph{2}^{\Z}},{\sigma}_*)$  is topological mixing and has a dense set of periodic sets.

\subsection{Enveloping Semigroup of $(\emph{2}^{\Z}, \sigma)$, its subshifts and of $(2^{\emph{2}^{\Z}},{\sigma}_*)$}

In case of the full $\emph{2}$-shift, $(\emph{2}^{\Z},\mathbb{Z})$, we can calculate it's enveloping semigroup and which comes out to be isomorphic to $\beta \Z$. Though this is a known result we include a brief proof for the sake of completion, and also since we use this method for computing enveloping semigroups of some subshifts. This is given as an exercise in \cite[p. 31, Ex. 1.25]{JOIN} and as in \cite[p. 496, Theorem 19.15]{HIN} in case of semigroups.

\begin{theorem} \label{betaz} $E( \emph{2}^{\Z},\sigma) = \beta \Z$. \end{theorem}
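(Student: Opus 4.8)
The plan is to exhibit a continuous semigroup isomorphism $\Phi_X\colon \beta\Z \to E(\emph{2}^{\Z},\sigma)$ by showing that the canonical map $\Phi_X$ (the extension of $n\mapsto \sigma^n$ guaranteed by the universal property of $\beta\Z$) is injective; surjectivity and continuity are already built into the general theory recalled earlier in the paper, so the whole content is the separation of points. First I would recall that $\Phi_X(\beta\Z)=E(\emph{2}^{\Z},\sigma)$ and that $\Phi_X$ is a flow and semigroup homomorphism, so it suffices to prove: if $p\neq q$ in $\beta\Z$ then $\Phi_X(p)\neq\Phi_X(q)$, i.e. there is a point $x\in\emph{2}^{\Z}$ with $px\neq qx$.

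The key construction is to use ultrafilters directly. Given distinct ultrafilters $p,q$ on $\Z$, pick a set $A\subseteq\Z$ with $A\in p$ and $\Z\setminus A\in q$. Define the point $x=x_A\in\emph{2}^{\Z}$ to be the indicator sequence of $A$, that is $x_n=1$ if $n\in A$ and $x_n=0$ otherwise. The crucial observation is that evaluation at the $0$-th coordinate detects membership: for any $r\in\beta\Z$, the value $(rx)_0$ equals $\lim_{n\to r}(\sigma^n x)_0=\lim_{n\to r}x_n$, and this $p$-limit of the $\{0,1\}$-valued net $(x_n)$ is $1$ precisely when $\{n: x_n=1\}=A\in r$. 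Hence $(px)_0=1$ because $A\in p$, while $(qx)_0=0$ because $A\notin q$. Therefore $px\neq qx$, so $\Phi_X(p)\neq\Phi_X(q)$, giving injectivity. Combined with surjectivity onto $E(\emph{2}^{\Z},\sigma)$ and continuity of $\Phi_X$ (a continuous bijection between compact Hausdorff spaces, hence a homeomorphism) and the fact that $\Phi_X$ respects the semigroup operation, we conclude $E(\emph{2}^{\Z},\sigma)\cong\beta\Z$.

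The step needing the most care is making the identification $(rx)_0=\lim_{n\to r}x_n$ precise and checking that this limit in $\emph{2}=\{0,1\}$ (a two-point discrete, hence compact, space) is exactly the ultrafilter-limit which equals $1$ iff $\{n:x_n=1\}\in r$. This is the standard fact that for a net indexed by $T$ converging along an ultrafilter $r\in\beta T$ into a compact Hausdorff space $K$, the limit is the unique point every neighborhood of which is hit on an $r$-large set; for $K=\{0,1\}$ this says the limit is $1$ iff $\{n:x_n=1\}\in r$. I would also remark that the same indicator-sequence technique will be reused for computing enveloping semigroups of subshifts in the subsequent subsection, so it is worth isolating cleanly. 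No genuine obstacle is expected here; the only mild subtlety is being careful that we only need coordinate $0$ (equivalently, that $\sigma^n x$ has $0$-th coordinate $x_n$) rather than controlling the whole sequence $rx$, which keeps the argument short.
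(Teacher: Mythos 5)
Your proposal is correct and follows essentially the same route as the paper: both identify points of $\emph{2}^{\Z}$ with subsets of $\Z$ via indicator sequences and prove injectivity of the canonical map $\Phi_X\colon\beta\Z\to E(\emph{2}^{\Z})$ by reading off the $0$-th coordinate of $p\cdot x_A$, which detects whether $A\in p$. The paper phrases this detection in ultrafilter-product notation (showing $A\in p\Leftrightarrow 0\in p\ast A$ via $0p=p$, then deducing $p\subseteq q\subseteq p$), while you phrase it as an ultrafilter limit in the two-point space after choosing a single separating set $A\in p\setminus q$ — the same mechanism, and indeed the style the paper itself uses later for the Even Shift.
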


\begin{proof} First of all for every bi-infinite sequence $(x_{i})$ in $(\emph{2}^{\Z},\sigma)$, we have a set $A=\lbrace m\in \mathbb{Z}: x_{m}= 1\rbrace $ and on the other way for any $K\subseteq \mathbb{Z}$ there is a bi-infinite sequence $(x_{i})$ where
 $$ x_{i} = \left\{
                           \begin{array}{ll}
                             1, & \hbox{$i\in K $;} \\
                             0, & \hbox{$i\in \mathbb{Z} \smallsetminus K$.}
                           \end{array}
                         \right.$$

So, we can identify each bi-infinite sequence in $(\emph{2}^{\Z},\mathbb{Z})$ with a subset of $\Z$. Also  for each $m\in \Z,\ m\equiv h(m)=\lbrace A\subset \Z: m\in A\rbrace$ is an ultrafilter. Also $h(A) = \lbrace B \subset \Z: A \subset B\rbrace$ is an ultrafilter such that $A \in h(A)$.

Now let $(x_{i}) \approx A\subset \Z$ in $\emph{2}^{\Z}$ and $n\in \Z$,
\begin{align*}
n\cdot A= \sigma^n (x_{i})&= (x_{i-n})\\ &= \lbrace k-n \in \Z: k\in A\rbrace\\ &=\lbrace s\in \Z: s+n \in A\rbrace \\ &= \lbrace s\in \Z: A\in h(s+n)\rbrace \\ &= \lbrace s\in \Z: h(s+n)\in h(A)\rbrace \\ &= \lbrace s\in \Z: s+n\equiv h(s+n)\in h(A)\rbrace
\end{align*}
Now if we define the action $`*'$ of $\beta \Z$ on $(\emph{2}^{\Z}, \Z)$ as
$p\in \beta \Z$ and $A\in \emph{2}^{\Z}$,
\begin{align*}
p*A=\lbrace \gamma\in \Z: \gamma p\in h(A)\rbrace
\end{align*}
Then from the above discussion, this action is the extension of the usual shift action of $\Z$ on $\emph{2}^{\Z}$. So, by the Stone-$\check{C}$ech compactification, the natural map $\Phi_{\emph{2}^{\Z}}:\beta Z\rightarrow E(\emph{2}^{\Z})$ defined as $\Phi_{\emph{2}^{\Z}}(p):\emph{2}^{\Z}\rightarrow \emph{2}^{\Z}$ and $\Phi_{\emph{2}^{\Z}}(p)(A)= p*A$ is a homomorphism and $\Phi_{\emph{2}^{\Z}}(\beta \Z)=E(\emph{2}^{\Z})$. Our claim is that $\Phi_{\emph{2}^{\Z}}$ is injective also.

Suppose,
\begin{align*}
\Phi_{\emph{2}^{\Z}}(p)&= \Phi_{\emph{2}^{\Z}}(q)\\
\Phi_{\emph{2}^{\Z}}(p)(A)&= \Phi_{\emph{2}^{\Z}}(q)(A)\ \forall\ A \in \emph{2}^{\Z}\\
\lbrace \gamma\in \Z: \gamma p\in h(A)\rbrace &= \lbrace \eta \in \Z: \eta q\in h(A)\rbrace
\end{align*}
Now $0\in \Z$ is identity and $p\in \beta \Z$ is an ultrafilter, then
\begin{align*}
0p\equiv h(0)p &=\lbrace A\subset \Z: A*h(0)\in p\rbrace \\ &=\lbrace A\subset \Z: \lbrace s\in \Z: R_{s}^{-1}(A)\in h(0)\rbrace\in p\rbrace\\
&= \lbrace A\subset \Z: \lbrace s\in \Z: 0\in R_{s}^{-1}(A)\rbrace\in p\rbrace\\
&= \lbrace A\subset \Z: \lbrace s\in \Z: s\in A\rbrace\in p\rbrace\\
&= \lbrace A\subset \Z: A\in p\rbrace\\
&= p.
\end{align*}
So, if $A\in p$ means $p\in h(A)\Rightarrow 0p\in h(A)\Rightarrow 0\in p*A=\Phi_{\emph{2}^{\Z}}(p)(A)=\Phi_{\emph{2}^{\Z}}(q)(A)= q*A$

So, $0\in q*A\Rightarrow 0q\in h(A)\Rightarrow q\in h(A)\Rightarrow A\in q$

$\therefore\  p\subset q$ and similarly $q\subset
p,\Rightarrow p=q$. Hence $\Phi_{\emph{2}^{\Z}}$ is injective. So, $E(\emph{2}^{\Z})\cong \beta \Z$. \end{proof}

\begin{corollary} For the induced system $(2^{\emph{2}^{\Z}},{\sigma}_*)$, $E(2^{\emph{2}^{\Z}}) \cong \beta \Z$. \end{corollary}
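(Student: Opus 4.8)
The goal is to show $E(2^{\emph{2}^{\Z}},{\sigma}_*) \cong \beta\Z$, given that $E(\emph{2}^{\Z},\sigma)\cong\beta\Z$ (Theorem \ref{betaz}). The first observation is that $E(2^{\emph{2}^{\Z}},{\sigma}_*)$ is a factor of $\beta\Z$: indeed $E(2^Y, g_*)$ is always a quotient of $\beta\Z$ via the canonical map $\Phi_{2^Y}: \beta\Z \to E(2^Y,g_*)$ extending $n \mapsto g_*^n$, for any cascade $(Y,g)$. So it suffices to prove that $\Phi_{2^{\emph{2}^{\Z}}}$ is \emph{injective}. The natural route is to exploit the embedding $i: \emph{2}^{\Z} \hookrightarrow 2^{\emph{2}^{\Z}}$, $x \mapsto \{x\}$, which is a continuous embedding of cascades intertwining $\sigma$ with $\sigma_*$ (since $\sigma_*(\{x\}) = \{\sigma x\}$).

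\textbf{Key steps.} First I would record that $\Phi_{2^{\emph{2}^{\Z}}}(\beta\Z) = E(2^{\emph{2}^{\Z}},\sigma_*)$ and that for $p,q \in \beta\Z$ we have $\Phi_{2^{\emph{2}^{\Z}}}(p) = \Phi_{2^{\emph{2}^{\Z}}}(q)$ iff the corresponding maps on $2^{\emph{2}^{\Z}}$ agree on every closed set $A \subseteq \emph{2}^{\Z}$. Second, I would restrict attention to the singletons: for $x \in \emph{2}^{\Z}$, the action of $p$ on $\{x\} \in 2^{\emph{2}^{\Z}}$ is, by Lemma \ref{10.1} applied to this induced system, again a singleton $\{p\cdot x\}$, where $p\cdot x$ is precisely the action of $p \in \beta\Z$ on $x \in \emph{2}^{\Z}$ coming from $E(\emph{2}^{\Z},\sigma)$ — this is exactly the compatibility encoded by the homomorphism $\Theta: E(2^{\emph{2}^{\Z}}) \to E(\emph{2}^{\Z})$ of Theorem \ref{T10.2}, which satisfies $\Theta \circ \Phi_{2^{\emph{2}^{\Z}}} = \Phi_{\emph{2}^{\Z}}$ by uniqueness of extensions from $\Z$. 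Third, since $\Phi_{\emph{2}^{\Z}}: \beta\Z \to E(\emph{2}^{\Z})$ is injective by Theorem \ref{betaz}, the composite $\Theta \circ \Phi_{2^{\emph{2}^{\Z}}}$ is injective, hence $\Phi_{2^{\emph{2}^{\Z}}}$ itself is injective. Therefore $\Phi_{2^{\emph{2}^{\Z}}}$ is a continuous bijective homomorphism from the compact Hausdorff space $\beta\Z$, hence a homeomorphism, and $E(2^{\emph{2}^{\Z}},\sigma_*) \cong \beta\Z$.

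\textbf{Main obstacle.} The only genuinely delicate point is the identification in the second step that the $\beta\Z$-action on singletons in $2^{\emph{2}^{\Z}}$ reproduces the original $\beta\Z$-action on $\emph{2}^{\Z}$, i.e. that $\Theta\circ\Phi_{2^{\emph{2}^{\Z}}} = \Phi_{\emph{2}^{\Z}}$; but this is immediate from the universal property of $\beta\Z$ since both sides are continuous extensions to $\beta\Z$ of the same map $n \mapsto \sigma^n$ on $\Z$, so uniqueness forces equality. Everything else is formal. An alternative, essentially equivalent, phrasing avoids $\Theta$ entirely: directly mimic the injectivity argument in the proof of Theorem \ref{betaz}, writing the action of $p$ on a singleton $\{x\}$, with $x$ identified with a subset of $\Z$, and using that $0 \in \Z$ recovers membership in the ultrafilter $p$; I would likely present this cleaner route, remarking only at the end that it also follows abstractly from Theorems \ref{betaz} and \ref{T10.2}.
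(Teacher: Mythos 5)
Your argument is correct, and it is essentially the route the paper intends: the canonical map $\Phi_{2^{\emph{2}^{\Z}}}:\beta\Z\to E(2^{\emph{2}^{\Z}})$ composed with the restriction-to-singletons homomorphism $\Theta$ of Theorem \ref{T10.2} recovers $\Phi_{\emph{2}^{\Z}}$, which is injective by Theorem \ref{betaz}, forcing $\Phi_{2^{\emph{2}^{\Z}}}$ to be a continuous bijection from the compact space $\beta\Z$ and hence an isomorphism. The paper states this as an immediate corollary of Theorem \ref{betaz} (together with Lemma \ref{10.1} and Theorem \ref{T10.2}) without further proof, so no divergence to report.
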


\vskip .5cm

We look into the ideas in \cite{BGH}. By Theorem \ref{boy}, any irreducible subshift of finite type $X$ with topological entropy greater than $\log 2$ factors onto the $\emph{2}-$shift  \cite{B}. Hence $E(X) = \beta \Z$.

 A cartesian product of mixing subshift of finite type $Y$ is also a mixing subshift of finite type.  Thus, there is a $K \in \N$ such that the Cartesian product $Y^k$ has the full $\emph{2}-$shift as a factor, for all $k \geq K$  and so the enveloping semigroup $E(Y^k) \cong \beta \Z$, for all $k \geq K$.

 Now $E(Y)$ is a factor of $E(Y^k)$, and also $ \Delta E(Y)^k \cong E(Y^k)$. Thus,

 $$ \beta \Z \cong E(Y^k) \cong \Delta E(Y)^k  \mapsto E(Y), \ \forall k \geq K.$$

 Since the natural factor from $\Delta E(Y)^k \mapsto E(Y)$ is the projection, and the product of $\beta \Z$ with itself just gives the product of ultrafilters, which again are ultrafilters, we must have $E(Y) = \beta \Z$. This proves that:

\begin{theorem} \label{msft} For any mixing subshift of finite type $Y$, we have $E(Y) = \beta \Z$. \end{theorem}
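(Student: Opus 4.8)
The plan is to follow closely the argument sketched in the paragraph immediately preceding the statement, filling in the three ingredients that make it work: Boyle's factor theorem, the identification $\Delta E(Y)^k \cong E(Y^k)$ for products, and the structural fact that $\beta\Z$ is ``rigid'' enough that any factor of it which arises as a self-projection must be $\beta\Z$ itself. First I would recall that a mixing subshift of finite type $Y$ is in particular an irreducible subshift of finite type; if $Y$ is a single point (entropy $0$) the conclusion is trivial (both sides are $\beta\Z$ acting on a point, or one simply treats it as a degenerate case), so assume $h(Y)>0$. Passing to the Cartesian power, $Y^k$ with the product shift $\sigma^{(k)}$ is again a mixing (hence irreducible) subshift of finite type, and $h(Y^k)=k\,h(Y)\to\infty$. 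Hence for all $k$ large enough, say $k\ge K$, we have $h(Y^k)>\log 2 = h(\emph{2}^{\Z})$. Since every periodic point of $\emph{2}^{\Z}$ has period dividing the period of some periodic point of $Y^k$ (periodic points of all periods exist in a mixing SFT, and in the $\emph{2}$-shift), the hypothesis $Y^k \stackrel{Per}{\rightarrow} \emph{2}^{\Z}$ of Theorem \ref{boy} is met, so Boyle's factor theorem gives a factor map $Y^k \rightarrow \emph{2}^{\Z}$ for all $k\ge K$.

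Next I would invoke the functoriality of enveloping semigroups under factor maps (the bulleted result in Section 5.2: a surjective flow homomorphism $f:(X,T)\to(Y,T)$ induces a surjective semigroup and flow homomorphism $\theta:E(X)\to E(Y)$). Applied to $Y^k \rightarrow \emph{2}^{\Z}$ this yields a surjective homomorphism $E(Y^k)\twoheadrightarrow E(\emph{2}^{\Z})$; combined with Theorem \ref{betaz}, $E(\emph{2}^{\Z})\cong\beta\Z$, so $E(Y^k)$ factors onto $\beta\Z$. On the other hand, $E(Y^k)$ is always a factor of $\beta\Z$ (every point transitive flow, in particular $(E(Z),T)$ for any $Z$, is a factor of $\beta\Z$ via $\Phi$). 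So $E(Y^k)$ sits between $\beta\Z$ and $\beta\Z$ in the factor ordering. This does not yet force $E(Y^k)=\beta\Z$ in general — but here we also use $\Delta E(Y)^k \cong E(Y^k)$ (stated in Section 9 and used there), the diagonal in the $k$-fold power. Thus $\beta\Z$ is a factor of $\Delta E(Y)^k$, and $\Delta E(Y)^k$ is in turn a factor of $\beta\Z$. I would then argue that the composite $\beta\Z \twoheadrightarrow \Delta E(Y)^k \xrightarrow{\ \mathrm{pr}\ } E(Y)$, where $\mathrm{pr}$ is the projection onto one coordinate, together with the factor $E(Y)\to\beta\Z$ one gets from reversing the roles, pins $E(Y)$ down: since $\beta\Z$ maps onto $E(Y)$ (again universality) and $E(Y)^k$ (diagonal) maps onto $\beta\Z$, the only possibility consistent with the product of $\beta\Z$ with itself being just the space of ultrafilters on the product is $E(Y)=\beta\Z$.

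The step I expect to be the genuine obstacle — and the one deserving the most care — is the last one: deducing $E(Y)=\beta\Z$ from the two-sided factor relationship, rather than merely ``$E(Y)$ is a factor of $\beta\Z$ that factors onto $\beta\Z$.'' A clean way to make this rigorous is to observe that the factor $\pi:\Delta E(Y)^k \to E(Y)$ used here is the coordinate projection, which on the diagonal is an \emph{isomorphism} $\Delta E(Y)^k \cong E(Y)$ of flows (and of semigroups), so in fact $E(Y^k)\cong E(Y)$. Then the chain becomes: $E(Y)\cong E(Y^k)$ factors onto $\beta\Z$, and $\beta\Z$ factors onto $E(Y)$. Now one uses that $\beta\Z$ is projective (co-universal) in the category of compact right-topological semigroup flows generated by $\Z$: any surjective homomorphism $E(Y)\twoheadrightarrow\beta\Z$ must, when precomposed with the canonical $\Phi_{E(Y)}:\beta\Z\to E(Y)$, give an endomorphism of $\beta\Z$ fixing $\Z$ pointwise, hence the identity; this forces $\Phi_{E(Y)}$ to be injective, i.e. $E(Y)\cong\beta\Z$. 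I would therefore structure the write-up so that the key lemma extracted is: \emph{if $E(Z)$ admits a surjective homomorphism onto $\beta\Z$, then $E(Z)\cong\beta\Z$}, proved via this endomorphism-of-$\beta\Z$-fixing-$\Z$ argument, and then the theorem is immediate from Boyle plus $E(Y^k)\cong E(Y)$.
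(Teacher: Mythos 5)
Your proposal is correct and follows essentially the same route as the paper: Boyle's factor theorem applied to the Cartesian powers $Y^k$ (whose entropy eventually exceeds $\log 2$, so $Y^k$ maps onto the $\emph{2}$-shift), the identification $E(Y^k)\cong \Delta E(Y)^k\cong E(Y)$ via the diagonal projection, and the universality of $\beta\Z$ to conclude $E(Y)\cong\beta\Z$. Your closing lemma (a surjective homomorphism $E(Y)\to\beta\Z$ commuting with the $\Z$-action forces $\Phi_Y:\beta\Z\to E(Y)$ to be injective, since the composite $\beta\Z\to E(Y)\to\beta\Z$ fixes the dense subset $\Z$ and is therefore the identity) is simply a rigorous rendering of the paper's terse final sentence about products of ultrafilters, so the two arguments coincide in substance.
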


\vskip .5cm

Recalling a result from \cite{AN}: If $(X,\sigma)$ is a mixing subshift of finite type then it is not isomorphic to $(2^Y,g_*)$ for any compact system $(Y,g)$. In particular for a mixing subshift of finite type $(X, \sigma)$, $(2^X, \sigma_*)$ will not be a subshift of finite type. Yet $E(2^X, \sigma_*) = \beta \Z$.

Here, we note that  it would be interesting to isolate the properties of those systems $(Y,g)$ which are isomorphic to
$(2^X, \sigma_*)$  for some  mixing subshift of finite type $(X,\sigma)$.

 \vskip .5cm

 The Golden Mean Shift $\mathcal{G}$ is a mixing subshift of finite type  and so has $\beta \Z$ as its enveloping semigroup. Let $X$ be the Even Shift. Then $X$ is a factor of the Golden Mean Shift $\mathcal{G}$,  and so $E(X)$ will be a  factor of $E(\mathcal{G})$.  Since $ E(\mathcal{G}) = \beta \Z$,  a natural question here is { \it ``What factor of $\beta \Z$ will be the enveloping semigroup of the Even Shift?''} We thank \emph{\textbf{Dona Strauss}} for help with this following theorems.

 \begin{theorem} The Enveloping semigroup of the Even Shift $X$, $E(X) = \beta \Z$. \end{theorem}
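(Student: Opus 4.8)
The plan is to show that the enveloping semigroup of the Even Shift $X$ is all of $\beta\Z$, even though $X$ is only sofic and not a subshift of finite type. The key point is that $E(X)$ is a factor of $E(\mathcal{G}) = \beta\Z$ (via the factor map from the Golden Mean Shift), so $E(X)$ is a quotient of $\beta\Z$; we must rule out every proper quotient. The strategy mirrors the argument used for Theorem \ref{msft}: exploit products. First I would observe that a Cartesian power $X^k$ of the Even Shift is again sofic and irreducible, and one can compute (or quote from Boyle's machinery / entropy considerations) that for $k$ large enough $X^k$ has entropy exceeding $\log 2$ and satisfies the periodic-point divisibility condition, so by Boyle's factor theorem (Theorem \ref{boy}) $X^k$ factors onto the $\emph{2}$-shift; hence $\beta\Z \cong E(\emph{2}^{\Z}) $ is a factor of $E(X^k)$. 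Actually, one needs the stronger fact that $E(X^k)\cong\beta\Z$ itself; for this I would note $X$ is a factor of the mixing SFT $\mathcal{G}$, so $X^k$ is a factor of $\mathcal{G}^k$, and $E(\mathcal{G}^k)\cong\beta\Z$ by Theorem \ref{msft}, while the surjection $E(\mathcal{G}^k)\to E(X^k)$ combined with a factor $E(X^k)\to\beta\Z$ forces $E(X^k)\cong\beta\Z$ since the only way $\beta\Z$ surjects onto something that surjects back onto $\beta\Z$ is via an isomorphism (here using that $\beta\Z$ has no proper ``retract'' quotient of this kind — more carefully, that the composite is the identity on the dense copy of $\Z$, hence the identity).

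Next I would use the identification $\Delta E(X)^k \cong E(X^k)$ recalled in Section 9: we get $\beta\Z \cong E(X^k)\cong \Delta E(X)^k$, and the natural projection $\Delta E(X)^k \to E(X)$ is a factor map. Composing, $E(X)$ is a factor of $\beta\Z$ via a map that restricted to $\Z\subset\beta\Z$ is the diagonal-then-projection, i.e. the identity on the dense subgroup $\Z$. The crucial structural fact is that the product of $\beta\Z$ with itself, under the induced semigroup operation, is just computing products of ultrafilters, and the diagonal embedding $\beta\Z\hookrightarrow\Delta\beta\Z^k$ followed by projection is the identity; therefore the factor map $\beta\Z\to E(X)$ is injective on a dense set and, being a continuous surjection between compact Hausdorff spaces that is the identity on the dense $\Z$, must be a homeomorphism — hence an isomorphism of flows. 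This yields $E(X)=\beta\Z$.

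The main obstacle I anticipate is the step asserting $E(X^k)\cong\beta\Z$ rather than merely ``$\beta\Z$ is a factor of $E(X^k)$.'' For mixing SFTs this was clean (Theorem \ref{msft}) because one directly had the $\emph{2}$-shift as a factor and could run the retract argument; for the sofic Even Shift one must verify that $X^k$ is irreducible sofic (clear, since a product of irreducible sofic shifts is irreducible sofic) and that the entropy/periodic-point hypotheses of Boyle's theorem hold for $X^k$ for all large $k$ — the entropy of the Even Shift is $\tfrac12\log\tfrac{1+\sqrt5}{2}$... wait, it equals the golden mean shift's entropy $\log\tfrac{1+\sqrt5}{2}$, which is $>\log 2$? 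No: $\tfrac{1+\sqrt5}{2}\approx 1.618 < 2$, so $h(X)<\log 2$, and one genuinely needs to pass to powers so that $k\cdot h(X) > \log 2$, i.e. $k\ge 2$ suffices for the entropy inequality, but one still must check the periodic-point divisibility $X^k \stackrel{Per}{\to}\emph{2}$-shift, which holds because the $\emph{2}$-shift has periodic points of every period (so every period is ``divisible by'' the period $1$ of a fixed point). With these checks in place the argument goes through. The secondary subtlety — that the composite factor map $\beta\Z\to E(X)$ really is injective — is handled exactly as in Section 9's discussion that projection out of the diagonal of $\beta\Z^k$ is a bijection onto $\beta\Z$, so I would simply invoke that.
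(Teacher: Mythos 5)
Your argument is correct, but it is a genuinely different route from the one the paper takes. The paper proves the theorem directly at the level of ultrafilters: given $p\neq q$ in $\beta\Z$, it picks $P\in p\setminus q$ with $0\in P$ and all elements congruent mod $4$, builds the point of the Even Shift with $1$'s exactly on $P\cup(P+1)$ (legal because the gaps between $1$'s are even), and evaluates $\Phi_X(p)$ and $\Phi_X(q)$ at this point to see they differ; thus $\Phi_X:\beta\Z\to E(X)$ is injective, with no appeal to Boyle's theorem or to products. Your route instead passes to a Cartesian power: $X^k$ is mixing sofic, $h(X^k)=k\log\frac{1+\sqrt5}{2}>\log 2$ for $k\ge 2$, and the periodic-point condition is trivial since the $\emph{2}$-shift has fixed points, so Boyle's theorem (Theorem \ref{boy}) gives $X^k\to\emph{2}^{\Z}$; combining the surjections $\beta\Z\cong E(\mathcal{G}^k)\to E(X^k)\to\beta\Z$, whose composite fixes the dense copy of $\Z$ and hence is the identity, forces $E(X^k)\cong\beta\Z$, and then $E(X)\cong\Delta E(X)^k\cong E(X^k)$ finishes. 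This is essentially the argument of Theorem \ref{msft}, and indeed the paper itself notes right after its proof that this product/Boyle route extends to all mixing sofic shifts; what your version buys is generality (it never uses the specific combinatorics of the Even Shift), while the paper's proof buys a concrete, self-contained witness of injectivity and is the template it reuses for the spacing-type subshifts $Z$ where no SFT cover argument is invoked. One caution on your wording: for the final map $\beta\Z\to E(X)$, "continuous surjection which is the identity on the dense $\Z$" does not by itself give injectivity — the canonical $\Phi_X$ of \emph{any} cascade has that property — so the injectivity must come, as you then say, from the fact that the identification $E(X^k)\cong\Delta E(X)^k$ makes the coordinate projection onto $E(X)$ a bijection (equivalently $E(X^k)\cong E(X)$ always); the "identity on dense $\Z$" argument is legitimately used only for the self-map of $\beta\Z$ in the retract step establishing $E(X^k)\cong\beta\Z$.
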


 \begin{proof} We note that $\Phi_X: \beta\Z \to E(X)$ is a homomorphism. We recall the proof of Theorem \ref{betaz} and build up on that.

Let $p \neq q \in \beta \Z$. Then there is a $P \subset \Z$ such that $P \in p$ but $P \notin q$. Without loss of generality we can assume that $0 \in P$ and $a \cong b \ (mod \ 4), \ \forall a,b \in P$.

Define $(x_i) \in \emph{2}^{\Z}$, by taking $x_i = \left\{
                                         \begin{array}{ll}
                                           1, & \hbox{if $i \in P \cup P+1$;} \\
                                           0, & \hbox{otherwise.}
                                         \end{array}
                                       \right.$

We note that in $(x_i)$ there are even no. of $0's$ between any two occurrences of $1$ and so $(x_i) \in X$.

Then for $m \in P$,  $\sigma^m((x_i))_0 = 1$ and
$\sigma^m((x_i))_1 = 1$. But if $m \notin P$, then this is not true.

Now $P \in p \Rightarrow p \in h(P) \Rightarrow 0, 1 \in p \ast P = \Phi_X(p)(P)$. But $0,1 \notin q \ast P = \Phi_X(q)(P)$.

Thus, $\Phi_X(p) \neq \Phi_X(q)$ and so $\Phi_X$ is injective.

 \end{proof}

 This gives another proof to the fact that the enveloping semigroup of the Golden Mean Shift is $\beta \Z$. Also by the discussions above recalling Theorem \ref{boy} and noting that the product of mixing sofic shifts will be mixing sofic, we can say that the enveloping semigroup of any mixing sofic shift will be $\beta \Z$.

 \vskip .5cm

 Similarly let $Z \subset \{0,1\}^{\Z}$ is defined as

 $ Z = \{ (x_i) : $ for some fixed $K \in \N$, between any two occurrences of $1's$ in $(x_i)$ there are $K \Z_+$ number of $0's \}$.

 So all blocks of the form $10^{Kn+j}1, j =1, \ldots, K-1$ are forbidden in any $x \in Z$ for $n \in \Z_+$.

 \begin{theorem} The Enveloping semigroup of the subshift $Z$, $E(Z) = \beta \Z$. \end{theorem}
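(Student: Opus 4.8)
The plan is to adapt, almost verbatim, the method used in the proof of Theorem \ref{betaz} and again for the Even Shift. The natural map $\Phi_Z : \beta\Z \to E(Z)$ coming from the Stone--$\check{C}$ech property is always a surjective homomorphism, so it suffices to prove that $\Phi_Z$ is \emph{injective}. Identifying a point $x \in \emph{2}^{\Z}$ with its support $S=\{i\in\Z: x_i=1\}$ and running the same computation as in the proof of Theorem \ref{betaz} (the $\beta\Z$--action on $\emph{2}^{\Z}$ is the continuous extension of the shift, so $(p\cdot x)_m = \lim_{t\to p} x_{m+t}$), one gets the clean description $(p\cdot x)_m = 1 \iff S-m\in p$. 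Thus, to separate two ultrafilters $p\neq q$ it is enough to exhibit a single point $x\in Z$ such that some translate of $\mathrm{supp}(x)$ lies in exactly one of $p$ and $q$.

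First I would dispose of the "residue-class" obstruction. The periodic point $x^{\ast}$ with $x^{\ast}_i=1 \iff K\mid i$ has support $K\Z$ and manifestly lies in $Z$, and by the formula above $\mathrm{supp}(p\cdot x^{\ast}) = (-c_p)+K\Z$, where $c_p+K\Z$ is the unique residue class mod $K$ belonging to $p$. Hence if $c_p\neq c_q$, the point $x^{\ast}$ already separates $p$ and $q$. Otherwise, replacing $p,q$ by $(-c_p)+p$ and $(-c_p)+q$ — harmless since $\Phi_Z$ is a homomorphism and $\sigma$ is invertible — we may assume $K\Z\in p\cap q$. Now comes the main point: one cannot simply thin a member of $p$ down to the support of a point of $Z$, because the $Z$-admissible supports (those whose consecutive gaps are $\equiv 1\ (\mathrm{mod}\ K)$) are far too rigid; for instance no infinite subset of $K\Z$ is the support of a point of $Z$. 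The fix is to use the copy of $\beta\Z$ carried by $K\Z$: let $\tilde p,\tilde q$ be the pullbacks of $p,q$ along $n\mapsto Kn$; since $p\neq q$ and both lie in $h(K\Z)\cong\beta\Z$ we have $\tilde p\neq\tilde q$, so there is $P_0\in\tilde p\setminus\tilde q$. I would then form the "comb" point $x\in\emph{2}^{\Z}$ with $\mathrm{supp}(x)=\bigcup_{n\in P_0}\{Kn,Kn+1,\ldots,Kn+K-1\}$. A short check shows every maximal run of $1$'s in $x$ is a union of adjacent length-$K$ blocks and every gap of $0$'s between runs has the form $K(n'-n-1)$ for consecutive $n<n'$ in $P_0$, hence is $0$ or a positive multiple of $K$; therefore $x\in Z$. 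Finally, since $K\Z\in p$, membership $\mathrm{supp}(x)\in p$ is equivalent to $KP_0\in p$, i.e. to $P_0\in\tilde p$, and likewise for $q$; so $(p\cdot x)_0=1\neq 0=(q\cdot x)_0$, whence $\Phi_Z(p)\neq\Phi_Z(q)$ and $E(Z)=\beta\Z$. As a by-product (as already remarked in the paper for mixing SFTs) this also identifies $E(2^Z,\sigma_*)\cong\beta\Z$.

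I expect the genuinely nontrivial step to be the second one: noticing that the straight imitation of the Even-Shift argument fails because $Z$-admissible sets are sparse, and that the correct remedy is to descend to the sub-copy of $\beta\Z$ on $K\Z$ and to build the separating point out of length-$K$ blocks, which automatically makes all inter-block gaps legal in $Z$. Everything else — surjectivity of $\Phi_Z$, the coordinate formula $(p\cdot x)_m=1\iff\mathrm{supp}(x)-m\in p$, the verification that the comb point is in $Z$, and the reduction of $\mathrm{supp}(x)\in p$ to $P_0\in\tilde p$ — is routine once this construction is in hand.
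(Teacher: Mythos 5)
Your second step — the comb point built over $P_0\in\tilde p\setminus\tilde q$ — is correct, as is the reduction via the pullback ultrafilters on $K\Z$ and the equivalence $\mathrm{supp}(x)\in p\Leftrightarrow KP_0\in p\Leftrightarrow P_0\in\tilde p$ (valid because $K\Z\in p$). But your first step fails: the periodic point $x^{\ast}$ with $\mathrm{supp}(x^{\ast})=K\Z$ is \emph{not} in $Z$ when $K\geq 2$. Between two consecutive $1$'s of $x^{\ast}$ there are exactly $K-1$ zeros, i.e.\ $x^{\ast}$ contains the forbidden block $10^{K-1}1$ (take $n=0$, $j=K-1$ in the list of forbidden words $10^{Kn+j}1$). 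Indeed your own characterization of $Z$-admissible supports — consecutive differences $\equiv 1 \pmod K$ — excludes $K\Z$, so the claim that $x^{\ast}$ ``manifestly lies in $Z$'' contradicts the very remark you make two sentences later. Since your main construction genuinely needs $K\Z\in p\cap q$, the case where $p$ and $q$ lie in different residue classes mod $K$ is left unproved. The repair is easy: replace $x^{\ast}$ by the $2K$-periodic point $y$ with $\mathrm{supp}(y)=2K\Z+\{0,1,\ldots,K-1\}$ (runs of $K$ ones separated by exactly $K$ zeros), which does lie in $Z$; since $\mathrm{supp}(y)$ is a union of residue classes mod $2K$, one checks that $p\cdot y$ is the shift of $y$ by the unique class mod $2K$ belonging to $p$, and $y$ has least period $2K$, so $y$ separates $p$ from $q$ whenever their residues mod $2K$ — in particular mod $K$ — differ. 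With that substitution your two-case argument is complete.

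For comparison, the paper's proof needs neither the case split nor the pullback: it takes a single set $P\in p\setminus q$, intersects it with the residue class mod $2K$ belonging to $p$ (this keeps it in $p\setminus q$), and uses the point with support $P+\{0,1,\ldots,K-1\}$, i.e.\ length-$K$ blocks planted on $P$ itself rather than on $K\Z$; membership $m\in P$ is then read off from the coordinates $0,1,\ldots,K-1$ of $\sigma^m(x)$ all being $1$, exactly as in the Even Shift argument. Your route, once corrected, is a valid alternative, but it is somewhat longer and buys nothing extra here.
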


 \begin{proof} We can prove this exactly on the lines of the previous theorem.

Let $p \neq q \in \beta \Z$. Then there is a $P \subset \Z$ such that $P \in p$ but $P \notin q$. Without loss of generality we can assume that $0 \in P$ and $a \cong b \ (\mod \ 2K), \ \forall a,b \in P$.

Define $(x_i) \in \{0,1\}^{\Z}$, by taking $x_i = \left\{
                                         \begin{array}{ll}
                                           1, & \hbox{if $i \in P \cup (P+1) \cup (P+2) \cup \ldots \cup (P + K-1)$;} \\
                                           0, & \hbox{otherwise.}
                                         \end{array}
                                       \right.$

We note that in $(x_i)$ there are $K \Z_+$ no. of $0's$ between any two occurrences of $1$ and so $(x_i) \in Z$.

Then for $m \in P$,  $\sigma^m((x_i))_0 = 1$, $\sigma^m((x_i))_1 = 1$, \ldots,
$\sigma^m((x_i))_{K-1} = 1$. But if $m \notin P$, then this is not true.

Now $P \in p \Rightarrow p \in h(P) \Rightarrow 0, 1, \ldots, K-1 \in p \ast P = \Phi_Z(p)(P)$. But $0,1, \ldots, K-1 \notin q \ast P = \Phi_Z(q)(P)$.

Thus, $\Phi_Z(p) \neq \Phi_Z(q)$ and so $\Phi_Z$ is injective.

 \end{proof}

 For any $S \subset \N$, define $X_S = \{\{x_i\}  : x_i = x_j = 1 \Leftrightarrow |i - j| \in S \cup \{0\}\} \subset \emph{2}^{\Z}$. It can be easily seen that $X_S$ is a subshift of the $\emph{2-}$shift, and is called the \emph{spacing subshift}. The above theorem shows that $E(X_S) = \beta \Z$ for specific $S$. However, for a general $S \subset \N$, it would be challenging to determine $E(X_S)$.

 \begin{remark} This observation leads to interesting questions:

 1. Can we characterize all subshifts with enveloping semigroups $\beta \Z$? 

 2. Since we just saw an example of a proper factor $\pi:A \to B$ for which the factor $\theta: E(A) \to E(B)$ is an isomorphism. Can we characterize all the properties of the shift space $A$ that $B$ too inherits so that $E(A) \cong E(B)$?

 \end{remark}

 \vskip .5cm

 Now the period doubling system on $\mathbb{S}^1$ is a factor of the $\emph{2}-$shift. Any such factor map will fail to be one-one only on a set of measure zero. Thus $\beta \Z$ factors onto $E(\mathbb{S}^1)$. Also the period doubling system on $\mathbb{S}^1$ is not weakly rigid and so will not have a transitive enveloping semigroup. \textbf{ We conjecture: $E(\mathbb{S}^1) = \beta \Z$.}

 \vskip .5cm

Now the interesting thing is to look at the fact when we change the action of $\Z$ on $\emph{2}^{\Z}$ and take $2\Z$ as our acting group, i.e. we consider the map $\sigma^2$ on $\emph{2}^{\Z}$.

\subsection{Enveloping semigroup of $(\emph{2}^{\Z}, 2\Z)$:} If we proceed as above in this case then we will have the extension action $'\ast'$ of $\beta 2\Z$ on $(\emph{2}^{\Z}, 2\Z)$ as for $p\in \beta2\Z$ and $A\in \emph{2}^{\Z}$
\begin{align*}
p*A=\lbrace 2\gamma\in 2\Z: 2\gamma p\in h(A)\rbrace
\end{align*}
Then by the the same procedure, the natural map $\Phi_{2^{\Z}}: \beta2\Z \rightarrow E(2^{\Z},2\Z)$ defined as $\Phi_{2^{\Z}}(p)(A)= p\ast A$.

Which gives $\Phi_{2^{\Z}}(\beta2\Z)= E(2^{\Z},2\Z)$.

The same procedure as above will shows that this is one-one also.

Suppose,
\begin{align*}
\Phi_{2^{\Z}}(p)&= \Phi_{2^{\Z}}(q)\\
\Phi_{2^{\Z}}(p)(A)&= \Phi_{2^{\Z}}(q)(A)\ \forall\ A \in 2^{\Z}\\
\lbrace 2\gamma\in 2\Z: 2\gamma p\in h(A)\rbrace &= \lbrace 2\eta \in 2\Z: 2\eta q\in h(A)\rbrace
\end{align*}
Now $0\in 2\Z$ is identity and $p\in \beta 2\Z$ is an ultrafilter, then
\begin{align*}
0p\equiv h(0)p &=\lbrace A\subset 2\Z: A*h(0)\in p\rbrace \\ &=\lbrace A\subset 2\Z: \lbrace 2s\in 2\Z: R_{s}^{-1}(A)\in h(0)\rbrace\in p\rbrace\\
&= \lbrace A\subset 2\Z: \lbrace 2s\in 2\Z: 0\in R_{s}^{-1}(A)\rbrace\in p\rbrace\\
&= \lbrace A\subset 2\Z: \lbrace 2s\in 2\Z: 2s\in A\rbrace\in p\rbrace\\
&= \lbrace A\subset 2\Z: A\in p\rbrace\\
&= p.
\end{align*}
So, if $A\in p$ means $p\in h(A)\Rightarrow 0p\in h(A)\Rightarrow\ 2\cdot0p\in h(A) \Rightarrow 0\in p*A=\Phi_{\emph{2}^{\Z}}(p)(A)=\Phi_{\emph{2}^{\Z}}(q)(A)= q*A$.

So, $2\cdot0\in q*A\Rightarrow 2\cdot0q\in h(A)\Rightarrow q\in h(A)\Rightarrow A\in q$.

Thence $\  p\subset q$ and similarly $q\subset
p,\Rightarrow p=q$. Hence $\Phi_{\emph{2}^{\Z}}$ is injective.
Hence  $E(\emph{2}^{\Z},2\Z)\cong \beta 2\Z$. In general for any $n\in \N$, $E(2^{\Z}, n\Z)\cong \beta n\Z$.

\subsection{Isomorphism of $\beta \Z$ and $\beta 2\Z$}

The following result about the isomorphism of $\beta \Z$ and $\beta 2\Z$ is given in \cite{HIN}[Lemma 3.30, Ex. 3.4.1]. We give a detailed proof here;

\begin{theorem} $\beta\Z$ and $\beta 2\Z$ are isomorphic as a topological space, as a flow and also as a semigroup.
\end{theorem}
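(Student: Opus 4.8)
The plan is to produce the topological-and-algebraic isomorphism $\beta\Z \cong \beta 2\Z$ by exhibiting a concrete bijection at the level of the underlying discrete groups and then invoking the universal property of the Stone-\v{C}ech compactification. First I would note that the doubling map $\delta: \Z \to 2\Z$, $\delta(n) = 2n$, is a group isomorphism of discrete groups (it is a bijection, and $\delta(m+n) = 2m+2n = \delta(m)+\delta(n)$). Since $2\Z$ is discrete and hence its own Stone-\v{C}ech embedding target is $\beta 2\Z$, the composite $\Z \xrightarrow{\delta} 2\Z \hookrightarrow \beta 2\Z$ is a continuous map from $\Z$ into a compact Hausdorff space, so by property (c) of $\beta T$ it extends uniquely to a continuous map $\hat\delta: \beta\Z \to \beta 2\Z$. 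Symmetrically, $\delta^{-1}: 2\Z \to \Z$ extends to a continuous $\widehat{\delta^{-1}}: \beta 2\Z \to \beta\Z$. Then $\widehat{\delta^{-1}} \circ \hat\delta: \beta\Z \to \beta\Z$ is a continuous map restricting to the identity on the dense subset $\Z$, hence equals $\mathrm{id}_{\beta\Z}$ by uniqueness of extensions (and Hausdorffness); likewise $\hat\delta \circ \widehat{\delta^{-1}} = \mathrm{id}_{\beta 2\Z}$. So $\hat\delta$ is a homeomorphism, giving the isomorphism of topological spaces.

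Next I would upgrade this to an isomorphism of flows. The acting group in both flows $(\beta\Z,\Z)$ and $(\beta 2\Z, 2\Z)$ must be matched up via $\delta$, i.e. I would check that $\hat\delta(n \cdot p) = \delta(n) \cdot \hat\delta(p)$ for $n \in \Z$, $p \in \beta\Z$. For $p = m \in \Z$ this is just $\delta(n+m) = \delta(n)+\delta(m)$, which holds; both sides are continuous in $p$ for fixed $n$ (left multiplication $L_{\delta(n)}$ on $\beta 2\Z$ is continuous, $\hat\delta$ is continuous, and $L_n$ on $\beta\Z$ is continuous), so the identity extends from the dense set $\Z$ to all of $\beta\Z$. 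This is the appropriate notion of flow isomorphism here, since the two flows have different (though isomorphic) acting groups; the equivariance is with respect to the group isomorphism $\delta$.

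Finally, for the semigroup isomorphism I would show $\hat\delta(pq) = \hat\delta(p)\hat\delta(q)$ for all $p,q \in \beta\Z$. I would fix $q$ first: the map $p \mapsto \hat\delta(pq)$ is continuous (right multiplication $R_q$ on $\beta\Z$ is continuous, $\hat\delta$ continuous), and the map $p \mapsto \hat\delta(p)\hat\delta(q)$ is continuous in $p$ (the map $x \mapsto x \cdot \hat\delta(q)$, i.e. $R_{\hat\delta(q)}$, is continuous on $\beta 2\Z$, composed with the continuous $\hat\delta$); on the dense set $p = n \in \Z$ they agree iff $\hat\delta(nq) = \delta(n)\hat\delta(q)$, which is exactly the flow-equivariance already established. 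Hence they agree for all $p \in \beta\Z$, with $q$ still fixed. Now fix $p \in \beta\Z$ arbitrary and run the same argument in the second variable: $q \mapsto \hat\delta(pq)$ is continuous (here one uses that left multiplication $L_p$ on $\beta\Z$ need \emph{not} be continuous in general — so instead I would use that $q \mapsto pq$ is continuous when $p \in \Z$, push the identity $\hat\delta(pq)=\hat\delta(p)\hat\delta(q)$ from $q \in \Z$ to all $q$ using continuity of $R_{\hat\delta(p)}$ and $\hat\delta$ only after first establishing it for $p \in \Z$, then bootstrap). Concretely: Step A, prove $\hat\delta(nq) = \delta(n)\hat\delta(q)$ for $n \in \Z$, all $q \in \beta\Z$ (done above); Step B, for fixed $q$, both sides of $\hat\delta(pq)=\hat\delta(p)\hat\delta(q)$ are continuous in $p$ and agree on $\Z$, so agree for all $p$. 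That already finishes it, since $R_q$ is continuous on $\beta\Z$ for every $q \in \beta\Z$ (stated in the excerpt), so no second bootstrap is actually needed.

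The step I expect to be the main obstacle — or at least the one demanding care — is making sure the continuity arguments use only \emph{right} multiplication continuity (valid for all elements of $\beta T$) and \emph{left} multiplication continuity only for elements of $T$ (the only case where it holds), since $\beta\Z$ is merely right topological. The cleanest route, as sketched, is: extend $\delta$ and $\delta^{-1}$ to get the homeomorphism; derive equivariance from density plus uniqueness; derive multiplicativity by fixing the first coordinate in $T$, then the second coordinate arbitrary using continuity of $R_q$. I would also remark that $\hat\delta$ carries $\Z$ onto $2\Z$ and the identity $0$ to $0$, so the isomorphism is unital, and that it restricts to the standard identification on the "finite" part, with the remainder $\beta\Z \setminus \Z$ mapping homeomorphically onto $\beta 2\Z \setminus 2\Z$.
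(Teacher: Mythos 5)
Your proof is correct, but it takes a genuinely different route from the paper's. The paper argues concretely at the level of ultrafilters: it defines the map by pushing an ultrafilter forward along the bijection $n\mapsto 2n$ (i.e.\ $p\mapsto\{f(A):A\in p\}$), verifies by hand that the image is an ultrafilter, checks injectivity and surjectivity directly, gets continuity from the Stone--\v{C}ech extension property, and then establishes the semigroup homomorphism property by an explicit element-by-element computation unraveling the definition of the product of ultrafilters. You instead argue abstractly from the universal property: extend $\delta$ and $\delta^{-1}$ to continuous maps between $\beta\Z$ and $\beta 2\Z$, observe that both composites restrict to the identity on a dense subset and hence are identities, which gives the homeomorphism without ever touching the ultrafilter description; you then obtain $\delta$-equivariance (the flow isomorphism) and multiplicativity by the standard two-step density argument, first fixing the left factor in $\Z$ (using continuity of $L_n$ on $\beta\Z$ and $L_{\delta(n)}$ on $\beta 2\Z$), then fixing the right factor arbitrarily (using continuity of $R_q$ and $R_{\hat\delta(q)}$), and you are correctly careful to use left-multiplication continuity only for elements of the acting group, which is exactly the constraint in a right topological semigroup. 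Your approach is softer, avoids the ultrafilter combinatorics entirely, handles the flow-isomorphism clause more explicitly than the paper does, and generalizes verbatim to any isomorphism of discrete (semi)groups; the paper's approach buys an explicit formula for the isomorphism in the ultrafilter language that it uses throughout that section. The only blemish is the somewhat meandering passage about a possible second bootstrap in the last step, which you yourself correctly note is unnecessary since $R_q$ is continuous for every $q\in\beta\Z$.
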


\begin{proof}
First of all $\Z$ and $2\Z$ are isomorphic as topological groups under the map $f: \Z \rightarrow 2\Z$ as $f(n)= 2n$ and $\beta \Z$ and $\beta 2\Z$ are Stone-$\check{C}$ech compactifications of $\Z$ and $2\Z$ respectively. So $i_{\Z}: \Z \rightarrow \beta\Z $ and $i_{2\Z}:2\Z \rightarrow \beta 2\Z$ are the natural maps defined as $i_{\Z}(n)= h(n)$ and $i_{2\Z}(2n)= h(2n)$ respectively, where $n\equiv h(n)$ identified as an ultrafilter over $\Z$ and the maps look as inclusions. So, by taking composition we have that $i_{2\Z}\circ f=g:\Z \rightarrow \beta 2\Z$ as $g(n)= h(2n)$ is a continuous map. So, by using the Stone-$\check{C}$ech compactifications $\beta 2\Z$ of $2\Z$ this map has a continuous extension $\Phi: \beta 2\Z \rightarrow \beta \Z$ defined as $\Phi(p)=\lbrace f(A): A \in p \rbrace$.

\textbf{$\Phi$ is well defined}: Suppose $p$ is an ultrafilter on $\Z$ and we have to show that $\Phi(p)= \lbrace f(A): A \in p \rbrace$ is an ultrafilter on $2\Z$.

if $f(A), f(B) \in \Phi(p)$ where $A, B \in p$ then $f(A)\cap f(B) \subseteq f(A\cap B)$. Since $\emptyset \notin p\ \Rightarrow \emptyset \notin \Phi(p)$. For $A,B\in p$, so $A\cap B \neq \emptyset\ \Rightarrow f(A\cap B)\neq \emptyset$. So, $f(A)\cap f(B)\neq \emptyset$.

Also let $f(A)\in \Phi(p)$ and $B\subset \beta 2\Z$ such that $f(A)\subset B$. Here $A\in p$ and since $f$ is a bijection then $A\subset f^{-1}(B)$ where $f^{-1}(B)\subset \beta \Z$.

As $A\subset f^{-1}(B)\ \Rightarrow f^{-1}(B)\in p\ \Rightarrow f(f^{-1}(B))\in \Phi(p)\ \Rightarrow B\in \Phi(p)$. So, $\Phi(p)$ is a filter on $\beta 2\Z$.
Now, if $K \notin \Phi(p)\ \Rightarrow f^{-1}(F)\notin p$. Since $p$ is an ultrafilter $\Rightarrow\ \Z\setminus f^{-1}(F)\in p$.

Since $f$ is bijective. So, $\Z\setminus f^{-1}(F)= f^{-1}(2\Z \setminus F)$. Therefore $f^{-1}(2\Z \setminus F) \in p\ \Rightarrow f(f^{-1}(2\Z \setminus F))\in \Phi(p)\ \Rightarrow 2\Z \setminus F\in \Phi(p)$, which shows that $\Phi(p)$ is an ultrafilter.
Also if $p=q$ then by the definition of $\Phi,\ \Phi(p)=\Phi(q)$. So, $\Phi$ is a well defined map.

\textbf{$\Phi$ is one-one}: Suppose $p\neq q$ then $\exists$ $A\in p$ and $A\notin q$. Since $q$ is an ultrafilter $\Rightarrow \Z \setminus A \in q$. So, $f(A)\in \Phi(p)$ and $f(\Z\setminus A)\in \Phi(q)$.

Again the bijection of $f$ shows that $2\Z\setminus f(A)\in \Phi(q)$. Since $\Phi(q)$ is also an ultrafilter $\Rightarrow\ f(A)\notin \Phi(q)$. Which shows that $\Phi(p)\neq \Phi(q)$. Therefore $\Phi$ is one-one.

\textbf{$\Phi$ is onto}: Suppose $q\in \beta
2\Z$ is an ultrafilter over $2\Z$. Then consider the set $\lbrace f^{-1}(B): B\in q\rbrace$. Since $f$ is bijection then in the similar way as we did above for $f,\ \lbrace f^{-1}(B): B\in q\rbrace$ is an ultrafilter over $\Z$ and if we take $p= \lbrace f^{-1}(B): B\in q\rbrace$ then $\Phi(p)=q$, which shows that $\Phi$ is onto also.

Hence $\Phi$ is a continuous bijection between $\beta \Z$ and $\beta2\Z$ and since both spaces are compact, they are topologically homeomorphic.

Since $\beta\Z$ and $\beta2\Z$ are semingroups also. So, we will prove that $\Phi$ is semigroup homomorphism also.

\textbf{$\Phi$ is semigroup homomorphism}: To prove $\Phi(pq)=\Phi(p)\Phi(q)$.

Let \begin{align*}
A\in \Phi(pq)\ & \Leftrightarrow\ f^{-1}(A)\in pq \\
& \Leftrightarrow \lbrace s\in \Z: f^{-1}(A)-s \in p \rbrace \in q \\
& \Leftrightarrow \lbrace s\in \Z: \lbrace m \in \Z: m+s\in f^{-1}(A)\rbrace \in p \rbrace \in q\\
& \Leftrightarrow \lbrace s\in \Z: \lbrace m\in \Z: f(m+s)\in A\rbrace \in p \rbrace \in q\\
& \Leftrightarrow \lbrace s\in \Z: \lbrace m\in \Z: 2m+2s\in A\rbrace \in p \rbrace \in q \\
& \Leftrightarrow \lbrace 2s\in 2\Z: \lbrace 2m\in 2\Z: 2m+2s\in A\rbrace \in \Phi(p) \rbrace \in \Phi(q)\\
& \Leftrightarrow A \in \Phi(p)\Phi(q).
\end{align*}
Which shows that $\Phi(pq)=\Phi(p)\Phi(q)$.
\end{proof}

\begin{corollary} For $n \in \N$, $\beta\Z$ and $\beta n\Z$ are isomorphic as a topological space, as a flow and also as a semigroup. \end{corollary}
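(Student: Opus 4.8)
The plan is to observe that the proof of the preceding theorem used nothing special about the integer $2$: it rested only on the fact that $f:\Z\rightarrow 2\Z$, $f(k)=2k$, is an isomorphism of (discrete) topological groups. So I would fix $n\in\N$ and work with the map $f_n:\Z\rightarrow n\Z$ given by $f_n(k)=nk$, which is again a bijective group homomorphism between discrete groups, hence an isomorphism of topological groups. Writing $i_\Z:\Z\rightarrow\beta\Z$ and $i_{n\Z}:n\Z\rightarrow\beta n\Z$ for the canonical embeddings, the composite $g_n=i_{n\Z}\circ f_n:\Z\rightarrow\beta n\Z$ is continuous, so by the universal property of $\beta\Z$ it extends uniquely to a continuous map $\Phi_n:\beta\Z\rightarrow\beta n\Z$, and unravelling the construction gives $\Phi_n(p)=\lbrace f_n(A):A\in p\rbrace$ for an ultrafilter $p$ on $\Z$.

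Then I would repeat the verifications from the theorem verbatim, with $f$ replaced by $f_n$ and $2\Z$ by $n\Z$ throughout. First, $\Phi_n$ is well defined: since $f_n$ is a bijection, $f_n(A)\cap f_n(B)=f_n(A\cap B)$ and $f_n$ respects complements, so $\lbrace f_n(A):A\in p\rbrace$ is an ultrafilter on $n\Z$. Second, $\Phi_n$ is injective: if $p\neq q$, pick $A\in p$ with $A\notin q$; then $f_n(A)\in\Phi_n(p)$ while $n\Z\setminus f_n(A)=f_n(\Z\setminus A)\in\Phi_n(q)$, so $\Phi_n(p)\neq\Phi_n(q)$. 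Third, $\Phi_n$ is onto: given an ultrafilter $q$ on $n\Z$, the family $\lbrace f_n^{-1}(B):B\in q\rbrace$ is an ultrafilter $p$ on $\Z$ with $\Phi_n(p)=q$. Since $\beta\Z$ and $\beta n\Z$ are compact Hausdorff, this continuous bijection is a homeomorphism.

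For the flow and semigroup structures I would note that, by construction, $\Phi_n$ intertwines the $\Z$-action on $\beta\Z$ with the $n\Z$-action on $\beta n\Z$ along the group isomorphism $f_n$ (this is precisely the statement that $\Phi_n$ extends $g_n$), so it is an isomorphism of flows in the appropriate sense; and the identity $\Phi_n(pq)=\Phi_n(p)\Phi_n(q)$ follows from the same chain of equivalences as in the theorem, with every occurrence of $2$, $2s$, $2m$ replaced by $n$, $ns$, $nm$. There is no genuine obstacle here: the only point requiring (minor) attention is the bookkeeping about which acting group acts on which space, so that the word ``flow'' in the statement is read as ``isomorphism of flows over the group isomorphism $f_n:\Z\rightarrow n\Z$''. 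Everything else is a transcription of the proof already given for the case $n=2$.
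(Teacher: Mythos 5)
Your proposal is correct and is exactly the argument the paper intends: the corollary is deduced by running the proof of the $\beta\Z \cong \beta 2\Z$ theorem verbatim with the bijection $f_n(k)=nk$ in place of $f(k)=2k$, and your verifications of well-definedness, injectivity, surjectivity, compactness giving a homeomorphism, and the semigroup/flow identities match that template. No gaps.
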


\begin{theorem} For the $\emph{2}-$shift, we have $E(\emph{2}^{\Z}, \sigma) \cong E(\emph{2}^{\Z}, \sigma^2) \cong E(\emph{2}^{\Z}, \sigma^3) \cong \ldots$ \end{theorem}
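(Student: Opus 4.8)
The plan is to chain together the two isomorphism results already established earlier in the excerpt. Specifically, I would first invoke the theorem $E(\emph{2}^{\Z},\sigma) \cong \beta\Z$ (Theorem \ref{betaz}), and then, by the same argument carried out in the subsection on $E(\emph{2}^{\Z},2\Z)$, the general identification $E(\emph{2}^{\Z}, n\Z) \cong \beta n\Z$ for every $n \in \N$. Since for the cascade $(\emph{2}^{\Z}, \sigma^n)$ the acting group is precisely $n\Z$ (the subgroup of $\Z$ generated by $n$), we have $E(\emph{2}^{\Z}, \sigma^n) = E(\emph{2}^{\Z}, n\Z) \cong \beta n\Z$. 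Then I would apply the corollary that $\beta\Z \cong \beta n\Z$ as topological spaces, as flows, and as semigroups, for every $n \in \N$. Composing these isomorphisms gives $E(\emph{2}^{\Z}, \sigma^n) \cong \beta n\Z \cong \beta\Z \cong E(\emph{2}^{\Z}, \sigma)$ for every $n$, and hence all the $E(\emph{2}^{\Z}, \sigma^n)$ are mutually isomorphic, which is exactly the displayed chain.

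The one point that needs a word of care — and which I regard as the only genuine (though minor) obstacle — is the identification of the enveloping semigroup of the cascade generated by $\sigma^n$ with the enveloping semigroup of the flow $(\emph{2}^{\Z}, n\Z)$. Here one should note that $E(\emph{2}^{\Z}, \sigma^n)$ is by definition the closure in $(\emph{2}^{\Z})^{\emph{2}^{\Z}}$ of $\{(\sigma^n)^k : k \in \Z\} = \{\sigma^{nk} : k \in \Z\} = \{\sigma^m : m \in n\Z\}$, which is exactly the set whose closure defines $E(\emph{2}^{\Z}, n\Z)$ when $n\Z$ acts on $\emph{2}^{\Z}$ by the restriction of the shift action. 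So the two enveloping semigroups coincide literally, not merely up to isomorphism; no extra work is needed beyond this observation.

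Thus the proof is essentially a two-line deduction: $E(\emph{2}^{\Z},\sigma^n) = E(\emph{2}^{\Z}, n\Z) \cong \beta n\Z \cong \beta\Z \cong E(\emph{2}^{\Z}, \sigma)$, where the first equality is the definitional remark above, the first isomorphism is the computation in the subsection on $E(\emph{2}^{\Z}, 2\Z)$ (valid verbatim for general $n$), the second is the corollary $\beta\Z \cong \beta n\Z$, and the last is Theorem \ref{betaz}. Since isomorphism of right topological semigroups (respectively of flows, of topological spaces) is an equivalence relation, the full chain $E(\emph{2}^{\Z},\sigma) \cong E(\emph{2}^{\Z},\sigma^2) \cong E(\emph{2}^{\Z},\sigma^3) \cong \cdots$ follows immediately.
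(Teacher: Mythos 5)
Your proposal is correct and is essentially the paper's own argument: the theorem is stated there as an immediate consequence of $E(\emph{2}^{\Z},\sigma^n)=E(\emph{2}^{\Z},n\Z)\cong\beta n\Z$ (the computation in the subsection on $E(\emph{2}^{\Z},2\Z)$, generalized to $n\Z$) together with the corollary that $\beta\Z\cong\beta n\Z$ as space, flow and semigroup, and Theorem \ref{betaz}. Your explicit remark that $E(\emph{2}^{\Z},\sigma^n)$ and $E(\emph{2}^{\Z},n\Z)$ coincide literally as closures of the same subset of $(\emph{2}^{\Z})^{\emph{2}^{\Z}}$ is exactly the (tacit) identification the paper relies on.
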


\begin{corollary} For any mixing subshift of finite type or mixing sofic shift $X$, we have $E(X, \sigma) \cong E(X, \sigma^2) \cong E(X, \sigma^3) \cong \ldots$ \end{corollary}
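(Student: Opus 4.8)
The plan is to fix $k\in\N$ and prove $E(X,\sigma^k)\cong\beta\Z$; since this will hold for every $k$, and in particular for $k=1$, the stated chain of isomorphisms follows at once — exactly in the spirit of the preceding theorem for the $\emph{2}$-shift, which is itself a mixing subshift of finite type and so is recovered as the case $X=\emph{2}^{\Z}$.

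The first step is to recognize $(X,\sigma^k)$, up to topological conjugacy, as a shift of the same type. Passing to the $k$-th higher power shift $X^{[k]}$ over the alphabet $\mathcal{B}_k(X)$ — obtained by recoding a point $x\in X$ into the bi-infinite sequence of its consecutive length-$k$ blocks $\bigl(x_{[ik,\,(i+1)k-1]}\bigr)_{i\in\Z}$ — gives a homeomorphism conjugating $(X,\sigma^k)$ with $(X^{[k]},\sigma)$; and it is standard (see \cite{LIN}) that $X^{[k]}$ is a subshift of finite type when $X$ is, and sofic when $X$ is sofic, since a finite list of forbidden blocks for $X$ (after passing to an $M$-step presentation) transforms into a finite list for $X^{[k]}$, respectively a finite presentation transports along the block map. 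One also checks that mixing is inherited: for nonempty open $U,V\subseteq X$, topological mixing of $(X,\sigma)$ supplies $N$ with $\sigma^n(U)\cap V\neq\emptyset$ for all $n\geq N$, hence $\sigma^{kn}(U)\cap V\neq\emptyset$ for all $n\geq N$, so $(X,\sigma^k)$, and therefore $X^{[k]}$, is again mixing. Thus $X^{[k]}$ is a mixing subshift of finite type, resp. a mixing sofic shift.

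Next I would apply Theorem \ref{msft} in the finite-type case, and in the sofic case the consequence of Boyle's factor theorem \ref{boy} recorded above (a Cartesian power of a mixing sofic shift is mixing sofic, hence factors onto the $\emph{2}$-shift, forcing its enveloping semigroup to be $\beta\Z$, which in turn forces $E$ of the shift itself to be $\beta\Z$ via the projection/product-of-ultrafilters argument), to conclude $E(X^{[k]},\sigma)\cong\beta\Z$. Finally, the enveloping semigroup is a topological-conjugacy invariant: a conjugacy $h:(X,\sigma^k)\to(X^{[k]},\sigma)$ induces $p\mapsto h\circ p\circ h^{-1}$, which is continuous on the relevant function spaces in the product topology, sends $(\sigma^k)^n$ to $\sigma^n$ for every $n\in\Z$, and hence restricts to an isomorphism $E(X,\sigma^k)\xrightarrow{\ \sim\ }E(X^{[k]},\sigma)$ of flows and of right topological semigroups. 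Combining, $E(X,\sigma^k)\cong\beta\Z$ for every $k\in\N$, which is the claim.

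The one step requiring genuine care — and the only real obstacle — is the opening reduction: that the $k$-th higher power of a mixing subshift of finite type (resp. mixing sofic shift) is again a mixing subshift of finite type (resp. mixing sofic shift). Mixing being inherited is immediate as above, and conjugacy-invariance of $E$ is routine, but the assertion that the higher power stays within the class should be stated precisely, by exhibiting the finite forbidden-block list (resp. the transported finite presentation) for $X^{[k]}$ together with the explicit conjugating block map; everything downstream then follows from results already established in the paper.
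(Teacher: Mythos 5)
Your argument is correct, and it supplies a proof that the paper leaves implicit: the corollary is stated there without its own argument, resting on Theorem \ref{msft}, the remark that every mixing sofic shift has enveloping semigroup $\beta\Z$, and the preceding treatment of the $2$-shift, whose powers were handled by computing $E(\{0,1\}^{\Z}, n\Z)\cong\beta n\Z$ directly with ultrafilters and then invoking $\beta n\Z\cong\beta\Z$. Your route differs at the key step: instead of redoing the ultrafilter computation or the Boyle argument (Theorem \ref{boy}) for the $n\Z$-action on $X$, you recode $(X,\sigma^k)$ as the higher power shift $(X^{[k]},\sigma)$, check that this stays inside the class of mixing subshifts of finite type (resp.\ mixing sofic shifts), apply the already-established $\sigma$-results, and transport the conclusion back through $p\mapsto h\circ p\circ h^{-1}$, which is indeed an isomorphism $E(X,\sigma^k)\cong E(X^{[k]},\sigma)$ both of flows and of semigroups (a continuous bijection between compact Hausdorff function spaces carrying $\sigma^{kn}$ to $\sigma^{n}$). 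What your reduction buys is uniformity: every power becomes literally a shift space to which Theorem \ref{msft} and the sofic remark apply verbatim, so nothing about $\beta n\Z$ or the factor/injectivity argument needs to be revisited; the cost is importing the standard facts from \cite{LIN} that higher power shifts of SFTs (sofic shifts) are again SFTs (sofic) and that mixing passes to powers, which you rightly flag as the one step to write out via an explicit forbidden-block list or transported presentation. The only caveat is inherited from the paper rather than introduced by you: Theorem \ref{msft} and the sofic remark tacitly assume the shift is nondegenerate, so that some Cartesian power has entropy exceeding $\log 2$; in the degenerate case the corollary holds vacuously, so your proof is unaffected.
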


\section{Saturated Enveloping Semigroups}

We look into an elementary observation:

\begin{proposition}
Let $(X,f)$ be a cascade and $f$ be surjective on $E(X)$, then
$$E(X,f)=E(X,f^n)\cup f(E(X,f^n))\cup f^2(E(X,f^n))\cup f^3(E(X,f^n))\cup \ldots \cup f^{n-1}(E(X,f^n))$$

 $\forall\ n\in \mathbb{N}$, considered as a semigroup.
\end{proposition}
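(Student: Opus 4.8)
The plan is to analyze $E(X,f)$ as the closure of $\{f^k : k \in \mathbb{Z}\}$ in $X^X$ and to split the integers modulo $n$. The starting observation is that $\{f^k : k \in \mathbb{Z}\} = \bigcup_{j=0}^{n-1} \{f^{j} \cdot f^{nm} : m \in \mathbb{Z}\} = \bigcup_{j=0}^{n-1} f^j\{(f^n)^m : m \in \mathbb{Z}\}$, an honest set-theoretic equality of the underlying $\mathbb{Z}$-orbit. Taking closures in the compact space $X^X$, and using that closure commutes with finite unions, gives $E(X,f) = \overline{\{f^k : k\in\mathbb{Z}\}} = \bigcup_{j=0}^{n-1} \overline{f^j \{(f^n)^m : m\in\mathbb{Z}\}}$. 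The key point is then that each map $\rho_{f^j} : X^X \to X^X$, $g \mapsto f^j g$, is continuous (this is the left-multiplication continuity recorded in Section~5.3 / the discussion of $X^X$ as a semigroup), hence $\overline{f^j\{(f^n)^m : m\in\mathbb{Z}\}} = \overline{\rho_{f^j}(\{(f^n)^m : m\in\mathbb{Z}\})}$. Continuity alone gives the inclusion $\rho_{f^j}(\overline{A}) \subseteq \overline{\rho_{f^j}(A)}$; to get equality I need $\rho_{f^j}$ to be a \emph{closed} map, which is where surjectivity of $f$ enters — if $f$ is surjective (hence, being a continuous self-map of a compact Hausdorff space, a homeomorphism in the cascade setting, or at least $f^j$ induces a closed map on $X^X$ when $f$ is onto $E(X)$ as hypothesized), then $\rho_{f^j}$ is a closed embedding and $\rho_{f^j}(\overline{A}) = \overline{\rho_{f^j}(A)}$. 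Thus $\overline{f^j\{(f^n)^m\}} = f^j \overline{\{(f^n)^m\}} = f^j E(X,f^n)$, and assembling the union yields exactly $E(X,f) = \bigcup_{j=0}^{n-1} f^j(E(X,f^n))$.

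The second half is to check this decomposition respects the semigroup structure, i.e. that the right-hand side is closed under composition and the identification is as semigroups, not merely as sets. Here I would argue as follows: $E(X,f^n)$ is itself a right-topological semigroup (being an enveloping semigroup of the cascade $(X,f^n)$), $f^j E(X,f^n) f^{-j}$-type conjugation arguments show each $f^j E(X,f^n)$ is a two-sided coset-like piece, and composition of $f^{j_1} p$ with $f^{j_2} q$ (with $p,q \in E(X,f^n)$) lands in $f^{j_1+j_2} E(X,f^n)$ after absorbing the appropriate power of $f^n$ into the $E(X,f^n)$ factor, using that $f^n$ is the identity element's-worth of translation inside $E(X,f^n)$ — more precisely $f^{j_1} p f^{j_2} q$: since $f^n \in E(X,f^n)$ and $E(X,f^n)$ is $f^n$-invariant, $p f^{j_2} = f^{j_2} (f^{-j_2} p f^{j_2})$ and we reduce the exponent mod $n$. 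The exponents add modulo $n$, matching the cyclic-group bookkeeping, so the union is genuinely a semigroup decomposition indexed by $\mathbb{Z}/n\mathbb{Z}$.

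The step I expect to be the main obstacle is the passage $\rho_{f^j}(\overline{A}) = \overline{\rho_{f^j}(A)}$, i.e. justifying that left multiplication by $f^j$ is a \emph{closed} (equivalently here, open onto its image / homeomorphic embedding) map on the relevant subsets of $X^X$ — this is precisely why the hypothesis ``$f$ surjective on $E(X)$'' is needed and cannot be dropped. Without surjectivity one only gets $E(X,f) \supseteq \bigcup_{j=0}^{n-1} f^j(E(X,f^n))$ from the trivial inclusion, and the reverse inclusion can fail because a net $f^{k_i} \to p$ with $k_i \equiv j \pmod n$ need not force $p \in f^j E(X,f^n)$ when $f^j$ is not a homeomorphism of $X^X$. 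I would handle this by first noting that a surjective continuous self-map of a compact metric space that generates the cascade is automatically a homeomorphism, so $\rho_{f^j}: X^X \to X^X$ is a homeomorphism with inverse $\rho_{f^{-j}}$, making it both open and closed; then the equality of closures is immediate and the whole decomposition follows. A secondary, more bookkeeping-level obstacle is verifying the pieces $f^j(E(X,f^n))$ are compatible under composition exactly as stated (as opposed to overlapping in uncontrolled ways), but this is routine once one observes $f^n \in E(X,f^n)$ acts as an ``internal shift'' allowing exponent reduction mod $n$.
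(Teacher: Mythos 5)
Your proposal is correct in substance and follows the same underlying idea as the paper's proof: split the powers of $f$ into residue classes modulo $n$, identify $\overline{\{f^{j+kn}:k\in\Z\}}$ with $f^j\bigl(E(X,f^n)\bigr)$, and assemble the finite union. The organizational difference is that the paper proves the inclusion $E(X,f)\subseteq\bigcup_{j}f^j(E(X,f^n))$ by taking a net (the paper says sequence) $f^{n_k}\to p$ and passing to a subnet lying in a single residue class, whereas you get both inclusions at once from ``closure of a finite union is the union of the closures''; your version is arguably cleaner, and your extra paragraph checking that the union is closed under composition (using that elements of $E(X)$ commute with powers of $f$, so exponents add mod $n$ after absorbing $f^n$ into $E(X,f^n)$) addresses the phrase ``considered as a semigroup'', which the paper's proof does not spell out.

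Two caveats on your justification of the key step $f^j(\overline{A})=\overline{f^j(A)}$ with $A=\{f^{kn}:k\in\Z\}$. First, no closed-map or homeomorphism argument is really needed: $\overline{A}=E(X,f^n)$ is a closed subset of the compact Hausdorff space $X^X$, hence compact, so its image under the continuous map $\rho_{f^j}$ is compact, hence closed, and since it contains $f^j(A)$ it contains $\overline{f^j(A)}$; the reverse inclusion is just continuity. Consequently your diagnosis that this is ``precisely where the surjectivity hypothesis enters and cannot be dropped'' is off --- the closure identity (and indeed the whole set-theoretic decomposition) goes through by compactness alone, even for a semicascade. Second, your auxiliary claim that a surjective continuous self-map of a compact metric space is automatically a homeomorphism is false in general (the doubling map on $\mathbb{S}^1$ is a counterexample); what saves your argument is simply that a cascade is by definition generated by a homeomorphism, so $\rho_{f^j}$ does have the continuous inverse $\rho_{f^{-j}}$. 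Neither caveat creates a gap in the proof of the stated proposition; they only affect your commentary on the role of the hypothesis.
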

\begin{proof}
Since $E(X,f^n)=\overline{\lbrace \ldots, f^{-2n}, f^{-n}, e, f^n, f^{2n},\ldots, f^{kn},\ldots\rbrace}\subseteq E(X,f)$

and so for $1 \leq m \leq n-1$, $f^m(\overline{\lbrace \ldots, f^{-2n}, f^{-n}, e, f^n, f^{2n},\cdots, f^{kn},\cdots\rbrace})$

  $= \overline{\lbrace \ldots, f^{m-2n}, f^{m-n}, f^m, f^{m+n},f^{m+2n},\cdots, f^{m+kn},\cdots \rbrace}\subseteq E(X,f)$.

  So, $E(X,f^n)\cup f(E(X,f^n))\cup f^2(E(X,f^n))\cup f^3(E(X,f^n))\cup \cdots \cup f^{n-1}(E(X,f^n))\subseteq E(X,f)$.

On the other way, fix $n \in \N$ and let $p\in E(X,f)$ then there is a sequence $\lbrace n_k \rbrace$ such that
$f^{n_k}\longrightarrow p$.

Now consider the set $S_1 = \lbrace k\in \N:n_k\in n\mathbb{Z}\rbrace$.

\textbf{Case-1:} if for every $k\in \N,$ there is an $l\in \N$ with $l\geq k$ such that $n_{l}\in n\Z$. So, for every $k\in \N$ there is an $l\in \N$ with $l\geq k$ such that $l \in S_1$.

Therefore $\lbrace n_{k_l} : k_{l} \in S_1 \rbrace$ is a subsequence of $\lbrace n_{k} \rbrace$ such that $f^{n_{k_l}}$ converges to p. So, $p \in E(X,f^n)$.

\textbf{Case-2:} Suppose there is a $k_0 \in \N$ such that $\forall\ k\geq k_0, \ n_k \in \lbrace n\Z+1\rbrace \cup \lbrace n \Z+2 \rbrace \cup \ldots \cup \lbrace n\Z+(n-1)\rbrace$, i.e. $\forall \ k \geq k_0, \ n_k \notin n\Z$.

Now if for every $k\in \N$ there is an $l\in \N$ as $l\geq k$ such that $n_l\in \lbrace n\Z+1\rbrace$. Then $S_2=\lbrace k\in \N: k\geq k_0\ \texttt{and}\ n_k \in \lbrace n\Z +1 \rbrace \rbrace$.

Again as the last case, $\lbrace n_{k_m} : k_{m} \in S_2 \rbrace$ is a subsequence of $\lbrace n_{k} \rbrace$ such that $f^{n_{k_m}}$ converges to $p$. So, $p\in \overline{\lbrace \ldots, f^{-2n+1}, f^{-n+1}, f, f^{n+1},f^{2n+1},\ldots,f^{ln+1},\ldots\rbrace}$

$ = f(\overline{\lbrace \ldots, f^{-2n}, f^{-n}, e, f^{n},f^{2n},\ldots,f^{ln},\ldots\rbrace})=f(E(X,f^n))$.

If there is $k_1$ such that $\forall\ k\geq k_1,\ n_k \in \lbrace n \Z +2 \rbrace \cup \lbrace n \Z +3 \rbrace \ldots \cup \lbrace n\Z +(n-1)\rbrace$. Again by preceding in similar way as above, $p\in f^{r}(E(X,f^{n}))$ for some $r\in \{ 1,2, \ldots, n-1\}$. Therefore $p\in E(X,f^n)\cup f(E(X,f^n))\cup f^2(E(X,f^n))\cup f^3(E(X,f^n))\cup \ldots \cup f^{n-1}(E(X,f^n))$.

Hence $E(X,f)= E(X,f^n)\cup f(E(X,f^n))\cup f^2(E(X,f^n))\cup f^3(E(X,f^n))\cup \ldots \cup f^{n-1}(E(X,f^n))$.
\end{proof}

\begin{remark} Note that $E(X,f)= E(X,f^n)\cup f(E(X,f^n))\cup f^2(E(X,f^n))\cup f^3(E(X,f^n))\cup \ldots \cup f^{n-1}(E(X,f^n))$ as a semigroup. Thus, $E(X,f^n) \subseteq E(X,f)$ as a semigroup. However, we cannot claim any relation between the systems $(E(X,f^n), f^n)$ and $(E(X,f),f)$.

\vskip .5cm

For example consider the irrational rotation $(\mathbb{S}^1, T_\alpha)$, where $T_\alpha$ is the rotation by angle $\alpha$. Then $T^n_\alpha = T_{n\alpha}$. Hence $(E(\mathbb{S}^1, T_\alpha), T_\alpha) \cong (\mathbb{S}^1, T_\alpha)$ and $(E(\mathbb{S}^1, T_{n\alpha}), T_{n\alpha}) \cong (\mathbb{S}^1, T_{n\alpha})$.

\vskip .5cm

And so $(E(\mathbb{S}^1, T_\alpha), T_\alpha) \ncong (E(\mathbb{S}^1, T_{n\alpha}), T_{n\alpha})$. \end{remark}

\vskip .5cm

Then, can we expect any relation between such systems here?

\begin{definition}
For a flow $(X,T)$ or a cascade $(X,f)$, the enveloping semigroup $E(X)$ is called \emph{saturated enveloping semigroup}\ (SES) if $E(X)\cong E(2^X)$ both as a semigroup as as a flow.
\end{definition}

\begin{definition}
For a cascade $(X,f)$, the enveloping semigroup $(E(X),f)$ is called super saturated if $E(X,f)\cong E(X,f^2)\cong E(X,f^3)\cong \ldots \cong E(X,f^n)\cong\ldots \cong E(2^X, f_{\ast})$ for each $n\in \N$, both as a semigroup and as a flow.
\end{definition}

 Note that every equicontinuous system has saturated enveloping semigroup and the simplest example of saturated enveloping semigroup is the irrational rotation on the unit circle $\mathbb{S}^{1}$.

 \vskip .5cm

\begin{example}
Since we have shown that in case of $\emph{2}$-shift, $\beta\Z \cong E(\emph{2}^{\Z},\sigma)\cong E(\emph{2}^{\Z},\sigma^{2})\cong E(\emph{2}^{\Z},\sigma^{3})\cong \cdots \cong E(\emph{2}^{\Z},\sigma^{n})\cong \cdots \cong E(2^{\emph{2}^\mathbb{Z}}, \sigma)$ as a semigroup as well as a flow. So, $E(\emph{2}^\mathbb{Z},\mathbb{Z})$ is a super  saturated enveloping semigroup.
\end{example}
\begin{example}
$\mathbb{S}^{1}$ is the unit circle and $\alpha$ is an irrational number the $T_\alpha:\mathbb{S}^{1}\rightarrow \mathbb{S}^{1}$ is an irrational rotation as $T_\alpha(e^{i\theta})=e^{i(\theta+2\pi \alpha})$. The system $(\mathbb{S}^{1},T_\alpha)$ is an equicontinuous minimal system. So  $E(\mathbb{S}^{1},T_\alpha)\cong (\mathbb{S}^{1},T_\alpha)$. Again, $E(2^{\mathbb{S}^1}, {T_\alpha}_*) \cong E(\mathbb{S}^{1},T_\alpha)$. So $E(\mathbb{S}^{1},T_\alpha)$ is saturated. Now if we consider the system $(\mathbb{S}^{1},T_\alpha^2)$ then this will be the irrational rotation by angle $2\alpha$ and so $(\mathbb{S}^{1},T_\alpha)\ncong (\mathbb{S}^{1},T_\alpha^2)$. Since $(\mathbb{S}^{1},T_\alpha^2)$ is also an equicontinuous minimal system, $E(\mathbb{S}^{1},T_\alpha^2)\cong (\mathbb{S}^{1},T_\alpha^2)$. So, $E(\mathbb{S}^{1},T_\alpha^{2})\ncong E(\mathbb{S}^{1},T_\alpha)$. Therefore the enveloping semigroup of an irrational rotation on unit circle is not super saturated.
\end{example}

Glasner \cite{STM} has defined a \emph{complete system}.  The system $(X, T)$ is \emph{$n-$complete} if for every point $(x_1, \ldots , x_n) \in X^n$ with distinct components the orbit $T(x_1, \ldots , x_n)$ is dense in $X^n$. It is called \emph{complete} when it is $n-$complete for every $n \in N$.

\begin{theorem} \cite{STM} Let $(X, T)$ be a dynamical system. Then $E(X, T) = X^X$ if and only
if $(X, T)$ is complete. \end{theorem}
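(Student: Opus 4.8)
\textbf{Proof proposal for the statement} $E(X,T) = X^X$ \emph{if and only if} $(X,T)$ \emph{is complete.}

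The plan is to prove the two implications separately, in both cases exploiting the universality of $\beta T$ together with the identification $E(X^\Lambda) \cong \Delta E(X)^\Lambda$ for any index set $\Lambda$, which was recorded earlier in the excerpt. First I would fix the easy direction. Suppose $E(X,T) = X^X$. Let $(x_1,\ldots,x_n) \in X^n$ have distinct components; I want to show $\overline{T(x_1,\ldots,x_n)} = X^n$. Given an arbitrary target $(y_1,\ldots,y_n) \in X^n$, the distinctness of the $x_i$ lets me choose a function $g \in X^X$ with $g(x_i) = y_i$ for each $i$ (this is where distinctness is essential — it makes the prescription consistent). Since $E(X,T) = X^X$, there is $p \in \beta T$ with $\Phi_X(p) = g$, hence a net $t_\alpha \to p$ in $\beta T$ with $\pi^{t_\alpha} \to g$ pointwise in $X^X$; in particular $t_\alpha(x_i) \to y_i$ simultaneously for all $i$, so $(y_1,\ldots,y_n)$ lies in the orbit closure of $(x_1,\ldots,x_n)$. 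As $(y_1,\ldots,y_n)$ and $n$ were arbitrary, $(X,T)$ is complete.

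For the converse, suppose $(X,T)$ is complete and let $g \in X^X$ be arbitrary; I must produce $p \in \beta T$ (equivalently an element of $E(X,T)$) whose action on $X$ agrees with $g$. The idea is to approximate $g$ on finite sets and pass to a limit using compactness of $\beta T$. Fix a finite subset $F = \{x_1,\ldots,x_n\} \subseteq X$ — without loss of generality with distinct entries — and a neighbourhood $U_i$ of $g(x_i)$ for each $i$. Because $(X,T)$ is $n$-complete, the orbit of $(x_1,\ldots,x_n)$ is dense in $X^n$, so there is $t \in T$ with $t(x_i) \in U_i$ for all $i$; this exhibits a map in $\{\pi^t : t \in T\}$ that is $F$-close to $g$ in the product topology. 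Thus $g$ lies in the closure of $\{\pi^t : t \in T\}$ in $X^X$, which is precisely $E(X,T)$. Hence $X^X \subseteq E(X,T)$, and since the reverse inclusion always holds, $E(X,T) = X^X$.

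The one subtlety I expect to have to handle carefully — the main (though still minor) obstacle — is the reduction to \emph{distinct} components in the completeness hypothesis: a general $g \in X^X$ and a general finite $F$ need not have distinct $g$-values, and in the first implication the prescribed target tuple need not have distinct entries either, but \emph{completeness is only assumed for tuples with distinct components}. The fix is routine: when some $x_i$ coincide we simply delete repetitions to get a genuinely distinct tuple and apply completeness there, and the consistency of the assignment $x_i \mapsto g(x_i)$ is automatic; when the target entries coincide there is no issue at all since we only need the orbit to enter a product of neighbourhoods. I would state this reduction explicitly as a one-line remark rather than belabour it. Everything else is a direct unwinding of the definitions of $E(X,T)$ as a closure in $X^X$ and of completeness, together with the standard fact that every point-transitive flow, in particular $(E(X,T),T)$, is a factor of $(\beta T, T)$.
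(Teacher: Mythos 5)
Your argument is correct: both implications are a direct unwinding of the definition of $E(X,T)$ as the pointwise closure of $\{\pi^t : t\in T\}$ in $X^X$, and you correctly note that basic neighbourhoods in the product topology only involve finitely many (automatically distinct) coordinates, which is exactly where $n$-completeness is used, while in the other direction the distinctness of the $x_i$ is what makes the prescription $g(x_i)=y_i$ consistent. The paper itself states this result only as a citation to Glasner's work and gives no proof, so there is nothing to compare against beyond noting that your argument is the standard one and is complete as written.
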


We see that there is a connection between complete systems and those with saturated enveloping semigroups.

Recall that  $\phi: T \mapsto X^X$ has a continuous extension $\Phi_{X}:\beta T \rightarrow X^X$ and $\Phi_{X}(\beta T)= E(X)$ is a subgroup of $X^X$. When $E(X) = X^X$, it means that $\Phi_X$ is surjective. Can $\Phi_X$ be also injective? We note that we have examples when $E(X) = \beta \Z$, and such systems are always saturated and in this case $\Phi_X$ is injective.

\vskip .5cm

In \cite{TM} Glasner observed: If $\phi$ is a nontrivial continuous automorphism of a system $(X, T)$ then $\phi p = p \phi$
for every $p \in  E(X)$. Thus when the group $Aut (X, T)$ is nontrivial then
$E \subset \{p \in X^X : \phi p = p \phi, \ \forall \phi \in Aut (X, T)\}$. In particular, when $T$ is commutative
$$E \subset \{p \in X^X : p \alpha = \alpha p, \ \forall \alpha \in T \}$$
He raised a simple question: Are there dynamical systems $(X, T)$ for which this inclusion is an equality? And further proved that:

\begin{theorem} \cite{TM} There does not exist an infinite minimal cascade $(X, f)$ for which
$$E(X, T) = \{p \in X^X : p f = f p\}$$
\end{theorem}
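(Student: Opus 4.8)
The plan is to construct, for an arbitrary infinite minimal cascade $(X,f)$, an element $p \in X^X$ which commutes with $f$ but which does not lie in $E(X,f)$, thereby forcing the inclusion $E(X,f) \subseteq \{p \in X^X : pf = fp\}$ to be strict. The natural source of such an element is the \emph{centralizer} (or automorphism group) of the system together with the fact that $X^X$ contains far more commuting maps than just the ones obtained as limits of powers of $f$; however, since the automorphism group of a minimal cascade may be trivial, I would not rely on automorphisms. Instead, the cleanest approach is a cardinality/continuity argument: first establish that every element of $E(X,f)$, viewed in $X^X$, is a \emph{semi-open} or at least a tame map in a suitable sense — more precisely, that $E(X,f)$ is a point-transitive subsystem of $(X^X, f)$ generated by $e$, hence has at most the cardinality of a separable space of maps and is in particular a Rosenthal-type compactum when $X$ is metric, so $\mathrm{card}\, E(X,f) \leq 2^{\mathfrak c}$ with strong structural constraints — whereas $\{p : pf = fp\}$ can be shown to be ``too large'' or to contain maps violating these constraints.

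Concretely, here are the steps I would carry out in order. First, I would fix a minimal point $x_0$ and recall that $e_{X} \in E(X,f)$ has dense orbit, so every $p \in E(X,f)$ is a pointwise limit of a net $f^{n_i}$; in particular $p$ maps $X$ onto a dense subset, and if $p$ is continuous then $p$ is a surjective homomorphism of $(X,f)$ onto itself, i.e.\ an element of $\mathrm{Aut}(X,f)$ up to the (possibly trivial) structure. Second, I would exhibit a discontinuous map $p_0 \in X^X$ with $p_0 f = f p_0$: take any point $y \in X$, any point $z \in X$ with $z \neq y$, and define $p_0$ on the orbit $\mathcal O(y) = \{f^n y : n \in \Z\}$ by $p_0(f^n y) = f^n z$, and on the (dense, co-dense) complement set $p_0$ equal to, say, the constant-on-fibers extension forced by some non-continuous selection — the key point being that commutation with $f$ only constrains $p_0$ on each individual orbit, so $p_0$ can be defined orbit-by-orbit and there is enormous freedom. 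Third, I would argue that this $p_0$ cannot lie in $E(X,f)$: if it did, then since $E(X,f)$ acting on a minimal flow has the property that every element sends minimal points to minimal points, and more importantly $p_0$ would have to respect the proximal relation and the structure of minimal ideals, one derives a contradiction with the arbitrariness of the orbit-wise definition (for instance, one can arrange $p_0$ to map two proximal points to two distal points, which no element of $\beta T$, hence of $E(X,f)$, can do).

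The main obstacle — and the step I expect to require real care — is the third one: ruling out $p_0 \in E(X,f)$. Membership in $E(X,f)$ is a closure condition in the product topology, which is very weak, so a crude cardinality count is not automatically available; I would need a genuine dynamical obstruction. The cleanest such obstruction is the observation that if $p \in E(X,f) = \Phi_X(\beta\Z)$, then $p = qx \mapsto$ behaves like the action of an ultrafilter, and in particular for \emph{any} pair $(a,b) \in X \times X$ the image $(pa, pb)$ lies in the orbit closure $\overline{\Z(a,b)}$ inside $X \times X$ — because $p$ is a limit of maps $f^{n_i}$ acting diagonally. Thus if I choose $y, z$ and a point $w$ with $(y,w)$ proximal in $(X,f)$ but arrange $p_0(y) = z$, $p_0(w) = z'$ with $(z,z')$ \emph{not} proximal and $(z,z') \notin \overline{\Z(y,w)}$, then $p_0 \notin E(X,f)$. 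The existence of such $y,w,z,z'$ uses only that $X$ is infinite minimal (hence contains a proximal pair or, if distal, contains two distinct points whose orbit closure in $X\times X$ is a proper minimal subset not equal to all of $X\times X$), and that we have complete freedom in defining $p_0$ on the orbit of $w$. Assembling these pieces gives a $p_0$ commuting with $f$ but outside $E(X,f)$, completing the proof; the remaining details are the routine verification that the orbit-wise definition of $p_0$ is consistent (distinct orbits are disjoint) and that $p_0 f = f p_0$ holds pointwise, which is immediate from the construction.
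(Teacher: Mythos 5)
First, note that the paper does not actually prove this statement: it is quoted from Glasner's paper \cite{TM}, so there is no internal proof to compare with. Judged on its own, your framework is the right one — commutation with $f$ only ties $p_0$ down orbit-by-orbit (consistent because an infinite minimal cascade has no periodic points), and membership in $E(X,f)$ forces $(p_0x_1,\dots,p_0x_n)\in\overline{\mathcal{O}_{f\times\dots\times f}(x_1,\dots,x_n)}$ for every finite tuple; indeed $E(X,f)=\{p: pf=fp\}$ holds \emph{if and only if} every tuple of points on pairwise distinct orbits has dense orbit in the corresponding product power. Your argument is complete in the distal case and, more generally, whenever $(X,f)$ is not weakly mixing (then no pair is product-transitive, so any pair on distinct orbits works).

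The genuine gap is the existence claim in your third step for the non-distal case. A product-transitive pair is automatically proximal (its orbit closure contains the diagonal), so "$(X,f)$ contains a proximal pair" gives no pair $(y,w)$ with $\overline{\mathcal{O}(y,w)}\neq X\times X$; proximality is not an obstruction to dense product orbits but a consequence of them. Worse, the needed pair simply may not exist: there are \emph{doubly minimal} cascades (King's map with topological minimal self-joinings, and Weiss's doubly minimal uniquely ergodic models of zero-entropy systems), i.e.\ infinite minimal systems in which \emph{every} pair of points lying on distinct orbits has dense orbit in $X\times X$. For such systems your pair-based construction cannot produce any $(z,z')$ outside the orbit closure, and your auxiliary claim that no element of $E(X,f)$ can send a proximal pair to a non-proximal pair is also false there, since $\{(py,pw):p\in E(X,f)\}=\overline{\mathcal{O}(y,w)}=X\times X$ exhausts all pairs. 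So your proposal in effect reduces the theorem to the assertion that no infinite minimal cascade is doubly minimal, which is false; the actual content of Glasner's theorem is to rule out the analogous "all tuples on distinct orbits are product-transitive" property at \emph{some} finite order for every system, and for doubly minimal systems this necessarily happens only at order $\geq 3$, where your proposal offers no argument.
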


We have a simple question here: \textbf{What can be said about this  problem for a non-minimal cascade?
}

\section{Enveloping Semigroup for Semiflows}

We acknowledge the work  done on topological dynamics  for semiflows in
\cite{AUSD}. Generally, dynamical properties of semiflows are much different from that of flows. This motivates us to study enveloping semigroups of semiflows also. We present here our version for the enveloping semigroups
for semiflows.

\vskip .5cm

In this section, we consider the action by monoids instead of
  groups. So, we consider the semiflow $(X,S)$ where $X$ is a compact
metric space and $S$ is a
monoid acting on $X$. We can also consider the semicascade $(X,g)$ where
$g$ is continuous though not necessarily surjective or injective. We
investige the dynamical properties of $E(X)$ in such a case and see how
they differ from the case when the action is  by a group.

Here the interesting  point to note is that when we consider the action by a group and by it's subgroup then the enveloping semigroup of the given system is not the same.

We look into a few examples of Enveloping semigroups for semiflows(semicascades):

\begin{example}  $E( \emph{2}^{\N},\sigma) = \beta \N$.

The proof of for this  works same as  in Theorem \ref{betaz} by replacing $\N$ by $\Z$. \end{example}

\begin{example} Recall  Example 6.3. We modify it a bit to let $n \in \N$ and $X_{n} = \lbrace n \rbrace \times \N^{\ast} \times \lbrace 1,2,\ldots, n\rbrace$ where $\N^{\ast}= \N \cup \lbrace \infty \rbrace$ is the one point compactification of $\N$, and $f_{n}: X_{n} \rightarrow X_{n}$ defined as $f_{n}(n, k, l )=(n, k+1, (l+1) \mod n )$. In the cascade $(X_n, f_n)$, $f_{n}^{ni}(n, k, l)= (n, k+ni, l) \ \forall i$. So for each $(n, k, l )\in X_n$, $f_{n}^{ni}(n, k, l)\longrightarrow (n, \infty, l)$. So $E(X_n)= \lbrace f_{n}^{k}: k\in \Z\rbrace \cup \lbrace p_n, f_np_n, \ldots, f_n^{n-1}p_n \rbrace$ where $p_{n}(n, k, l)= (n, \infty, l)$. Also $f_{n}^{n}p_{n}(n, k, l)= f_{n}^{n}(n, \infty, l)=(n, \infty, l)\Rightarrow f_{n}^{n}p_{n}= p_{n}$. Therefore $p_{n}$ is periodic point of period $n$. Also $p_{n}p_{n}=p_{n}$. Therefore for each $n\in \N$, there is a cascade $(X_{n}, f_{n})$ such that $E(X_{n})$ contains a periodic point of period $n$ which is an idempotent. \end{example}

\color{black}

\begin{example} We recall  Example \ref{EX 5.1} where $X=[0,1]$ and $f: [0,1]\rightarrow [0,1]$ as $f(x)=x^2$. When we take the action by  semigroup $\N$ then
 $E(X)$ is the one point compactification of $\N$.

 Here we can see that the function is invertible and we can take backward orbits as well. But if we are interested in taking only the forward orbits  then  the structure of the enveloping semigroup will change.
\end{example}

 \begin{remark}
It is important to note that all the theory for flows $(X,T)$ discussed in Section 5 above, is also valid for the semiflows $(X,S)$.
\end{remark}

\subsection{Distality of $E(X,S)$}

If each $s \in S$ is surjective,  then $(X, S)$ is called a \emph{surjective semiflow}. If each $s \in S$ is bijective, then \emph{$[S]$} denotes the smallest group of self-homeomorphisms of $X$ containing $S$. In such a case the semiflow $(X, S)$ induces the flow $(X,[S])$. A point $x \in X$ is called a \emph{distal point} of $(X,S)$ if $x$ is the only point proximal to itself in $\overline{Sx}$.

We note that

\begin{theorem} \cite{AUSD}
If $(X,S)$ is an equicontinuous surjective semiflow, then $(X,S)$ is distal.
\end{theorem}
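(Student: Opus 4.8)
The plan is to show that equicontinuity forces the enveloping semigroup $E(X,S)$ to consist entirely of bijections, and then to use the standard Ellis-style idempotent argument to conclude distality. First I would observe that if $(X,S)$ is an equicontinuous surjective semiflow, then the family $\{s : s \in S\} \subset X^X$ is equicontinuous, and by item (5) of the function-space facts recalled in Section~4 its closure $E(X,S) = \overline{\{s : s\in S\}}$ in $X^X$ is also equicontinuous; in particular every element of $E(X,S)$ is continuous, and by Arzela-Ascoli the pointwise and uniform topologies agree on $E(X,S)$, so $E(X,S)$ is a compact topological semigroup under composition.

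The key step is to prove that every $p \in E(X,S)$ is a surjective isometry-like map, hence invertible. Using equicontinuity, fix $\varepsilon>0$ and choose $\delta>0$ witnessing it uniformly; then for any $s\in S$, $d(x,y)\geq\varepsilon$ implies $d(sx,sy)$ is bounded below by some modulus depending only on $\varepsilon$ (this is the standard consequence of equicontinuity of the group/semigroup generated together with surjectivity — each $s$ is "uniformly injective" in the sense that it cannot collapse an $\varepsilon$-separated pair by more than a controlled amount, and surjectivity upgrades this to each $s$ being a homeomorphism with equicontinuous inverse as well). Passing to limits, each $p\in E(X,S)$ is injective; since $E(X,S)$ is a compact semigroup, it contains an idempotent $u^2=u$ by Nakamura's lemma (the version recalled before Theorem~3.1, as right multiplication is continuous on $X^X$). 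An injective idempotent must be the identity, so $e$ is the unique idempotent of $E(X,S)$, and therefore $E(X,S)$ is a group.

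Finally I would invoke the characterization (recalled in the minimality/enveloping-semigroup bullet list of Section~5, which the excerpt notes is valid for semiflows too) that a semiflow is distal if and only if $ux = x$ for all $x$ and all idempotents $u \in \beta S$ — equivalently, via the canonical map $\Phi_X : \beta S \to E(X,S)$, if and only if $E(X,S)$ contains no nontrivial idempotent, i.e. $E(X,S)$ is a group. Since we have just shown $E(X,S)$ is a group, distality follows. The main obstacle I anticipate is the surjectivity/invertibility step: verifying carefully that equicontinuity \emph{plus} surjectivity of each $s\in S$ genuinely forces every limit map $p\in E(X,S)$ to be injective (equicontinuity alone gives continuity of limits but not injectivity — the constant maps in $E(X,f)$ for $f(x)=x^2$ show surjectivity of the generator is essential), and then handling the idempotent argument in the monoid setting where $S$ need not be a group. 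One should be careful that "equicontinuous surjective semiflow" is exactly the hypothesis that rules out the degenerate collapsing behavior, and to make the uniform lower bound on $d(sx,sy)$ precise one may prefer to argue directly that $[S]$ (or the closure of $S\cup S^{-1}$) is equicontinuous and then note $E(X,S)\subseteq E(X,[S])$, which is a group of isometries in a compatible metric.
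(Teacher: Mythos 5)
The paper itself gives no proof of this statement (it is quoted from Auslander--Dai \cite{AUSD}), so your proposal has to stand on its own, and while its skeleton (pass to $E(X,S)$, show the identity is the only idempotent, deduce that $E(X,S)$ is a group, conclude distality) is sound, the step you yourself flag as the key one is a genuine gap. You assert that equicontinuity together with surjectivity of each $s\in S$ makes every $s$ ``uniformly injective'' -- that $d(x,y)\geq\varepsilon$ forces $d(sx,sy)$ to be bounded below uniformly in $s$ -- and hence that every $p\in E(X,S)$ is injective. Equicontinuity controls images of nearby points, not preimages; since $S$ is only a monoid you cannot apply $s^{-1}$ to transfer the modulus as one does for group actions, and each $s$ is assumed surjective but not injective, so even injectivity of the generators is part of what must be proved. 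Your parenthetical justification essentially restates the conclusion of the theorem. The same objection hits your fallback: $[S]$ is only defined when every $s$ is bijective (not given), and equicontinuity of the inverses, i.e.\ of $[S]$, is again precisely the content of the statement rather than something quotable.

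The argument can be repaired by trading injectivity for surjectivity. Since $E(X,S)$ is the closure of an equicontinuous family, every $p\in E(X,S)$ is continuous and pointwise convergence $s_i\to p$ is uniform; a uniform limit of continuous surjections of a compact metric space is surjective (given $y$, choose $x_i$ with $s_i(x_i)=y$, pass to a convergent subnet $x_i\to x$, and use $d(p(x),y)\leq d(p(x),p(x_i))+d(p(x_i),s_i(x_i))\to 0$). Hence every idempotent $u\in E(X,S)$ is surjective, and a surjective idempotent satisfies $u(z)=u(u(x))=u(x)=z$ for every $z=u(x)$, so $u=e$ -- no injectivity needed. Your concluding step then goes through: if $(x,y)$ is proximal there is $p\in E(X,S)$ with $px=py$; the set $\lbrace q\in E(X,S): qx=qy\rbrace$ is a nonempty closed left ideal, so by Nakamura's lemma it contains a minimal left ideal and hence an idempotent, which must be $e$, giving $x=y$. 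With that substitution (surjectivity of idempotents in place of uniform injectivity of all elements) the proof is correct and stays entirely within the tools the paper recalls.
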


\vskip .5cm

We know that if a flow $(X,T)$ is distal then $E(X,T)$ is  a group. Now let $S$  be a subsemigroup of $T$, and consider the semiflow $(X,S)$. Will $E(X,S)$ be also a group wherein inverses of $s\in S$  lie in $E(X) \setminus \lbrace s: s\in S\rbrace$?

\vskip.5cm

We recall Ellis' theorem.

\begin{theorem} \textbf{[Ellis Theorem]} \cite{REL}
Let $(X,T)$ be a distal flow. Then $(X,T)$ is pointwise almost periodic.
\end{theorem}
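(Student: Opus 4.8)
The plan is to read the result off the algebraic characterization of distality already assembled in the excerpt, rather than to argue with orbit closures from scratch. Recall two facts recorded above. First, for a flow $(X,T)$ the following are equivalent: $(X,T)$ is distal; $e$ is the only idempotent in $E(X)$; $E(X)$ is a group. Second, for $x\in X$ and a minimal ideal $I\subseteq E(X)$, the point $x$ is almost periodic if and only if there is an idempotent $u^2=u\in I$ with $ux=x$. The whole proof will be a bookkeeping combination of these.

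First I would fix an arbitrary $x\in X$. By Zorn's Lemma $E(X)$ contains a minimal left ideal $I$, and since $E(X)$ is a compact Hausdorff right-topological semigroup, Nakamura's Lemma (the lemma due to K.\ Nakamura quoted above) guarantees that $I$ contains an idempotent $u=u^2$; in particular $u$ is a minimal idempotent. Because $(X,T)$ is distal, $E(X)$ is a group, so its unique idempotent is the identity $e$; hence $u=e$. Therefore $ux=ex=x$, and the almost-periodicity criterion applies: $x$ is an almost periodic point, i.e.\ $\overline{Tx}$ is a minimal set. Since $x$ was arbitrary, $(X,T)$ is pointwise almost periodic.

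As a cross-check I would also record the direct argument, which uses only that $E(X)$ is a group: given $y\in\overline{Tx}$, pick $p\in E(X)$ with $px=y$; since $E(X)=\overline{\{\pi^t:t\in T\}}$ is a group, $p^{-1}\in E(X)$, and choosing a net $t_i\in T$ with $\pi^{t_i}\to p^{-1}$ yields $t_i y\to p^{-1}y=p^{-1}(px)=x$, so $x\in\overline{Ty}$ and hence $\overline{Ty}=\overline{Tx}$. Since this holds for every $y\in\overline{Tx}$, the orbit closure is minimal, giving the same conclusion. Either route is short.

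There is no substantial obstacle here; the only thing to be careful about is applying the equivalences at the correct level — working with idempotents of $E(X)$ itself (not of $\beta T$) and quoting the existence of minimal ideals and of idempotents within them for a possibly non-metrizable right-topological semigroup — all of which is already set up in the algebra section above. So the write-up will mainly consist of citing the relevant earlier statements in the right order.
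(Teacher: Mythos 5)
Your proof is correct. Both routes you give are sound: the algebraic one (distal $\Rightarrow E(X)$ is a group with $e$ its only idempotent; every minimal left ideal contains an idempotent, hence equals $E(X)$ and contains $e$; since $ex=x$, the criterion ``$x$ almost periodic $\Leftrightarrow ux=x$ for some idempotent $u$ in a minimal ideal'' applies) and the direct one (for $y=px\in\overline{Tx}=E(X)x$, use $p^{-1}\in E(X)$ and a net $\pi^{t_i}\to p^{-1}$ to get $x\in\overline{Ty}$, so the orbit closure is minimal). None of the quoted equivalences depends on the theorem being proved — they rest only on the Ellis--Nakamura idempotent lemma and the ideal structure of $E(X)$ — so there is no circularity.

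Note, however, that your route is genuinely different from what the paper does. The paper does not reprove the flow statement at all (it is quoted with a citation to Ellis); what it proves is a \emph{Generalized Ellis Theorem} for semiflows $(X,S)$, and the argument there is Auslander's: one takes a maximal almost periodic set $A$ (via Zorn's Lemma), assumes $A\neq X$, passes to a minimal subset of the orbit closure of $(x,z)$ in $X\times X^{|A|}$ with $\mathrm{range}(z)=A$, and derives a proximal pair contradicting distality. That argument deliberately avoids your main tool — the group structure of the enveloping semigroup — because in the semiflow setting that structure is only established \emph{after} (and by means of) the generalized theorem, via the surjectivity corollary and the induced flow $(X,[S])$. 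So your proof is the shorter, classical algebraic argument and is perfectly adequate for the stated flow version, while the paper's approach buys the extension to monoid actions at the cost of the heavier almost-periodic-set machinery.
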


We note that Ellis' theorem is true (in general) for semiflows also. We include a proof here given to us by \textbf{Auslander}. For that we need the following definition and properties.

\begin{definition}\cite{AEAP}
For a semiflow $(X,S)$, a set $A\subset X$ is said to be an \emph{almost periodic set} if any point $z\in X^{|A|}$ with
$range(z)=A$ is an almost periodic point of $(X^{|A|},S)$, where $|A|$ is the cardinality of $A$.
\end{definition}

\begin{lemma}\cite{AEAP} \label{app}
\begin{enumerate}
\item Any non-empty subset of an almost periodic set is an almost periodic
set. (In particular, every point of an almost periodic set is almost periodic.)
\item Let $A$ be a maximal almost periodic set for $(X,S)$, and let $x\in X$. Then there is an $x^{\prime}\in A \cap \overline{Sx}$ such that $x$ and $x^{\prime}$ are proximal.
\item Let $\pi: (X,S)\longrightarrow (Y,S)$ be an onto
flow homomorphism, and let $B$ be an
almost periodic set in Y. Then there is an almost periodic set $A$ in X such that $\pi(A) = B$.
\item If A is a maximal almost periodic set in X, then $\pi(A)$ is a maximal almost periodic set in $Y$.
\item Suppose $range(z)$ is a maximal almost periodic set. Let $z^{\prime} \in \overline{Sz}$. Then $range(z^{\prime})$ is a maximal almost periodic set.

\end{enumerate}
\end{lemma}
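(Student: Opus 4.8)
The plan is to isolate a few reusable facts about the enveloping semigroup $E(X,S)$ and then run each of the five parts through them. Recall that for any index set $\Lambda$ the product $(X^{\Lambda},S)$ is a compact semiflow, that a point is almost periodic exactly when it is fixed by some minimal idempotent $u$ of $E(X^{\Lambda})$ (equivalently its orbit closure is minimal), and that for such $u$ and any $w$ the point $uw$ is almost periodic with $(w,uw)$ proximal. The first thing I would record is an \emph{enumeration-independence} observation: writing $z_A\in X^A$ for the tautological enumeration $z_A(a)=a$, the set $A$ is an almost periodic set if and only if $z_A$ is an almost periodic point, if and only if some (hence every) $w\in X^{\Lambda}$ with $\mathrm{range}(w)=A$ is almost periodic. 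This holds because any two enumerations of the same range are mutual coordinate-reindexings, so each is a continuous equivariant factor of the other, and a continuous image of a minimal set is minimal.

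Granting this, part (1) is immediate: if $B\subseteq A$ then $z_B$ is a coordinate projection of $z_A$, hence almost periodic, so $B$ is an almost periodic set. For part (2) I would take $z=z_A$ almost periodic, choose a minimal idempotent $u$ with $uz=z$, and set $x'=ux$; then $(z,x')=u(z,x)$ is fixed by $u$, hence almost periodic, so its range $A\cup\{x'\}$ is an almost periodic set, and maximality of $A$ forces $x'\in A$, while $x'=ux\in\overline{Sx}$ with $(x,ux)$ proximal by the idempotent computation already recorded in the excerpt. Part (3) is a lifting argument: enumerate $B$ as the almost periodic point $z_B\in Y^B$, note $\pi^B:X^B\to Y^B$ is an onto homomorphism, lift the minimal set $\overline{Sz_B}$ to a minimal $M\subseteq X^B$ with $\pi^B(M)=\overline{Sz_B}$ (property (b) on minimal subsets), pick $\tilde z\in M$ with $\pi^B(\tilde z)=z_B$, and put $A=\mathrm{range}(\tilde z)$; then $A$ is almost periodic and $\pi(A)=B$.

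For part (4), $\pi(A)$ is an almost periodic set by applying the factor argument to $z_A$, and maximality is the delicate step. I would argue by contradiction: if $y_0\notin\pi(A)$ with $\pi(A)\cup\{y_0\}$ almost periodic, lift $\{y_0\}$ by part (3) to an almost periodic point $\hat x_0$ over $y_0$, apply part (2) to get $x'\in A\cap\overline{S\hat x_0}$ with $(\hat x_0,x')$ proximal, and push down to see that $(y_0,\pi x')$ is a proximal pair lying inside the almost periodic set $\pi(A)\cup\{y_0\}$. Here I need the separating fact that \emph{two distinct points of an almost periodic set are distal} — a proximal pair that is simultaneously an almost periodic point of $X\times X$ must lie on the diagonal — which forces $y_0=\pi x'\in\pi(A)$, the desired contradiction. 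Establishing this distality fact cleanly (via a minimal idempotent fixing the pair together with a minimal ideal collapsing it) is, I expect, the main obstacle, since the naive route of lifting $y_0$ directly through $X^A\times X$ fails: applying an idempotent to recover almost periodicity simultaneously disturbs the enumeration $z_A$, so one cannot keep the $A$-coordinates fixed while landing in the fibre over $y_0$.

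Finally, for part (5) I would use a closed-invariant-relation trick. With $A=\mathrm{range}(z)$ maximal and $z'\in\overline{Sz}=M$ minimal, $z'$ is almost periodic so $\mathrm{range}(z')$ is an almost periodic set; if it were not maximal, say $\mathrm{range}(z')\cup\{x_1\}$ is almost periodic with $x_1\notin\mathrm{range}(z')$, then $(z',x_1)$ is almost periodic and its minimal orbit closure projects onto $M$. Lifting $z$ through this projection yields a point $(z,x_2)$ in the same minimal set, whence $A\cup\{x_2\}$ is almost periodic and maximality of $A$ gives $x_2=z_{\lambda_0}$ for some coordinate $\lambda_0$. The set $\{(w,v):v=w_{\lambda_0}\}$ is closed and invariant and contains $(z,x_2)$, hence contains the whole minimal set $\overline{S(z,x_2)}=\overline{S(z',x_1)}$, so $x_1=z'_{\lambda_0}\in\mathrm{range}(z')$, a contradiction. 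I would present (5) directly after (2)--(4), as it reuses both the lifting of minimal sets and the maximality of $A$.
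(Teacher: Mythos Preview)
The paper does not prove this lemma; it is quoted from \cite{AEAP} and invoked as a black box in the proof of the Generalized Ellis Theorem that follows. Your proposal is correct as a standalone proof: the enumeration-independence reduction is the right framework, parts (1)--(3) go through as written, the separating fact you isolate for (4) is easy (a proximal almost-periodic pair has minimal orbit closure meeting $\Delta$, hence contained in $\Delta$), and the closed-invariant-relation argument for (5) is the standard one.
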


\color{black}

\begin{theorem}\textbf{[Generalized Ellis Theorem]}
Let $(X,S)$ be a distal semiflow. Then $(X,S)$ is pointwise almost periodic.
\end{theorem}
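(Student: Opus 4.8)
The plan is to deduce the statement directly from the machinery of almost periodic sets recorded in Lemma \ref{app}, exactly as in the flow case, observing that none of the cited facts used invertibility of the action.

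First I would establish that $(X,S)$ admits at least one maximal almost periodic set. Since $X$ is compact, Zorn's lemma applied to the family of nonempty closed invariant subsets of $X$ produces a minimal set $M$; every point of $M$ is almost periodic, so each singleton $\{m\}$ with $m\in M$ is an almost periodic set, and hence the family of almost periodic sets of $(X,S)$ is nonempty. Using the part of the theory of \cite{AEAP} which guarantees that a union of a chain of almost periodic sets is again an almost periodic set, Zorn's lemma then yields a maximal almost periodic set $A\subseteq X$.

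Next, fix an arbitrary $x\in X$. By Lemma \ref{app}(2) there is a point $x'\in A\cap\overline{Sx}$ such that $x$ and $x'$ are proximal. Since $(X,S)$ is distal, the only proximal pairs are diagonal, so $x=x'$; in particular $x\in A$. By Lemma \ref{app}(1) every point of the almost periodic set $A$ is an almost periodic point, so $x$ is almost periodic. As $x\in X$ was arbitrary, $(X,S)$ is pointwise almost periodic.

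The only genuinely delicate point is the first step: that being an almost periodic set is preserved under unions of chains, so that a maximal one exists. For flows this is classical — one characterizes almost periodic sets via minimal idempotents of the enveloping semigroup and argues by compactness — and the same reasoning carries over to semiflows because $E(X,S)$ is still a compact right-topological semigroup, so Nakamura's lemma (stated above) supplies idempotents in its minimal left ideals. I expect this Zorn-type verification to be the main obstacle; once it is in place, distality reduces the rest to a single line.
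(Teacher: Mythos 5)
Your proposal is correct, and at bottom it follows the same road as the paper: produce a maximal almost periodic set $A$ and use distality to force every point into $A$. The difference is in the division of labour. You invoke Lemma \ref{app}(2) as a black box --- for a maximal almost periodic set $A$ and any $x\in X$ there is $x'\in A\cap\overline{Sx}$ proximal to $x$ --- after which distality gives $x=x'\in A$ in one line. The paper never cites that item; its proof instead re-derives exactly that statement in the semiflow setting: it forms $(x,z)$ with $\mathrm{range}(z)=A$, takes a minimal subset of $\overline{S(x,z)}$, finds an almost periodic point $(x',z)$ in it, uses maximality of $A$ to force $x'\in A$, and then notes $s_i x\to x'$ together with $s_i x'\to x'$ (because $x'$ is a coordinate of $z$) to get proximality and contradict distality. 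That re-derivation is not redundant: the source \cite{AEAP} is written for group actions, so quoting its lemma for a semiflow presupposes precisely the check you only assert (``none of the cited facts used invertibility''); the assertion happens to be true --- the product--orbit-closure argument never uses inverses --- and the paper's inline argument is exactly that verification, whereas within this paper's framework your citation is legitimate since Lemma \ref{app} is stated for $(X,S)$. Conversely, you are more careful than the paper about the Zorn step: you supply nonemptiness (a minimal set exists and its points are almost periodic) and correctly locate the chain-union property in the idempotent characterization of almost periodic sets, carried out in a fixed minimal left ideal of the compact right-topological semigroup so that Nakamura's lemma applies; the paper simply says ``by Zorn's Lemma.'' In short, both proofs are sound: the paper's buys self-containedness in the non-invertible setting, yours buys brevity plus an explicit treatment of maximality.
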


\begin{proof}
 $(X,S)$ is a distal semiflow.

If $x\in X$ is an almost periodic point, then  by Zorn's Lemma  there is a maximal almost periodic set in $X$ which contains $x$. Also every point of an almost periodic set is almost periodic.

Let $A\subset X$ be a  maximal almost periodic set and assume that $A\neq X$. Let $z\in X^{|A|}$ with $range(z)=A$. Then $z$ is the almost periodic point of $(X^{|A|},S)$. Let $x\in X\setminus A$ and consider the point $(x,z)\in X\times X^{|A|}$. The orbit closure $\overline {S(x,z)}$ is a closed invariant subset of $X\times X^{|A|}$ and so there exists a minimal $M\subset \overline {S(x,z)}\subset X \times X^{|A|}$. Let $(x^{\prime},z)\in M$ then $\overline {S(x^{\prime},z)}=M$ being minimal,  $(x^{\prime},z)$ is an almost periodic point in $ X\times X^{|A|}$. Since $z$ is an almost periodic point of $X^{|A|}$ with $range(z)=A$ and $(x^{\prime},z)\in \overline {S(x,z)}$, there is a net $\lbrace s_{i}\rbrace$ such that $s_{i}(x,z)\longrightarrow (x^{\prime},z)$. Now since $\overline {S(x^{\prime},z)} = M$ is a minimal set in $X\times X^{|A|}$, so by Lemma \ref{app} $A \cup \lbrace x^{\prime}\rbrace$ is an almost periodic set of $(X,S)$. But  this contradicts the maximality of the almost periodic set $A$,  and so $x^{\prime}\in A$.

Now we have $s_{i}x\longrightarrow x^{\prime}$ and $s_{i}z\longrightarrow z$. Here $range(z)=A$ and $x^{\prime}\in A$, and take $\pi_{x^{\prime}}$ to be the projection map so that $s_{i}x^{\prime}=\pi_{x^{\prime}}(s_{i}z)\longrightarrow \pi_{x^{\prime}} z=x^{\prime}$. Hence $s_{i}(x,x^{\prime})\longrightarrow (x^{\prime},x^{\prime})$ giving $x$ and $x^{\prime}$ to be proximal, which violates the distality of $(X,S)$. So our assumption $A\neq X$ is false. Therefore $X=A$ is an almost periodic set, i.e. every point of $X$ is almost periodic point.
\end{proof}

\begin{corollary}
If $(X,S)$ is a distal semiflow. Then $(X,S)$ is surjective.
\end{corollary}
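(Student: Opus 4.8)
The plan is to derive surjectivity as an immediate consequence of the Generalized Ellis Theorem just proved, combined with the basic structure of almost periodic points in the enveloping semigroup picture. First I would recall that by the Generalized Ellis Theorem, a distal semiflow $(X,S)$ is pointwise almost periodic, so for every $x\in X$ the orbit closure $\overline{Sx}$ is a minimal set. The key point is that in a minimal semiflow every element of $S$ must act surjectively: if $s\in S$ and $\overline{Sx}$ is minimal, then $s(\overline{Sx})$ is a nonempty closed invariant subset of the minimal set $\overline{Sx}$, hence equals $\overline{Sx}$; running this over all the minimal sets $\overline{Sx}$, $x\in X$, which cover $X$, would give $s(X)=X$ provided the minimal sets partition $X$ — which they do not in general, so one must be a little more careful.

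The cleaner route I would take: fix $s\in S$ and $y\in X$; I must produce $x\in X$ with $sx=y$. By pointwise almost periodicity, $\overline{Sy}$ is minimal, so $y$ is an almost periodic point. Using the algebraic characterization available for semiflows (the analogue, valid here, of the result that $x$ is almost periodic iff there is a minimal left ideal $I\subseteq E(X)$ and an idempotent $u\in I$ with $uy=y$, and $\overline{Sy}=Iy$), pick such $I$ and $u$ with $uy=y$. Since $(X,S)$ is distal, $E(X)$ is a group with identity $e$ (this is the semiflow analogue of the Ellis--Auslander theorem, whose flow version is stated in the excerpt, and which the authors assert carries over), so the only idempotent is $e$, forcing $u=e$ and more importantly forcing $E(X)=I$ to be minimal as a semiflow and a group of bijections. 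Then $s\in S\subseteq E(X)$ has an inverse $s^{-1}\in E(X)$, and since $E(X)$ is point transitive with dense orbit of $e$, the map $p\mapsto py$ sends $E(X)$ onto $\overline{Sy}$; but more directly, because $E(X)$ is a group of maps $X\to X$ containing $s$ and its inverse $s^{-1}$, we get $s(s^{-1}y)=y$. So it remains only to see $s^{-1}y\in X$, which is automatic since $s^{-1}\in E(X)\subseteq X^X$ acts on $X$. Hence $y\in s(X)$, and since $y$ was arbitrary, $s$ is surjective; since $s$ was arbitrary, $(X,S)$ is a surjective semiflow.

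The main obstacle I anticipate is justifying that the algebraic apparatus — minimal left ideals, idempotents, the equivalence ``distal $\Leftrightarrow$ $E(X)$ a group'' — genuinely transfers from flows to semiflows with $S$ a monoid that need not consist of bijections a priori. The excerpt flags this (``all the theory for flows discussed in Section 5 above is also valid for the semiflows'') so I would lean on that remark, but in writing the proof I would be careful to phrase the argument so that it only invokes: (i) pointwise almost periodicity from the Generalized Ellis Theorem, (ii) the existence of a minimal left ideal $I$ in $E(X,S)$ with an idempotent (Nakamura's lemma applies since right multiplication in $E(X)$ is continuous), and (iii) distality forcing that idempotent to be $e$. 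An alternative, more self-contained finish avoiding the full group structure: for $s\in S$, consider $s$ acting on the minimal set $\overline{Sx}$ for each $x$; $s\overline{Sx}$ is closed, nonempty, invariant, contained in $\overline{Sx}$, hence $=\overline{Sx}$, so $\overline{Sx}\subseteq s(X)$ for every $x$, and $\bigcup_x\overline{Sx}=X$ gives $s(X)=X$. This second argument is shorter and sidesteps the enveloping-semigroup machinery entirely, so I would actually present that as the proof and relegate the enveloping-semigroup viewpoint to a remark.

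\begin{proof}
By the Generalized Ellis Theorem, the distal semiflow $(X,S)$ is pointwise almost periodic, so for every $x\in X$ the orbit closure $\overline{Sx}$ is a minimal subset of $X$. Fix $s\in S$. For any $x\in X$, the set $s\big(\overline{Sx}\big)$ is nonempty, closed (by compactness of $X$ and continuity of $s$), and $S$-invariant, and it is contained in $\overline{Sx}$; since $\overline{Sx}$ is minimal, $s\big(\overline{Sx}\big)=\overline{Sx}$. In particular $\overline{Sx}\subseteq s(X)$. As $x\in\overline{Sx}$ for every $x\in X$, we have $X=\bigcup_{x\in X}\overline{Sx}\subseteq s(X)\subseteq X$, so $s(X)=X$. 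Since $s\in S$ was arbitrary, every element of $S$ is surjective, i.e. $(X,S)$ is a surjective semiflow.
\end{proof}
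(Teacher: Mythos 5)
Your overall strategy is the same as the paper's — get pointwise almost periodicity from the Generalized Ellis Theorem and then use minimality to force the image of $s$ to fill out each minimal set — but the specific step you rely on has a gap in the generality of the statement. You claim that $s\big(\overline{Sx}\big)$ is $S$-invariant. For $t\in S$ one has $t\big(s(\overline{Sx})\big)=(ts)(\overline{Sx})\subseteq\overline{Sx}$, but to conclude $(ts)(\overline{Sx})\subseteq s(\overline{Sx})$ you would need to commute $t$ past $s$, i.e.\ $ts=st$. The section on semiflows only assumes that $S$ is a monoid, not that it is Abelian, so this invariance is unjustified; and invariance is exactly what carries the weight, since minimality of a compact semiflow alone does not force individual elements of $S$ to act surjectively (think of a monoid containing constant maps: every orbit can be dense while single elements are far from onto — such examples are not distal, which is why the claim is true here, but it is true for a reason you have not supplied). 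So as written your proof is correct when $S$ is Abelian (in particular for semicascades), but not in the stated generality.

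The repair is small and is precisely what the paper does: fix $s\in S$ and restrict to the semicascade $(X,s)$ generated by $s$ alone. Since any pair proximal for the iterates $\{s^n\}$ is proximal for $S$, distality of $(X,S)$ gives distality of $(X,s)$, and the Generalized Ellis Theorem applied to $(X,s)$ makes $X$ a union of $s$-minimal sets $N_i$. Now $s(N_i)$ is a nonempty closed subset of $N_i$ that is trivially $s$-invariant (being the $s$-image of an $s$-invariant set), so $s(N_i)=N_i$ by $s$-minimality, and $s(X)=X$ follows by taking the union. In other words, replace your $S$-orbit closures by orbit closures for the cyclic submonoid $\langle s\rangle$; the commutativity you implicitly needed then holds for free, and the rest of your argument goes through verbatim.
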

\begin{proof}
First note that if $(Y,f)$ be any distal semicascade then $(Y,f)$ will be pointwise almost periodic by the \emph{generalized Ellis Theorem}. So $Y=\bigsqcup_{i}N_{i}$, where $\bigsqcup$ denotes disjoint union and each $N_{i}$ is minimal subset of $Y$. Then for each $i$, $f(N_{i})\subset N_{i}$ is closed invariant and so by minimality of $N_{i}$, $f(N_{i})=N_{i}$. So $f$ is surjective on $Y$.

Now for any $s\in S$, consider the semicascade  $(X,s)$. Since $(X,S)$ is distal, as a subsemiflow, $(X,s)$ is also distal. By the above observation,  $s$ is surjective on $X$. So for each $s\in S$, $s$ is surjective on $X$. Hence $(X,S)$ is surjective semiflow.

\end{proof}

\begin{remark}
Distal systems are always  injective, and so from the above we note that distal systems are always bijective. Thus every distal semiflow $(X,S)$ induces a distal flow $(X,[S])$.
\end{remark}

This observation gives us another generalization:

\begin{theorem}
For a semiflow $(X,S)$ the following conditions are equivalent:
\begin{enumerate}
\item $(X,S)$ is distal.
\item $(E(X,S), S)$ is minimal.
\item $E(X,S)$ is a group with identity $e$ being the only idempotent.
\end{enumerate}

\end{theorem}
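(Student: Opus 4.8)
The plan is to prove the three equivalences cyclically, say $(1)\Rightarrow(3)\Rightarrow(2)\Rightarrow(1)$, adapting the classical flow arguments but using the Generalized Ellis Theorem proved just above in place of the ordinary Ellis Theorem, so that the known ``distal flow'' machinery carries over to semiflows. The underlying fact I will use throughout is that, by the Corollary and Remark preceding this theorem, a distal semiflow $(X,S)$ is automatically surjective and even bijective, so $E(X,S)$ consists of bijections; this is what allows the group structure to appear.

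For $(1)\Rightarrow(3)$: assume $(X,S)$ is distal. First I would show $E(X,S)$ has no idempotent other than $e$. If $u=u^2\in E(X,S)$, then for any $x\in X$ one has, picking a net $s_i\to u$, that $s_i x\to ux$ and hence $s_i(ux)=u(s_i x)\to u(ux)=ux$ as well, so $(x,ux)$ is proximal (the standard computation $\lim s_i(x,ux)=(ux,ux)$); distality forces $ux=x$ for all $x$, i.e. $u=e$. Next, each $p\in E(X,S)$ has a two-sided inverse: since $(X,S)$ is pointwise almost periodic by the Generalized Ellis Theorem, $e\in\overline{Sp}$ — indeed $\overline{Sp}$ is a minimal set (it is the image of the minimal set $\overline{Se}=E(X,S)$ under right multiplication, or one argues directly that the orbit closure of any point meets every minimal ideal and an idempotent in such an ideal must equal $e$), so there is a net $s_i p\to e$; passing to a subnet $s_i\to q\in E(X,S)$ gives $qp=e$, and then $pq$ is an idempotent, hence $pq=e$ by the previous step. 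Thus $E(X,S)$ is a group with unique idempotent $e$.

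For $(3)\Rightarrow(2)$: if $E(X,S)$ is a group with identity $e$, then for any $p\in E(X,S)$ the element $p^{-1}$ lies in $E(X,S)=\overline{Se}$, so there is a net $s_i\to p^{-1}$, whence $s_i p\to p^{-1}p=e$; since $e$ generates a dense orbit and is in the orbit closure of every point $p$, every orbit closure equals $E(X,S)$, i.e. $(E(X,S),S)$ is minimal. For $(2)\Rightarrow(1)$: suppose $(E(X,S),S)$ is minimal. Then the orbit closure of $e$ is all of $E(X,S)$, and the orbit closure of any idempotent $u$ is also all of $E(X,S)$; but by the general theory of enveloping semigroups an idempotent always satisfies $up=p$ for $p$ in the minimal ideal containing it, and minimality of the whole semigroup makes that ideal everything, forcing $u=ue=e$. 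So $e$ is the only idempotent, hence $E(X,S)$ is a group, and then for $px=py$ one can apply a left inverse of $p$ to conclude $x=y$, so the only proximal pairs are diagonal, i.e. $P(X)=\Delta$ and $(X,S)$ is distal. (Alternatively, close the cycle by noting $(2)$ implies no nontrivial idempotent, which as in $(1)\Rightarrow(3)$ is equivalent to distality.)

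The main obstacle I anticipate is justifying, in the semiflow setting, that the enveloping semigroup contains a left inverse for each of its elements and that $e$ lies in the orbit closure of every point of $E(X,S)$ — the facts that in the flow case come for free from $E(X)$ being a compact right-topological semigroup whose minimal ideals are groups. The ordinary Ellis theorem does this for flows; here I must lean precisely on the Generalized Ellis Theorem (pointwise almost periodicity of distal semiflows) together with the preceding Corollary that distal semiflows are bijective, so that $\overline{Se}=E(X,S)$ is itself a minimal set and the Nakamura-type idempotent lemma (Lemma in Section 3) applies inside any minimal left ideal. Once these structural points are in place, the three implications are routine manipulations with nets and proximality as sketched above.
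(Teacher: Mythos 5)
Your proof is essentially correct, but it takes a genuinely different route from the paper's. The paper proves $(1)\Rightarrow(2)\Rightarrow(3)\Rightarrow(1)$ and obtains the key first implication by reduction to flows: by the Generalized Ellis Theorem and its corollary a distal semiflow is bijective, so it induces the distal flow $(X,[S])$, and the known flow theory gives that $e$ is the only idempotent of $E(X,[S])\supseteq E(X,S)$, whence $(E(X,S),S)$ is minimal; the remaining implications are read off from the ideal-structure facts of Section 5, which the paper has declared valid for semiflows. You instead argue intrinsically inside $E(X,S)$, in the order $(1)\Rightarrow(3)\Rightarrow(2)\Rightarrow(1)$: distality kills idempotents via proximality of $(x,ux)$, and inverses are produced from Nakamura's lemma applied to a minimal left ideal inside the closed left ideal $\overline{Sp}=E(X,S)p$. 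What your route buys is self-containedness: it needs only that $E(X,S)$ is a compact right-topological semigroup, and in particular it does not need surjectivity or bijectivity of distal semiflows, nor the passage to $[S]$ --- in fact it does not need the Generalized Ellis Theorem at all. The paper's route is shorter because it recycles the flow-case theorem wholesale.

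Three local repairs are needed. (i) In $(1)\Rightarrow(3)$ the step $s_i(ux)=u(s_ix)$ is unjustified: elements of $S$ need not commute with $u$, and $u$ need not be continuous. What you actually need is only that $s_i\to u$ pointwise, evaluated at the point $ux$, gives $s_i(ux)\to u(ux)=ux$; this is exactly the computation recorded in Section 5 of the paper. (ii) Your appeal to the Generalized Ellis Theorem is a non sequitur: pointwise almost periodicity of $(X,S)$ says nothing directly about orbit closures in $E(X,S)$, and the assertion that $\overline{Se}=E(X,S)$ is minimal is precisely statement (2), not yet available at that point. Fortunately your parenthetical alternative --- $\overline{Sp}=E(X,S)p$ is a closed left ideal, hence contains a minimal left ideal, whose idempotent (Nakamura) must equal $e$ by the first step --- is the correct and sufficient argument, so the Generalized Ellis Theorem should simply be deleted from your proof. (iii) In $(2)\Rightarrow(1)$ the structure fact has its sides swapped: with the paper's convention, an idempotent $u$ in a minimal left ideal $I$ satisfies $pu=p$ for all $p\in I$ (it is a right identity on $I$), not $up=p$; applied at $p=e$ this still yields $u=eu=e$, so your conclusion stands once the citation is corrected.
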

\begin{proof}
$(1)\Rightarrow (2)$ Since we know that for a  distal flow $(X,T)$, $E(X)$ is the only minimal ideal and $e$\ (identity) is the only idempotent.  Since $(X,S)$ is distal, it induces the distal flow $(X,[S])$. Now $E(X,[S])$ has a  unique idempotent $e$. Hence,  the identity $e$ is the unique idempotent in $E(X,S)$ and so $(E(X,S),S)$ is minimal.

$(2)\Rightarrow (3)$ We know that for a minimal ideal $I\subset E(X)$ and a minimal idempotent $u\in I$, $Iu$ is a group with identity $u$. Since $(E(X, S),S)$ is minimal, $E(X, S)$ is the only minimal ideal in $E(X, S)$ with only idempotent $e$. Hence $E(X,S)$ is a group.

$(3)\Rightarrow (1)$ Let $(x,y)\in P(X)$, then there is a minimal ideal $I\subset E(X,S)$ such that  $rx=ry$ for all $r\in I$.  Now $E(X)$ is a group, and so $ x=y$. Hence $(X,S)$ is distal.
\end{proof}

On similar lines, we can also say:

\begin{proposition} Let $(X,S)$ be a transitive, distal semiflow then  $(E(X,S), S)$ is distal.
\end{proposition}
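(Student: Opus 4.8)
The plan is to reduce this to the structural results already established for distal semiflows together with the semiflow analogue of Proposition \ref{5.2}. First I would note that since $X$ is a compact metric space, the transitivity of $(X,S)$ yields point transitivity: there is an $x_0 \in X$ with $\overline{Sx_0} = X$. (This is exactly the observation recorded in Section 2 for transitive metrizable flows, and it carries over verbatim to semiflows.) Hence the semiflow analogue of Proposition \ref{5.2} applies and gives $E(E(X,S), S) \cong E(X,S)$, both as a semiflow and as a semigroup.

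Next I would invoke the equivalence theorem just proved: since $(X,S)$ is distal, $E(X,S)$ is a group, with the identity $e$ as its only idempotent. Combining this with the isomorphism of the previous paragraph, $E(E(X,S), S)$ is also a group. Now apply the same equivalence theorem, this time to the semiflow $(E(X,S), S)$: a semiflow is distal if and only if its enveloping semigroup is a group. Since $E(E(X,S),S)$ is a group, $(E(X,S),S)$ is distal, which is the claim.

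Alternatively — the route I would actually write out, in case one prefers not to lean on the full equivalence applied to $(E(X,S),S)$ — one argues directly as in the proof of Theorem \ref{TH 7.1}. If $(p,q) \in P(E(X,S))$, then there is a net $\{s_i\} \subset S$ with $\lim_i s_i p = \lim_i s_i q$, and passing to a limit point in $E(E(X,S),S)$ produces an element $r$ with $rp = rq$. Since $E(E(X,S),S) \cong E(X,S)$ is a group, $r$ is invertible there, whence $p = r^{-1}(rp) = r^{-1}(rq) = q$. Thus $P(E(X,S)) = \Delta$ and $(E(X,S),S)$ is distal.

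The only genuine point to check carefully — and the step I expect to be the main obstacle — is the transfer of Proposition \ref{5.2} (the isomorphism $E(E(Y,S)) \cong E(Y,S)$ for point transitive $(Y,S)$) from the group setting to the monoid setting. The Remark asserting that the Section 5 theory remains valid for semiflows covers this, but one should verify that the construction $\theta : E(X,S) \to X$, $p \mapsto p x_0$, together with the induced map on the second enveloping semigroup, goes through with $S$ a monoid rather than a group; it does, since that argument never uses inverses of elements of $S$. Everything else is a routine application of the distal-semiflow machinery developed above (the Generalized Ellis Theorem and the equivalences following it).
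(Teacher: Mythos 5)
Your proposal is correct and follows essentially the same route as the paper: the paper's proof also takes a proximal pair $(p,q)$ in $E(X,S)$, produces $r$ in a (minimal ideal of) $E(E(X,S))$ with $rp=rq$, invokes transitivity to identify $E(E(X,S))\cong E(X,S)$, and then cancels $r$ using the group structure of $E(X,S)$ guaranteed by distality. Your first variant (reapplying the distal--group equivalence to $(E(X,S),S)$) and your caution about transferring Proposition \ref{5.2} to monoid actions are consistent with the paper, which covers the latter by its remark that the Section 5 theory remains valid for semiflows.
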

\begin{proof} Suppose $(p,q)\in P(E(X,S))$ then there is a minimal ideal $I \subset E(E(X, S))$ such that  $rp=rq$ for all $r\in I$. Since $(X,S)$ is transitive, $E(E(X)) \cong E(X)$. So $rp=rq$ for all $r\in I\subset E(X)$. Since $E(X)$ is a group by the above generalization, we have $ p=q$. Hence $(E(X,S),S)$ is distal. \end{proof}
\begin{theorem}
Suppose $(X,S)$ is a distal system then $E(X,S)= E(X,[S])$ as subsets of  $X^X$.
\begin{proof}
Since $(X,S)$ is distal. $E(X,S)$ is a group with unique idempotent  $e$ and $E(X,S)$ is unique minimal ideal n $E(X,S)$.

Now $S \subset \overline{S}=E(X,S)$ but $[S]$ is the smallest group containing $S$. So $[S]\subseteq E(X,S)$. Hence $E(X,[S])= \overline{[S]} \subseteq \overline{E(X,S)}= E(X,S)$.
\end{proof}
\end{theorem}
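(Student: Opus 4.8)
The plan is to prove the two inclusions $E(X,S)\subseteq E(X,[S])$ and $E(X,[S])\subseteq E(X,S)$ separately, the first being purely formal and the second using the structure theory just established for distal semiflows. For the inclusion $E(X,S)\subseteq E(X,[S])$ I would simply note that $S\subseteq[S]$, so the family of maps $\{x\mapsto sx:s\in S\}$ sits inside $\{x\mapsto tx:t\in[S]\}$; since $E(X,[S])$ is compact, hence closed, in $X^X$, taking closures gives $E(X,S)=\overline{\{x\mapsto sx:s\in S\}}\subseteq E(X,[S])$. This direction needs no distality at all.

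For the reverse inclusion I would exploit that, by the characterisation of distal semiflows proved above, $(X,S)$ distal forces $E(X,S)$ to be a group whose only idempotent is the identity map $I_X$, and that, by the Corollary and Remark just preceding, each $s\in S$ is in fact a self-homeomorphism of $X$, with topological inverse $s^{-1}$. The key step is to check $s^{-1}\in E(X,S)$: writing $\bar s$ for the inverse of $s$ in the group $E(X,S)$, we have $s\bar s=\bar s s=I_X$ as elements of $X^X$; since $s$ is a bijection of $X$ it has a unique two-sided functional inverse, so $\bar s=s^{-1}$ and hence $s^{-1}\in E(X,S)$. Now $E(X,S)$, being an enveloping semigroup, is closed under composition, so it contains every finite composite of elements of $S\cup\{s^{-1}:s\in S\}$; that is, the group of self-homeomorphisms $[S]$ generated by $S$ satisfies $[S]\subseteq E(X,S)$. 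As $E(X,S)$ is closed in $X^X$, we get $E(X,[S])=\overline{[S]}\subseteq E(X,S)$, and combining the two inclusions yields $E(X,S)=E(X,[S])$ as subsets of $X^X$.

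The only place where genuine content enters is the identification of the group-theoretic inverse $\bar s$ of $s$ inside $E(X,S)$ with the topological inverse $s^{-1}$ of the homeomorphism $s$; this is exactly where distality is used, through the two facts that $E(X,S)$ is a group with identity $I_X$ and that every $s$ is bijective, both already available. Everything else is manipulation of closures together with the fact that enveloping semigroups are closed under composition, so I do not expect any serious obstacle beyond being careful that the identity of the group $E(X,S)$ really is $I_X$ and not some other idempotent — which is guaranteed because distality makes $e=I_X$ the unique idempotent of $E(X,S)$.
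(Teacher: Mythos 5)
Your proposal is correct and follows essentially the same route as the paper: distality makes $E(X,S)$ a group containing $S$, hence $[S]\subseteq E(X,S)$, and taking closures gives $E(X,[S])=\overline{[S]}\subseteq E(X,S)$, the reverse inclusion being immediate from $S\subseteq [S]$. The only difference is that you spell out what the paper compresses into ``$[S]$ is the smallest group containing $S$'' --- namely that the group-theoretic inverse of each $s\in S$ inside $E(X,S)$ coincides with its functional inverse $s^{-1}$ because the group identity is the unique idempotent $e=I_X$ --- and you also record explicitly the trivial inclusion that the paper leaves unstated.
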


We see the same in the following example:

\begin{example}
Consider the irrational rotation $(S^{1}, T_{\alpha})$. This flow is invertible, minimal and distal. So $E(S^1, T_{\alpha})$ is a group and $E(S^{1}, T_{\alpha})\cong S^{1}$. In case we take forward orbits only, since the irrational rotation is minimal and also an isometry, so the identity $e$ and hence the inverses $T_{\alpha}^{-n}$ will all be limit points of the forward iterates. So  $E(S^{1}, \N) \cong S^{1} \cong E(S^{1}, T_{\alpha}) = E(S^{1}, \Z)$.
\end{example}

\color{black}

\subsection{Continuity of elements in $E(X,S)$}

Each $s \in S$ is continuous on $X$, but that need not be so for elements of $E(X,S)$. We notice here that the results for semiflows vary from the results for the known cases for flows.

%\begin{theorem} \cite{AUSD}
%Let $(X,S)$ be a semiflow with $S$ an Abelian semigroup and the set of idempotents $J(E(X))
%\subset C(X, X)$ then the following two statements hold;
%\begin{enumerate}
%\item There exists a unique minimal left ideal $I$ in $E(X)$ and
%moreover $I$ contains a unique idempotent
%$u$. Hence $P(X)$ is an equivalence relation on $X$.
%\item If $x \in X$ is an almost periodic point, then it is a distal
%point. Hence if there is a dense set of almost periodic points then $(X,S)$ is distal.
%\end{enumerate}
%\end{theorem}

We recall an equivalent definition of a WAP system from \cite{MST}.
For a compact metric space $X$ and a continuous (not necessarily injective or surjective) $g: X\to X$, the semicascade $(X,g)$ is \emph{WAP} if for each $f$ in $\mathcal{C}(X)$ (space of all continuous complex valued functions on $X$)
and each sequence $\{N_{0}\}$ in $\Z^{+}$, there is a subsequence $\{N_1\}$ of $\{N_0\}$ so that $\lbrace f\circ g^{n}: n \in N_{1}\rbrace$ converges pointwise (and boundedly) to a continuous limit.

\vskip .5cm

We have the following for a WAP semicascade $(X,g)$.
\begin{proposition}
Suppose $(X,g)$ is a WAP semicascade then all members of $E(X,g)$ are continuous.
\end{proposition}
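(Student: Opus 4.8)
The plan is to show that the pointwise limit of any sequence $\{g^{n_k}\}$ in $E(X,g)$ is continuous, and the natural route is to use the WAP characterisation quoted from \cite{MST} directly. Let $\alpha \in E(X,g)$. Since $E(X,g) = \overline{\{g^n : n \in \N\}}$ in the product topology on $X^X$, and $X$ is a compact metric space, $\alpha$ is the pointwise limit of a \emph{net} of iterates; but using that $X^X$ is, for metric $X$, not metrizable yet still compact, I would first reduce to a sequence. Concretely, one argues that for $\alpha$ in the closure it suffices to produce a sequence $n_k \nearrow \infty$ with $g^{n_k} \to \alpha$ pointwise: this follows because the evaluation of $\alpha$ at any countable dense subset $D \subset X$ is approximated by finitely many iterates, and a diagonal argument over $D$ together with compactness of $X$ (to control the values off $D$) yields such a sequence. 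Alternatively, and more cleanly, I would simply invoke the standard fact that the enveloping semigroup of a metric system is sequentially dense in itself over the set $\{g^n\}$, so that every $\alpha \in E(X,g)$ arises as $\lim_k g^{n_k}$ for some sequence $\{n_k\}$.

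Once $\alpha = \lim_k g^{n_k}$ pointwise for a sequence, apply the WAP hypothesis: for every $f \in \mathcal{C}(X)$, taking $\{N_0\} = \{n_k\}$, there is a subsequence $\{N_1\} = \{n_{k_j}\}$ such that $f \circ g^{n_{k_j}}$ converges pointwise and boundedly to a continuous function $h_f \in \mathcal{C}(X)$. But $g^{n_{k_j}}(x) \to \alpha(x)$ for every $x$, so by continuity of $f$ we get $f(g^{n_{k_j}}(x)) \to f(\alpha(x))$ for every $x$; hence $h_f = f \circ \alpha$. Thus $f \circ \alpha \in \mathcal{C}(X)$ for every $f \in \mathcal{C}(X)$. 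Since $X$ is a compact Hausdorff (metric) space, $\mathcal{C}(X)$ separates points and the property ``$f \circ \alpha$ is continuous for all $f \in \mathcal{C}(X)$'' forces $\alpha$ itself to be continuous: if $x_i \to x$ in $X$ but $\alpha(x_i) \not\to \alpha(x)$, then by compactness of $X$ pass to a subnet with $\alpha(x_i) \to z \neq \alpha(x)$, pick $f \in \mathcal{C}(X)$ with $f(z) \neq f(\alpha(x))$, and get a contradiction with continuity of $f\circ\alpha$. Therefore $\alpha$ is continuous, i.e. $E(X,g) \subset \mathcal{C}(X,X)$.

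The main obstacle I anticipate is the reduction from nets to sequences: the WAP definition from \cite{MST} is phrased in terms of sequences $\{N_0\}$ in $\Z^+$, while membership in the closure $E(X,g)$ is a priori only a net statement, and $X^X$ need not be metrizable. The cleanest fix is to note that for a fixed $\alpha$ one only needs countably many ``test'' conditions: choosing a countable dense $D\subseteq X$ and a countable dense $\{f_m\}\subseteq \mathcal{C}(X)$ (separable since $X$ is compact metric), a single diagonalisation produces one sequence $\{n_k\}$ along which all $f_m \circ g^{n_k}$ converge to continuous limits and $g^{n_k}\to\alpha$ on $D$; continuity of each limit $f_m\circ\alpha|_D$ then extends to all of $X$ by density, and the separation argument above upgrades this to continuity of $\alpha$. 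This is routine but is the one place where the metric hypothesis on $X$ is genuinely used, so I would state it carefully rather than gloss over it.
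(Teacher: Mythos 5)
Your second step is fine, and in fact it supplies details the paper's own two-line proof leaves implicit: from the MST condition you get $f\circ\alpha\in\mathcal{C}(X)$ for every $f\in\mathcal{C}(X)$, and your point-separation argument correctly upgrades this to continuity of $\alpha$. The genuine gap is exactly the step you flagged, and neither of your proposed fixes closes it. The ``standard fact'' that every element of $E(X,g)$ is a pointwise limit of a \emph{sequence} of iterates is false for general metric systems: for the full shift one has $E(\emph{2}^{\N},\sigma)\cong\beta\N$, in which no sequence of distinct iterates converges, so the elements of $\beta\N\setminus\N$ are not sequential limits of $\{\sigma^n\}$. (That system is not WAP, but it shows the fact cannot come from metrizability of $X$ alone; deducing it from WAP, say via metrizability of $E(X)$ for hereditarily non-sensitive systems, would invoke machinery far heavier than the proposition itself.) The diagonal argument does not repair this: diagonalizing over a countable dense $D\subset X$ and a countable dense $\{f_m\}\subset\mathcal{C}(X)$ gives a sequence $\{n_k\}$ with $g^{n_k}\to\alpha$ \emph{on $D$ only}, together with pointwise convergence of each $f_m\circ g^{n_k}$ to a continuous limit. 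From this one can indeed deduce that $g^{n_k}$ converges pointwise on all of $X$ to some continuous $\beta$ (the limit values are pinned down because $\{f_m\}$ separates points), but $\beta$ is only known to agree with $\alpha$ on $D$; since $\alpha$ is not yet known to be continuous, you cannot conclude $\beta=\alpha$ off $D$. So the argument shows that some continuous element of $E(X,g)$ agrees with $\alpha$ on a dense set, not that $\alpha$ is continuous. For comparison, the paper's proof is essentially your first paragraph in compressed form: it too begins ``there is a sequence $g^{n_i}\to h$'' and silently assumes this reduction.

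The way to close the gap is to extract from the net a sequence adapted only to the countably many points witnessing a putative discontinuity, rather than trying to converge to $\alpha$ everywhere. Suppose $g^{t_\nu}\to\alpha$ pointwise and $\alpha$ is not continuous at $x$: since $X$ is compact metric there are $x_j\to x$ with $\alpha(x_j)\to z\neq\alpha(x)$. Pick $f\in\mathcal{C}(X)$ with $f(z)=0$ and $f(\alpha(x))=1$, and choose powers $n_1<n_2<\cdots$ occurring in the net so that $|f(g^{n_i}y)-f(\alpha(y))|<1/i$ for all $y\in\{x,x_1,\dots,x_i\}$ (pointwise convergence of the net controls any finite set of points). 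By the WAP hypothesis pass to a subsequence along which $f\circ g^{n_{i_l}}$ converges pointwise and boundedly to a continuous $h$. Then for each fixed $j$ one has $h(x_j)=\lim_l f(g^{n_{i_l}}x_j)=f(\alpha(x_j))$, so $h(x_j)\to f(z)=0$, while $h(x)=f(\alpha(x))=1$; this contradicts continuity of $h$ at $x$. This is Grothendieck's double-limit argument in disguise; equivalently, one may quote Eberlein--\v{S}mulian to identify the MST condition with relative weak compactness of each orbit $\{f\circ g^n\}$ in $\mathcal{C}(X)$, so that every function in its pointwise closure, in particular $f\circ\alpha$, is continuous, after which your separation argument finishes the proof.
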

\begin{proof}
Let $h\in E(X,g)$. There is a sequence $\lbrace n_{i} \rbrace$
 such that $g^{n_{i}} \longrightarrow h$. Since $(X,g)$ is WAP system, there is a subsequence $\lbrace n_{k_{i}}\rbrace$
of $\lbrace n_{i}\rbrace$ such that  $g^{n_{k_{i}}}$ converges pointwise to a continuous limit. Since $g^{n_{i}} \longrightarrow h$, so $g^{n_{k_{i}}} \longrightarrow h$ and hence $h$ is continuous.
\end{proof}

In case of equicontinuous cascade, all members of the enveloping
semigroup are homeomorphisms which is not so in case of
equicontinuous semicascade(since they need not be distal). Consider the below example;
\begin{example}
Let $X=[0,1]$ and $f:X\rightarrow X$ defined as $f(x)=\frac{x}{2}$.
Then $(X,f)$ is equicontinuous semicascade.
We can see that for every sequence $\lbrace n_{i} \rbrace \in \N$ and every $x \in X$, the only limit point of $\lbrace f^{n_{i}}(x)\rbrace$
is $0$. So $E(X)= \lbrace f^{n}:n\in \N\rbrace \cup \lbrace h\rbrace $, where $h(x)=0$ for all $x\in [0,1]$.
\end{example}

We note that in the example above the map $f$ is injective but not surjective. However surjective maps giving equicontinuous systems will be bijective and we do get homeomorphisms in their enveloping semigroups.

\begin{remark}
Let $(X,S)$ be a surjective, equicontinuous semiflow. Then $(X,S)$ is distal and so  $E(X,S) = E(X,[S])$ is a group of self-homeomorphisms on $X$.
\end{remark}

What happens in general?

\begin{proposition} \label{eec}
If $(X,S)$ is an equicontinuous semiflow, then all members of $E(X)$ are continuous.
\end{proposition}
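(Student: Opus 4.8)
The plan is to exploit the standard fact from the theory of function spaces (recalled earlier in the excerpt as item (5) of the list following the definitions of the $c$- and $p$-topologies, and again in the Arzela--Ascoli theorem) that the closure of an equicontinuous family in $X^X$, when $X$ is a compact metric space, is again equicontinuous, and that on an equicontinuous family the compact-open topology coincides with the point-open topology. Since $(X,S)$ is equicontinuous, the family $\{s : s\in S\}\subset X^X$ — more precisely the family $\{\pi^s : s\in S\}$ of the maps induced by the monoid action — is an equicontinuous subfamily of $X^X$. Each $\pi^s$ is in particular continuous, so this family lies in $\mathcal{C}(X,X)$.

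First I would note that $E(X,S)$ is by definition the closure of $\{\pi^s : s\in S\}$ in $X^X$ taken with the product (= point-open) topology. Because the family $\{\pi^s : s\in S\}$ is equicontinuous, its closure in the point-open topology agrees with its closure in the compact-open topology (the two topologies coincide on an equicontinuous family, hence also on its closure), and by the cited result this closure is again an equicontinuous family of maps $X\to X$. Then I would invoke the elementary fact that an equicontinuous family of functions between metric spaces consists of continuous functions — indeed equicontinuity at every point of $X$ for the whole family forces each individual member to be continuous at every point. Therefore every $h\in E(X,S)$ is continuous on $X$, which is exactly the assertion.

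The one point that needs a word of care — and is really the only obstacle — is the passage from "the family $\{\pi^s\}$ is equicontinuous" to "its closure in $X^X$ is equicontinuous", since a priori the closure could contain limit maps that are wild. This is precisely where compactness of $X$ (metric) is used, via the result recalled as item (5) in the Function Spaces section: if $\mathcal F\subset X^X$ is equicontinuous then $\overline{\mathcal F}\subset X^X$ is equicontinuous, provided one closes up in the $c$-topology; and the coincidence of the $c$- and $p$-topologies on equicontinuous families lets us identify this $c$-closure with the $p$-closure, i.e. with $E(X,S)$. So the proof is: equicontinuity of the action $\Rightarrow$ equicontinuity of $\overline{\{\pi^s\}}=E(X,S)$ $\Rightarrow$ continuity of each member. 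I would present it in two or three sentences, citing the earlier items (the list after the $c$- and $p$-topology definitions, and the Arzela--Ascoli statement) rather than re-deriving anything.

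\begin{proof}
Since $(X,S)$ is equicontinuous, the family $\mathcal{F}=\{\pi^s : s\in S\}$ is an equicontinuous subfamily of $X^X$, and in particular $\mathcal{F}\subset\mathcal{C}(X,X)$. On an equicontinuous family the point-open ($p$-) topology and the compact-open ($c$-) topology coincide, so the closure of $\mathcal{F}$ in $X^X$ with the product topology coincides with its closure in the $c$-topology. As $X$ is a compact metric space, the closure of an equicontinuous family in the $c$-topology of $X^X$ is again equicontinuous. Hence $E(X,S)=\overline{\mathcal{F}}$ is an equicontinuous family of maps $X\to X$, and an equicontinuous family consists of continuous functions. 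Therefore every member of $E(X,S)$ is continuous on $X$.
\end{proof}
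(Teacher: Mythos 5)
Your route is genuinely different from the paper's: you argue abstractly from the function-space facts (closure of an equicontinuous family is equicontinuous; $c$- and $p$-topologies agree on equicontinuous families), whereas the paper proves the statement directly by a net argument — take $s_i\to p$ pointwise, use equicontinuity of $\{s:s\in S\}$ at $x$ to get a neighbourhood $U\ni x$ with $d(sx,sy)<\epsilon/3$ for all $s\in S$, $y\in U$, and then estimate $d(p(x),p(y))\le d(p(x),s_jx)+d(s_jx,s_jy)+d(s_jy,p(y))<\epsilon$ along a suitable subnet. Your approach buys brevity and places the result where it belongs, inside standard Ascoli theory; the paper's buys self-containment and makes visible exactly where equicontinuity of the action enters.

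There is, however, one inferential soft spot at precisely the step you flag as the crux. From ``the $c$- and $p$-topologies coincide on an equicontinuous family'' you cannot conclude that the $c$-closure and the $p$-closure of $\mathcal F=\{\pi^s\}$ in $X^X$ coincide: equality of the two subspace topologies on a set (or even on its $c$-closure) says nothing, by itself, about where limit points of that set sit in the ambient space $X^X$, and since the $p$-topology is coarser the $p$-closure could a priori be strictly larger. The equality $\overline{\mathcal F}^{\,p}=\overline{\mathcal F}^{\,c}$ is true, but its proof is exactly the content at issue: one must show that a pointwise limit of an equicontinuous net is continuous (indeed that convergence upgrades to uniform convergence on compacta), which is the $\epsilon/3$ computation the paper carries out. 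Note also that the fact recalled in the paper as item (5) concerns the closure in the $c$-topology, while $E(X,S)$ is by definition the closure in the $p$-topology, so that citation alone does not cover what you need. The fix is small: either invoke the pointwise-closure form of the lemma (the closure of an equicontinuous family in the topology of pointwise convergence is equicontinuous, as in Kelley or Willard), or insert the short net-and-triangle-inequality argument above; with that replacement your proof is correct.
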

\begin{proof}
Let $p \in E(X)$  and $x\in X$ with $\epsilon > 0$. There
is a net $\lbrace s_{i}\rbrace$ in $S$ such that
$s_{i}\longrightarrow p$. So for any $y \in X$, there is an directed set $N_{y}$ such that $d(s_{i}(y),p(y))< \frac{\epsilon}{3}$ for all $i$ preceding $N_{y}$ in this directed set. Also since $(X,S)$ is equiconinuous, then there is an open neighbourhood $U \ni x$ such that for all $y\in U$ and
for all $s\in S$, $d(sx,sy)<\frac{\epsilon}{3}$.

Now let $s_{j}$ be a subnet given by $j$ preceding both $ N_{x},N_{y}$ in this directed set and some $y\in U$. Then
\begin{center}
$d(p(x),p(y))\leq d(p(x),s_{j}x)+d(s_{j}x,s_{j}y)+
d(s_{j}y,p(y))<
\frac{\epsilon}{3}+\frac{\epsilon}{3}+\frac{\epsilon}{3}=\epsilon$.
\end{center}

Thus for every $y \in U$, $d(p(x),p(y)) < \epsilon$.

Hence $p$ is continuous. \end{proof}

\vskip .5cm

\subsection{Transitivity of $E(X,S)$}

We recall the concepts of \emph{weak rigidity, rigidity and uniform rigidity}  first defined by Glasner and Maon \cite{RIG} for  cascades. We note that the same definitions also hold for semicascades. Now the semicascade $(X,g)$ gives the enveloping semicascade $(E(X,g), g)$. We recall Theorem \ref{2.1} here and note that the theorem  below can be proved on the same lines.

\begin{theorem}  For the semicascade $(X,g)$ the following are equivalent:

\begin{enumerate}
  \item The semicascade $(E(X), g)$  is topologically transitive.
  \item $(X,g)$ is weakly rigid.
  \item The identity $e = g^0$ is not isolated in $E(X)$.
\end{enumerate}

\end{theorem}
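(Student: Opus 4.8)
The plan is to mirror the proof of Theorem \ref{2.1} verbatim, since nothing in that argument used the invertibility of the generator; every step goes through for a continuous (not necessarily surjective or injective) $g$. So I would organize the proof as a cycle $(1)\Leftrightarrow(2)$, $(2)\Rightarrow(3)$, $(3)\Rightarrow(1)$, working with $E(X,g)$ viewed as a subspace of $X^X$ with the point-open topology and with the basic open sets $B = [x_1,\ldots,x_n;U_1,\ldots,U_n] = \{\alpha \in X^X : \alpha(x_i) \in U_i\}$. The only bookkeeping change from the cascade case is that orbits and convergent subsequences are indexed by $n_k \in \N$ rather than $n_k \in \Z$; this is harmless because weak rigidity for semicascades, as recalled just before the statement, is exactly the existence of $m \in \N$ (not $\Z$) with $d(x_i, g^m(x_i)) < \epsilon$ for finitely many prescribed points, which is the natural one-sided notion.

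First I would establish $(1)\Leftrightarrow(2)$. The point-open subbasic sets $[x;U]$ with $x \in X$, $U$ open, are nonempty in $E(X,g)$ precisely because $\mathcal{O}(e) = \{g^n : n \geq 0\}$ is dense, so every such $B$ meets the orbit of $e$; the system $(E(X,g),g)$ is therefore point transitive by construction. I would note that $(E(X,g),g)$ is topologically transitive if and only if for every pair of nonempty basic open sets $B_1 = [x_1,\ldots,x_n;U_1,\ldots,U_n]$ and $B_2 = [y_1,\ldots,y_m;V_1,\ldots,V_m]$ in $E(X,g)$ there is $l > 0$ with $g^l(B_1) \cap B_2 \neq \emptyset$. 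Unwinding this using that $g^n \to \alpha \in B_1$ for suitable $n$ gives exactly the one-sided iterative-transitivity condition; but since for semicascades weak rigidity and this condition coincide by the same computation as in the cascade case, I would simply fold $(1)\Leftrightarrow(2)$ into a direct argument: transitivity of $E(X,g)$ means for each basic $B \ni e$ there is $l>0$ with $g^l(B) \cap B \neq \emptyset$, i.e. there are points realizing $g^l(x_i) \in U_i$, which is weak rigidity. Conversely weak rigidity produces, for each such $B$, some $g^l \neq e$ in $B$, hence $e$ is not isolated, hence by homogeneity of the orbit $E(X,g)$ is perfect, and a perfect space with a dense orbit is topologically transitive.

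That last remark is really the content of $(2)\Rightarrow(3)$ and $(3)\Rightarrow(1)$, which I would then state cleanly. For $(2)\Rightarrow(3)$: given a basic $B = [x_1,\ldots,x_n;U_1,\ldots,U_n]$ with $e \in B$, weak rigidity gives $k > 0$ with $g^k(x_i) \in U_i$ for all $i$, so $g^k \neq e$ lies in $B$; hence $e$ is not isolated. For $(3)\Rightarrow(1)$: if $e$ is not isolated then since $g^n \to e$ forces $g^{n+j} \to g^j$, no $g^j$ is isolated, and as $\overline{E(X,g)} = \overline{\mathcal{O}(e)}$ with $\mathcal{O}(e)$ dense in a perfect compact space, $(E(X,g),g)$ is topologically transitive. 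I would remark that no step invoked surjectivity or injectivity of $g$, nor did the indexing by $\N$ versus $\Z$ matter beyond the cosmetic relabeling, which is precisely why the theorem transfers. I do not anticipate a genuine obstacle here; the only point requiring a sentence of care is confirming that the correct one-sided notion of weak rigidity (limits along $n_k \in \N$) is what makes $(2)\Rightarrow(3)$ work, and that is exactly the definition recalled in the paragraph preceding the statement.
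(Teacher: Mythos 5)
Your overall architecture --- the chain $(2)\Rightarrow(3)\Rightarrow(1)$ together with a return arrow from $(1)$ --- is exactly the paper's intent (the paper itself only remarks that the semicascade statement can be proved on the same lines as Theorem \ref{2.1}), and your arguments for $(2)\Rightarrow(3)$ and $(3)\Rightarrow(1)$ are fine. The genuine gap is in your direct argument for $(1)\Rightarrow(2)$. From topological transitivity you correctly get, for a basic set $B=[x_1,\ldots,x_n;U_1,\ldots,U_n]\ni e$, some $l>0$ with $g^l(B)\cap B\neq\emptyset$; but this only produces some $\alpha\in B$ with $g^l\alpha\in B$, i.e. $\alpha(x_i)\in U_i$ and $g^l(\alpha(x_i))\in U_i$. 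Your ``i.e.\ there are points realizing $g^l(x_i)\in U_i$'' silently replaces the witness $\alpha$ by $e$: nothing in the transitivity statement lets you make that choice, and $g^l$ applied to a point of $U_i$ near $x_i$ need not land near $g^l(x_i)$. Weak rigidity requires a power of $g$ itself (with positive exponent) to lie in $B$, which is precisely condition $(3)$, and $(1)\Rightarrow(3)$ is nowhere established in your proposal. This is exactly the role the intermediate notion of iterative transitivity plays in the proof of Theorem \ref{2.1}: its conclusion is $f^k\in B_1$ and $f^{k+l}\in B_2$, a statement about the prescribed points $x_i$, not merely $f^l(B_1)\cap B_2\neq\emptyset$; by dropping that item you lost the step that converts a transitivity witness into a power of $g$.

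The repair is short and should be written out. Left multiplication $L_g:\alpha\mapsto g\alpha$ is continuous on $E(X,g)$, so $B\cap L_g^{-l}(B)$ is open, and it is nonempty by the transitivity statement you invoked; since the forward orbit $\{g^m:m\geq 0\}$ of $e$ is dense in $E(X,g)$, this set contains some $g^m$, whence $g^m\in B$ and $g^{m+l}\in B$ with $m+l\geq 1$, giving $d(x_i,g^{m+l}(x_i))<\epsilon$ for all $i$, i.e. weak rigidity. (Equivalently, prove $(1)\Rightarrow(3)$ first: if $e$ were isolated then $\{e\}$ would be open and transitivity applied to the pair $(\{e\},\{e\})$ would force $g^m=e$ for some $m\geq 1$, making $E(X)$ finite --- a degenerate case the paper excludes; then deduce $(2)$ from $(3)$ using regularity of the compact Hausdorff space $E(X)$ together with density of the orbit of $e$.) Your remaining observations --- that the one-sided notion of weak rigidity indexed by $\N$ is the correct one, and that the cascade proof's use of preimages $f^{-k}(x_i)$ is avoidable so that no surjectivity or injectivity of $g$ is needed --- are correct.
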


\vskip .5cm

\begin{example} Consider the one-sided $\emph{2}-$shift $(\emph{2}^{\N}, \sigma)$. The enveloping semigroup here will be $E(X) = \beta \N$. Now take $k, l \in \N$ with $k < l$, then for points $x =  1^{k} 0^\infty, y =  1^\infty \in X$  observe that $x \in [1^k], y \in [1^l]$ but there is no $n \in \N$ for which $\sigma^n([x;[1^k]]) \cap [y; [1^{l}]] \neq \emptyset$. Thus $(E(\emph{2}^{\N}), \sigma)$ is not  transitive or $(\beta \N, \N)$ is not transitive.
\end{example}

\vskip .5cm

We recall the properties of weakly mixing enveloping semigroups  studied in \cite{AEB}, and note that we can prove the following in case of semicascades too:

\begin{theorem} For a semicascade $(X, g)$ the following are equivalent:

(i) The system $(X, g)$ has a weakly mixing enveloping semigroup.

(ii) For any $k$-tuple $(x_1, x_2, \ldots , x_k) \in X^k$ the  system  $(\overline{O(x_1, x_2, \ldots , x_k)}, g^{(k)})$ is weakly mixing.
\end{theorem}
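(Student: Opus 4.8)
The plan is to mimic, essentially verbatim, the proof of the corresponding result for cascades referenced in \cite{AEB} (the theorem characterising weakly mixing enveloping semigroups), replacing the single generator $f$ by the monoid generator $g$ and $\mathbb{Z}$-orbits by $\mathbb{N}$-orbits. First I would recall the two standing identities from Section 10's preamble: for any index set $\Lambda$ we may identify $\Delta E(X)^\Lambda$ with $E(X^\Lambda)$, and the orbit closure $\overline{O(x_1,\dots,x_k)}$ is a factor (via the evaluation map $ev_{(x_1,\dots,x_k)}$) of the pointed system $(E(X)^k,(e,\dots,e),g^{(k)})$. Both facts hold for semicascades since the only ingredients are the density of $\{g^n : n\in\mathbb{N}\}$ in $E(X)$ and continuity of the projections, neither of which uses invertibility of $g$.

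For the direction (i)$\Rightarrow$(ii): assume $(E(X),g)$ is weakly mixing, i.e.\ $(E(X)\times E(X),g\times g)$ is topologically transitive. Then for every $k$, $(E(X)^{2k},(g^{(2k)}))\cong (E(X^{2k}),g)$ is weakly mixing (using the Furstenberg-type fact, valid for $\mathbb{N}$-actions via the intersection lemma for semicascades, that weak mixing of a system implies weak mixing of all its finite powers — here applied to $E(X)$). Since $(\overline{O(x_1,\dots,x_k)},g^{(k)})$ is a factor of $(E(X)^k,g^{(k)})$, and factors of weakly mixing systems are weakly mixing, the pointed orbit closure of any $k$-tuple is weakly mixing, which is (ii). For (ii)$\Rightarrow$(i): assume every $k$-tuple orbit closure is weakly mixing. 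I would show $(E(X)^2,g^{(2)})$ is transitive by checking that for any two basic open sets $B_1,B_2\subset E(X)\times E(X)$ (coming from finitely many coordinates, i.e.\ finitely many points $x_1,\dots,x_k\in X$ and open sets in $X$) there is an $n\in\mathbb{N}$ with $g^{(2)n}(B_1)\cap B_2\ne\emptyset$. Pulling these open sets back through the evaluation map realises them inside $\overline{O(y)}^2$ for a suitable $2k$-tuple $y$ built from the relevant points, and weak mixing of that orbit closure (hence transitivity of its square) supplies the required $n$; translating back gives transitivity of $(E(X)^2,g^{(2)})$, i.e.\ weak mixing of $(E(X),g)$.

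The only genuine obstacle is making sure the $\mathbb{Z}$-specific steps in \cite{AEB} survive the passage to $\mathbb{N}$: concretely, (a) that weak mixing still propagates to all finite powers for a semicascade — this is exactly the content of the remark earlier in the excerpt that the Furstenberg intersection lemma and its consequences hold for semicascades (and was already used implicitly in Theorem 3.9 for induced semicascades); and (b) that the evaluation map $ev_{(x_1,\dots,x_k)}:(E(X)^k,g^{(k)})\to(\overline{O(x_1,\dots,x_k)},g^{(k)})$ is genuinely a factor map in the semicascade setting — which holds because $O(e,\dots,e)$ is dense in $E(X)^k$ and the map intertwines $g^{(k)}$ with $g^{(k)}$ by construction. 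Neither of these requires $g$ to be invertible or surjective, so no new idea beyond \cite{AEB} is needed; the proof is word-for-word the cascade argument with $\mathbb{Z}$ replaced by $\mathbb{N}$. Accordingly I would simply state "The proof follows exactly as in \cite{AEB}, using that the relevant version of the Furstenberg intersection lemma holds for semicascades and that $\overline{O(x_1,\dots,x_k)}$ is a factor of $(E(X)^k,g^{(k)})$," and leave the routine verifications to the reader, as the paper does for its other semiflow analogues.
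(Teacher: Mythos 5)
Your proposal matches the paper's treatment exactly: the paper gives no proof of this theorem, simply recalling the cascade result of \cite{AEB} and asserting that the argument carries over verbatim to semicascades, which is precisely your plan, with the same two ingredients (the evaluation-map factor onto $\overline{O(x_1,\ldots,x_k)}$ and the Furstenberg-type product/intersection lemma, neither of which needs invertibility of $g$). One small correction to your (i)$\Rightarrow$(ii): it is the diagonal $\Delta E(X)^{2k}$, not the full power $E(X)^{2k}$, that is identified with $E(X^{2k})$, and the tuple's orbit closure is a factor of this diagonal, i.e.\ of $(E(X),g)$ itself, so since $(E(X),g)$ is weakly mixing by hypothesis the conclusion is immediate and your detour through finite powers (and the claim that the orbit closure is a factor of all of $(E(X)^k,g^{(k)})$) is unnecessary.
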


\vskip .5cm

We also have:

\begin{theorem} The semicascade $(E(X),g)$ is mixing $\Rightarrow$ for the semicascade $(X,g)$ every $x \in X$ is essentially non-wandering. \end{theorem}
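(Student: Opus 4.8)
The plan is to mirror the argument already sketched in the body of the excerpt for the flow/cascade case (``The cascade $(E(X),f)$ is mixing $\Rightarrow$ for the cascade $(X,f)$ every $x\in X$ is essentially non-wandering''), checking that nothing in it used invertibility of $g$ or surjectivity. So first I would unwind what mixing of $(E(X),g)$ means at the level of the point-open topology on $E(X)\subset X^X$: for every pair of nonempty basic open sets $B_1,B_2$ in $E(X)$ there is $N\in\N$ with $g^k(B_1)\cap B_2\neq\emptyset$ for all $k\geq N$. Since $\mathcal O(e)$ is dense in $E(X)$ (the orbit of the identity $e=g^0$), mixing in particular forces, for every basic neighbourhood base $B$ at $e$, the existence of $N$ with $g^k(B)\cap B\neq\emptyset$ for all $k\geq N$.

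Next I would specialize to the one-coordinate basic open sets: fix $x\in X$ and an open $U\ni x$, and take $B=[x;U]=\{\alpha\in E(X):\alpha(x)\in U\}$, which is a nonempty open set containing $e$. Mixing gives $N\in\N$ so that for all $k\geq N$ there is $\beta_k\in B$ with $g^k\beta_k\in B$, i.e. $\beta_k(x)\in U$ and $g^k(\beta_k(x))\in U$. Writing $z_k=\beta_k(x)\in U$ we get $g^k(z_k)\in U$ with $z_k\in U$, hence $g^k(U)\cap U\neq\emptyset$ for every $k\geq N$. That is precisely the statement that $x$ is essentially non-wandering (Definition in the previous section), and since $x$ was arbitrary, every point of $X$ is essentially non-wandering.

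The only point that needs a second look is whether $[x;U]$ really is a (sub-)basic open set of $E(X)$ as a subspace of $X^X$ with the point-open topology, and whether $e\in[x;U]$ — both are immediate: $e(x)=x\in U$, and point-open subbasic sets are exactly of this form. No appeal to $g$ being a homeomorphism is made anywhere; the composition $g^k\beta_k$ makes sense for any continuous $g$ and any $\beta_k\in E(X,g)$, because $E(X,g)$ is a compact right-topological semigroup under composition and $g^k\in E(X,g)$. So the proof for semicascades is word-for-word the flow proof.

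The main obstacle — such as it is — is purely bookkeeping: one must be careful that the quantifier order in ``mixing of $(E(X),g)$'' is ``$\exists N\ \forall k\geq N$'' and that this passes intact through the specialization to $B=[x;U]$ and through the substitution $z_k=\beta_k(x)$; there is no genuine analytic difficulty and no new phenomenon peculiar to the non-invertible setting. Hence I would simply write: ``The proof is identical to that of the corresponding statement for cascades, noting that it uses neither surjectivity nor injectivity of $g$,'' and leave it at that, or include the three-line specialization above for completeness.
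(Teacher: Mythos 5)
Your proof is correct and is essentially the paper's own argument: the paper establishes the cascade case by specializing mixing of $(E(X),f)$ to the basic open set $[x;U]\ni e$, obtaining $f^k(U)\cap U\neq\emptyset$ for all $k\geq N$, and then states the semicascade version with no new ingredient, exactly as you do. Your handling of the quantifiers (letting the witness $\beta_k$ depend on $k$) is if anything slightly more careful than the paper's wording, and your observation that no invertibility or surjectivity of $g$ is used matches the paper's intent.
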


\vskip .5cm

But again, since we believe that there need not be any mixing enveloping cascades, the same would be true for enveloping semicascades.

\subsection{On $E(2^X, S)$}

It is interesting to see  for a semiflow $(X,S)$,  the relation between $E(X,S)$ and $E(2^X,S)$.
As we have seen that for an equicontinuous flow $(X,T)$, $E(2^X,T)$ is conjugate to $E(X,T)$.
This is also true for a semiflow.

\begin{theorem}
If $(X,S)$ is equicontinuous then $E(X,S)$ is conjugate to $E(2^X,S)$.
\end{theorem}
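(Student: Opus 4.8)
The plan is to mirror the proof of Theorem \ref{TH 10.3} for flows, checking that no step used $T$ being a group rather than a monoid. First I would recall the map $\Theta: E(2^X,S) \to E(X,S)$ constructed in Theorem \ref{T10.2}, noting that the construction and the proof that $\Theta$ is a well-defined, continuous, flow (here: semiflow) homomorphism go through verbatim for a monoid $S$, since nowhere did we invert elements of $S$ — we only took nets $\{s_i\}$ in $S$ with $\pi^{s_i}_* \to \alpha$ and restricted to singletons. So the entire content to be proved is that $\Theta$ is a bijection when $(X,S)$ is equicontinuous.

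Next I would establish the two facts about equicontinuous semiflows needed for the argument. By Proposition \ref{eec}, every member of $E(X,S)$ is continuous; moreover, since $(X,S)$ is equicontinuous, the family $\{\pi^s : s \in S\}$ is equicontinuous, hence by the Arzela–Ascoli circle of ideas (the results recalled in Section 4, in particular that the closure of an equicontinuous family is equicontinuous and that on an equicontinuous family the compact-open and point-open topologies coincide) $E(X,S)$ is an equicontinuous family on which the topologies of pointwise and uniform convergence agree. Also $(2^X,S)$ is equicontinuous whenever $(X,S)$ is (the induced Hausdorff metric argument from Theorem \ref{TH 2.4} does not use surjectivity or invertibility). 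These are exactly the ingredients that powered the flow proof.

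Then I would carry out the surjectivity and injectivity steps exactly as in Theorem \ref{TH 10.3}. For surjectivity, given $\alpha \in E(X,S)$ with $\pi^{s_i} \to \alpha$ uniformly, define $\alpha_*(A) = \alpha(A)$ — this is well-defined and gives a closed set because each $\alpha \in E(X,S)$ is continuous on the compact space $X$ (we do \emph{not} need $\alpha$ to be a homeomorphism here; only continuity and compactness of $X$, which is what makes $\alpha_*$ land in $2^X$) — and then the uniform-convergence argument shows $\pi^{s_i}_* \to \alpha_*$ in $2^X$, so $\alpha_* \in E(2^X,S)$ and $\Theta(\alpha_*) = \alpha$. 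For injectivity, $\Theta(\alpha) = \Theta(\beta)$ forces $\alpha(\{x\}) = \beta(\{x\})$ for all $x$, hence $\alpha(A) = \beta(A)$ for all finite $A$ by Lemma \ref{10.3}, and then continuity of $\alpha,\beta$ (from equicontinuity of $(2^X,S)$) together with density of finite sets in $2^X$ gives $\alpha = \beta$. Finally, since $E(2^X,S)$ is compact and $E(X,S)$ Hausdorff, the continuous bijection $\Theta$ is a homeomorphism, and being a semiflow homomorphism it is a conjugacy.

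The main obstacle — and it is a mild one — is making sure that the flow-proof step "$\alpha_*(A) := \alpha(A)$ is well-defined" does not secretly rely on $\alpha$ being a homeomorphism, which it was in the equicontinuous-flow setting. For a general equicontinuous semiflow $\alpha$ need only be continuous, but continuity of $\alpha$ plus compactness of $X$ already guarantees $\alpha(A)$ is closed and nonempty, so $\alpha_* : 2^X \to 2^X$ is legitimately defined; one should just remark this explicitly. Everything else is a line-by-line transcription of the proof of Theorem \ref{TH 10.3}, with "$T$" replaced by "$S$", "flow" by "semiflow", and invoking Proposition \ref{eec} in place of the group-theoretic statement that $E(X)$ consists of homeomorphisms.

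\begin{proof}
By Theorem \ref{T10.2} (whose proof uses only nets in the acting monoid and never inverts semigroup elements) there is a continuous semiflow homomorphism $\Theta : E(2^X,S) \to E(X,S)$, $\Theta(\alpha) = \alpha'$, where $\{\alpha'(x)\} = \alpha(\{x\})$. It suffices to show that $\Theta$ is a bijection; then, since $E(2^X,S)$ is compact and $E(X,S)$ is Hausdorff, $\Theta$ is a homeomorphism, and being a semiflow homomorphism it is a conjugacy.

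Since $(X,S)$ is equicontinuous, the family $\{\pi^s : s\in S\}$ is equicontinuous, so its closure $E(X,S)$ is an equicontinuous family of continuous maps (Proposition \ref{eec}) on which the point-open and compact-open topologies coincide; in particular the topologies of pointwise and of uniform convergence agree on $E(X,S)$. Moreover $(2^X,S)$ is again equicontinuous (the Hausdorff-metric estimate of Theorem \ref{TH 2.4} does not use surjectivity or injectivity), so every member of $E(2^X,S)$ is continuous and finite subsets are dense in $2^X$.

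\emph{Surjectivity.} Let $\alpha \in E(X,S)$. As $\alpha$ is continuous and $X$ compact, $\alpha(A)$ is a nonempty closed subset of $X$ for every $A \in 2^X$, so $\alpha_* : 2^X \to 2^X$, $\alpha_*(A) := \alpha(A)$, is well defined. Choose a net $\{s_i\}$ in $S$ with $\pi^{s_i} \to \alpha$, which by the previous paragraph is uniform convergence on $X$. Fix $A \in 2^X$ and an open neighbourhood $\langle U_1,\ldots,U_n\rangle$ of $\alpha_*(A)$ in $2^X$. Uniform convergence gives $l$ with $\pi^{s_i}(x) \in \bigcup_{j=1}^n U_j$ for all $x \in A$ and all $i \geq l$, so $\pi^{s_i}_*(A) \subset \bigcup_{j=1}^n U_j$ for $i \geq l$; and for each $U_k$ pick $x \in A$ with $\alpha(x) \in U_k$ to get $l_k$ with $\pi^{s_i}_*(A) \cap U_k \neq \emptyset$ for $i \geq l_k$. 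Taking $l_0$ past $l$ and all $l_k$ yields $\pi^{s_i}_*(A) \in \langle U_1,\ldots,U_n\rangle$ for $i \geq l_0$. Hence $\pi^{s_i}_*(A) \to \alpha_*(A)$ for every $A$, i.e. $\pi^{s_i}_* \to \alpha_*$, so $\alpha_* \in E(2^X,S)$ and $\Theta(\alpha_*) = \alpha$.

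\emph{Injectivity.} Suppose $\Theta(\alpha) = \Theta(\beta)$. Then $\alpha(\{x\}) = \beta(\{x\})$ for all $x \in X$, so by Lemma \ref{10.3}, $\alpha(A) = \beta(A)$ for every finite $A \in 2^X$. Given $E \in 2^X$, pick a net $\{F_j\}$ of finite sets with $F_j \to E$; since $\alpha,\beta$ are continuous on $2^X$, $\alpha(E) = \lim_j \alpha(F_j) = \lim_j \beta(F_j) = \beta(E)$. Thus $\alpha = \beta$.

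Therefore $\Theta$ is a bijective semiflow homomorphism between a compact space and a Hausdorff space, hence a conjugacy, and $E(X,S)$ is conjugate to $E(2^X,S)$.
\end{proof}
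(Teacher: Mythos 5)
Your proof is correct and follows essentially the same route as the paper: the paper also observes that $(2^X,S)$ inherits equicontinuity, invokes Proposition \ref{eec} so that all members of $E(X,S)$ and $E(2^X,S)$ are continuous and pointwise convergence coincides with uniform convergence, and then declares that the map $\Theta$ of Theorem \ref{T10.2} is a conjugacy "as in Theorem \ref{TH 10.3}". You have simply written out in full what the paper compresses into that last sentence, and your explicit remark that continuity of $\alpha$ plus compactness of $X$ (rather than $\alpha$ being a homeomorphism) makes $\alpha_*$ well defined is exactly the right adjustment for the semiflow setting.
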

\begin{proof}
Since $(X,S)$ is equicontinuous then $(2^X,S)$ will also be equicontinuous as it is true in flows. So by Proposition \ref{eec}, all the members of $E(X)$ and $E(2^X)$ are continuous and the topology of pointwise convergence coincide with the uniform convergence.

Now the same map $\Theta: E(2^X)\rightarrow E(X)$ defined in Theorem \ref{T10.2} will be a conjugacy and the proof will be same as in Theorem \ref{TH 10.3}.
\end{proof}

\section{Concluding Remarks}

One direction of study that we have not got into here is the `cave' of \emph{tame systems}. An elaborate study of such systems has been done in    \cite{ TM, STM, GM-MTS}. \emph{Tame systems} are those systems whose enveloping semigroups are seperable and Frechet. It is known that a minimal distal metric system is tame if and only if it is equicontinuous. And a  minimal tame cascade has zero topological entropy. It would be interesting to look into these properties for a non minimal case, but we feel that it would be too far fetching for us to get into such systems in this article.

\vskip .5cm

We are indebted to  Ethan Akin and Joseph Auslander for all the discussions throughout writing this article and their many helpful comments. We also thank Dona Strauss for helping us with computing the enveloping semigroup of the Even Shift, and related discussions.

\vspace{12pt}
\bibliography{xbib}

\begin{thebibliography}{99}

\bibitem {AK}
{\bf Ethan Akin,} Recurrence in topological dynamics: Furstenberg families and Ellis actions, {\it The University Series in Mathematics. Plenum Press, New York,} (1997).

\bibitem {AAB1}
{\bf Ethan Akin, Joseph Auslander and Kenneth Berg,} When is a transitive map chaotic, {\it Convergence in Ergodic Theory and Probability, Walter de Gruyter and Co,} (1996) 25-40.

\bibitem {AAB2}
{\bf Ethan Akin, Joseph Auslander and Kenneth Berg,} Almost equicontinuity and the enveloping semigroup, {\it Topological Dynamics and Applications, Contemporary Mathematics,} 215, (1998) 75-81.

\bibitem {AEAP}
{\bf Ethan Akin, Joseph Auslander}
Almost periodic sets and subactions in topological dynamics, {\it
Proc. Amer. Math. Soc.} 131 (2003), no. 10, 3059–3062.

\bibitem {AAG}
{\bf Ethan Akin, Joseph Auslander and Eli Glasner,} The topological dynamics of Ellis actions, {\it Mem. Amer. Math. Soc.} 195 (2008).


\bibitem {AN}
{\bf Ethan Akin, Joseph Auslander and Anima Nagar,} Dynamics of Induced Systems, {\it Ergod. Th. and Dynam. Sys.,} 37 (2017) 2034-2059.

\bibitem {AANT}
{ \bf Ethan Akin, Joseph Auslander and Anima Nagar,} Variations on the Concept of Topological Transitivity, {\it Studia Math.}  235 (3) (2016),  225–249.


\bibitem {AEB}
{\bf Ethan Akin, Eli Glasner and Benjamin Weiss,} On weak rigidity and weakly mixing enveloping semigroups, {\it arXiv:1711.03169,} (2017).

\bibitem {AUS}
{\bf Joseph Auslander,} Minimal flows and their extensions, {\it North-Holland Mathematics studies,} 153 (1988).

\bibitem {AUSD}
{\bf Joseph Auslander and Xiongping Dai,} Minimality, distality and equicontinuity for semigroup actions on
compact Hausdorff spaces, {\it Discrete \& Continuous Dynamical Systems - A},  39(2019) 4647-4711.

\bibitem {AG}
{\bf Joseph Auslander and Eli Glasner,} Distal and highly proximal extensions of minimal flows, {\it Indiana Univ. Math. J.,} 26 (1977) 731-749.


\bibitem {AGN}
{\bf Joseph Auslander, Gernot Greschonig and Anima Nagar,} Reflections on Equicontinuity, {\it Proc. Amer. Math. Soc.,} 142 (2014), 3129-3137.


\bibitem {AY}
{\bf Joseph Auslander and James A. Yorke,} Interval maps, factors of maps, and chaos, {\it Tohoku Mathematical Journal, Second Series,} 32.2, (1980) 177-188.

\bibitem{B}
{\bf Mike Boyle, } Lower entropy factors of sofic systems, {\it Ergodic Theory and Dynamical Systems,} 3, (1983) 541-557.

\bibitem{BGH}
{\bf Kenneth Berg, David Gove and Kamel Haddad}, Enveloping semigroups and mappings onto the two-shift, {\it Proc.
Amer. Math. Soc.,} 126, (1998) 899-905.

\bibitem {CHA}
{\bf Janusz J. Charatonik and Wlodzimierz J. Charatonik,} Inducible mappings between hyperspaces, {\it Bulletin of the Polish academy of sciences Mathematics,} 46.1, (1998), 5-10.

\bibitem {D}
{\bf Tomasz Downarowicz,} Weakly almost periodic flows and hidden eigenvalues, { \it Topological dynamics
and applications, Contemporary Mathematics 215, a volume in honor of R. Ellis,} (1998) 101-120.

\bibitem {DUG}
{\bf James Dugundji,} Topology, {\it Allyn And Bacon, Inc.,} (1966).

\bibitem{REL}
{\bf Robert Ellis,} Distal Transformation Groups, {\it Pacific Journal Of Mathematics,} 8 (1958), 401-405.

\bibitem{EL}
{\bf Robert Ellis,} A semigroup associated with a transformation group, {\it Transactions of the American Mathematical Society,} 94 (1960), 272-281.

\bibitem{ELLIS}
{\bf Robert Ellis,} Lectures on topological dynamics, {\it W. A. Benjamin, Inc., New York,}  (1969).

\bibitem {ELL}
{\bf David B. Ellis and Robert Ellis,} Automorphisms and Equivalence Relations in Topological Dynamics, {\it Cambridge University Press,} (2014).

\bibitem {EGS1}
{\bf Robert Ellis, Shamuel Glasner and Leonard Shapiro,} Proximal Isometric flows, {\it Advances in Mathematics,} 17 (1975), 213-260.



\bibitem{ELN}
{\bf Robert Ellis and Mahesh Nerurkar,} Weakly almost periodic flows, {\it Transactions of the American Mathematical Society,} 313.1, (1989), 103-119.


\bibitem{F}
{\bf  Hillel Furstenberg,} Disjointness in ergodic theory, minimal sets and a problem in diophantine
approximation, {\it Mathematical Systems theory,} 1 (1967), 1-49.

\bibitem{SGT}
{\bf Shmuel Glasner,} Topological dynamics and group theory, {\it Transactions of the American Mathematical Society} 187 (1974), 327-334.

\bibitem{SG}
{\bf Shmuel Glasner,} Compressibility Properties In Topological Dynamics, {\it American Journal of Mathematics,}  97(1975), 148-171.

\bibitem{SGM}
{\bf Shmuel Glasner,} A metric minimal flow whose enveloping semigroup contains finitely many minimal ideals is PI, {\it Israel Journal of Mathematics,} 22(1975), 87-92.

\bibitem{SGP}
{\bf Shmuel Glasner,} Proximal flows {\it Springer Berlin Heidelberg}, (1976) 17-29.

\bibitem {ELI}
{\bf Eli Glasner,} Quasifactors of minimal systems. {\it Topol. Methods Nonlinear Anal.,} 16 (2000), 351-370.

\bibitem {JOIN}
{\bf Eli Glasner,} Ergodic theory via joinings, {\it AMS, Surveys and Monographs,} 101 (2003).

\bibitem {TM}
{\bf Eli Glasner,} On tame dynamical systems, {\it Colloquium Mathematicum,} 105, (2006), 283-295.

\bibitem {ES}
{\bf Eli Glasner,} Enveloping semigroups in topological dynamics, {\it Topology and its Applications} 154 (2007), 2344-2363.

\bibitem {STM}
{\bf Eli Glasner,} The structure of tame minimal dynamical systems, {\it Ergod. Th. and Dynam. Sys.} 27 (2007), 1819-1837.

\bibitem {GG}
{\bf Eli Glasner and Yair Glasner,} A minimal PI cascade with $2^c$ minimal ideals, {\it arXiv:1801.03377,} (2018).

\bibitem {RIG}
{\bf Eli Glasner and David Maon,} Rigidity in topological dynamics, {\it Ergod. Th. Dynam. Sys. 9,} (1989), 309-320.

\bibitem {GMES}
{\bf Eli Glasner, Michael Megrelishvili,} Hereditarily non-sensitive dynamical systems and linear representations, {\it Colloq. Math.,} 104, (2006), 223-283.

\bibitem {GME}
{\bf Eli Glasner, Michael Megrelishvili,} On fixed point theorems and nonsensitivity, {\it Israel J. Math,} 190(2012), 289-305.

\bibitem {GM-MTS}
{\bf Eli Glasner, Michael Megrelishvili,} More on Tame Dynamical Systems, {\it Ergodic Theory and Dynamical Systems in their Interactions with Arithmetics and Combinatorics, Lecture Notes in Mathematics, Springer-Verlag} (2018).

\bibitem {ME}
{\bf Eli Glasner, Michael Megrelishvili and Vladimir V. Uspenskij,} On metrizable enveloping semigroups, {\it Israel journal of Mathematics, March,} 164(2008), 317-332.

\bibitem {ELBW1}
{\bf Eli Glasner and Benjamin Weiss,} Sensitive dependence on initial conditions, {\it Nonlinearity,} 6 (1993), 1067-1075.

\bibitem {ELBW2}
{\bf Eli Glasner and Benjamin Weiss,} Quasifactors of zero-entropy systems, {\it J. Amer. Math. Soc.,} 8(3) (1995), 665-686.

\bibitem {ELBW3}
{\bf Eli Glasner and Benjamin Weiss,} Locally equicontinuous dynamical systems, {\it In Colloq. Math,} 84, 85 (2000), 345-361.


\bibitem {HIN}
{\bf Neil Hindman  and Dona Strauss,} Algebra in the Stone-Cech compactification: theory and applications. Vol. 27, {\it Walter de Gruyter,} (1998).

\bibitem {IN}
{\bf Alejandro Illanes and Sam B. Nadler,} Hyperspaces: fundamentals and recent advances  {\it Monographs
and Textbooks in Pure and Applied Mathematics, 216. Marcel Dekker, Inc., New
York,} (1999).

\bibitem{LPYZ}

{ \bf Jie Li, Piotr Oprocha, Xiangsong Ye and Ruifeng Zhang, } When are all closed subsets recurrent? {\it Ergodic Theory Dynam. Systems} 37 (2017),  2223–2254.


\bibitem {LIN}
{\bf Douglas Lind and Brian Marcus,} An Introduction to Symbolic Dynamics and Coding,{\it Cambridge University Press,} (1995).


\bibitem {M}
{\bf Douglas C. McMahon And Louis J. Nachman,} An Intrinsic Characterization For PI Flows, {\it Pacific Journal Of Mathematics,} 89 (1980) 391-403.

\bibitem{EM}
{\bf Ernest Michael,} Topologies on spaces of subsets {\it Transactions of the American Mathematical Society,} 71.1 (1951), 152-182.

\bibitem{MST}
{\bf James Montgomery, Robert Sine, and Edward Thomas,} Some topological properties of weakly almost periodic mappings, {\it Topology Appl.,} 11 (1980),  69–85.

\bibitem{ANVK}
{\bf Anima Nagar  and V. Kannan} Topological Transitivity for Discrete
Dynamical Systems, {\it Applicable Mathematics In The Golden Age,}
 Narosa Publications(2003), 534-584.

\bibitem {NAMA}
{\bf Katsumi Nakamura,} On bicompact semigroups, {\it Mat. J. Okayama University 1,} (1952), 99-108.

\bibitem{PET}
{\bf Karl Petersen,} Disjointness and weak mixing of minimal sets, {\it Proc. Amer. Math. Soc.,} 24 (1970) 278-280.

\bibitem {PUN}
{\bf Puneet Sharma and Anima Nagar,} Topological dynamics on hyperspaces, {\it Appl. Gen. Topol.,} 11(1) (2010), 1-19.


\bibitem{Ve1}
{\bf William .A. Veech,} {Almost automorphic functions on groups}, {\it Amer. J. Math.} {87} (1965), 719-751.

\bibitem {VER1}
{\bf Jan de Vries,} Elements of Topological Dynamics, {\it Mathematics and its Applications 257, Kluwer, Dordrecht,} (1993).

\bibitem {VER2}
{\bf Jan de Vries,} Topological Dynamical Systems: An Introduction to the Dynamics of Continuous Mappings, {\it De Gruyter study in mathematics,} 59 (2014).

\bibitem {WAL}
{\bf Peter Walters,} An Introduction to Ergodic Theory, {\it Springer, New York,} (1982).

\bibitem {WIL}
{\bf Stephen Willard,} General topology, {\it Addison-Wesley Publishing Co.,} (1970).

\end{thebibliography}

\end{document}